\renewcommand{\hom}{\operatorname{Hom}}
\newcommand{\N}{\mathbb{N}}
\newcommand{\R}{\mathbb{R}}
\newcommand{\Z}{\mathbb{Z}}
\newcommand{\D}{\mathbb{D}}
\newcommand{\K}{\mathbb{K}}
\newcommand{\rom}[1]{\uppercase\expandafter{\romannumeral #1\relax}}
\newcommand{\ad}{\mathrm{ad}\,}
\newcommand{\Aut}{\mathrm{Aut}}
\newcommand{\charK }{\mathrm{char}\,\K }
\newcommand{\chg }[1]{\widehat{#1}}
\newcommand{\const }{\mathrm{const}}
\newcommand{\cC}{\mathcal{C}}
\newcommand{\cF }{\mathcal{F}}
\newcommand{\cE }{\mathcal{E}}
\newcommand{\cH }{\mathcal{H}}
\newcommand{\cS }{\mathcal{S}}
\newcommand{\cO }{\mathcal{O}}
\newcommand{\cW }{\mathcal{W}}
\newcommand{\cX }{\mathcal{X}}
\newcommand{\cR }{\mathcal{R}}
\newcommand{\cD }{\mathcal{D}}
\newcommand{\cY }{\mathcal{Y}}
\newcommand{\End}{\mathrm{End}}
\newcommand{\Ggen}{\mathcal{E}}
\newcommand{\id}{\mathrm{id}}
\newcommand{\NA}{\mathcal{B}}
\newcommand{\ot }{\otimes }
\newcommand{\Res}{\mathrm{Res}}
\newcommand{\supp}{\mathrm{supp}\,}
\newcommand{\trid}{\triangleright}
\newcommand{\ydG}{\prescript{G}{G}{\mathcal{YD}}}
\newcommand{\ydH}{\prescript{H}{H}{\mathcal{YD}}}
\newcommand{\roots}{\mathbf{\Delta}}
\newcommand{\lroots}{\mathbf{\Delta}_+^{\operatorname{long}}}
\newcommand{\sroots}{\mathbf{\Delta}_+^{\operatorname{short}}}
\newcommand{\rroots}[1]{\mathbf{\Delta}^{\operatorname{re}\,#1}}
\newcounter{xposi}
\newcounter{yposi}
\newcommand{\DDvertexone}[2]{\put(#1,#2){\circle*{1.5}}}
\newcommand{\DDvertextwo}[2]{\multiput(#1,#2)(0,4)2{\circle*{1.5}}}
\newcommand{\DDvertexsq}[2]{
  \multiput(#1,#2)(0,4)2{\circle*{1.5}}
  \setcounter{xposi}{#1}
	\addtocounter{xposi}4
	\multiput(\value{xposi},#2)(0,4)2{\circle*{1.5}}
}
\newcommand{\DDvertexT}[2]{
  \setcounter{xposi}{#1}
  \setcounter{yposi}{#2}
	\put(\value{xposi},\value{yposi}){\circle*{1.5}}
	\addtocounter{xposi}{-2}
	\addtocounter{yposi}{2}
	\put(\value{xposi},\value{yposi}){\circle*{1.5}}
	\addtocounter{xposi}{4}
	\put(\value{xposi},\value{yposi}){\circle*{1.5}}
	\addtocounter{xposi}{-2}
	\addtocounter{yposi}{-4}
	\put(\value{xposi},\value{yposi}){\circle*{1.5}}
}
\newcommand{\DDvertexthree}[2]{
  \multiput(#1,#2)(3,2)2{\circle*{1.5}}
  \multiput(#1,#2)(3,-2)2{\circle*{1.5}}
}
\newcommand{\comment}[1]{}
\numberwithin{equation}{section}
\numberwithin{figure}{section}
\theoremstyle{plain}
\newtheorem{thm}{Theorem}[section]
\newtheorem*{thm*}{Theorem}
\newtheorem*{classification*}{Classification theorem}
\newtheorem{lem}[thm]{Lemma}
\newtheorem{cor}[thm]{Corollary}
\newtheorem*{notation*}{Notation}
\newtheorem{pro}[thm]{Proposition}
\newtheorem{defn}[thm]{Definition}
\theoremstyle{remark}
\newtheorem{rem}[thm]{Remark}
\begin{document}

\title[Nichols algebras over non-abelian groups]{A classification
of Nichols algebras\\
of semi-simple Yetter-Drinfeld modules\\
over non-abelian groups}

\author{I. Heckenberger}
\address{Philipps-Universit\"at Marburg\\ 
FB Mathematik und Informatik \\
Hans-Meerwein-Stra\ss e\\
35032 Marburg, Germany}
\email{heckenberger@mathematik.uni-marburg.de}

\author{L. Vendramin}
\address{Departamento de Matem\'atica, FCEN, Universidad de Buenos Aires,
Pabell\'on 1, Ciudad Universitaria (1428), Buenos Aires, Argentina}
\email{lvendramin@dm.uba.ar}

\begin{abstract}
	Over fields of arbitrary characteristic we classify all braid-indecomposable
	tuples of at least
	two absolutely simple Yetter-Drinfeld modules over non-abelian groups such
	that the group is generated by the support of the tuple and the Nichols
	algebra of the tuple is finite-dimensional. Such tuples are
	classified in terms of analogs of Dynkin diagrams
	which encode
	much information about the Yetter-Drinfeld modules. We also	compute
	the dimensions of these finite-dimensional Nichols algebras. Our proof
	uses essentially
	the Weyl groupoid of a tuple of simple Yetter-Drinfeld modules
	and our previous result on pairs.
\end{abstract}

\maketitle

\setcounter{tocdepth}{1}
\tableofcontents{}

\section*{Introduction}

Let $\K $ be a field and let $G$ be a group. The $G$-graded $\K G$-modules (also
known as Yetter-Drinfeld modules) form a braided vector space. To any braided
vector space $V$ there exists up to isomorphism a unique connected graded
braided Hopf algebra $\NA (V)$ generated by $V$, such that the generators have
degree $1$ and all primitive elements are in $V$. This braided Hopf algebra is
known as the \textit{Nichols algebra of} $V$. It is a fundamental problem in
Hopf algebra theory to understand the structure of Nichols algebras, see for
example \cite{MR1913436} and \cite{MR1907185}.  Besides their applications to
quantum groups and Hopf algebras, Nichols algebras have many other interesting
applications such as Schubert calculus \cite{MR2209265}, Lie superalgebras, see
\cite[Example 5.2]{ICM}, and logarithmic conformal field theories
\cite{MR3171534,MR2965674,MR3146017}. 

First definitions and structural results on Nichols algebras were obtained by
Nichols \cite{MR0506406}. Nichols algebras were rediscovered later by Woronowicz
\cite{MR901157,MR994499} and Majid \cite{MR2106930}, and they were used as a
basic object in the lifting method of Andruskiewitsch and Schneider
\cite{MR1659895} to classify (finite-dimensional) pointed Hopf algebras
\cite{MR1780094,MR2108213,MR2630042}. Nowadays there exist generalizations of
this method to other classes of Hopf algebras \cite{MR3032811}.  A common step
in these methods is to determine all Nichols algebras satisfying a finiteness or
a moderate growth condition. Whereas Nichols was only able to determine Nichols
algebras over very small abelian groups, the theory of Weyl groupoids
\cite{MR2207786} lead to satisfactory classification results for arbitrary
finite abelian groups. Among the results in this direction we
mention the classification of finite-dimensional Nichols algebras of
diagonal type of \cite{MR2294771,MR2295200,MR2379892, MR2500361,MR2462836} and
\cite{MR3313687}, and the results related to presentations of such Nichols
algebras \cite{MR3181554,MR3420518}.  

Based on the successful experience with Weyl groupoids related to
Yetter-Drinfeld modules over abelian groups, the theory was extended
to arbitrary Hopf algebras with bijective antipode and semi-simple
Yetter-Drinfeld modules over them \cite{MR2766176}.  It turned out that Weyl
groupoids provide very strong information on the growth and on combinatorial
properties of Nichols algebras in the case of several direct summands. It is
remarkable that this theory is also very useful for studying Nichols algebras of
simple Yetter-Drinfeld modules. Indeed, the only known tool today to study Nichols
algebras over such Yetter-Drinfeld modules
is to look at braided subspaces which can be viewed as semi-simple
Yetter-Drinfeld modules over another Hopf algebra, see for example
\cite{MR2786171,MR2745542}.  From this point of view, the study of Nichols
algebras of semi-simple Yetter-Drinfeld modules is also crucial and has  
several potential applications.

\medskip
Let us explain the main results of this paper and the strategy of the proof.  
Let $G_0$ be a group and let $V$
be a finite-dimensional Yetter-Drinfeld module over $G_0$.  By restriction of
the module structure, one can view $V$ as a Yetter-Drinfeld module over the
subgroup $G$ of $G_0$ generated by the support of $V$. Moreover, under some
assumptions on $G$ and the field $\K$ one can decompose $V$ into the direct sum
of absolutely simples.
Motivated by this setting, we study tuples $M=(M_1,\dots ,M_\theta)$
of absolutely simple Yetter-Drinfeld modules over a non-abelian group $G$, where
$\theta \in \N$, such that $G$ is generated by the support of
$V=\oplus_{i=1}^\theta M_i$.

Let us add here a side remark.
	The reflection theory and the Weyl groupoid exist for tuples of
	simples. However, allowing simple Yetter-Drinfeld modules would lead to a
	discussion of group representations depending heavily on the field. Further, one
would loose essential parts of the combinatorics of the Weyl groupoid: In the
worst case one has only one simple summand over the base field
instead of several absolutely simples over an extended field.
On the other hand, field extensions of Nichols algebras are well understood. Therefore,
in general it is more promising to extend the field appropriately before
studying a Nichols algebra.

Since $V$ is a braided vector space, it admits a
Nichols algebra $\NA (V)$.
The general theory implies that in some cases, containing those with $\dim \NA
(V)<\infty $,
one can attach to $M$ a connected finite Cartan graph of rank $\theta $, see
Section~\ref{section:preliminaries} for the definitions.
If $\theta =1$, then the Cartan graph contains no information about $\NA (V)$.
Therefore we restrict our attention to the case $\theta \ge 2$.
Our aim is now to provide a classification of $\theta$-tuples $M$ of absolutely
simple Yetter-Drinfeld modules over non-abelian groups such that the group is
generated by the support of $\oplus_{i=1}^\theta M_i$, and $M$ has an
indecomposable finite Cartan graph.  We record that the indecomposability
assumption on the finite Cartan graph is merely of technical nature. It allows
us to exclude the components of the Cartan graph of rank one. Since the
classification in the case of two simple summands was performed in
\cite{MR3276225,MR3272075,rank2}, here we consider the case $\theta\geq3$. 
To write our classification theorem, we introduce two concepts:

\medskip
\noindent\emph{Braid-indecomposability.} The braid-indecomposability of the
tuple of Yetter-Drinfeld modules records the indecomposability assumption on the
finite Cartan graph, see Definition~\ref{def:braidindecomposable}.  

\medskip
\noindent\emph{Skeletons.} To describe the structure of the 
Yetter-Drinfeld modules involved, we make use of 
diagrams which are analogs of Dynkin diagrams of finite type.
We call them \textit{skeletons of finite type}.
See Definition~\ref{def:skeleton} 
for the definition of a skeleton and Figure~\ref{fig:fdNA} for skeletons of
finite type.

\medskip
A consequence of our main result is the following classification, see Theorem~\ref{thm:main}.

\begin{classification*}
	Let $\theta\geq3$ and let $M=(M_1,\dots,M_\theta)$ be a braid-indecomposable
	tuple of Yetter-Drinfeld modules over a non-abelian group $G$ such that 
	the support of $M_1\oplus\cdots\oplus M_\theta$ generates $G$. Then the Nichols
	algebra $\NA(M_1\oplus\cdots\oplus M_\theta)$ is finite-dimensional if and
	only if $M$ has a skeleton of finite type.
\end{classification*}

We remark that no assumption on the characteristic of the field $\K$ is needed.
The theorem is the culmination of several theorems stated in
Section~\ref{section:main_results}. These theorems contain the dimensions and
the Hilbert series for the Nichols algebras of the classification. Almost all of
the examples appearing in our classification admit a standard (classical) root
system.  The dimensions of the Nichols algebras admiting a standard root system
are shown in Table~\ref{tab:standard}. 

\begin{table}[h]
	\centering
	\caption{Finite-dimensional Nichols algebras with a standard root system.}
	\begin{tabular}{|c|c|c|c|c|c|c|c|c|}
		\hline dimension & \rule{0pt}{3ex} $2^{\theta(\theta+1)}$ & $4^{\theta
		(\theta-1)}3^{2\theta }$ & $2^{2\theta^2-\theta}$ & $4^{\theta (\theta -1)}$
		& $4^{36}$ & $4^{63}$ & $4^{120}$ & $4^{18}$ \\
		\hline
		root system & \rule{0pt}{2.5ex}  $A_\theta $ & $B_\theta $ & $C_\theta $ &
		$D_\theta $ & $E_6$ & $E_7$ & $E_8$ & $F_4$\\
		\hline
		$\charK $ & & 3 & $\ne 2$ & & & & & $\ne 2$\\
		\hline
	\end{tabular}
    \label{tab:standard}
\end{table}

\comment{
\begin{table}[H]
	\centering
	\begin{tabular}{|c|c|c|}
		\hline 
		dimension & root system & $\charK$\tabularnewline
		\hline 
		$4^{\theta(\theta+1)/2}$ \rule{0pt}{3ex} & $A_{\theta}$ & \tabularnewline
		\hline 
		$2^{2\theta(\theta-1)}3^{2\theta}$ \rule{0pt}{3ex} & $B_{\theta}$ & $3$\tabularnewline
		\hline 
		$2^{2\theta^{2}-\theta}$ \rule{0pt}{3ex} & $C_{\theta}$ & $\ne2$ \tabularnewline
		\hline 
		$4^{\theta(\theta-1)}$ \rule{0pt}{3ex} & $D_{\theta}$ & \tabularnewline
		\hline 
		$4^{36}$ & $E_{6}$ \rule{0pt}{3ex} & \tabularnewline
		\hline 
		$4^{63}$ & $E_{7}$ \rule{0pt}{3ex} & \tabularnewline
		\hline 
		$4^{120}$ & $E_{8}$ \rule{0pt}{3ex} & \tabularnewline
		\hline 
		$2^{36}$ & $F_{4}$ \rule{0pt}{3ex} & $2$\tabularnewline
		\hline 
	\end{tabular}
	\caption*{Finite-dimensional Nichols algebras with a standard root system.}
\end{table}
}
It is remarkable that in characteristic zero and in the case where $\theta\geq
4$, these finite-dimensional Nichols algebras appear in the work of Lentner
\cite{MR3253277} related to coverings of Nichols algebras.

In the case of three simple summands, one has an additional family of examples
which admits a non-standard root system. The dimensions of these Nichols
algebras are shown in Table~\ref{tab:nonstandard}.

\begin{table}[H]
	\centering
	\caption{Finite-dimensional Nichols algebras with a non-standard root
	system of rank three.}
	\begin{tabular}{|c|c|c|c|}
		\hline dimension & \rule{0pt}{3ex} $3^6\,12^7$ & $2^{12}\,12^4$ &
		$6^6\,12^7$ \\
		\hline
		$\charK $ & 2 & 3 & $\ne 2,3$\\
		\hline
	\end{tabular}
    \label{tab:nonstandard}
\end{table}

\comment{
\begin{table}[h]
	\centering
	\begin{tabular}{|c|c|}
		\hline 
		dimension & $\charK$\tabularnewline
		\hline 
		$3^{6}12^{7}$ \rule{0pt}{3ex}& $2$\tabularnewline
		\hline 
		$2^{12}12^{4}$ \rule{0pt}{3ex}& $3$\tabularnewline
		\hline 
		$6^{6}12^{7}$\rule{0pt}{3ex} & $\ne2,3$\tabularnewline
		\hline 
	\end{tabular}
	\caption*{Finite-dimensional Nichols algebras with a non-standard root
	system of rank three.}
\end{table}
}

The proof of Theorem~\ref{thm:main} is based on a general PBW-type theorem on
certain Nichols algebras from \cite[Thm.\,2.6]{MR2732989}, see
Theorem~\ref{thm:HS}, on the classification in the case
$\theta=2$ \cite{rank2}, and on the classification of connected indecomposable
finite Cartan graphs of rank three \cite{MR2869179}. In fact, we only need
Lemma~\ref{lem:noA} from \cite{MR2869179}, for the proof of which we had to use
the main result in \cite{MR2869179}.  In order to simplify our approach further,
we prove the following theorem, see Theorem~\ref{thm:finitetype}. 

\begin{thm*}
Any connected indecomposable finite Cartan graph has an object with a Cartan
matrix of finite type. 
\end{thm*}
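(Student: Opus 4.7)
The natural strategy is induction on the rank $\theta$. In the base case $\theta \le 2$, any connected indecomposable rank-$2$ Cartan matrix appearing in a finite Cartan graph already satisfies $c_{12}c_{21} \in \{0,1,2,3\}$ (otherwise the groupoid it generates would be infinite), so the matrix is automatically of finite type and the claim is immediate.

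For the inductive step, fix a connected indecomposable finite Cartan graph $\cC$ of rank $\theta \ge 3$. For any object $a$ and any $i \in \{1,\dots,\theta\}$, deleting the index $i$ and closing under the surviving reflections produces a finite Cartan graph of rank $\theta-1$. Its connected components are themselves finite Cartan graphs of lower rank, so the induction hypothesis provides, in each component, an object whose Cartan submatrix is a direct sum of finite-type matrices. Hence there exist objects $b \in \cC$ and indices $i$ for which the principal submatrix of $C^b$ obtained by removing the $i$-th row and column is of finite type.

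The heart of the proof is to upgrade such a $b$ to an object whose full Cartan matrix $C^{b'}$ is of finite type. The key input is the classical fact that an indecomposable generalized Cartan matrix of rank $\theta$ all of whose proper principal submatrices are of finite type is itself either of finite type or of affine type. I would therefore assume for contradiction that no object of $\cC$ carries a finite-type Cartan matrix, so that at every such $b$ the matrix $C^b$ belongs to the list of indecomposable affine Cartan matrices $\widetilde{X}_n$. Fixing a complexity function on objects (for instance $\Phi(a) = \sum_{i \ne j} |c_{ij}^a|$, or the multiset of Dynkin diagram entries), I would choose $b$ minimizing $\Phi$ and then, for each affine diagram shape, exhibit a simple reflection $r_i$ such that $C^{r_i(b)}$ is still indecomposable (using connectedness of $\cC$), still has all proper principal submatrices of finite type (by the rank-$(\theta-1)$ induction applied at $r_i(b)$), but has $\Phi(r_i(b)) < \Phi(b)$ — contradicting minimality.

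The main obstacle is this last case analysis: one must verify uniformly over all (untwisted and twisted) affine types that the Cartan graph axioms force the existence of such a strictly-reducing reflection. Here I would use the compatibility conditions $c_{ij}^{r_k(a)}$ in terms of $c_{ij}^a$ and the fact that finiteness of $\cC$ precludes certain long reflection sequences; the alternative (and equivalent) formulation is that an affine Cartan matrix would, via its iterated reflections inside $\cC$, produce infinitely many distinct positive real roots, contradicting finiteness of the set of real roots.
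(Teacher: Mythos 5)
Your argument has genuine gaps, and the most serious one is conceptual: at several points you assume that a single object carrying a Cartan matrix that is not of finite type would force the Cartan graph (or the groupoid "it generates") to be infinite. In a non-standard Cartan graph this is false, because reflections move you to \emph{other} objects with possibly different Cartan matrices, so one affine or indefinite matrix at one object generates nothing by itself. This breaks your base case ("$c_{12}c_{21}\ge 4$ at an object would make the groupoid infinite"): the paper's own Appendix exhibits pairs in the class $\wp^G_4$ with $a_{12}=-4$, $a_{21}=-1$ (product $4$, i.e.\ affine) that admit all reflections with a \emph{finite} Weyl groupoid (Corollary~\ref{cor:finiteW}, Table~\ref{tab:wprefl}), and Lemma~\ref{lem:noA}(5) lists rank-three finite Cartan graphs whose matrices contain a $-4$ entry. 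The same error reappears in your closing "equivalent formulation" (an affine matrix at some object produces infinitely many real roots), which is therefore not a valid shortcut. A second gap is in the inductive step: restricting to $I\setminus\{i\}$ and invoking the induction hypothesis only tells you that the \emph{one} principal submatrix of $C^{b}$ obtained by deleting the index $i$ is of finite type; it does not control principal submatrices containing $i$, so you cannot invoke the fact "all proper principal submatrices of finite type $\Rightarrow$ finite or affine" to conclude that $C^{b}$ is affine — it could be indefinite. Finally, the descent over all affine shapes, which you acknowledge is the heart of the matter, is left entirely unproven, and it is exactly where the difficulty sits.

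For comparison, the paper's proof (Theorem~\ref{thm:finitetype}) is a short global argument that needs no induction on rank, no restriction construction, and no classification of affine matrices: assuming every object has a matrix of affine or indefinite type, choose a real root $\alpha=\sum_i x_i\alpha_i$ maximal (componentwise) among \emph{all} real roots at \emph{all} objects, at an object $X$. Maximality forces $A^X x\ge 0$; the finite/affine/indefinite trichotomy (Theorem~\ref{thm:CMclassification}) then rules out indefinite type and forces $A^X x=0$ in the affine case, so $s_j^X(\alpha)=\alpha$ for all $j$, and by repeating the argument at every object in which $\alpha$ lives, the reflection orbit of $\alpha$ is $\{\alpha\}$. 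This contradicts $\alpha$ being a real root, since some sequence of reflections carries $\alpha$ to a simple root $\alpha_i$ and one more reflection to $-\alpha_i$. If you want to salvage your approach, the crucial missing ingredient is precisely an argument of this global kind; local data at a single object, or at an object minimizing an ad hoc complexity, does not suffice.
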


This result is of independent interest and its proof does not use the
classification of finite Cartan graphs \cite{MR2869179,MR3341467}.  At an early stage
of our work we had a proof of our main classification theorem without using the
classification in \cite{MR2869179}, but it was much more technical than the
present work. 

\medskip
The paper is organized as follows.
Notations, terminology and 
a review of the theory of Weyl groupoids of tuples of
simple Yetter-Drinfeld modules over groups is given 
in Section~\ref{section:preliminaries}.
In Section~\ref{section:main_results} we
state the main result of the paper, a classification of finite-dimensional
Nichols algebras of semi-simple Yetter-Drinfeld modules over groups
in terms
of skeletons of finite type. This section also contains the
Hilbert series of each of the Nichols algebras appearing in
our classification, see Theorems~\ref{thm:ADE}, \ref{thm:C}, \ref{thm:B1},
\ref{thm:B2}, and \ref{thm:F4}.
In Sections~\ref{section:Cartan_rank3}
and~\ref{section:Cartan} we collect useful facts about finite Cartan graphs. In
particular, in Theorem~\ref{thm:finitetype} we prove that
every finite connected indecomposable Cartan graph contains a point
with a Cartan matrix of finite type. Section~\ref{section:helpful}
contains several useful lemmas related to the structure of Yetter-Drinfeld
modules over arbitrary groups.
Sections~\ref{section:ADE}--\ref{section:F4} are devoted
to prove the structure theorems in cases $ADE$, $C$, $B$ and $F_4$. The main
theorem, Theorem~\ref{thm:main}, is then proved in Section~\ref{section:main}.
The paper contains two appendices.  Appendix~\ref{appendix:reflections} is
devoted to the structure theory of $(\ad V)(W)$ for particular Yetter-Drinfeld
modules $V$ and $W$. Some known results are cited and some new results, which
are needed in the paper, are obtained using known methods.
Appendix~\ref{appendix:rank2} reviews the main results of \cite{rank2}, where
finite-dimensional Nichols algebras of direct sums of two absolutely simple
Yetter-Drinfeld modules were studied. 

\section{Preliminaries}
\label{section:preliminaries}

\subsection{}
As usual, $\N$ is the
set of positive integers, $\N_0=\N\cup\{0\}$, $\Z$ is the set of integers, $\K$
is an arbitrary field of characteristic $\charK$ and
$\K^\times=\K\setminus\{0\}$. 

For a set $X$ we write $|X|$ for the cardinality of $X$. 

For a group $G$ we write $\widehat{G}$ for the set of linear characters of $G$
and $Z(G)$ for the center of $G$. For $g\in G$, we write $G^g$ for the
centralizer of $g$ in $G$. The conjugacy class of $g$ will be denoted by $g^G$.
For any $g,h\in G$ we sometimes write $g\triangleright h$ for $ghg^{-1}$.
If $X\subseteq G$ is a subset, then $\langle X\rangle$ denotes the subgroup of $G$
generated by $X$. 

The category of Yetter-Drinfeld modules over $G$ will be
denoted by $\ydG$. 
Recall that a Yetter-Drinfeld module over $G$, also called a $G$-graded $\K
G$-module, is
a $\K G$-module $V=\oplus_{g\in G}V_{g}$ such that $hV_{g}\subseteq
V_{hgh^{-1}}$ for all $g,h\in G$. It is a braided vector space with braiding
$c\colon V\otimes V\to V\otimes V$ defined by $c(u\otimes v)=gv\otimes u$ for
all $u\in V_g$, $v\in V$.  The \textbf{support} of $V$ is
\[
\supp V=\{g\in G:V_g\ne0\}.
\]
We say that $V$ is \textbf{absolutely simple} if $V\ne 0$ and if for any field
extension $\mathbb{L}$ of $\K$ the only Yetter-Drinfeld submodules of
$\mathbb{L}\otimes _\K V$ over $\mathbb{L}G$ are $\{0\}$ and $\mathbb{L}\otimes
_\K V$. (Absolutely) simple
Yetter-Drinfeld modules over $G$ are parametrized by pairs
$(g^G,\rho)$, where $g^G$ is a conjugacy class of $G$ and $\rho:\K G^g\to
\mathrm{End}(W)$ is an (absolutely) irreducible
representation of the centralizer $G^g$.  The (absolutely)
simple Yetter-Drinfeld modules over $G$ are
\[
        M(g^G,\rho)=\mathrm{Ind}_{G^g}^G\rho
\]
with the induced action $y(x\otimes w)=yx\otimes w$ for $x,y\in G$ and $w\in W$,
and the coaction $\delta :M(g^G,\rho )\to \K G\otimes M(g^G,\rho )$
is given by $\delta(x\otimes w)=xgx^{-1}\otimes(x\otimes w)$
for all $w\in W$, $x\in G$.
One also says that $x\otimes w$ has $G$-degree $xgx^{-1}$.

For a $G$-graded $\K G$-module $V$ we write $\NA(V)$ for the Nichols algebra
of $V$. 
Nichols algebras are connected strictly $\N_0$-graded braided Hopf algebras with
$V$ as degree one part. The Hilbert series of an $\N_0$-graded
algebra $R=\oplus_{n\in\N_0}R_n$ is $\sum_{n\geq0}(\dim R_n)t^n\in\Z[[t]]$.  
For all $k\in\N_0$ and $t\in \K $ let $(k)_t=1+t+\cdots+t^{k-1}$ be the usual
$t$-number. 

Many examples of finite-dimensional Nichols algebras of pairs of absolutely
simple Yetter-Drinfeld modules are related to the groups $\Gamma _n$ for $n\in
\{2,3,4\}$ defined in \cite{MR2732989}: For all $n\in \N _{\ge 2}$,
the group $\Gamma _n$ is defined by the generators $a,b,\nu $ and relations
$$ ba=\nu ab,\quad \nu a=a\nu ^{-1},\quad \nu b=b\nu ,\quad \nu ^n=1. $$

\subsection{Weyl groupoids and root systems}
\label{subsection:Weyl}

We review the basics of the theory of Weyl groupoids of tuples of simple
Yetter-Drinfeld modules over groups. We refer to \cite{MR2766176} and
\cite{MR2734956,MR2732989} for details and proofs.  We use the terminology
introduced in \cite{MR3313687} after several discussions with Andruskiewitsch
and Schneider.

Let $\theta\in\N$ and let $I=\{1,\dots,\theta\}$.
Let $\cX$ be a non-empty set and for each $X\in\cX$ let $A^{X}=(a_{ij}^X)_{1\leq
i,j\leq\theta}$ be a generalized Cartan matrix. For all
$i\in I$ let $r_i\colon\cX\to\cX$ be a map. 
The quadruple 
\[
	\cC=\cC(I,\cX,r,A),
\]
where $r=(r_i)_{i\in I}$ and $A=(A^X)_{X\in\cX}$,
is called a \textbf{semi-Cartan graph} if $r_i^2=\id_X$ for all $i\in I$, and
$a_{ij}^X=a_{ij}^{r_i(X)}$ for all $X\in\cX$ and $i,j\in I$. We say that a
semi-Cartan graph $\cC$ is \textbf{connected} if there is no proper non-empty
subset $\cY\subset\cX$ such that $r_i(Y)\in\cY$ for all $i\in I$ and $Y\in\cY$.

Let $\cC=\cC(I,\cX,r,A)$ be a semi-Cartan graph. 
There exists a unique category $\cD(\cX,I)$ with $\cX$ as its set of objects and
with morphisms
$$\hom(X,Y)=\{(Y,f,X):f\in\End(\Z^\theta)\}$$
for $X,Y\in\cX$ with the
composition defined by 
\[(Z,g,Y)\circ (Y,f,X)=(Z,gf,X)\]
for all $X,Y,Z\in\cX$ and
$f,g\in\End(\Z^\theta)$.

We write $\alpha_1,\dots,\alpha_\theta$ for the standard basis of $\Z^{\theta}$.

For each $X\in\cX$ and $i\in I$ let
\[
	s_i^X\in\Aut(\Z^\theta),\quad
	s_i^X\alpha_j=\alpha_j-a_{ij}^X\alpha_i
\]
for all $j\in I$. Let $\cW(\cC)$ be the subcategory of
$\cD(\cX,I)$ generated by the morphisms $(r_i(X),s_i^X,X)$, where $i\in I$
and $X\in\cX$. Then $\cW(\cC)$ is a groupoid. For any $X,Y\in \cX $ and $f\in
\Aut (\Z^\theta)$ with $w=(Y,f,X)\in \hom (X,Y)$ and for any $\alpha \in
\Z^\theta $ we also write $w\alpha $ for $f\alpha $.
For all $k\in \N _0$, $i_1,\dots ,i_k\in I$, $X_0,X_1,\dots ,X_k\in \cX$ with
$r_{i_m}(X_m)=X_{m-1}$ for all $1\le m\le k$ let
$$\id_{X_0}s_{i_1}\cdots s_{i_k} = s_{i_1}^{X_1}s_{i_2}^{X_2}\cdots
s_{i_k}^{X_k}\in \hom (X_k,X_0).$$

For each $X\in\cX$ the set of \textbf{real roots} of $\cC$ at $X$ is 
\[
	\rroots X=\{w\alpha_i:w\in \cup _{Y\in \cX}\hom(Y,X)\}\subseteq\Z^\theta.
\]
The sets of \textbf{positive real roots} and \textbf{negative real roots} are
\[
\rroots X_+=\rroots X\cap\N_0^I,
\quad
\rroots X_-=\rroots X\cap-\N_0^I,
\]
respectively. The semi-Cartan graph $\cC$ is \textbf{finite} if its set of real
roots at $X$ is finite for all $X\in\cX$. The semi-Cartan graph $\cC$ is a
\textbf{Cartan graph} if the following hold:
\begin{enumerate}
	\item For each $X\in\cX$ the set $\rroots X$
		consists of positive and negative roots.
	\item Let $X\in\cX$ and $i,j\in I$. If
		$t_{ij}^X=|\rroots X\cap(\N_0\alpha_i+\N_0\alpha_j)|<\infty$
		then $(r_ir_j)^{t_{ij}^X}(X)=X$.
\end{enumerate}
If $\cC$ is a Cartan graph, the groupoid $\cW(\cC)$ is the \textbf{Weyl groupoid}
of $\cC$. 

For all points $X\in \cX $ of the semi-Cartan graph $\cC$
let $\roots ^X\subseteq \Z^\theta $.
We say that $\cR=\cR(\cC,(\roots^X)_{X\in\cX})$ is a \textbf{root system}
of type $\cC$ if the following conditions hold:
\begin{enumerate}
	\item $\roots^X=(\roots^X\cap\N_0^I)\cup
		-(\roots^X\cap\N_0^I)$ for all $X\in \cX$.
	\item $\roots^X\cap\Z\alpha_i=\{\alpha_i,-\alpha_i\}$ for all $i\in
		I$, $X\in \cX $.
	\item $s_i^X(\roots^X)=\roots^{r_i(X)}$ for all $i\in I$, $X\in \cX $. 
	\item $(r_ir_j)^{m_{ij}^X}(X)=X$ for all $i,j\in I$ with $i\ne j$ and all
		$X\in \cX $, where
		$m_{ij}^X=|\roots^X\cap(\N_0\alpha_i+\N_0\alpha_j)|$ is finite. 
\end{enumerate}
Note that (4) is similar to the condition (2) of a Cartan graph, but here
$\roots $ is involved instead of the set of real roots. Axiom (4) is necessary for
the Coxeter relations of the Weyl groupoid.
For any finite Cartan graph $\cC$ the family $(\rroots X)_{X\in \cX }$ defines
the unique root system of type $\cC$, see \cite[Props. 2.9 and 2.12]{MR2498801}.

A connected semi-Cartan graph is \textbf{indecomposable} if there exists
$X\in \cX $ such that
the Cartan matrix $A^X$ is indecomposable, that is, there are no disjoint subsets
$I_1,I_2\subset I$ such that $I_1,I_2\not=\emptyset$, $I_1\cup I_2=I$, and
$a_{ij}^X=0$ for all $i\in I_1$, $j\in I_2$.
It is known by \cite[Prop.~4.6]{MR2498801}
that if a connected finite Cartan graph $\cC $ is indecomposable, then
$A^X$ is an indecomposable Cartan matrix for all points $X$ of $\cC $.

A semi-Cartan graph $\cC$ is \textbf{standard} if $A^X=A^Y$ for all $X,Y\in\cX$.  In
this case the real roots form the set of real roots of the Weyl group attached
to the Cartan matrix, and hence $\cC $ is a Cartan graph.
We then say that the Weyl groupoid $\cW(\cC)$ is standard.
If $\cR$ is a root system of a standard Cartan graph $\cC$, then we say that $\cR$ is
standard. The terminology is based on \cite{MR2742620}. 

Let us review the connections between Cartan graphs and Nichols algebras.  Let
$G$ be a group and $\ydG$ be the category of Yetter-Drinfeld
modules over $G$. We write $\cF_\theta^G$ for the set of $\theta$-tuples of
finite-dimensional absolutely simple objects in $\ydG$ and $\cX_\theta$ for the
$\theta$-tuples of isomorphism classes of finite-dimensional absolutely simple
objects in $\ydG$.

For any Yetter-Drinfeld module $U$ over $G$ and any $x\in U$, $y\in \NA (U)$ we
write
$(\ad x)(y)$ for $\mathrm{mult}(\id -c)(x\otimes y)$, where $\mathrm{mult}$
denotes the multiplication map in $\NA (U)$ and $c$ is the braiding of $\NA
(U)$.  Then for any two subsets $U'\subseteq U$ and $U''\subseteq \NA (U)$ we
write $(\ad U')(U'')$ for the linear span of the elements $(\ad x)(y)$ with
$x\in U'$, $y\in U''$.

Let $\theta \in \N $ and let $I=\{1,\dots ,\theta \}$.
For $M=(M_1,\dots,M_\theta)\in\cF_\theta^G$ let
$[M]=([M_1],\dots,[M_\theta])\in\cX_\theta$ be the corresponding $\theta$-tuple
of isomorphism classes. For all $i\in I$ and $j\in I\setminus \{i\}$ let
$a_{ij}^M=-\infty $ if $(\ad M_i)^m(M_j)\ne 0$ for all $m\ge 0$ and let
\[
	a_{ij}^M=-\sup\{m\in\N_0:(\ad M_i)^m(M_j)\ne0\}
\]
otherwise. Moreover, let $a_{ii}^M=2$ for all $i\in I$.
Then $A^M=(a^M_{ij})_{i,j\in I}$ is called the \textbf{Cartan matrix of} $M$.
Clearly, $A^M$ depends only on the isomorphism class of $M$ and hence we also
write $A^{[M]}$ for $A^M$.

For all $i\in I$ the reflection map
$R_i\colon\cF_\theta^G\to\cF_\theta^G$ is defined by $R_i(N)=N$ if $a^N_{ij}=-\infty
$ for some $j\in I$, and by $R_i(N)=(N_1',\dots,N_\theta')$, where 
\[
N_j'=\begin{cases}
	(\ad N_i)^{-a_{ij}^N}(N_j) & \text{if $j\ne i$},\\
	N_i^* & \text{if $j=i$},
\end{cases}
\]
otherwise.

Since $[R_i(M)]=[R_i(N)]$ in $\cX_\theta$ for all $M,N\in\cF_\theta^G$ with
$[M]=[N]$ and all $i\in I$, we may define
$r_i\colon\cX_\theta\to\cX_\theta$ by $r_i([N])=[R_i(N)]$ for all
$i\in I$. We then define
\begin{align*}
	&\cF_\theta^G(M)=\{R_{i_1}\cdots R_{i_k}(M)\in\cF_\theta^G: k\in\N_0,\,
	i_1,\dots,i_k\in I\},\\
	&\cX_\theta(M)=\{r_{i_1}\cdots r_{i_k}([M])\in\cX_\theta: k\in\N_0,\,
	i_1,\dots,i_k\in I\}.
\end{align*}

A tuple $M\in \cF_\theta^G$ \textbf{admits all reflections} if $a^N_{ij}\in \Z $ for all
$N\in\cF_\theta^G(M)$ and all $i,j\in I$.

For all $M\in\cF_\theta^G$ let $\NA(M)=\NA(M_1\oplus\cdots\oplus M_\theta)$.
Following the terminology in \cite{MR2734956} we say that
a Nichols algebra $\NA(M)$ is \textbf{decomposable} if there exists a totally
ordered set $L$ and a sequence $(W_l)_{l\in L}$ of finite-dimensional absolutely
simple $\N_0^{\theta}$-graded objects in $\ydG$ such that 
\[
	\NA(M)\simeq \otimes_{l\in L}\NA(W_l).
\]
In this case, the isomorphism classes of the $W_l$ and the $\Z^{\theta}$-degrees
are uniquely determined and hence one may define the set $\roots _+^{[M]}$ of
\textbf{positive roots} and the set $\roots^{[M]}$ of \textbf{roots} of
$[M]$:
\[
\roots _+^{[M]}=\{\deg W_l:l\in L\},
\quad
\roots ^{[M]}=\roots _+^{[M]}\cup -\roots _-^{[M]}.
\]

There are several results that imply the decomposability of a Nichols algebra.
For example, Kharchenko proved \cite[Thm. 2]{MR1763385} that $\NA(M)$ is
decomposable if $G$ is abelian and $\dim M_i=1$ for all $i\in I$. In
\cite{MR2734956} it is proved that if all finite tensor powers of
$M_1\oplus\cdots\oplus M_\theta$ are direct sums of absolutely simple objects in
$\ydG$ then $\NA(M)$ is decomposable. 

Suppose that $M$ admits all reflections. Then 
\[
\cC(M)=(I,\cX_\theta(M),(r_i)_{i\in I},(A^X)_{X\in\cX_\theta(M)})
\]
is a connected semi-Cartan graph and hence the groupoid $\cW (M)=\cW (\cC (M))$
is defined.

We stress that the above reflection theory works more generally
for tuples of simple Yetter-Drinfeld modules.
From \cite[Cor.~2.4 and Thm.~2.3]{MR2732989} one obtains the
following theorem.

\begin{thm} \label{thm:C(M)finiteCartan}
	Let $\theta\in\N$, let $G$ be a group and 
	let $M=(M_1,\dots ,M_\theta )$, where each $M_i$ is a simple 
	Yetter-Drinfeld module over $G$.  Assume that $M$ admits all reflections and
	that $\cW(M)$ is finite. Then $\NA(M)$ is
	decomposable and $\cC (M)$ is a finite Cartan graph.
\end{thm}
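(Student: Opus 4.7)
My plan is to split the statement into its two assertions --- that $\cC(M)$ is a finite Cartan graph, and that $\NA(M)$ is decomposable --- and to treat each using the general Weyl-groupoid machinery for semi-simple Yetter-Drinfeld modules developed in \cite{MR2732989}. Since $M$ admits all reflections by hypothesis, $\cC(M)$ is already a semi-Cartan graph by construction, so only the two extra Cartan-graph axioms together with the finiteness statement on real roots need to be verified, alongside the PBW-style decomposition of $\NA(M)$.

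First I would check that for every $X\in\cX_\theta(M)$, every real root in $\rroots{X}$ is positive or negative. I would argue by induction on the length of a morphism $w\in\hom(Y,X)$ in $\cW(\cC(M))$, using the standard reflection-groupoid analogue of the exchange condition: if $w\alpha_i$ is positive and $s_j^Xw\alpha_i$ is negative, then $w\alpha_i=\alpha_j$. Admissibility of all reflections is essential here because it forces $a_{ij}^X\in\Z$ for every $X$ and $i,j$, so the $s_i^X$ are honest automorphisms of $\Z^\theta$ and the inductive argument goes through. Next, for axiom (2), I would observe that the sub-groupoid of $\cW(M)$ generated by $r_i,r_j$ starting from $X$ is finite, hence the rank-two root set $\rroots{X}\cap(\N_0\alpha_i+\N_0\alpha_j)$ has some cardinality $t_{ij}^X<\infty$, and the Coxeter-type relation $(r_ir_j)^{t_{ij}^X}(X)=X$ follows from a short case check against the finite list of rank-two indecomposable Cartan matrices realized inside a finite Weyl groupoid. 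Finiteness of $\rroots{X}$ for every $X$ is immediate from finiteness of $\cW(M)$ via the orbit description of real roots, so $\cC(M)$ is a finite Cartan graph.

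The substantial part of the proof is the decomposability of $\NA(M)$. Here the plan is the PBW strategy for Weyl groupoids: fix a reduced decomposition $\id_Y s_{i_1}\cdots s_{i_N}$ of a longest morphism from $[M]$ to some $Y\in\cX_\theta(M)$, set $X_0=[M]$ and $X_k=r_{i_k}(X_{k-1})$, and define modules $W_k\subseteq\NA(M)$ iteratively by the same iterated-adjoint construction that appears in the definition of $R_i$, applied to the tuple produced at step $k-1$. One then shows by induction on $k$ that $W_k$ is an absolutely simple Yetter-Drinfeld submodule of $\NA(M)$ of $\Z^\theta$-degree $\beta_k=s_{i_1}^{X_1}\cdots s_{i_{k-1}}^{X_{k-1}}\alpha_{i_k}\in\N_0^\theta$, and that the multiplication map
\[
\NA(W_1)\otimes\cdots\otimes\NA(W_N)\longrightarrow\NA(M)
\]
is an isomorphism of $\Z^\theta$-graded vector spaces.

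The main obstacle, and the real content of \cite[Thm.~2.3]{MR2732989}, is precisely this last step: controlling the root modules $W_k$ along the reflection sequence, proving their simplicity and the correctness of their degrees, and then upgrading the list $(W_k)$ to a genuine tensor-product decomposition of $\NA(M)$. The first two points hinge on a fine structural analysis of $(\ad V)(W)$ for simple Yetter-Drinfeld modules $V,W$, which is the technical heart of reflection theory and for which admissibility of reflections is indispensable. Independence of the decomposition from the choice of reduced expression should follow from a Matsumoto-type theorem for Weyl groupoids, and the assumption $|\cW(M)|<\infty$ guarantees that a longest morphism, hence a finite reduced word, actually exists.
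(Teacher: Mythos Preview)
The paper does not give its own proof of this statement; it simply records it as a consequence of \cite[Cor.~2.4 and Thm.~2.3]{MR2732989}. Your proposal is thus an attempt to reconstruct those cited arguments, and the overall architecture --- deduce the Cartan-graph axioms from finiteness of $\cW(M)$ and admissibility of all reflections, and obtain decomposability via a PBW factorisation along a reduced expression of a longest morphism --- is in the right spirit and close to what the cited source does.

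There is, however, a soft spot in your treatment of Cartan-graph axiom~(2), and it reflects an inversion of the actual logical order. Your ``short case check against the finite list of rank-two indecomposable Cartan matrices realized inside a finite Weyl groupoid'' is circular: any such classification of rank-two Cartan graphs already presupposes the Cartan-graph axioms you are trying to establish, and without them there is no a~priori finite list to check against. In \cite{MR2732989} the argument runs the other way: one first proves decomposability of $\NA(N)$ for every $N\in\cF_\theta^G(M)$ (this is Thm.~2.3 there), which produces at each object $[N]$ a set $\roots^{[N]}$ of degrees of the tensor factors; one then verifies that the family $(\roots^{[N]})$ satisfies the four root-system axioms listed in Section~\ref{subsection:Weyl}, where axiom~(4) is precisely the Coxeter relation and is proved using the Nichols-algebra structure (invariance of $\NA(N)$ under reflections), not by abstract combinatorics of Cartan matrices. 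The Cartan-graph property of $\cC(M)$ is then a \emph{consequence} of the existence of this root system (Cor.~2.4). So decomposability is the input for the Cartan-graph claim, not the other way around as in your sketch. A minor further point: the theorem is stated for tuples of simple modules, whereas your PBW sketch speaks of absolutely simple factors $W_k$; be aware that the two notions are being mixed here, as the paper itself remarks just after the theorem.
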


Clearly, the same theorem holds if one starts with a tuple of absolutely
	simple Yetter-Drinfeld modules.
	Since extension of the base field of a Nichols algebra is compatible with the
	grading and with taking coinvariants,
	any reflection of a tuple of absolutely simples is again a
tuple of absolutely simples.

\medskip
As in the case of Coxeter groups, on morphisms of Weyl groupoids one defines a
length function, see \cite[\S1]{MR2734956}. The following theorem is an analog
of a PBW-decomposition for the Nichols algebras $\NA(M)$ of tuples $M$ of
absolutely simple Yetter-Drinfeld modules. 

\begin{thm}{\cite[Thm. 2.6]{MR2732989}}
	\label{thm:HS}
	Let $\theta\geq2$ and $M\in\cF_\theta^G$. Suppose that $M$ admits all
	reflections and that $\cW (M)$ is finite. Let
  $w=\id_{[M]}s_{i_1}\cdots s_{i_l}$ be a reduced decomposition
	of a longest element of $\cup _{[N]\in\cX_\theta(M)}\hom([N],[M])$. Let
	$$\beta_m=\id_{[M]}s_{i_1}\cdots s_{i_{m-1}}\alpha_{i_m} $$
	for all $m\in \{1,\dots ,l\}$, where $l$ is the length of $w$.
	Then $\roots_+^{[M]}=\{\beta_1,\dots,\beta_l\}$ and
	$\beta_k\ne\beta_m$ for all $k,m\in\{1,\dots,l\}$ with $k\ne m$. There 
	exist finite-dimensional absolutely simple subobjects
	$M_{\beta_m}\subseteq\NA(M)$ in $\ydG$ of degree $\beta_m$ for all
	$m\in\{1,\dots,l\}$ with  $M_{\beta_m}\simeq
	R_{i_{m-1}}\cdots R_{i_2}R_{i_1}(M)_{i_m}$ in $\ydG$.
	Moreover, the
	multiplication map 
	\begin{equation*}
		\NA(M_{\beta_l})\otimes\cdots\NA(M_{\beta_2})\otimes\NA(M_{\beta_1})\to\NA(M)
	\end{equation*}
	is an isomorphism of $\N_0^\theta$-graded objects in $\ydG$. 
\end{thm}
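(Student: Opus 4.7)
My plan is to combine the decomposability of $\NA(M)$ guaranteed by Theorem~\ref{thm:C(M)finiteCartan} with the reflection theory of \cite{MR2734956,MR2732989} to identify the ordering and isomorphism classes of the PBW factors. Decomposability yields an isomorphism $\NA(M)\simeq\bigotimes_{l\in L}\NA(W_l)$ of $\N_0^\theta$-graded objects in $\ydG$ with a totally ordered set $L$ and finite-dimensional absolutely simple subobjects $W_l\subseteq\NA(M)$, so the task reduces to matching this data with the labelling $\beta_m$ and $M_{\beta_m}$ read off from the reduced expression $w=\id_{[M]}s_{i_1}\cdots s_{i_l}$.

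I would proceed by induction on $l$. Put $i=i_1$ and $M'=R_i(M)$. The first reflection extracts the factor corresponding to $\beta_1=\alpha_i$: the $\K G$-module projection $M\twoheadrightarrow M_i$ extends to a Hopf-algebra surjection $\pi\colon\NA(M)\twoheadrightarrow\NA(M_i)$, whose subalgebra of right coinvariants $K=\NA(M)^{\mathrm{co}\,\pi}$ satisfies a triangular decomposition
\[
\mathrm{mult}\colon K\otimes\NA(M_i)\;\xrightarrow{\ \sim\ }\;\NA(M)
\]
as $\N_0^\theta$-graded objects in $\ydG$, by Radford's biproduct theorem. Set $M_{\beta_1}=M_i$. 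Using the structural results on $(\ad V)(W)$ collected in Appendix~\ref{appendix:reflections} together with the admissibility of all reflections (which forces $-a^M_{ij}\in\N_0$), one shows that $K$ is generated as a $\K$-algebra by the subspaces $(\ad M_i)^k(M_j)$ for $j\ne i$ and $0\le k\le -a^M_{ij}$, whose top layers $(\ad M_i)^{-a^M_{ij}}(M_j)$ are finite-dimensional absolutely simple Yetter-Drinfeld modules isomorphic to $R_i(M)_j$.

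The same construction in $\NA(M')$ produces a coinvariant subalgebra $K'\subseteq\NA(M')$ under the surjection onto $\NA(M_i^\ast)$, and the reflection isomorphism of \cite[Cor.~2.4, Thm.~2.3]{MR2732989} supplies a grading-twisting identification $K'\simeq K$ of braided Hopf algebras in $\ydG$ in which the $\Z^\theta$-degree of each layer is shifted by $s_i^{[M]}$. Applying the inductive hypothesis to $\NA(M')$ along a reduced expression of its longest element that peels off $\alpha_i$ first (existence follows from exchange properties of Weyl groupoids applied to the tail $s_{i_2}\cdots s_{i_l}$) produces absolutely simple subobjects of $K'$ of degree $\beta'_m$ and isomorphism type $R_{i_{m-1}}\cdots R_{i_2}(M')_{i_m}$. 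Transporting them through $K'\simeq K$ yields subobjects $M_{\beta_m}\subseteq K\subseteq\NA(M)$ of degree $\beta_m=s_i^{[M]}\beta'_m$ and isomorphism type $R_{i_{m-1}}\cdots R_{i_1}(M)_{i_m}$, and composing with the triangular decomposition gives the asserted factorisation. The equality $\roots_+^{[M]}=\{\beta_1,\dots,\beta_l\}$ and pairwise distinctness follow from standard properties of reduced expressions of longest elements in finite Weyl groupoids, together with the fact that $s_i^{[M]}$ restricts to a bijection $\roots_+^{[M']}\setminus\{\alpha_i\}\to\roots_+^{[M]}\setminus\{\alpha_i\}$.

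The principal obstacle is the reflection isomorphism $K\simeq K'$ between the two coinvariant subalgebras. Establishing it requires bijectivity of both multiplication maps $K\otimes\NA(M_i)\to\NA(M)$ and $K'\otimes\NA(M_i^\ast)\to\NA(M')$ via Radford's biproduct theorem, the non-trivial identification of the top iterated adjoints as absolutely simple Yetter-Drinfeld modules of the expected isomorphism type, and compatibility of both sides with the common Cartan graph; these are the essential outputs of the reflection theory in \cite{MR2734956,MR2732989} and of Appendix~\ref{appendix:reflections}.
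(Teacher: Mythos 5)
The paper does not prove this statement at all: Theorem~\ref{thm:HS} is imported verbatim from \cite[Thm.\,2.6]{MR2732989}, so there is no in-paper argument to compare with. Judged against the method of the cited source and its predecessors \cite{MR2766176,MR2734956}, your outline assembles the right toolkit: the bosonization $\NA(M)\simeq K\#\NA(M_{i_1})$ with $K$ the right coinvariants, the identification of $K$ with the analogous coinvariant algebra of $R_{i_1}(M)$ up to an $s_{i_1}$-twist of the $\Z^\theta$-grading, the fact that $s_{i_1}$ maps the positive roots at $[R_{i_1}(M)]$ other than $\alpha_{i_1}$ bijectively onto those at $[M]$ other than $\alpha_{i_1}$, and the observation that once the factorization is built, the equality $\roots_+^{[M]}=\{\beta_1,\dots,\beta_l\}$ follows from the uniqueness of the decomposition data. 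This is essentially the strategy of the literature proof, with the genuinely hard inputs (generation of $K$ by the spaces $(\ad M_{i_1})^k(M_j)$ and the degree-twisted identification $K\simeq K'$) correctly delegated to the reflection theory rather than to the paper's Appendix~\ref{appendix:reflections}, which alone would not suffice.

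The genuine gap is the induction scheme. You induct on $l$, the length of the longest element, but after passing to $M'=R_{i_1}(M)$ you apply the inductive hypothesis to a reduced expression of the longest element at $[M']$ beginning with $s_{i_1}$. In a connected finite Cartan graph the longest elements at all objects have the same length $l=|\roots_+^{[M]}|$, so nothing decreases and the argument is circular. The word that does get shorter, $\id_{[M']}s_{i_2}\cdots s_{i_l}$ of length $l-1$, is not a longest element, so the theorem as stated cannot be fed back into it. To make the induction run one must strengthen the claim to arbitrary reduced words $w$: for instance, show by induction on $\ell(w)$ that the multiplication map $\NA(M_{\beta_m})\otimes\cdots\otimes\NA(M_{\beta_1})\to\NA(M)$ is injective with image a prescribed $\N_0^\theta$-graded right coideal subalgebra of $\NA(M)$ attached to $w$ (this is how \cite{MR2732989} and \cite{MR2734956} proceed), and only then specialize to $w$ longest, where surjectivity follows because $\{\beta_1,\dots,\beta_l\}$ exhausts $\roots_+^{[M]}$. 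Without such a strengthening, the step ``apply the inductive hypothesis to $\NA(M')$'' proves nothing, and this is precisely the step carrying the content of the theorem.
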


In this theorem, as everywhere else, we write $N_i$ for the $i$-th entry of a
tuple $N\in \cF_\theta^G$ (here $N=R_{i_{m-1}}\cdots R_{i_2}R_{i_1}(M)$), where
$1\le i\le \theta $.

For all $\alpha=\sum_{i=1}^\theta
n_i\alpha_i\in\Z^\theta$, we write $t^\alpha$ for $t_1^{n_1}\cdots
t_\theta^{n_\theta}\in\Z[[t_1,\dots,t_\theta]]$. 
For any $\N_0^\theta$-graded object
$X=\oplus_{\alpha\in\N_0^\theta} X_\alpha$ in $\ydG$, the (multivariate) Hilbert
series of $X$ is 
\[
\sum_{\alpha\in\N_0^\theta}(\dim X_\alpha)t^\alpha\in\Z[[t_1,\dots,t_\theta]].
\]

The Yetter-Drinfeld modules $(\ad V)^k(W)\subseteq \NA (V\oplus W)$
for $k\in \N _0$ and $V,W\in \ydG $ can be computed as a certain
subobject of $V^{\otimes k}\otimes W$ using Lemma~\ref{lem:X_n} below and hence the
$M_{\beta_m}$ in Theorem~\ref{thm:HS} can be computed effectively. This allows us to compute
the Hilbert series of $\NA(M)$.

\begin{lem}{\cite[Thm.~1.1]{MR2732989}}
  \label{lem:X_n}
  Let $V$ and $W$ be Yetter-Drinfeld modules over a group. 
  Let $\varphi _0=0$, $X_0^{V,W}=W$, and 
  \begin{align*}
		\varphi_{m+1} &= \id-c_{V^{\otimes m}\otimes W,V}\,c_{V,V^{\otimes m}
	  \otimes W}+(\id\otimes\varphi_m)c_{1,2},\\
		X_{m+1}^{V,W} &= \varphi_{m+1}(V\otimes X_m^{V,W})\subseteq V^{\otimes (m+1)}\otimes W
  \end{align*}
  for all $m\geq 0$. Then $(\ad V)^n(W)\simeq X_n^{V,W}$ for all $n\in \N _0$.
\end{lem}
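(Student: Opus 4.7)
The plan is induction on $n$, with the base case $n=0$ being immediate since $X_0^{V,W}=W=(\ad V)^0(W)$.

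For $n=1$, I would consider the surjective Yetter-Drinfeld homomorphism $\pi_1\colon V\otimes W\to(\ad V)(W)$ given by $v\otimes w\mapsto(\ad v)(w)=vw-\mu(c_{V,W}(v\otimes w))$ and identify its kernel. The defining ideal of $\NA(V\oplus W)$ in bidegree $(1,1)$ equals $\ker(\id+c)\subseteq V\otimes W\oplus W\otimes V$. A direct calculation shows that $x\in V\otimes W$ lies in $\ker\pi_1$ if and only if $(x,-c_{V,W}(x))\in\ker(\id+c)$, which is equivalent to $(\id-c_{W,V}c_{V,W})(x)=0$, that is, $x\in\ker\varphi_1$. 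Therefore $(\ad V)(W)\simeq(V\otimes W)/\ker\varphi_1=\varphi_1(V\otimes W)=X_1^{V,W}$ in $\ydG$.

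For the inductive step, assume an isomorphism $\psi_{n-1}\colon X_{n-1}^{V,W}\xrightarrow{\sim}(\ad V)^{n-1}(W)$ in $\ydG$ and consider the surjection
$$\Psi\colon V\otimes X_{n-1}^{V,W}\longrightarrow(\ad V)^n(W),\qquad v\otimes x\longmapsto(\ad v)\bigl(\psi_{n-1}(x)\bigr).$$
It suffices to prove that $\ker\Psi=\ker\bigl(\varphi_n|_{V\otimes X_{n-1}^{V,W}}\bigr)$, for then $\Psi$ factors through the desired isomorphism $X_n^{V,W}=\varphi_n(V\otimes X_{n-1}^{V,W})\simeq(\ad V)^n(W)$. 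Expanding $(\ad v)(y)=vy-\mu(c_{V,\NA}(v\otimes y))$ with $y=\psi_{n-1}(x)\in\NA(V\oplus W)_{(n-1,1)}$ reveals the role of each summand of $\varphi_n$: the contribution $\id-c_{V^{\otimes(n-1)}\otimes W,V}\,c_{V,V^{\otimes(n-1)}\otimes W}$ realizes the primary braided commutator, swapping $v$ past the block $x\in V^{\otimes(n-1)}\otimes W$ and back, while $(\id\otimes\varphi_{n-1})c_{1,2}$ is the correction which threads the inductive hypothesis through the outermost $V$-factor, accounting for the identification of $x\in X_{n-1}^{V,W}$ with $\psi_{n-1}(x)$ via $\varphi_{n-1}$.

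The main obstacle is the bookkeeping required to translate the vanishing of $(\ad v)(\psi_{n-1}(x))$ in $\NA(V\oplus W)_{(n,1)}$ into the single condition $\varphi_n(v\otimes x)=0$ in $V^{\otimes n}\otimes W$. The argument is that the defining ideal of $\NA(V\oplus W)$ is generated in each bidegree inductively by the braided Leibniz rule from lower-degree relations, and the recursion for $\varphi_n$ mirrors exactly this inductive generation. A careful Sweedler-notation manipulation, together with the fact that $V^{\otimes n}\otimes W$ embeds into the quotient of $T(V\oplus W)_{(n,1)}$ by all relations not involving the outermost $V$-position, then reduces the verification to the defining recursion for $\varphi_n$, closing the induction.
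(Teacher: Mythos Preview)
The paper does not prove this lemma; it is quoted verbatim from \cite[Thm.~1.1]{MR2732989} and used as a black box throughout (see, e.g., the proofs of Lemmas~\ref{lem:noncommuting} and~\ref{lem:rank3ab}, which explicitly invoke ``the proof of \cite[Thm.~1.1]{MR2732989}''). So there is no in-paper argument to compare against; I can only assess your proposal on its own merits.

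Your base case and the case $n=1$ are fine. The identification $\ker\pi_1=\ker(\id-c_{W,V}c_{V,W})$ on $V\otimes W$ is exactly the computation $(\id+c)(x-c(x))=(\id-c^2)(x)$, and the first isomorphism theorem then gives $(\ad V)(W)\simeq\varphi_1(V\otimes W)=X_1^{V,W}$.

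The inductive step, however, is not a proof. You assert that $\ker\Psi=\ker\bigl(\varphi_n|_{V\otimes X_{n-1}^{V,W}}\bigr)$, offer a heuristic reading of the three summands of $\varphi_n$, and then defer everything to an unspecified ``careful Sweedler-notation manipulation.'' That is precisely where the content lies. Concretely, for $x\in X_{n-1}^{V,W}$ you have $\Psi(v\otimes x)=0$ if and only if $v\otimes\psi_{n-1}(x)-c\bigl(v\otimes\psi_{n-1}(x)\bigr)$ lies in the degree-$(n,1)$ component of the defining ideal of $\NA(V\oplus W)$. To convert this into the vanishing of $\varphi_n(v\otimes x)$ \emph{inside} $V^{\otimes n}\otimes W$ you must (a) understand how the degree-$(n,1)$ relations are generated from lower degrees, and (b) show that after reducing modulo those lower-degree relations the obstruction lands in the subspace $V^{\otimes n}\otimes W$ and equals $\varphi_n(v\otimes x)$. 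Your sentence about ``$V^{\otimes n}\otimes W$ embeds into the quotient of $T(V\oplus W)_{(n,1)}$ by all relations not involving the outermost $V$-position'' is neither made precise nor justified, and it is not obvious.

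The original argument in \cite{MR2732989} proceeds differently: it exploits the coalgebra structure of $\NA(V\oplus W)$ and the characterisation of $(\ad V)^n(W)$ via iterated coinvariants (or equivalently via braided derivations), which gives a direct inductive handle on the map $\varphi_n$ without having to dissect the relations ideal in each bidegree. If you want to carry your kernel-computation approach through, you will need to prove the key identity $\ker\Psi=\ker\varphi_n|_{V\otimes X_{n-1}^{V,W}}$ explicitly rather than gesture at it.
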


The following important fact on $(\ad M_i)^m(M_j)$ for any $\theta \in \N $,
$M\in \cF_\theta^G$, $m\in \N $, and
$i,j\in \{1,\dots ,\theta \}$ is also used heavily for explicit calculations.
It is a variant of \cite[Thm. 7.2(3)]{MR2734956}. 

\begin{thm}  \label{thm:admabsimple}
	Let $\theta \in \N $ and $M\in \cF_\theta^G$. Assume that $M$ admits all
	reflections and that $\cW (M)$ is finite. Let $i,j\in \{1,\dots ,\theta \}$
	with $i\ne j$. Then $(\ad M_i)^m(M_j)\in \ydG $ is absolutely simple for
	all $0\le m\le -a_{ij}^M$ and zero for all $m>-a_{ij}^M$.
\end{thm}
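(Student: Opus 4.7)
The plan is to derive this theorem from \cite[Thm.~7.2(3)]{MR2734956}, which establishes the analogous statement for tuples of simple (not necessarily absolutely simple) Yetter-Drinfeld modules. The task therefore reduces to upgrading ``simple'' to ``absolutely simple'' via base extension. First I would apply \cite[Thm.~7.2(3)]{MR2734956} directly to $M$, viewed as a tuple of simple objects in $\ydG$; this immediately delivers the vanishing $(\ad M_i)^m(M_j)=0$ for $m>-a_{ij}^M$, and the simplicity of $(\ad M_i)^m(M_j)$ for $0\le m\le -a_{ij}^M$.

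To promote simplicity to absolute simplicity, I would fix an arbitrary field extension $\mathbb{L}/\K$ and form the base-changed tuple $M_\mathbb{L}=(\mathbb{L}\otimes_\K M_1,\dots,\mathbb{L}\otimes_\K M_\theta)$ of Yetter-Drinfeld modules over $\mathbb{L}G$; each $\mathbb{L}\otimes_\K M_k$ is simple by the assumption that $M_k$ is absolutely simple. The recursion of Lemma~\ref{lem:X_n} constructs $X_n^{V,W}$ from $\K$-linear combinations of the braiding and the identity, hence it commutes with scalar extension:
\[
X_n^{\mathbb{L}\otimes_\K M_i,\,\mathbb{L}\otimes_\K M_j} \;=\; \mathbb{L}\otimes_\K X_n^{M_i,M_j} \quad \text{for all } n\ge 0.
\]
Consequently $a^{M_\mathbb{L}}_{ij}=a^M_{ij}$ for all $i,j$, the reflections of $M_\mathbb{L}$ are the base extensions of the reflections of $M$, the tuple $M_\mathbb{L}$ admits all reflections, and $\cW(M_\mathbb{L})$ coincides with $\cW(M)$, hence is finite.

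Now I would apply \cite[Thm.~7.2(3)]{MR2734956} to $M_\mathbb{L}$. This shows that
\[
\mathbb{L}\otimes_\K (\ad M_i)^m(M_j) \;=\; (\ad(\mathbb{L}\otimes_\K M_i))^m(\mathbb{L}\otimes_\K M_j)
\]
is simple as a Yetter-Drinfeld module over $\mathbb{L}G$. Since $\mathbb{L}/\K$ was arbitrary, the module $(\ad M_i)^m(M_j)$ is absolutely simple by definition. The main obstacle is the formal verification of base-change compatibility for the Weyl-groupoid data, so that the hypotheses of the cited theorem transfer cleanly from $M$ over $\K$ to $M_\mathbb{L}$ over $\mathbb{L}$; this is routine bookkeeping but must be articulated explicitly in order to invoke the original result.
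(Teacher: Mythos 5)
Your proposal is correct and is essentially the paper's own argument: the paper offers no proof beyond declaring the statement a variant of \cite[Thm.\,7.2(3)]{MR2734956}, with the passage from simple to absolutely simple handled by exactly the base-field-extension compatibility you spell out (compare the remark following Theorem~\ref{thm:C(M)finiteCartan}, which notes that scalar extension is compatible with the reflection formalism, so reflections of tuples of absolutely simples stay absolutely simple and the Cartan graph is unchanged). The only cosmetic difference is that the vanishing of $(\ad M_i)^m(M_j)$ for $m>-a_{ij}^M$ is already immediate from the definition of $a_{ij}^M$, so the cited theorem is only needed for the (absolute) simplicity in the range $0\le m\le -a_{ij}^M$.
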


\section{Main results}
\label{section:main_results}

We will need $q$-numbers and $q$-factorials in rings in different contexts.
For any ring $R$, any $m\in \N_0$ and any $q\in R$ let
$$(m)_q=\sum _{i=0}^{m-1}q^i,\qquad (m)_q^!=\prod _{i=1}^m(i)_q.$$

Let $G$ be a group. For
all $M\in \cF _\theta ^G$ let 
\[
\supp M=\supp M_1\cup\cdots\cup\supp M_\theta.
\]
Let $\Ggen _\theta ^G$ denote the subclass of $\cF _\theta ^G$ consisting of all
tuples $M$ such that $G$ is generated by $\supp M$.

\begin{defn} 
	\label{def:braidindecomposable}
	Let $\theta \in \N $.  Then $M\in \cF _\theta ^G$ is
	called \textbf{braid-indecomposable}, if there exists no decomposition
	$M'\oplus M''$ of $\oplus _{i=1}^\theta M_i$ in $\ydG $
	such that $M',M''\ne 0$ and $(\id	-c^2)(M'\otimes M'')=0$.
\end{defn}

In this work we will attach a skeleton (a kind of decorated Dynkin diagram) to
some tuples in $\cF_\theta^G$. 

\begin{defn} 
	\label{def:skeleton}
	Let $\theta \in \N_{\geq2}$ and 
	$M=(M_1,\dots ,M_\theta )\in \cF ^G_\theta $.
	Let $A=(a_{ij})_{1\leq i,j\leq\theta}$ be the Cartan matrix of $M$.
	We say that \textbf{$M$ has a skeleton} if
	\begin{enumerate}
		\item for all $1\le i\le \theta $ there exist $s_i\in \supp M_i$
			and $\sigma _i\in \chg{G^{s_i}}$ such that $M_i\simeq M(s_i,\sigma _i)$,
			and
		\item for all $1\le i<j\le \theta $
			with $a_{ij}\ne 0$ at least one of $a_{ij}$, $a_{ji}$ is $-1$.
	\end{enumerate}
	In this case the \textbf{skeleton of} $M$ is a partially oriented
	partially labeled loopless graph with $\theta $ vertices
	satisfying the following properties.
	\begin{enumerate}
		\item For all $1\le i\le \theta $, the $i$-th vertex is symbolized by
			$|\supp M_i|=\dim M_i$ points. If $\dim M_i=1$, then the vertex is labeled
			by $\sigma _i(s_i)$.
    	If $\dim M_i=2$ and there is an additional restriction on
			$p=\sigma_i(s_i's_i^{-1})$, where $\supp M_i=\{s_i,s_i'\}$, then the
			$i$-th vertex is labeled by $(p)$.
			Otherwise there is no label.
		\item For all $i,j\in \{1,\dots ,\theta \}$ with $i\ne j$
			there are $a_{ij}a_{ji}$ edges between
			the $i$-th and $j$-th vertex. The edge is oriented towards $j$
			if and only if $a_{ij}=-1$, $a_{ji}<-1$.
		\item Let $1\le i<j\le \theta $ with $a_{ij}<0$.
			If $\supp M_i$ and $\supp M_j$ commute, then
			the connection between the $i$-th and $j$-th vertex consists of continuous
			lines. Otherwise the connection consists of dashed lines. The connection
			is labeled with $\sigma _i(s_j)\sigma _j(s_i)$ if $\dim M_i=1$ or $\dim
			M_j=1$, and otherwise it is not labeled.
	\end{enumerate}
\end{defn}

\begin{rem}
	Let $i\in \{1,\dots ,\theta \}$ with $\dim M_i=1$ in
	Definition~\ref{def:skeleton}. Since the Yetter-Drinfeld modules $M_j$ are
	absolutely simple for all $j$, the support of each $M_j$ is a conjugacy class
	of $G$ and the central element $s_i$ acts by a scalar on
	each $M_j$. Thus $\sigma _i(s_j)$ and $\sigma _j(s_i)$ do not depend on the
	choice of $s_j\in \supp M_j$.
	
	We will show in Lemma~\ref{le:welldefp} that the label $(p)$ of a vertex with two
	points in Definition~\ref{def:skeleton} is well-defined.
	Therefore all labels of the skeleton of $M$ are well-defined.
\end{rem}


\begin{defn}
	A skeleton is called \textbf{simply-laced} if any two vertices are connected
	by at most one edge. A skeleton is called \textbf{connected} if the underlying
	graph is connected.  A connected skeleton with at least three vertices is
	said to be of \textbf{finite type} if it appears in Figure \ref{fig:fdNA}.
	For technical reasons we say that a skeleton of type $\alpha_2$ is of finite
	type.
\end{defn}

The main result of this paper is the following theorem.

\begin{figure}
	\raisebox{8pt}{$\alpha _\theta $}
	\begin{picture}(200,20)
	\DDvertextwo{10}{8}
    \multiput(13,10)(5,0){5}{\line(1,0){3}}
	\DDvertextwo{39}{8}
    \multiput(42,10)(5,0){3}{\line(1,0){3}}
    \put(60,7){$\cdots$}
    \multiput(77,10)(5,0){3}{\line(1,0){3}}
	\DDvertextwo{93}{8}
    \multiput(96,10)(5,0){5}{\line(1,0){3}}
	\DDvertextwo{122}{8}
  \end{picture}\\
	\raisebox{8pt}{$\beta _\theta $}
  \begin{picture}(200,20)
	\DDvertextwo{10}{8}
    \multiput(13,10)(5,0){5}{\line(1,0){3}}
	\DDvertextwo{39}{8}
    \multiput(42,10)(5,0){3}{\line(1,0){3}}
    \put(60,7){$\cdots$}
    \multiput(77,10)(5,0){3}{\line(1,0){3}}
	\DDvertextwo{93}{8}
    \multiput(96,10)(5,0){5}{\line(1,0){3}}
	\DDvertextwo{122}{8}
    \multiput(125,11)(5,0){5}{\line(1,0){3}}
    \multiput(125,9)(5,0){5}{\line(1,0){3}}
		\put(134,7){$>$}
	\DDvertextwo{151}{8}
  \end{picture}
	\makebox[0pt]{\raisebox{8pt}{$\charK =3$}}\\
	\raisebox{8pt}{$\beta '_3$}
  \begin{picture}(200,25)
	  \DDvertexone{10}{10}
    \put(13,10){\line(1,0){23}}
    \put(9,15){\small $p$}
	  \put(20,15){\small $p^{-1}$}
	  \DDvertexone{39}{10}
    \put(38,15){\small $p$}
    \put(42,11){\line(1,0){23}}
    \put(42,9){\line(1,0){23}}
	  \put(50,15){\small $p^{-1}$}
    \put(50,7){$>$}
	  \DDvertexthree{68}{10}
  \end{picture}
	\makebox[0pt]{\raisebox{8pt}{$(3)_{-p}=0$}}\\
	\raisebox{8pt}{$\beta ''_3$}
  \begin{picture}(200,25)
	  \DDvertexone{10}{10}
    \put(13,11){\line(1,0){26}}
    \put(13,9){\line(1,0){26}}
    \put(9,15){\small $p$}
	  \put(16,15){\small $p^{-1}$}
    \put(22,7){$>$}
    \put(33,15){\small $(-p)$}
	  \DDvertextwo{42}{8}
    \multiput(45,11)(5,0)5{\line(1,0)3}
    \multiput(45,9)(5,0)5{\line(1,0)3}
    \put(53,7){$>$}
	  \DDvertexthree{71}{10}
  \end{picture}
	\makebox[0pt]{\raisebox{8pt}{$(3)_{-p}=0$}}\\
	\raisebox{8pt}{$\gamma _\theta $}
  \begin{picture}(200,20)
	\DDvertextwo{10}{8}
    \multiput(13,10)(5,0){5}{\line(1,0){3}}
	\DDvertextwo{39}{8}
    \multiput(42,10)(5,0){3}{\line(1,0){3}}
    \put(60,7){$\cdots$}
    \multiput(77,10)(5,0){3}{\line(1,0){3}}
	\DDvertextwo{93}{8}
    \multiput(96,10)(5,0){5}{\line(1,0){3}}
	\DDvertextwo{122}{8}
    \put(125,9){\line(1,0){23}}
    \put(125,11){\line(1,0){23}}
		\put(134,7){$<$}
    \put(132,14){-1}
	  \DDvertexone{151}{10}
    \put(147,14){-1}
  \end{picture}
	\makebox[0pt]{\raisebox{8pt}{$\charK \ne 2$}}\\
	\raisebox{8pt}{$\delta _\theta $}
  \begin{picture}(200,40)
	  \DDvertextwo{10}8
    \multiput(13,10)(5,0){5}{\line(1,0){3}}
	  \DDvertextwo{39}8
    \multiput(42,10)(5,0){3}{\line(1,0){3}}
    \put(60,7){$\cdots$}
    \multiput(77,10)(5,0){3}{\line(1,0){3}}
	  \DDvertextwo{93}8
    \multiput(93,15)(0,5)3{\line(0,1){3}}
		\DDvertextwo{93}{32}
    \multiput(96,10)(5,0)5{\line(1,0)3}
	  \DDvertextwo{122}8
  \end{picture}\\
	\raisebox{8pt}{$\varepsilon _6$}
  \begin{picture}(200,40)
	\DDvertextwo{10}{8}
    \multiput(13,10)(5,0){5}{\line(1,0){3}}
	\DDvertextwo{39}{8}
    \multiput(42,10)(5,0){5}{\line(1,0){3}}
	\DDvertextwo{68}{8}
    \multiput(68,15)(0,5){3}{\line(0,1){3}}
	\DDvertextwo{68}{32}
    \multiput(71,10)(5,0){5}{\line(1,0){3}}
	\DDvertextwo{97}{8}
    \multiput(100,10)(5,0){5}{\line(1,0){3}}
	\DDvertextwo{126}{8}
  \end{picture}\\
	\raisebox{8pt}{$\varepsilon _7$}
  \begin{picture}(200,40)
	\DDvertextwo{10}{8}
    \multiput(13,10)(5,0){5}{\line(1,0){3}}
	\DDvertextwo{39}{8}
    \multiput(42,10)(5,0){5}{\line(1,0){3}}
	\DDvertextwo{68}{8}
    \multiput(68,15)(0,5){3}{\line(0,1){3}}
	\DDvertextwo{68}{32}
    \multiput(71,10)(5,0){5}{\line(1,0){3}}
	\DDvertextwo{97}{8}
    \multiput(100,10)(5,0){5}{\line(1,0){3}}
	\DDvertextwo{126}{8}
    \multiput(129,10)(5,0){5}{\line(1,0){3}}
	\DDvertextwo{155}{8}
  \end{picture}\\
	\raisebox{8pt}{$\varepsilon _8$}
  \begin{picture}(200,40)
	  \DDvertextwo{10}{8}
    \multiput(13,10)(5,0){5}{\line(1,0){3}}
	  \DDvertextwo{39}{8}
    \multiput(42,10)(5,0){5}{\line(1,0){3}}
	  \DDvertextwo{68}{8}
    \multiput(68,15)(0,5){3}{\line(0,1){3}}
	  \DDvertextwo{68}{32}
    \multiput(71,10)(5,0){5}{\line(1,0){3}}
	  \DDvertextwo{97}{8}
    \multiput(100,10)(5,0){5}{\line(1,0){3}}
	  \DDvertextwo{126}{8}
    \multiput(129,10)(5,0){5}{\line(1,0){3}}
  	\DDvertextwo{155}{8}
    \multiput(158,10)(5,0){5}{\line(1,0){3}}
  	\DDvertextwo{184}{8}
  \end{picture}\\
	\raisebox{8pt}{$\varphi _4$}
  \begin{picture}(200,25)
		\put(5,15){-1}
	  \DDvertexone{10}{10}
    \put(13,10){\line(1,0){23}}
		\put(20,15){-1}
		\put(34,15){-1}
	  \DDvertexone{39}{10}
    \put(42,11){\line(1,0){23}}
    \put(42,9){\line(1,0){23}}
    \put(50,7){$>$}
		\put(49,15){-1}
	  \DDvertextwo{68}{8}
    \multiput(71,10)(5,0){5}{\line(1,0){3}}
	  \DDvertextwo{97}{8}
  \end{picture}
  \makebox[0pt]{\raisebox{8pt}{$\charK \ne 2$}}\\
  \caption{Skeletons of finite type with at least three vertices.}
  \label{fig:fdNA}
\end{figure}

\begin{thm}
	\label{thm:main}
	Let $\theta\in\N_{\geq3}$.  Let $G$ be a non-abelian group and 
	$M$ in $\Ggen _\theta ^G$. Assume that $M$ is braid-indecomposable.  The
	following are equivalent:
	\begin{enumerate}
		\item\label{it:main:skeleton} $M$ has a skeleton of finite type. 
		\item\label{it:main:Nichols} $\NA(M)$ is finite-dimensional.
		\item\label{it:main:groupoid} $M$ admits all reflections and the Weyl
			groupoid $\cW(M)$ of $M$ is finite.
	\end{enumerate}
\end{thm}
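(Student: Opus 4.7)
The plan is to prove the cycle $(1) \Rightarrow (2) \Rightarrow (3) \Rightarrow (1)$, using the PBW-type Theorem~\ref{thm:HS}, the fact (Theorem~\ref{thm:C(M)finiteCartan}) that a tuple admitting all reflections with finite Weyl groupoid yields a finite Cartan graph, the existence of a finite-type object in an indecomposable finite Cartan graph (Theorem~\ref{thm:finitetype}), the rank-two classification recalled in Appendix~\ref{appendix:rank2}, and the structure theorems of Sections~\ref{section:ADE}--\ref{section:F4}.

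For $(1) \Rightarrow (2)$ I inspect Figure~\ref{fig:fdNA} case by case. Each skeleton of finite type is covered by one of Theorems~\ref{thm:ADE}, \ref{thm:C}, \ref{thm:B1}, \ref{thm:B2}, \ref{thm:F4}, each of which provides an explicit polynomial Hilbert series for $\NA(M)$ and hence implies $\dim \NA(M) < \infty$.

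For $(2) \Rightarrow (3)$, finiteness of $\dim \NA(M)$ forces $(\ad M_i)^m(M_j) = 0$ for $m$ large, so every entry $a_{ij}^M$ is a finite integer and $R_i(M)$ is defined for all $i$. The reflection theory (together with Theorem~\ref{thm:HS}) gives $\dim \NA(R_i(M)) = \dim \NA(M) < \infty$, and iterating shows that $M$ admits all reflections. Since at any $[N] \in \cX_\theta(M)$ the positive real roots inject into the (finite) set of $\N_0^\theta$-degrees carrying $\NA(N)$ via Theorem~\ref{thm:HS}, the Weyl groupoid $\cW(M)$ is finite.

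For $(3) \Rightarrow (1)$, the main work. Theorem~\ref{thm:C(M)finiteCartan} gives that $\cC(M)$ is a finite Cartan graph. Braid-indecomposability of $M$ forces $\cC(M)$ to be indecomposable at $[M]$: a block decomposition of $A^M$ indexed by $I_1 \sqcup I_2$ yields $(\ad M_i)(M_j) = 0$ for all $i \in I_1$, $j \in I_2$, and hence $(\id - c^2)(M' \otimes M'') = 0$, where $M' = \oplus_{i \in I_1} M_i$, $M'' = \oplus_{i \in I_2} M_i$. By Theorem~\ref{thm:finitetype} there is $[N] \in \cX_\theta(M)$ with $A^N$ indecomposable of finite type. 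Replacing $M$ by a representative of $N$, we may assume $A^M$ is one of $A_\theta, B_\theta, C_\theta, D_\theta, E_6, E_7, E_8, F_4$ (the rank $\ge 3$ finite types). For each possibility we then invoke the corresponding structure theorem in Sections~\ref{section:ADE}--\ref{section:F4}, which uses the rank-two classification at each edge of the Dynkin diagram together with the non-abelianness of $G$ (and the fact that $\supp M$ generates $G$) to pin down the labels, orientations, and commutation data of the skeleton and to certify that it is one of the diagrams in Figure~\ref{fig:fdNA}.

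The main obstacle is the last step. Each structure theorem must start from only the Cartan data and the isomorphism classes of the absolutely simple summands and then reconstruct the whole skeleton. The combinatorial difficulty is that the rank-two classification allows many local shapes at each edge, and only some of these local shapes can be glued into a globally consistent tuple over a non-abelian group; ruling out inconsistent gluings uses Theorem~\ref{thm:admabsimple} and Lemma~\ref{lem:X_n} to control $(\ad M_i)^m(M_j)$ and the compatibility between adjacent and next-to-adjacent edges. The case analysis of Sections~\ref{section:ADE}--\ref{section:F4} carries this out type by type, and the non-abelianness of $G$ is used precisely to exclude configurations in which all pairs $(\supp M_i, \supp M_j)$ commute elementwise, which would otherwise force $G$ to be abelian.
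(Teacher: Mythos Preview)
Your overall cycle matches the paper's, but one ingredient is missing and this causes gaps in two places. In $(1)\Rightarrow(2)$, the skeleton $\beta''_3$ is \emph{not} handled by any of Theorems~\ref{thm:ADE}, \ref{thm:C}, \ref{thm:B1}, \ref{thm:B2}, \ref{thm:F4}: its Cartan matrix is $\left(\begin{smallmatrix}2&-1&0\\-2&2&-1\\0&-2&2\end{smallmatrix}\right)$, which is neither of type $B_3$ nor $C_3$, so none of those theorems' hypotheses are met. The paper treats this case by first applying Proposition~\ref{pro:skeleton:B''} to see that $R_3(M)$ has a skeleton of type $\beta'_3$, then invoking Theorem~\ref{thm:B1} on $R_3(M)$, and finally using $\dim\NA(M)=\dim\NA(R_3(M))$. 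In $(3)\Rightarrow(1)$, after reflecting to some $N$ with $A^N$ of finite type and applying the structure theorems, you have only shown that $N$ has a finite-type skeleton. Writing ``replacing $M$ by $N$ we may assume $A^M$ is of finite type'' is not legitimate: the property ``has a skeleton of finite type'' is not \emph{a priori} reflection-invariant, and the conclusion concerns $M$. The paper fills this with Propositions~\ref{pro:skeleton:ADE}, \ref{pro:skeleton:C}, \ref{pro:skeleton:B}, \ref{pro:skeleton:B'}, \ref{pro:skeleton:B''}, \ref{pro:skeleton:F4}, which together show that every reflection of a tuple with a finite-type skeleton again has one (possibly of a different type, as with $\beta'_3\leftrightarrow\beta''_3$); one then propagates the skeleton back along $M\simeq R_{i_k}\cdots R_{i_1}(N)$.

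Both gaps stem from the same omission: the reflection-stability propositions for finite-type skeletons, which are not automatic and form a substantial part of Sections~\ref{section:ADE}--\ref{section:F4}. A minor side remark on $(2)\Rightarrow(3)$: invoking Theorem~\ref{thm:HS} there is circular, since that theorem already assumes all reflections exist and $\cW(M)$ is finite; the paper instead cites \cite[Cor.~3.18, Prop.~3.23]{MR2766176} directly.
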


We record that the third property of $M$ in the theorem
is also equivalent to the finiteness of
the set of $\N _0$-graded right coideal subalgebras of $\NA (M)$ by
\cite[Thm.\,6.15]{MR3096611}.

Theorem \ref{thm:main} will be proved in Section \ref{section:main}.  The
Hilbert series of the Nichols algebras of Theorem \ref{thm:main} will be given
in Subsections \ref{subsection:ADE}, \ref{subsection:C}, \ref{subsection:B} and
\ref{subsection:F4}. In Sections~\ref{section:ADE}, \ref{section:C},
\ref{section:B}, and \ref{section:F4} we give a description of all tuples in
$\cF_\theta^G$ which
have a skeleton of finite type.

\subsection{The $ADE$ series}
\label{subsection:ADE}

The following theorem will be proved in Section \ref{section:ADE}.

\begin{thm} 
	\label{thm:ADE}
	Let $\theta\in\N_{\geq2}$. Let $G$ be a non-abelian group and  $M\in \Ggen
	_\theta ^G$.  Assume that the
	Cartan matrix $A^M$ is of finite type and the Dynkin diagram of $A^M$ is
	connected and simply-laced. Then the following hold:
	\begin{enumerate}
		\item \label{it:ADE:skeleton} $M$ has a simply-laced skeleton of finite type.
		\item \label{it:ADE:groupoid} $M$ admits all reflections and the Weyl groupoid $\cW (M)$ is
			finite with a finite root system of standard type $A_\theta$ with 
			$\theta\geq2$, $D_\theta$ with $\theta\geq4$, $E_6$, $E_7$ or $E_8$.
		\item \label{it:ADE:Nichols} $\NA(M)$ is finite-dimensional and its
			Hilbert series is 
			\[
			\cH(t)=\prod_{\alpha\in\mathbf{\Delta}_+}(1+t^\alpha)^2,
			\]
			where $\mathbf{\Delta}_+$ denotes the set of positive roots of the root system
			associated with the Cartan matrix $A^M$. The dimensions of these Nichols algebras
			are listed in Table~\ref{tab:standard}.
	\end{enumerate}
\end{thm}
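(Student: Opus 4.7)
The plan is to reduce to the rank-two classification (Appendix~\ref{appendix:rank2}) applied edge-by-edge along the Dynkin diagram of $A^M$. Since $A^M$ is of simply-laced finite type, every pair $(M_i,M_j)$ with $a_{ij}^M \neq 0$ satisfies $a_{ij}^M = a_{ji}^M = -1$ and is a rank-two subtuple whose Cartan matrix is of type $A_2$. The rank-two list in \cite{rank2} enumerates every possibility for such a pair, and $G$ being non-abelian together with connectedness of the diagram will pick out exactly the $A_2$-entries with both $M_i$ and $M_j$ of dimension two, support a conjugacy class of size two, and non-commuting supports.

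For (\ref{it:ADE:skeleton}), first I would observe that if $|\supp M_k| = 1$ for every $k$, then each support element lies in $Z(G)$ and $G = \langle \supp M \rangle$ would be abelian, contradicting the hypothesis. Hence some $M_k$ has $|\supp M_k| \geq 2$. Starting from this $M_k$ and walking along the connected Dynkin diagram, the rank-two classification forces, at each adjacent vertex $l$, that $M_l$ also be two-dimensional with support a size-two conjugacy class and with the character compatibility condition read off from the $A_2$-entry of the rank-two list. This identifies the skeleton of $M$ with one of $\alpha_\theta$, $\delta_\theta$, $\varepsilon_6$, $\varepsilon_7$, or $\varepsilon_8$ in Figure~\ref{fig:fdNA}. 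For non-adjacent pairs, $a_{ij}^M = 0$ forces $\supp M_i$ and $\supp M_j$ to commute, matching the continuous-line condition of the skeleton.

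For (\ref{it:ADE:groupoid}), I would proceed by induction on the length of a reduced word in the reflections, showing that each reflection $R_i$ produces another tuple of the same structural form. By Theorem~\ref{thm:admabsimple} and Lemma~\ref{lem:X_n}, $(\ad M_i)(M_j)$ for $j$ adjacent to $i$ is absolutely simple and can be computed explicitly; one checks that its support is again a two-element conjugacy class with the analogous character data, and that the new pair $(M_i^*,(\ad M_i)(M_j))$ satisfies the same rank-two $A_2$ relations. Since this preserves the skeleton and the Cartan matrix, the Cartan graph $\cC(M)$ is standard, so by Theorem~\ref{thm:C(M)finiteCartan} $\cW(M)$ is the finite Weyl group of $A^M$, $\NA(M)$ is decomposable, and the root system is the standard one of the claimed type.

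For (\ref{it:ADE:Nichols}), Theorem~\ref{thm:HS} identifies $\NA(M)$ with $\bigotimes_{m=1}^{l} \NA(M_{\beta_m})$, where $\{\beta_1,\dots,\beta_l\} = \mathbf{\Delta}_+$ and each $M_{\beta_m}$ is absolutely simple of dimension two. The rank-two analysis identifies $\NA(M_{\beta_m})$ as the exterior algebra on $M_{\beta_m}$, with Hilbert series $(1+t^{\beta_m})^2$; multiplying produces the stated $\cH(t)$, and specializing at $t_1=\cdots=t_\theta=1$ yields the dimensions of Table~\ref{tab:standard}. The main obstacle is the induction in (\ref{it:ADE:groupoid}): verifying in closed form that a reflection sends a skeleton of one of the $ADE$ types to one of the same type, keeping track of the conjugacy-class data and of the characters on centralizers, is the technical heart of the argument and requires a careful combination of Lemma~\ref{lem:X_n} with the explicit rank-two classification data.
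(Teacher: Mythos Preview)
Your overall outline matches the paper's, and part~(\ref{it:ADE:Nichols}) is handled exactly as you describe. But there are two genuine gaps in parts~(\ref{it:ADE:skeleton}) and~(\ref{it:ADE:groupoid}).

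For~(\ref{it:ADE:skeleton}), you cannot invoke the rank-two classification of Appendix~\ref{appendix:rank2} on a sub-pair $(M_k,M_l)$ directly: that theorem requires the pair to lie in $\Ggen_2^H$ for a \emph{non-abelian} $H$ generated by the two supports, and there is no a~priori reason that $\Res^G_H M_k$, $\Res^G_H M_l$ remain absolutely simple or that $H$ is non-abelian. The paper avoids this by a different induction: it removes a \emph{leaf} vertex $i$ and restricts to $H=\langle\cup_{j\ne i}\supp M_j\rangle$, using Lemma~\ref{lem:reduction} to show $M'\in\Ggen^H_{\theta-1}$ with $H$ non-abelian. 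Induction on~$\theta$ (Lemma~\ref{lem:ADE:noncomm}) then gives $\dim M_j=|\supp M_j|=2$ for all $j\ne i$, and only afterwards do Lemmas~\ref{lem:A2comm} and~\ref{lem:A2noncomm} (which are stated for the full tuple, with $i$ a leaf) force $\dim M_i=2$ and non-commuting supports along each edge. Your ``walking'' argument has the same target but lacks this restriction step; as written it is not justified.

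For~(\ref{it:ADE:groupoid}), verifying that $(M_i^*,(\ad M_i)(M_j))$ again satisfies the $A_2$ conditions only controls the edges incident to~$i$. After reflecting at $i$ you must also check that for $j,k\ne i$ the entry $a^{R_i(M)}_{jk}$ and the associated character relations are unchanged; when, say, $j$ is adjacent to $i$ and $k$ is adjacent to $j$, this involves $R_i(M)_j=(\ad M_i)(M_j)$ and $R_i(M)_k=M_k$, and is not a rank-two statement. The paper handles this by reducing to \emph{rank three}: by Lemma~\ref{lem:triples} and Remark~\ref{rem:connectedtriples} it suffices to show that every connected rank-$3$ sub-skeleton of type $\alpha_3$ is preserved under each $R_k$, and this is the explicit computation of Lemma~\ref{lem:skeleton:A3}. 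This rank-$3$ check is the ``technical heart'' you allude to, but it is not the rank-two check you describe.
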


\subsection{The $C$ series}
\label{subsection:C}

The following theorem will be proved in
Section~\ref{section:C}.

\begin{thm}
	\label{thm:C}
	Let $\theta\in\N_{\geq3}$, $G$ be a non-abelian
	group and $M\in \Ggen _\theta ^G$.  Assume that the Cartan matrix $A^M$ is of type
	$C_\theta$.  Then the following are equivalent:
	\begin{enumerate}
		\item \label{it:C:skeleton} $\charK\ne 2$ and $M$ has a skeleton of type $\gamma_\theta$.
		\item \label{it:C:fgroupoid} $M$ admits all reflections and the Weyl groupoid $\cW (M)$ is
			finite.
		\item \label{it:C:sgroupoid} $M$ admits all reflections and the Weyl groupoid $\cW (M)$ is
			standard.
		\item \label{it:C:NA} $\NA(M)$ is finite-dimensional.
	\end{enumerate}
	In this case, the Hilbert series of $\NA(M)$ is 
	\[
	\cH(t)=
	\prod_{\alpha\in\sroots}
	(1+t^{\alpha})^2\prod_{\alpha\in\lroots}(1+t^\alpha),
	\]
	where $\sroots$ and $\lroots$ denote the set of short positive roots
	and long positive roots of the root system associated with $\cW(M)$,
	respectively. In particular 
	\[
	\dim\NA(M)=2^{2\theta^2-\theta}.
	\]
\end{thm}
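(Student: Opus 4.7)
The plan is to prove the chain $(1) \Rightarrow (3) \Rightarrow (2) \Rightarrow (4) \Rightarrow (1)$ and then derive the Hilbert series. The implication $(3) \Rightarrow (2)$ is immediate, since the standard Weyl groupoid of type $C_\theta$ is governed by the finite classical Weyl group $W(C_\theta)$. For $(2) \Rightarrow (4)$, Theorem~\ref{thm:C(M)finiteCartan} yields that $\NA(M)$ is decomposable with finite Cartan graph $\cC(M)$; then Theorem~\ref{thm:HS} factors $\NA(M) \simeq \bigotimes_m \NA(M_{\beta_m})$ over the finitely many positive roots, and the rank-two classification recalled in Appendix~\ref{appendix:rank2} bounds the dimension of each factor. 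For $(1) \Rightarrow (3)$ I would carry out an explicit reflection computation: given $M$ with skeleton $\gamma_\theta$, each $R_i(M)_j = (\ad M_i)^{-a_{ij}^M}(M_j)$ can be identified via the structural results of Appendix~\ref{appendix:reflections} as an absolutely simple Yetter-Drinfeld module, and its labels and supports reassemble into the same skeleton $\gamma_\theta$ at the new point. The characteristic constraint $\charK \ne 2$ is precisely what is needed to run the reflection across the distinguished $C_2$-edge at positions $\theta-1,\theta$, while the remaining $A_2$-subsystems behave uniformly. Consequently $\cW(M)$ is standard of type $C_\theta$.

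The central step is $(4) \Rightarrow (1)$. Finite-dimensionality of $\NA(M)$ together with Theorem~\ref{thm:admabsimple} forces every chain $(\ad M_i)^n(M_j)$ to terminate, so $M$ admits all reflections; since $A^M = C_\theta$ is of finite type, the resulting Cartan graph is finite at $[M]$, and the same argument applied at every point of $\cW(M)$ (via Theorem~\ref{thm:C(M)finiteCartan}) yields finiteness of the whole Weyl groupoid. I would then analyse each rank-two subtuple $(M_i, M_j)$ with $a_{ij}^M < 0$, whose Cartan submatrix is of type $A_2$ or $C_2$: the pair lies in $\cF_2^{G_{ij}}$ for the subgroup $G_{ij} = \langle \supp M_i \cup \supp M_j \rangle$, and Appendix~\ref{appendix:rank2} lists the admissible braiding configurations, labels and commutation pattern of supports for each such submatrix. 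Matching the unique $C_2$-subpair at positions $\theta-1,\theta$ yields both the characteristic condition $\charK \ne 2$ and the label $-1$ on the terminal one-dimensional vertex; matching the $A_2$-subpairs along the chain forces each remaining vertex to be two-dimensional with the appropriate dashed connections to its neighbours, and the assumption that $G$ is non-abelian but generated by $\bigcup_i \supp M_i$ rules out an entirely commuting configuration. The local rank-two data then glue consistently into a global skeleton of type $\gamma_\theta$.

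For the Hilbert series I would fix a reduced expression for the longest element of $W(C_\theta)$ and apply Theorem~\ref{thm:HS}. Among the $\theta^2$ positive roots, the $\theta(\theta-1)$ short ones have two-dimensional $M_{\beta_m}$ each contributing $(1+t^{\beta_m})^2$, and the $\theta$ long ones are one-dimensional with self-braiding $-1$ contributing $(1+t^{\beta_m})$; multiplying gives the stated $\cH(t)$ and dimension $4^{\theta(\theta-1)} \cdot 2^\theta = 2^{2\theta^2 - \theta}$. The main obstacle throughout is the structural deduction $(4) \Rightarrow (1)$: recovering the per-vertex decoration, the (non-)commutation pattern of supports, and the characteristic constraint from mere finite-dimensionality of $\NA(M)$, so that the local rank-two data of Appendix~\ref{appendix:rank2} glue consistently into the global $\gamma_\theta$-skeleton under the non-abelianness hypothesis.
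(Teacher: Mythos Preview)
Your chain $(1)\Rightarrow(3)\Rightarrow(2)\Rightarrow(4)\Rightarrow(1)$ has two genuine gaps, and both are at places where the paper does something substantially harder than you suggest.

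\medskip
\textbf{The step $(2)\Rightarrow(4)$ does not go through as written.} A finite Weyl groupoid gives you, via Theorem~\ref{thm:HS}, a factorization $\NA(M)\simeq\bigotimes_m\NA(M_{\beta_m})$ over finitely many positive roots, but nothing yet forces the individual factors $\NA(M_{\beta_m})$ to be finite-dimensional: each $M_{\beta_m}$ is a \emph{single} absolutely simple module, and neither Appendix~\ref{appendix:rank2} (which classifies \emph{pairs}) nor finiteness of $\cW(M)$ bounds $\dim\NA(M_{\beta_m})$. Already in diagonal type there are braidings with finite Cartan graph and infinite-dimensional Nichols algebra. The paper avoids this by proving the implications in the order $(1)\Rightarrow(4)\Rightarrow(2)\Rightarrow(1)$: the Hilbert series is computed \emph{from the skeleton data} (so one knows exactly what each $M_i$ and hence each $M_{\beta_m}$ is), and $(4)\Rightarrow(2)$ is the general direction quoted from \cite{MR2766176}.

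\medskip
\textbf{The step $(4)\Rightarrow(1)$, i.e.\ essentially $(2)\Rightarrow(1)$, cannot be done with rank-two information alone.} Once Lemma~\ref{lem:Ctheta:noncomm} gives $\dim M_\theta=1$ and $\dim M_{\theta-1}=2$, the pair $(M_{\theta-1},M_\theta)$ has $|\supp M_{\theta-1}|=2$, $|\supp M_\theta|=1$, and Cartan entries $(-2,-1)$; but Corollary~\ref{cor:finiteW} and Table~\ref{tab:wprefl} show that \emph{several} classes $\wp^G_i$ realize exactly these data. What you need is $(M_{\theta-1},M_\theta)\in\wp^G_1(2)$, which is precisely what yields the label $-1$ and forces $\charK\ne 2$. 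The paper obtains this via a genuine rank-three argument: Lemma~\ref{lem:reduction} reduces to $\theta=3$; Lemma~\ref{lem:C3:conditions} reflects the triple to eliminate $\wp^G_2,\wp^G_3,\wp^G_4$; and Lemma~\ref{lem:C3:more} computes $R_2(M)$ and invokes Corollary~\ref{cor:columns} (a consequence of the classification of finite rank-three Cartan graphs, Lemma~\ref{lem:noA}, together with Theorem~\ref{thm:ADE} to rule out $A_3$ points) to exclude the remaining bad case. Without this rank-three input you cannot read off $\sigma_{\theta-1}(s_\theta)\sigma_\theta(s_{\theta-1})=-1$ or $\charK\ne 2$ from the $C_2$-subpair. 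Your sketch of the $A_2$-subpairs and the gluing along the chain is fine and matches Lemmas~\ref{lem:ADE:noncomm}--\ref{lem:A2noncomm}; it is the terminal edge that needs more.
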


\subsection{The $B$ series}
\label{subsection:B}

Our main results in this subsection are the following two theorems.

\begin{thm}
	\label{thm:B1}
	Let $\theta\in\N_{\geq3}$. Let $G$ be a non-abelian group and  $M\in \Ggen
	_\theta ^G$.  Assume that $\dim M_1=1$ and that the Cartan matrix
	$A^M$ is of type $B_\theta$.  Then the following are
	equivalent:
	\begin{enumerate}
		\item \label{it:B1:skeleton} $\theta =3$ and $M$ has a skeleton of type $\beta '_3$.
		\item \label{it:B1:groupoid} $M$ admits all reflections and the Weyl groupoid $\cW (M)$ is
			finite.
		\item \label{it:B1:NA} $\NA(M)$ is finite-dimensional.
	\end{enumerate}
	Let $h=3$ if $\charK=2$, $h=2$ if $\charK=3$, and $h=6$ otherwise, and let
	$h'=2$ if $\charK=3$ and $h'=6$ otherwise.
	Then in the above cases the Hilbert series of $\NA(M)$ is 
 	\[
		\cH(t)=\prod _{\alpha \in\cO_1}(h)_{t^{\alpha}}\prod _{\alpha\in
		\cO_3}(2)^2_{t^{\alpha}}(3)_{t^{\alpha}}
		\prod _{\alpha \in\cO_{233}}(2)_{t^{\alpha}}(h')_{t^{\alpha}},
  \]
    where $\cO_1,\cO_3$, and $\cO_{233}$ are the sets of positive roots in the
    orbits of $\alpha_1$, $\alpha_3$, and $\alpha_2+2\alpha_3$, respectively,
    under the action of the automorphism group of the skeleton of $M$ in its
	Cartan graph, see Lemma~\ref{lem:CGtwopoints}.  In particular,
	\[
	\dim\NA(M)=h^612^4(2h')^3.
	\]
\end{thm}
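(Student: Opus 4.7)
The plan is to establish the equivalences in the cycle $(1) \Rightarrow (2) \Rightarrow (3) \Rightarrow (1)$ and then derive the Hilbert series formula (from which the dimension follows) using the PBW decomposition of Theorem~\ref{thm:HS}. The central tool throughout is the rank-two classification recalled in Appendix~\ref{appendix:rank2}; the case $\theta = 3$ is handled by ``turning the crank'' on explicit reflections, while the case $\theta \geq 4$ is ruled out by propagating rank-two and rank-three obstructions along the chain.

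For $(1) \Rightarrow (2)$ I would start from a tuple with skeleton $\beta'_3$ and compute the three reflections $R_1(M), R_2(M), R_3(M)$ explicitly. Because $\dim M_1 = \dim M_2 = 1$, the pair $(M_1, M_2)$ is of diagonal Cartan type $A_2$ with label $p$, while $(M_2, M_3)$ is a one-plus-three dimensional $B_2$-type pair that matches one of the entries of the classification in \cite{rank2} precisely when $(3)_{-p} = 0$; the pair $(M_1, M_3)$ is disconnected. Tracing the orbit of $M$ under repeated reflections shows that only finitely many isomorphism classes arise and that each rank-two subsystem remains on the finite list, so $\cW(M)$ is finite. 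For $(2) \Rightarrow (3)$, Theorem~\ref{thm:C(M)finiteCartan} gives a decomposition $\NA(M) \simeq \bigotimes_{\beta \in \mathbf{\Delta}_+^{[M]}} \NA(M_\beta)$ and Theorem~\ref{thm:admabsimple} identifies each $M_\beta$ with a reflection of one of $M_1, M_2, M_3$; each such Nichols algebra of a single absolutely simple module (of dimension $1$, $2$, or $3$) is known to be finite-dimensional, so $\dim \NA(M) < \infty$.

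The hard direction is $(3) \Rightarrow (1)$. Assuming $\dim \NA(M) < \infty$, standard results give that $M$ admits all reflections and $\cW(M)$ is finite, so $\cC(M)$ is a finite Cartan graph. With Cartan matrix $B_\theta$ and $\dim M_1 = 1$, the pair $(M_1, M_2)$ at the singly-bonded end has Cartan type $A_2$ with one vertex of dimension one; an inspection of Appendix~\ref{appendix:rank2} forces $\dim M_2 = 1$ together with a label constraint on the character values. Applying the same argument to $(M_2, M_3)$ (as long as $\theta \geq 4$) forces $\dim M_3 = 1$ and propagates the label down the chain, and by induction $\dim M_i = 1$ for $i \leq \theta - 1$. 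Then $(M_{\theta-1}, M_\theta)$ is a $B_2$-type pair with $\dim M_{\theta-1} = 1$ and must match a $B_2$-entry of \cite{rank2}, which in turn dictates $\dim M_\theta \in \{2, 3\}$ together with sharp character conditions. Combining these constraints, the rank-three subsystem $(M_{\theta-2}, M_{\theta-1}, M_\theta)$ produces a connected indecomposable finite Cartan subgraph to which Lemma~\ref{lem:noA} and the structural lemmas of Sections~\ref{section:Cartan_rank3}--\ref{section:Cartan} apply; because the non-abelian hypothesis on $G$ obstructs the corresponding diagonal-type extension, one obtains a contradiction unless $\theta - 2 = 1$, i.e.\ $\theta = 3$. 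When $\theta = 3$, one reads off directly from the $B_2$ entry of \cite{rank2} that the only surviving possibility is the skeleton $\beta'_3$ with $(3)_{-p} = 0$.

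For the Hilbert series, the PBW decomposition of Theorem~\ref{thm:HS} expresses $\NA(M)$ as a tensor product of $\NA(M_\beta)$ over positive roots $\beta$. Under the action of the automorphism group of the skeleton on the Cartan graph (Lemma~\ref{lem:CGtwopoints}), the positive roots split into the three orbits $\cO_1, \cO_3, \cO_{233}$, and each $M_\beta$ in a given orbit has the same Hilbert polynomial: the three local Nichols algebras contribute the factors $(h)_{t^\alpha}$, $(2)_{t^\alpha}^2 (3)_{t^\alpha}$, and $(2)_{t^\alpha}(h')_{t^\alpha}$ respectively, where the parameters $h, h'$ reflect the characteristic dependence of the Nichols algebra of the two- or three-dimensional simple. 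Specializing $t = 1$ in each factor and multiplying gives the claimed total $\dim \NA(M) = h^6 \, 12^4 \, (2h')^3$. The main obstacle in the whole argument is the exclusion of $\theta \geq 4$: pure rank-two analysis is insufficient, and one has to combine the propagation along the $A$-chain with the rank-three Cartan graph classification (via Lemma~\ref{lem:noA}) and with the global non-abelian constraint on $G$.
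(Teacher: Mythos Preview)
Your cycle $(1)\Rightarrow(2)\Rightarrow(3)$ and the Hilbert-series computation are essentially what the paper does (it runs the implications as $(1)\Rightarrow(3)\Rightarrow(2)$, but the content is the same: Propositions~\ref{pro:skeleton:B'} and~\ref{pro:skeleton:B''} show that reflections cycle between the skeletons $\beta'_3$ and $\beta''_3$, so the Cartan graph is the explicit two-point graph of Lemma~\ref{lem:CGtwopoints}, and Theorem~\ref{thm:HS} together with the three $W_0$-orbits gives the product formula).

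The exclusion of $\theta\ge 4$ in your hard direction has a genuine gap. You claim that the terminal rank-three subsystem $(M_{\theta-2},M_{\theta-1},M_\theta)$ ``produces a contradiction unless $\theta-2=1$'', but that subsystem is precisely the $\beta'_3$ configuration and is perfectly consistent with Lemma~\ref{lem:noA} (it lands in case~(4)); the subgroup it generates is already non-abelian, so there is no ``diagonal-type extension'' obstruction to appeal to. The obstruction is genuinely a rank-four phenomenon, and the paper handles $\theta=4$ in two separate branches. First, if the vertex and edge labels propagate nicely (so that a putative $\beta'_4$ skeleton exists), Propositions~\ref{pro:skeleton:B'} and~\ref{pro:skeleton:B''} give a two-point rank-four Cartan graph whose two Cartan matrices are written out explicitly, and \cite[Thm.\,5.4]{MR2498801} is invoked to say this Cartan graph is \emph{not} finite. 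Second, if the labels do not propagate (which forces $\charK=3$), one applies the already-established $\theta=3$ instance of Theorem~\ref{thm:main} to the sub-triples $(R_1(M)_2,R_1(M)_3,R_1(M)_4)$ and $(R_2(M)_1,R_2(M)_3,R_2(M)_4)$ and extracts incompatible character equations. Neither of these is accessible from the terminal rank-three block alone. A smaller point for $\theta=3$: matching $(M_2,M_3)$ to the rank-two $B_2$ entry fixes $\sigma_2(s_2)$ and the edge label on $2$--$3$, but does not by itself pin down $\sigma_1(s_1)=p$ and $\sigma_1(s_2)\sigma_2(s_1)=p^{-1}$; the paper obtains these by running the same rank-two analysis on $R_1(M)$ and $R_2(M)$ and comparing.
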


\begin{thm}
	\label{thm:B2}
	Let $\theta\in\N_{\geq3}$. Let $G$ be a non-abelian group and  $M\in \Ggen
	_\theta ^G$.  Assume that $\dim M_1>1$, and
	that the Cartan matrix $A^M$ is of type $B_\theta$.  Then the following are
	equivalent:
	\begin{enumerate}
		\item \label{it:B2:skeleton} $\charK=3$ and $M$ has a skeleton of type $\beta_\theta$.
		\item \label{it:B2:groupoid} $M$ admits all reflections and the Weyl groupoid
			$\cW (M)$ is finite.
		\item \label{it:B2:sgroupoid} $M$ admits all reflections and the Weyl groupoid $\cW (M)$ is
			standard.
		\item \label{it:B2:NA} $\NA(M)$ is finite-dimensional.
			\end{enumerate}
		In this case the Hilbert series of $\NA(M)$ is 
  	\[
    	\cH(t)=\prod_{\alpha\in\sroots}
			(1+t^{\alpha}+t^{2\alpha })^2\prod_{\alpha\in\lroots}(1+t^\alpha)^2,
	  \]
	  where $\sroots$ and $\lroots$ denote the set of short positive roots
	  and long positive roots of the root system associated with $\cW(M)$,
	  respectively. In particular 
	  \[
			\dim\NA(M)=2^{2\theta (\theta -1)}3^{2\theta }.
	  \]
\end{thm}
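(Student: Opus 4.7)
The plan is to establish the cycle $(1)\Rightarrow(3)\Rightarrow(2)\Rightarrow(4)\Rightarrow(1)$ and then to extract the Hilbert series from the PBW decomposition. The implication $(3)\Rightarrow(2)$ is immediate, because the standard Weyl groupoid of type $B_\theta$ has a finite set of real roots. For $(2)\Rightarrow(4)$, I invoke Theorems~\ref{thm:C(M)finiteCartan} and~\ref{thm:HS}: $\NA(M)$ decomposes as a tensor product of Nichols algebras $\NA(M_{\beta_m})$ indexed by the positive real roots, with each $M_{\beta_m}$ isomorphic to an entry of a reflected tuple. Provided each such entry generates a finite-dimensional Nichols algebra, which follows from the rank~$2$ input recalled in Appendix~\ref{appendix:rank2}, the whole tensor product is finite-dimensional.

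For $(1)\Rightarrow(3)$, I assume $\charK=3$ and that $M$ has skeleton $\beta_\theta$, and argue by induction on the word length in the generators $r_i$ that every tuple in $\cF_\theta^G(M)$ again has skeleton $\beta_\theta$ and Cartan matrix of type $B_\theta$. The inductive step reduces to analyzing a single reflection $R_i(M)$: using Lemma~\ref{lem:X_n} together with Theorem~\ref{thm:admabsimple}, I compute the structure of $(\ad M_i)^{-a_{ij}^M}(M_j)$ for the two neighbours $j$ of $i$ and verify that the outputs are again $2$-dimensional absolutely simple Yetter-Drinfeld modules with non-commuting supports and with characters satisfying the $\beta_\theta$ constraints. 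The rank~$2$ results of Appendix~\ref{appendix:rank2} applied to the terminal pair $(M_{\theta-1},M_\theta)$ of type $\beta_2$ in $\charK=3$, and to the interior pairs of type $\alpha_2$, make these computations effective. The Cartan entries outside the neighbours of $i$ are unchanged by $R_i$, so the Cartan matrix of $R_i(M)$ remains of type $B_\theta$. Therefore $\cW(M)$ is the standard Weyl groupoid of type $B_\theta$, which is finite, giving both $(3)$ and $(2)$.

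The main obstacle is $(4)\Rightarrow(1)$, where the global skeleton must be reconstructed from local rank~$2$ information. Since $\NA(M_i\oplus M_{i+1})$ is a braided Hopf subalgebra of $\NA(M)$, each pair $(M_i,M_{i+1})$ has finite-dimensional Nichols algebra and a Cartan submatrix of type $A_2$ for $i<\theta-1$ or $B_2$ for $i=\theta-1$. The rank~$2$ classification reviewed in Appendix~\ref{appendix:rank2}, combined with the hypotheses that $\dim M_1>1$ and that $G$ is non-abelian, restricts each pair $(M_i,M_{i+1})$ to a short list of possible skeletons. Propagating compatibility along the chain via Theorem~\ref{thm:admabsimple}, since each $M_i$ with $2\le i\le\theta-1$ appears in two adjacent pairs and must be consistent with both, forces $\dim M_i=2$ for all $i$, non-commuting supports of consecutive $M_i$, and $\charK=3$ coming from the $B_2$ pair at the end of the chain. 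The resulting skeleton is exactly $\beta_\theta$.

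For the Hilbert series I apply Theorem~\ref{thm:HS} to a reduced decomposition of the longest element of the standard $B_\theta$ Weyl groupoid, obtaining $\theta^2$ positive roots partitioned into $\theta$ short and $\theta(\theta-1)$ long, together with the corresponding absolutely simple modules $M_{\beta_m}$, each $2$-dimensional. The rank~$2$ analysis determines the Hilbert polynomial of each $\NA(M_{\beta_m})$: for short roots it equals $(1+t^{\beta_m}+t^{2\beta_m})^2$ in $\charK=3$ (dimension~$9$), and for long roots it equals $(1+t^{\beta_m})^2$ (dimension~$4$). Multiplying these factors yields the stated Hilbert series, and specializing $t=1$ gives $\dim\NA(M)=9^\theta\cdot 4^{\theta(\theta-1)}=2^{2\theta(\theta-1)}3^{2\theta}$.
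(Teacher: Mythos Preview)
Your implication $(2)\Rightarrow(4)$ has a genuine gap. Theorem~\ref{thm:HS} does give a tensor decomposition $\NA(M)\simeq\bigotimes_m\NA(M_{\beta_m})$, but under hypothesis $(2)$ alone you have no structural information on the individual $M_{\beta_m}$: they are entries of reflected tuples whose Cartan matrices, dimensions, and characters you do not yet know. ``Rank~$2$ input'' concerns \emph{pairs} with finite Weyl groupoid, not single modules, and there exist braided vector spaces with finite Weyl groupoid whose Nichols algebra is infinite-dimensional. So you cannot conclude $\dim\NA(M_{\beta_m})<\infty$ without first establishing the skeleton --- that is, without passing through $(1)$.

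The paper's cycle is accordingly different: $(1)\Rightarrow(3),(4)$ directly via Proposition~\ref{pro:skeleton:B} and Theorem~\ref{thm:HS}; $(4)\Rightarrow(2)$ by the standard argument from \cite{MR2766176}; $(3)\Rightarrow(2)$ trivially; and the substantive implication is $(2)\Rightarrow(1)$. Your sketch labelled $(4)\Rightarrow(1)$ is in fact closer to the paper's $(2)\Rightarrow(1)$ and could be salvaged as such, but it needs more than ``propagating compatibility along the chain''. The paper proceeds by induction on $\theta$: Lemmas~\ref{lem:A2comm}, \ref{lem:A2noncomm} and especially Lemma~\ref{lem:reduction} are used to pass to the subgroup $H$ generated by $\cup_{i\ge 2}\supp M_i$ and show that the restricted tuple lies in $\Ggen^H_{\theta-1}$ with the same Cartan type, so that the induction hypothesis applies. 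The base case $\theta=3$ requires a further non-obvious observation: since $a^M_{13}=0$, both elements of $\supp M_2$ act identically on $\supp M_3$ by conjugation, which is what allows \cite[Thm.~2.1]{rank2} to pin down the pair $(M_2,M_3)$ and force $\charK=3$. Simply invoking the rank~$2$ classification on each adjacent pair over $G$ does not work directly, because that classification assumes the group is generated by the support of the pair.

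Your Hilbert series computation and your $(1)\Rightarrow(3)$ argument are correct and match the paper's.
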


Theorems \ref{thm:B1} and
\ref{thm:B2} will be proved in Section \ref{section:B}.

\subsection{The exceptional case $F_4$}
\label{subsection:F4}

%

The following theorem will be proved in Section~\ref{section:F4}.

\begin{thm}
	\label{thm:F4}
	Let $G$ be a non-abelian group and let $M\in \Ggen _\theta ^G$.
	Assume that the
	Cartan matrix $A^M$ is of type $F_4$.  Then the following are equivalent:
	\begin{enumerate}
		\item \label{it:F4:skeleton} $\charK\ne2$ and $M$ has a skeleton of type $\varphi_4$.
        \item \label{it:F4:fgroupoid} $M$ admits all reflections and the Weyl
            groupoid $\cW (M)$ is finite.
        \item \label{it:F4:sgroupoid} $M$ admits all reflections and the Weyl
            groupoid $\cW (M)$ is standard.
		\item \label{it:F4:Nichols} $\NA(M)$ is finite-dimensional.
	\end{enumerate}
	In this case,
	the	Hilbert series of $\NA (M)$ is
			\[
			\cH(t)=(2)_{t}^6(2)_{t^2}^5(2)_{t^3}^5(2)_{t^4}^5(2)_{t^5}^4(2)_{t^6}^3(2)_{t^7}^3(2)_{t^8}^2(2)_{t^9}(2)_{t^{10}}(2)_{t^{11}}.
			\]
			In particular $\dim\NA(M)=2^{36}$.
\end{thm}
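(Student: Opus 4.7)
The plan is to prove the four-way equivalence by the implications $(1)\Rightarrow(4)\Rightarrow(2)\Rightarrow(1)\&(3)$ and $(3)\Rightarrow(2)$. The implication $(4)\Rightarrow(2)$ is immediate from Theorem~\ref{thm:C(M)finiteCartan}, and $(3)\Rightarrow(2)$ is immediate because a standard Weyl groupoid whose Cartan matrix is of type $F_4$ is isomorphic to the finite Weyl group of $F_4$. The Hilbert series will come out of the construction used for $(1)\Rightarrow(4)$. Hence the substantial content is concentrated in $(2)\Rightarrow(1)$ (which will also yield $(2)\Rightarrow(3)$) and in the explicit realization of the skeleton $\varphi_4$.

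For $(2)\Rightarrow(1)$, the key observation is that the $F_4$ Cartan matrix contains three rank-two Cartan submatrices: $(M_1,M_2)$ of type $A_2$, $(M_2,M_3)$ of type $B_2$, and $(M_3,M_4)$ of type $A_2$. Each rank-two subtuple again admits all reflections and has a finite Weyl groupoid (as a sub-groupoid of $\cW(M)$), so by Theorem~\ref{thm:admabsimple} and Appendix~\ref{appendix:rank2} the classification from \cite{rank2} applies and furnishes a finite list of structural possibilities for each pair: the dimensions of the two modules, the character values $\sigma_i(s_i)$, the label products $\sigma_i(s_j)\sigma_j(s_i)$, and whether $\supp M_i$ commutes with $\supp M_j$. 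The three pairs share the middle modules $M_2$ and $M_3$, which imposes strong compatibility constraints on the labels and dimensions of these two modules. I would then run the following funnel: list the rank-two candidates for each of the three pairs; keep only the combinations that agree on the shared modules, for which $\supp M$ generates a non-abelian $G$, and for which $M$ is braid-indecomposable; and for each surviving candidate verify that applying each reflection $R_i$ produces a tuple whose three rank-two subtuples again lie in the rank-two lists, with Cartan matrix still of type $F_4$. Candidates that violate any of these checks are discarded. The sole survivor is the skeleton $\varphi_4$, and the $-1$ labels on its one-dimensional vertices force $\charK\ne 2$. The same stability verification shows that the Cartan matrix is constant along the Weyl-groupoid orbit of $[M]$, yielding $(2)\Rightarrow(3)$.

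For $(1)\Rightarrow(4)$ and the Hilbert series, I would first exhibit an explicit realization of the skeleton $\varphi_4$ in $\Ggen_4^G$ for a suitable non-abelian $G$ — a natural candidate uses an enlargement of $\Gamma_2$ with the two 2-dimensional vertices coming from 2-element conjugacy classes of involutions — and verify directly that it admits all reflections with standard $F_4$-Weyl groupoid. Then Theorem~\ref{thm:HS}, applied to any reduced decomposition $w=\id_{[M]}s_{i_1}\cdots s_{i_{24}}$ of the longest element (of length $24$, equal to the number of positive roots of $F_4$), gives an $\N_0^4$-graded isomorphism
\[
\NA(M)\simeq \NA(M_{\beta_{24}})\otimes\cdots\otimes\NA(M_{\beta_1}),
\]
with each $M_{\beta_m}$ absolutely simple by Theorem~\ref{thm:admabsimple}. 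Under the $\varphi_4$ hypothesis, each factor is either one-dimensional with $\sigma(s)=-1$, contributing $(2)_{t^{|\beta_m|}}$, or two-dimensional with an exterior Nichols algebra of dimension~$4$, contributing $(2)_{t^{|\beta_m|}}^2$. Summing the resulting exponents according to the distribution of the 24 positive roots of $F_4$ across the eleven heights $1,\dots,11$ yields the asserted Hilbert series, and specialization at $t=1$ gives $\dim\NA(M)=2^{36}$.

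The main obstacle is the combinatorial case analysis in $(2)\Rightarrow(1)$. The rank-two list, especially in the $B_2$ case, has many families, and three subtuples sharing two modules generate a sizeable combinatorial tree even before reflections are introduced. Systematically eliminating each non-$\varphi_4$ candidate — often requiring several iterated reflections and re-applications of the rank-two list, together with the extra global constraints that $G$ is non-abelian and that $M$ is braid-indecomposable — is the technical heart of the proof.
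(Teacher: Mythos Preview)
Your overall cycle of implications is the same as the paper's, and your $(1)\Rightarrow(4)$ computation via Theorem~\ref{thm:HS} and the height distribution of the $F_4$ roots is correct in spirit. However, two points deserve comment.

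First, for $(1)\Rightarrow(4)$ you do not need to ``exhibit an explicit realization'' of $\varphi_4$ over some particular $G$; the statement is about an arbitrary $M$ with this skeleton. What the paper does instead is prove abstractly (Proposition~\ref{pro:skeleton:F4}, via Lemmas~\ref{lem:skeleton:112} and~\ref{lem:skeleton:C3} and Remark~\ref{rem:connectedtriples}) that every reflection $R_k$ preserves the $\varphi_4$ skeleton. This immediately gives that $\cW(M)$ is standard of type $F_4$ and that each $M_{\beta_m}$ again has a skeleton vertex of the expected type, so the Hilbert-series factorisation follows for \emph{all} such $M$, not just one example. Also, a minor citation slip: $(4)\Rightarrow(2)$ comes from \cite[Cor.~3.18, Prop.~3.23]{MR2766176}, not from Theorem~\ref{thm:C(M)finiteCartan} (whose hypotheses already assume the conclusion you want).

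Second, your $(2)\Rightarrow(1)$ via a purely rank-two funnel is in principle viable but is a genuinely different and substantially harder route than the paper's. The paper exploits the inductive structure: by Lemma~\ref{lem:F4:step1} one restricts to $H=\langle\supp M_2\cup\supp M_3\cup\supp M_4\rangle$ and obtains a $C_3$ tuple, to which the already-proved Theorem~\ref{thm:C} applies, yielding the $\gamma_3$ skeleton and $\charK\ne2$ in one stroke. Only two scalars, $\sigma_1(s_1)$ and $\sigma_1(s_2)\sigma_2(s_1)$, then remain, and these are pinned down by reflecting at vertex~$2$ and applying Theorem~\ref{thm:main} for $\theta=3$ to the triple $(M'_1,M'_3,M'_4)$. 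Your rank-two-only approach would have to rediscover these rank-three constraints by hand; note in particular that $(M_1,M_2)$ is a pair of one-dimensional modules over an \emph{abelian} subgroup, so the non-abelian rank-two list in Appendix~\ref{appendix:rank2} does not cover it, and tracking which Cartan matrices can occur after iterated reflections without the rank-three classification is exactly the combinatorial explosion the paper's inductive strategy is designed to avoid.
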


\section{Finite Cartan graphs of rank three}
\label{section:Cartan_rank3}

In this section we collect some facts about finite Cartan graphs of rank three
which will be used for our classification. Our main reference is
\cite{MR2869179}.

\begin{lem} \label{lem:noA}
  Let $\cC =\cC (I,\cX ,r,A)$
  be a connected indecomposable finite Cartan graph with $|I|=3$.
	If $A^X$ is not of type $A_3$ for all $X\in \cX $, then up to a permutation of
	$I$ one of the following holds.
	\begin{enumerate}
		\item $\cC $ is standard of type $C_3$.
		\item $\cC $ is standard of type $B_3$.
		\item \label{it:noA:3} For each point $X$ of $\cC $, $A^X$ is one of the matrices
			\[ \begin{pmatrix} 2 & -1 & 0\\-1 & 2 & -2\\0 & -1 & 2\end{pmatrix},\quad
			\begin{pmatrix} 2 & -1 & 0\\-2 & 2 & -2\\0 & -1 & 2\end{pmatrix}. \]
		\item \label{it:noA:4} For each point $X$ of $\cC $, $A^X$ is one of the matrices
			\[ \begin{pmatrix} 2 & -1 & 0\\-1 & 2 & -1\\0 & -2 & 2\end{pmatrix},\quad
			\begin{pmatrix} 2 & -1 & 0\\-2 & 2 & -1\\0 & -2 & 2\end{pmatrix}. \]
		\item For each point $X$ of $\cC $, $A^X$ is one of the matrices
			\begin{align*}
			  &\begin{pmatrix} 2 & -1 & 0\\-1 & 2 & -1\\0 & -2 & 2\end{pmatrix},\quad
			  \begin{pmatrix} 2 & -1 & 0\\-1 & 2 & -1\\0 & -4 & 2\end{pmatrix},\quad
			  \begin{pmatrix} 2 & -1 & 0\\-1 & 2 & -2\\0 & -2 & 2\end{pmatrix},\\
			  &\begin{pmatrix} 2 & -1 & -1\\-1 & 2 & -1\\-1 & -2 & 2\end{pmatrix},\quad
			  \begin{pmatrix} 2 & 0 & -1\\0 & 2 & -1\\-1 & -2 & 2\end{pmatrix},\quad
				\begin{pmatrix} 2 & 0 & -1\\0 & 2 & -1\\-1 & -3 & 2\end{pmatrix}.
			\end{align*}
		\item For each point $X$ of $\cC $, $A^X$ is one of the matrices
			\begin{align*}
			  &\begin{pmatrix} 2 & -1 & 0\\-1 & 2 & -1\\0 & -2 & 2\end{pmatrix},\quad
			  \begin{pmatrix} 2 & -1 & 0\\-1 & 2 & -1\\0 & -3 & 2\end{pmatrix},\quad
			  \begin{pmatrix} 2 & -1 & 0\\-1 & 2 & -2\\0 & -2 & 2\end{pmatrix},\\
			  &\begin{pmatrix} 2 & -1 & 0\\-1 & 2 & -2\\0 & -1 & 2\end{pmatrix},\quad
			  \begin{pmatrix} 2 & -1 & 0\\-2 & 2 & -3\\0 & -1 & 2\end{pmatrix},\quad
			  \begin{pmatrix} 2 & -1 & 0\\-2 & 2 & -2\\0 & -1 & 2\end{pmatrix}.
			\end{align*}
	\end{enumerate}
	The six cases correspond to the set of positive roots in \cite[Appendix A]{MR2869179}
	with number $3$, $4$, $13$, $14$, $25$, and $28$, respectively.
\end{lem}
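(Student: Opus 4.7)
The plan is to invoke the full classification of connected indecomposable finite Cartan graphs of rank three from \cite{MR2869179}, whose Appendix A lists every such Cartan graph up to equivalence as one of $55$ root systems, specifying for each one the set of positive roots together with the Cartan matrices occurring at the various points. Our task then reduces to a finite inspection: for each of these $55$ cases, read off the list of Cartan matrices $\{A^X : X \in \cX\}$ and discard those cases in which the Cartan matrix $A_3$, i.e.\
\[
\begin{pmatrix} 2 & -1 & 0\\ -1 & 2 & -1\\ 0 & -1 & 2\end{pmatrix},
\]
or any matrix equivalent to it under a permutation of $I$, appears at some point.

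First I would dispose of the two standard cases. Among the standard Cartan graphs of rank three there are exactly three possibilities, corresponding to the Cartan matrices of types $A_3$, $B_3$, $C_3$; the hypothesis rules out the first, leaving exactly cases (1) and (2) of the lemma. Next I would go through the remaining (non-standard) entries of Appendix A of \cite{MR2869179}. For each entry the Cartan matrices at the points of the graph are explicitly tabulated; I would simply scan this table and keep only those entries whose matrix list contains no $A_3$. A finite verification then shows that precisely the entries numbered $3$, $4$, $13$, $14$, $25$, $28$ of Appendix A survive, and a direct comparison of the Cartan matrices listed there with those given in items (3)--(6) of the lemma (allowing a permutation of $I=\{1,2,3\}$ so that the distinguished vertex sits in the agreed position) matches them exactly.

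The last point to address is the indecomposability hypothesis: one has to check that each of the Cartan matrices listed is itself indecomposable, which by \cite[Prop.~4.6]{MR2498801} is automatic once $\cC$ is a connected indecomposable finite Cartan graph, so no further work is needed here.

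The main obstacle is not conceptual but bookkeeping: one must be careful with the action of $S_3$ permuting the index set, because Appendix A of \cite{MR2869179} fixes a particular labeling of the simple roots while our statement is only claimed up to a permutation of $I$. Hence the matching between the six surviving appendix entries and items (3)--(6) of the lemma has to be carried out modulo such a relabeling; beyond this, the verification is mechanical.
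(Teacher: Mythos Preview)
Your approach is essentially the same as the paper's: both proofs invoke the classification in \cite[Appendix~A]{MR2869179} and reduce the lemma to a finite inspection of the $55$ listed root systems. One small correction: Appendix~A records the sets of positive roots, not the Cartan matrices at every point, so the verification is not quite a matter of ``reading off'' matrices from a table. The paper explains how to recover $A^X$ from $\roots_+^X$ via the characterization $a_{ij}^X = -\max\{m : \alpha_j + m\alpha_i \in \roots^X\}$ (this is \cite[Cor.~2.9]{MR2869179}), and then how to propagate to neighbouring points using that $s_i^X$ maps $\roots_+^X\setminus\{\alpha_i\}$ bijectively to $\roots_+^{r_i(X)}\setminus\{\alpha_i\}$; the check is then carried out (most efficiently by computer). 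Also, note that entries $3$ and $4$ in Appendix~A \emph{are} the standard $C_3$ and $B_3$ cases, so your separate treatment of the standard cases followed by a search through ``non-standard'' entries that again returns $3$ and $4$ is a small organisational redundancy.
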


\begin{rem}
	The Cartan graphs in cases Lemma~\ref{lem:noA}(3),(4) also appeared
	in \cite[Thm.~5.4]{MR2498801}.	
\end{rem}

\begin{proof}
	Consider the list of all possible sets of positive roots in
	\cite[Appendix A]{MR2869179}.
	There are precisely $55$ such sets up to permutation of $I$ and up to a choice
	of a point of $\cC $.
	By \cite[Cor.\,2.9]{MR2869179}, the Cartan matrix of the point $X$ can be obtained from the
	set $\roots ^X_+$ of its positive roots: $\alpha_j+m\alpha_i\in \roots ^X$ for
		$m\in \Z $, $i,j\in I$ with $i\ne j$, if and only if $0\le m\le -a_{ij}^X$.
	Since the reflection $s_i^X$
	for $i\in I$ maps $\roots ^X_+\setminus \{\alpha _i\}$ bijectively to
	$\roots _+^{r_i(X)}\setminus \{\alpha _i\}$,
	one can calculate the Cartan matrices and the sets
	of positive roots in all points of $\cC $. The elementary calculations are done most
	efficiently by a computer program.
\end{proof}

For later reference we extract two easy corollaries of the lemma.

\begin{cor}
	\label{cor:columns}
  Let $\cC =\cC (I,\cX ,r,A)$
  be a connected indecomposable finite Cartan graph with $|I|=3$.
	If $A^X$ is not of type $A_3$ for all $X\in \cX $, then for all $X\in \cX $
	and for all columns of $A^X$ there is at most one entry which is
	strictly smaller than $-1$.
\end{cor}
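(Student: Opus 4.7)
The plan is to reduce the statement to a direct inspection of the finite list of matrices supplied by Lemma~\ref{lem:noA}. Since $\cC$ is a connected indecomposable finite Cartan graph of rank three with no point whose Cartan matrix is of type $A_3$, for each $X\in\cX$ the matrix $A^X$ belongs to one of the six explicit families of $3\times 3$ Cartan matrices listed in Lemma~\ref{lem:noA}.

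Because the claim is about a single Cartan matrix at a time, I would simply read off each of the matrices appearing in the six cases and check, column by column, that at most one entry in each column is strictly less than $-1$ (the diagonal entries are $2$, so only off-diagonal entries matter). For instance, in the standard types $C_3$ and $B_3$ of the first two cases this is visible from the Dynkin diagram directly. In the cases \eqref{it:noA:3} and \eqref{it:noA:4} the nontrivial columns are the middle and the outer ones, and inspection shows the entries $\le -2$ occur in distinct columns. The remaining two cases contain six matrices each, but in every one of them the entries $\le -2$ again occupy distinct columns.

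The main (and only) obstacle is bookkeeping: there are a couple of dozen matrices to look at, and one has to be careful to check columns rather than rows (the property is asymmetric, as the transpose need not satisfy it). Since no new structural input is required beyond Lemma~\ref{lem:noA}, the proof can reasonably be written as a single sentence invoking the lemma together with the observation ``as one checks from the lists of matrices,'' or as a short tabular verification. I would opt for the former to keep the exposition concise, with an explicit pointer to the six cases of Lemma~\ref{lem:noA} so the reader can verify the columnwise condition by inspection.
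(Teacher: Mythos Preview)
Your proposal is correct and matches the paper's approach exactly: the paper states this corollary without proof, treating it as an immediate consequence of Lemma~\ref{lem:noA} by inspection of the listed matrices. Your observation that one must check columns rather than rows is apt (e.g.\ the second matrix in case~\eqref{it:noA:3} has row $(-2,2,-2)$), and the permutation of $I$ in Lemma~\ref{lem:noA} acts on rows and columns simultaneously, so the columnwise property is preserved.
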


\begin{rem} \label{rem:columns}
  The claim in Corollary~\ref{cor:columns} holds without the
	assumption in the second sentence, but we will not need this.
\end{rem}

\begin{cor}
	\label{cor:noAC}
  Let $\cC =\cC (I,\cX ,r,A)$
  be a connected indecomposable finite Cartan graph with $|I|=3$.
	If $A^X$ is not of type $A_3$ and not of type $C_3$ for all $X\in \cX $,
	then either $\cC $ is standard of type $B_3$ or
  there is a permutation of $I$ such that for all points $X$ the Cartan matrix
	$A^X$ is one of the matrices in Lemma~\ref{lem:noA}(4).
\end{cor}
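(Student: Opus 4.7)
The plan is to apply Lemma~\ref{lem:noA} and then sieve its six cases against the extra hypothesis that no $A^X$ has type $C_3$. Up to a permutation of $I$, $\cC$ falls into one of the six listed possibilities. Case (1) is the standard type $C_3$, hence is ruled out immediately; case (2) is the standard type $B_3$ and yields the first conclusion.

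For the remaining cases (3), (5), (6), I would identify in each list at least one matrix that coincides, after a suitable permutation of $\{1,2,3\}$, with the $C_3$ Cartan matrix
\[
\begin{pmatrix} 2 & -1 & 0 \\ -1 & 2 & -2 \\ 0 & -1 & 2 \end{pmatrix}.
\]
In cases (3) and (6) this matrix appears literally in the list, at positions one and four respectively. In case (5) the fifth listed matrix $\begin{pmatrix} 2 & 0 & -1 \\ 0 & 2 & -1 \\ -1 & -2 & 2 \end{pmatrix}$ becomes the $C_3$ Cartan matrix after swapping the indices $2$ and $3$, hence is of type $C_3$ as well. Since each case of Lemma~\ref{lem:noA} corresponds via the classification \cite{MR2869179} to a single isomorphism class of Cartan graphs in which every listed matrix is actually attained at some point, the identified $C_3$-type matrix forces an $A^X$ of type $C_3$ in each of cases (3), (5), (6), contradicting the hypothesis.

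Only cases (2) and (4) therefore survive, which are exactly the two claimed alternatives: $\cC$ standard of type $B_3$, or (after the appropriate permutation of $I$) every $A^X$ is one of the two matrices listed in Lemma~\ref{lem:noA}(4) — neither of which is of type $C_3$, so that the conclusion is consistent with the hypothesis.

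The only nontrivial step is spotting that the fifth matrix in case (5) is a permutation of the $C_3$ Cartan matrix; the other exclusions are immediate from inspection. A secondary subtlety is that we use the fact that every matrix listed for a given case of Lemma~\ref{lem:noA} is actually attained at some point of $\cC$. This is built into the bookkeeping underlying \cite{MR2869179}, and if a direct check is desired one can obtain any such matrix from any starting point of the case by a short sequence of reflections.
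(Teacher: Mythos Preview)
Your proposal is correct and follows exactly the approach the paper intends: the corollary is stated without proof as an immediate consequence of Lemma~\ref{lem:noA}, and your case-by-case elimination is precisely the inspection one must carry out. Your identification of the $C_3$ matrix in cases (3), (5), (6) is accurate, including the permutation needed in case (5), and you correctly flag the one genuine subtlety---that each listed matrix is actually realized at some point of the Cartan graph---which holds because each case of Lemma~\ref{lem:noA} corresponds to a single root system from the classification in \cite{MR2869179}.
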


\section{Cartan matrices of finite type}
\label{section:Cartan}

Recall from \cite[Thm.\,4.3]{MR823672} the classification of a class of
indecomposable real matrices. One says that a matrix $A=(a_{ij})_{i,j\in \{1,\dots ,n\}}$ is
\textbf{indecomposable}, if there are no proper subsets $I,J$ of $\{1,\dots ,n\}$
such that $I\cap J=\emptyset $, $I\cup J=\{1,\dots ,n\}$, and $a_{ij}=a_{ji}=0$
for all $i\in I$, $j\in J$. For $x,y\in \R^n$ we write $x>y$ ($x\ge y$,
respectively) if $x-y$ has only positive (non-negative, respectively) entries.

\begin{thm} \label{thm:CMclassification}
  Let $n\in \N $ and let $A$ be an indecomposable real $n\times n$-matrix such that
  $a_{ij}\le 0$ for all $i,j\in \{1,2,\dots
  ,n\}$ with $i\ne j$, and $a_{ij}=0$ whenever $a_{ji}=0$. Then $A$
  has precisely one of the following properties.
  \begin{enumerate}
    \item[(Fin)] $\det A\ne 0$; there exists $u>0$ such that $Au>0$;
      $Av\ge 0$ implies $v>0$ or $v=0$.
    \item[(Aff)] $\mathrm{corank}\,A=1$; there exists $u>0$ such that $Au=0$;
      $Av\ge 0$ implies that $Av=0$.
    \item[(Ind)] There exists $u>0$ such that $Au<0$; $Av\ge 0$, $v\ge 0$
      imply that $v=0$.
  \end{enumerate}
  Then $A$ is called of finite, affine, and indefinite type, respectively.
  Moreover, $A^t$ has the same type as $A$.
\end{thm}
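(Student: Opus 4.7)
The plan is to analyze the convex cone
\[
K = \{ u \in \R^n : u \geq 0,\ Au \geq 0 \}
\]
and derive the trichotomy from its structure together with the positivity properties forced by indecomposability. First I would prove a positivity lemma: if $u \in K$ is nonzero, then $u > 0$. Indeed, if $I = \{i : u_i = 0\}$ and $J = \{j : u_j > 0\}$ were both nonempty, then for $i \in I$ one has $0 \le (Au)_i = \sum_{j \in J} a_{ij} u_j$, and each summand is non-positive since $a_{ij} \le 0$ for $i \ne j$ and $u_j > 0$; hence $a_{ij}=0$ for $i \in I$, $j \in J$, and the sign-symmetry hypothesis forces $a_{ji}=0$ too, contradicting indecomposability. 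Mutual exclusivity of (Fin), (Aff), (Ind) is then immediate from their statements: the $u$ of (Fin) with $Au>0$ violates the implication of (Aff), and both (Fin) and (Aff) produce a nonzero element of $K$, which is forbidden by (Ind).

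For exhaustiveness I would distinguish three cases according to $K$. Case (i): some $u \in K$ has $Au > 0$. Here I verify (Fin) as follows: given $Av \ge 0$, write $v = v^+ - v^-$ and consider $v + \varepsilon u$ for small $\varepsilon>0$; combined with the positivity lemma this forces $v^-=0$, so $v\ge 0$ and hence $v>0$ or $v=0$; invertibility of $A$ follows by applying the implication to $\pm v$ when $Av=0$. Case (iii): $K = \{0\}$, so the second half of (Ind) is immediate, and the existence of $u>0$ with $Au<0$ follows from a theorem-of-the-alternative argument: if no such $u$ existed, a Gordan/Farkas-type separation would produce $v \ge 0$, $v \ne 0$, with $A^t v \ge 0$, giving a nonzero element of the analogous cone for $A^t$; then running the rest of the proof for $A^t$ (which satisfies the same hypotheses) would eventually return a nonzero element of $K$, a contradiction.

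The main obstacle is case (ii), where $K \ne \{0\}$ but no $u \in K$ has $Au$ strictly positive; I aim at (Aff). The core claim is that $Au = 0$ for every $u \in K \setminus \{0\}$, and then $\mathrm{corank}\, A = 1$. For the first part I plan to use a convexity/perturbation argument: if $u \in K$ has $(Au)_{i_0}>0$ for some $i_0$, an open neighborhood of $u$ inside $\R^n_{>0}$ still satisfies $(Au')_{i_0}>0$, and by exploring directions $w$ that raise the vanishing coordinates of $Au$ while preserving $u'\ge 0$, one produces an element of $K$ with $Au'$ strictly positive, contradicting the case hypothesis. Once $K \setminus \{0\} \subseteq \ker A$, two linearly independent positive kernel vectors would admit a real combination that is nonnegative, nonzero, but vanishes in some coordinate, contradicting the positivity lemma; hence $\mathrm{corank}\, A = 1$. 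Finally, the equality of types of $A$ and $A^t$ follows by running the trichotomy for $A^t$ (which satisfies the same hypotheses) and observing that invertibility, having corank one with a positive kernel vector, and the absence of a nonzero $u\ge 0$ with $Au \ge 0$ are all invariant under transposition (via duality of cones and $\mathrm{rank}\, A = \mathrm{rank}\, A^t$).
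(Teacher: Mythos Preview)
The paper does not prove this theorem: it is quoted from Kac, \emph{Infinite dimensional Lie algebras} (Thm.~4.3 there), and used only as a black box for Theorem~\ref{thm:finitetype}. So there is no argument in the paper to compare against, and I comment on your sketch directly.

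Your positivity lemma and the trichotomy on the cone $K$ are the standard route, but case~(ii) has a real gap. You assert that if $u>0$, $Au\ge 0$, $Au\ne 0$, then ``exploring directions that raise the vanishing coordinates'' produces $u'$ with $Au'>0$. This is the heart of the affine case and does not come for free; it needs indecomposability in a specific way. A correct argument: with $I=\{i:(Au)_i=0\}$ and $J=\{j:(Au)_j>0\}$ both nonempty, indecomposability gives $i_0\in I$, $j_0\in J$ with $a_{i_0j_0}<0$; setting $u'=u-\varepsilon e_{j_0}$ one gets $(Au')_{i_0}=-\varepsilon a_{i_0j_0}>0$, while $(Au')_i\ge 0$ for all other $i$ when $\varepsilon>0$ is small, so $|I|$ strictly drops; iterate. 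Two smaller issues: in case~(i), ``$v+\varepsilon u$ for small $\varepsilon$'' need not be $\ge 0$, so the positivity lemma does not apply; the fix is to take $t_0=\max_i(-v_i/u_i)$ and apply the lemma to $v+t_0u$, which is $\ge 0$ with a zero coordinate and satisfies $A(v+t_0u)\ge 0$. In case~(iii), ``running the rest of the proof for $A^t$ would return a nonzero element of $K$'' hides the nontrivial implication that (Fin) or (Aff) for $A^t$ forces $K_A\ne\{0\}$; this needs its own separation/perturbation step (for instance, (Aff) for $A^t$ rules out any $v>0$ with $A^tv<0$, whence $K_A\ne\{0\}$ by Gordan), and as written the sentence reads as circular.
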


Now we apply this theorem in order to prove that any connected indecomposable
finite Cartan graph has a point with a Cartan matrix
of finite type. The classification of indecomposable Cartan matrices of finite
type is well-known and can be found for example in \cite{MR823672}.

\begin{thm}
  \label{thm:finitetype}
  Let $\cC =\cC (I,\cX ,r,A)$ be a connected
	indecomposable finite Cartan graph.
	Then there exists $X\in \cX $ such that $A^X$ is of finite type.
\end{thm}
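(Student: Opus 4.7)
The plan is to argue by contradiction using Theorem~\ref{thm:CMclassification}. Suppose $A^X$ is not of finite type for any $X\in\cX$. Since $\cC$ is connected and indecomposable, \cite[Prop.~4.6]{MR2498801} (cited in the paper) ensures that $A^X$ is an indecomposable generalized Cartan matrix at every point, so by the trichotomy each $A^X$ is either of affine or of indefinite type.

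The main actor will be the Weyl vector
\[
\rho^X:=\sum_{\beta\in\rroots{X}_+}\beta\in\N_0^\theta,
\]
which is strictly positive in every coordinate because $\alpha_1,\dots,\alpha_\theta\in\rroots{X}_+$. Since $(\rroots{X})_{X\in\cX}$ is a root system by \cite[Props.~2.9 and~2.12]{MR2498801}, a standard argument using axioms (1)--(3) shows that $s_i^X$ restricts to a bijection $\rroots{X}_+\setminus\{\alpha_i\}\to\rroots{r_i(X)}_+\setminus\{\alpha_i\}$, while $s_i^X\alpha_i=-\alpha_i$. Summing over $\rroots{X}_+$ then yields
\[
s_i^X\rho^X=\rho^{r_i(X)}-2\alpha_i,\qquad\text{i.e.}\qquad \rho^{r_i(X)}=\rho^X+c_i(X)\,\alpha_i,
\]
where $c_i(X):=2-(A^X\rho^X)_i$.

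I would next verify that at any $X$ with $A^X$ not of finite type, some index $i$ yields $c_i(X)\ge 2$. Indeed, if $A^X$ is affine and $\rho^X\in\ker A^X$ then every $c_i(X)$ equals $2$; otherwise (indefinite, or affine with $A^X\rho^X\ne0$), Theorem~\ref{thm:CMclassification} applied to $v=\rho^X>0$ forces $A^X\rho^X$ to have a strictly negative entry at some $i$, giving $c_i(X)\ge 3$. Iteratively choosing such indices produces an infinite sequence $X_0,X_1,X_2,\dots$ in $\cX$ with $X_{k+1}=r_{i_{k+1}}(X_k)$ and $c_{i_{k+1}}(X_k)\ge 2$, and telescoping gives
\[
\rho^{X_K}=\rho^{X_0}+\sum_{k=1}^{K}c_{i_k}(X_{k-1})\,\alpha_{i_k},
\]
a sum of non-negative vectors whose total coordinate sum grows by at least $2$ per step. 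Since $\cX$ is finite for any connected finite Cartan graph, only finitely many distinct values of $\rho^X$ occur, contradicting this unbounded growth for $K$ sufficiently large.

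The main obstacle is the reflection identity $s_i^X\rho^X=\rho^{r_i(X)}-2\alpha_i$, for which one must verify that $s_i^X$ permutes the positive real roots other than $\alpha_i$; this is a standard consequence of the root system axioms for $\cC$ (any $\beta\in\rroots{X}_+\setminus\{\alpha_i\}$ has a positive coefficient at some $\alpha_j$ with $j\ne i$, which is preserved by $s_i^X$, forcing $s_i^X\beta\in\rroots{r_i(X)}_+$ by axiom (1)). Once this is in place, the trichotomy cleanly produces the lower bound $c_i(X)\ge 2$, and the finiteness of $\cX$ closes the argument by pigeonhole.
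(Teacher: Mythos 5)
Your argument is correct, and it takes a genuinely different route from the paper's. The paper also argues by contradiction via Theorem~\ref{thm:CMclassification}, but its central object is a single real root $\alpha =\sum_i x_i\alpha_i$ chosen maximal (for $>$) among all real roots at all objects: maximality forces $A^Xx\ge 0$, the trichotomy then forces $A^X$ to be affine with $A^Xx=0$, hence $\alpha$ is fixed by every simple reflection and so by every word of reflections, contradicting the fact that $\alpha$ lies in the orbit of some $\alpha_i$ while one further reflection sends $\alpha_i$ to $-\alpha_i$. You instead track the Weyl-type vector $\rho^X=\sum_{\beta\in\rroots{X}_+}\beta$ through the identity $\rho^{r_i(X)}=\rho^X+\bigl(2-(A^X\rho^X)_i\bigr)\alpha_i$, and use the trichotomy only to produce, at every object of affine or indefinite type, an index $i$ with $(A^X\rho^X)_i\le 0$; the resulting growth of the coordinate sum by at least $2$ per step contradicts the finiteness of $\cX$. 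The ingredients are the same as the paper's toolkit: indecomposability of every $A^X$ via \cite[Prop.~4.6]{MR2498801}, finiteness of $\cX$ and of the real root sets (both asserted in the paper's proof as well), and the bijection $s_i^X\colon\rroots{X}_+\setminus\{\alpha_i\}\to\rroots{r_i(X)}_+\setminus\{\alpha_i\}$, which the paper itself invokes in the proof of Lemma~\ref{lem:noA}. What each approach buys: the paper's maximal-root argument is leaner, needing no telescoping or bookkeeping of $\rho^X$; yours is more quantitative, mirrors the classical height-of-$\rho$ argument for affine/indefinite Kac--Moody matrices, and isolates the trichotomy in a clean uniform bound $c_i(X)\ge 2$, which makes the mechanism of the contradiction quite transparent.
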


\begin{proof}
  The indecomposability of $\cC $ implies that $A^X$ is indecomposable for
  all $X\in \cX $, see \cite[Prop.~4.6]{MR2498801}.
  We give an indirect proof of the theorem.  So assume that
  for all $X\in \cX $ the Cartan matrix $A^X$ is of affine or indefinite type.

  Since $\cC $ is finite and connected,
	$\cX $ is a finite set and $\rroots X$ is
  finite for all $X\in \cX $.  Among all real roots of $\cC $ in all objects,
  choose $\alpha =\sum _{i\in I}x_i\alpha _i$ which is maximal with respect to
  $>$. Let $x=(x_i)_{i\in I}$ and let $X\in \cX $ be such that
  $\alpha \in \rroots X$. 
	Let
	$$B=\{s_{j_1}\cdots s_{j_k}^X(\alpha )\,|\,k\ge 0,\,j_1,\dots ,j_k\in
  I\}.$$
  Observe that $s_j(\alpha )=\alpha -\sum _{i\in I}a_{ji}x_i\alpha _j$ for all
  $j\in I$. Thus the maximality of $\alpha $ implies that $Ax\ge 0$.
  Since $x\ge 0$ and $x\ne 0$, $A$ is not of indefinite type.
  Then $A$ is of affine type and $Ax=0$. Consequently, $s_j^X(\alpha )=\alpha $
	for all $j\in I$. Since $\alpha $ is maximal,
		by induction on $k$ we conclude that $s_{j_1}\cdots
	s_{j_k}^X(\alpha )=\alpha $ for any $k\in \N_0$ and $j_1,\dots ,j_k\in I$.
  Therefore $B=\{\alpha \}$. On the other hand, $\alpha $ is a real root which
	implies that $s_{i_1}\cdots s_{i_k}^X(\alpha )=\alpha_i$ for some $k\in \N_0$,
	$i_1,\dots ,i_k\in I$, and then
	$s_is_{i_1}\cdots s_{i_k}^X(\alpha)=-\alpha_i\ne \alpha $. This is clearly a
contradiction.
\end{proof}

\section{Auxiliary lemmas}
\label{section:helpful}

In this section, let $G$ be a group.

\subsection{}

We first extend results of \cite{MR3276225}. We start with considerations in a
general setting. 

\begin{lem}
	\label{lem:rs}
	Let $s\in G$.  Assume that $|s^G|=2$. Let $r,\epsilon
	\in G$ be such that $rs=\epsilon sr$, $\epsilon \ne 1$.
    Then the following hold:
	\begin{enumerate}
		\item $s^G=\{s,\epsilon s\}$, $r\epsilon =\epsilon ^{-1}r$,
			and $g\epsilon =\epsilon g$, $g\epsilon s =\epsilon sg$ for all $g\in G^s$.
		\item $r^{-1}sr=rsr^{-1}=\epsilon s$ and
			$r^2,r^{-1}gr,rgr^{-1}\in G^s$ for all $g\in G^s$.
		\item $(\epsilon ^m s^n)^G=\{\epsilon ^m s^n,\epsilon ^{n-m} s^n\}$
			for all $m,n\in \Z $.
		\item Let $H$ be a subgroup of $G$ containing $r$ and $s$. Then $H$ is
				generated by $(H\cap G^s)\cup \{r\}$.
	\end{enumerate}
\end{lem}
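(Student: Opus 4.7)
The plan is to exploit throughout that $|s^G|=2$ forces $G^s$ to be a normal subgroup of index $2$ in $G$, so that $G=G^s\sqcup G^sr$, together with the basic identity $rsr^{-1}=\epsilon s$ obtained immediately from $rs=\epsilon sr$.

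For part~(1), since $rsr^{-1}=\epsilon s\in s^G$ and $\epsilon s\ne s$ because $\epsilon\ne 1$, we get $s^G=\{s,\epsilon s\}$. Normality of $G^s$ gives $\epsilon=rsr^{-1}s^{-1}\in G^s$, so $\epsilon$ commutes with $s$. To get $g\epsilon=\epsilon g$ for $g\in G^s$, I would conjugate $\epsilon s\in s^G$ by $g$: since $gsg^{-1}=s$, the product $(g\epsilon g^{-1})s$ must lie in $\{s,\epsilon s\}$, so $g\epsilon g^{-1}\in\{1,\epsilon\}$, and the case $1$ is excluded because $\epsilon\ne 1$. The same trick with $r$ in place of $g$: $(r\epsilon r^{-1})(\epsilon s)\in\{s,\epsilon s\}$ gives $r\epsilon r^{-1}\in\{\epsilon^{-1},1\}$, and again the trivial case is excluded, yielding $r\epsilon=\epsilon^{-1}r$. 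The identity $g\epsilon s=\epsilon sg$ then follows from $g\epsilon=\epsilon g$ and $gs=sg$.

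For part~(2), $r^{-1}sr=\epsilon s$ follows by applying $r^{-1}\cdot - \cdot r$ to $rsr^{-1}=\epsilon s$ and using $r^{-1}\epsilon r=\epsilon^{-1}$ (which is (1) rewritten); alternatively, multiply $rs=\epsilon sr$ by $r^{-1}$ on both sides and push $\epsilon^{\pm 1}$ across $r^{-1}$. For $r^2\in G^s$ I would compute $r^2sr^{-2}=r(\epsilon s)r^{-1}=(r\epsilon r^{-1})(rsr^{-1})=\epsilon^{-1}\epsilon s=s$. For $rgr^{-1}\in G^s$ with $g\in G^s$, I compute $(rgr^{-1})s(rg^{-1}r^{-1})=rg(\epsilon s)g^{-1}r^{-1}=r\epsilon gsg^{-1}r^{-1}=r\epsilon r^{-1}\cdot rsr^{-1}=\epsilon^{-1}\epsilon s=s$, using centrality of $\epsilon$ in $G^s$ from~(1); the case $r^{-1}gr$ is symmetric via $r^{-1}\in G^sr$ (since $r^{-2}\in G^s$).

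For part~(3), since $\epsilon,s\in Z(G^s)$ (both commute with every $g\in G^s$, by (1) and by definition of $G^s$), the element $\epsilon^ms^n$ is centralized by all of $G^s$. Because $G=G^s\cup G^sr$, its full $G$-conjugacy class is $\{\epsilon^ms^n,\,r\epsilon^ms^nr^{-1}\}$, and a direct computation using $r\epsilon r^{-1}=\epsilon^{-1}$, $rsr^{-1}=\epsilon s$, and $[\epsilon,s]=1$ gives $r\epsilon^ms^nr^{-1}=\epsilon^{-m}(\epsilon s)^n=\epsilon^{n-m}s^n$. For part~(4), every $h\in H\subseteq G=G^s\sqcup G^sr$ is either in $H\cap G^s$ or of the form $(hr^{-1})r$ with $hr^{-1}\in H\cap G^s$, so $H$ is generated by $(H\cap G^s)\cup\{r\}$.

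The only delicate step is part~(1): pinning down that $\epsilon$ is central in $G^s$ and inverted by $r$, for which the decisive observation is that conjugating the element $\epsilon s$ of $s^G$ by an arbitrary $g$ (resp.\ $r$) must again land in the two-element set $\{s,\epsilon s\}$. Everything else is routine manipulation using this and the coset decomposition $G=G^s\sqcup G^sr$.
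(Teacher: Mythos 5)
Your proof is correct and follows essentially the same route as the paper: part (1) by observing that conjugation by $r$ and by elements of $G^s$ must preserve the two-element class $s^G=\{s,\epsilon s\}$, parts (2)–(3) by the resulting identities $r\epsilon r^{-1}=\epsilon^{-1}$, $rsr^{-1}=\epsilon s$ together with $\epsilon,s$ centralizing $G^s$, and part (4) by the index-$2$ coset decomposition $G=G^s\sqcup G^s r$. The paper leaves (2) and (3) as ``similar arguments,'' which your computations simply make explicit.
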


\begin{proof}
	Since $rsr^{-1}=\epsilon s$ and $|s^G|=2$, we conclude that $s^G=\{s,\epsilon
	s\}$. Then $r\epsilon sr^{-1}=s$ and therefore $r\epsilon r^{-1}=\epsilon
	^{-1}$. Moreover, $s^G=\{s,\epsilon s\}$ implies that $g\epsilon
	sg^{-1}=\epsilon s$ for all $g\in G^s$ and hence $g\epsilon =\epsilon g$ for
	all $g\in G^s$. In particular, (1) is proven.

	(2) and (3) follow by similar arguments.

	(4) Since $|s^G|=2$, $G^s$ has index $2$ in $G$. Therefore $H\cap G^s$
	has index at most $2$ in $H$. Since $r\in H\setminus G^s$, we conclude the
claim.
\end{proof}

\begin{lem}
	\label{lem:A3:222}
	Let $r,s,\epsilon\in G$. Assume that
	$|r^G|=|s^G|=2$,
	$rs=\epsilon sr$, and
	$\epsilon \ne 1$.
	Then the following hold:
	\begin{enumerate}
		\item $r^G=\{r,\epsilon r\}$, $s^G=\{s,\epsilon s\}$, $\epsilon ^2=1$
			and $\epsilon \in Z(G)$.
		\item Let $t\in G$. Assume that $|t^G|=2$, $rt=tr$, and $st\ne ts$.
			Then $t^G=\{t,\epsilon t\}$ and $st=\epsilon ts$. 
	\end{enumerate}
\end{lem}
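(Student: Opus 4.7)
The plan for part (1) is to apply Lemma~\ref{lem:rs} twice. The first application uses the given data directly: since $|s^G|=2$ and $rs = \epsilon sr$ with $\epsilon \ne 1$, Lemma~\ref{lem:rs}(1) yields $s^G = \{s,\epsilon s\}$, the twisted relation $r\epsilon = \epsilon^{-1}r$, and the commutation $g\epsilon = \epsilon g$ for every $g \in G^s$; in particular $s\epsilon = \epsilon s$. For the second application, I rewrite $rs = \epsilon sr$ as $sr = \epsilon^{-1}rs$, and then swap the roles of $r$ and $s$ and replace $\epsilon$ by $\epsilon^{-1}$ to obtain $r^G = \{r,\epsilon^{-1}r\}$ together with $s\epsilon^{-1} = \epsilon s$. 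Comparing $s\epsilon = \epsilon s$ with $s\epsilon^{-1} = \epsilon s$ forces $\epsilon^2 = 1$, so in fact $r^G = \{r,\epsilon r\}$. For $\epsilon \in Z(G)$ I invoke Lemma~\ref{lem:rs}(4) with $H=G$: the group $G$ is generated by $G^s \cup \{r\}$, and $\epsilon$ commutes with every element of $G^s$ by the first application and with $r$ because $r\epsilon = \epsilon^{-1}r = \epsilon r$ once $\epsilon^2=1$.

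The plan for part (2) is to first produce the analogue of $\epsilon$ for the pair $(s,t)$. Since $|t^G|=2$ and $st \ne ts$, the element $sts^{-1}$ lies in $t^G\setminus\{t\}$, so there exists a unique $\eta \in G$ with $\eta \ne 1$ and $st = \eta ts$. Applying part (1) to the triple $(s,t,\eta)$ then gives $\eta \in Z(G)$, $\eta^2 = 1$, and $t^G = \{t,\eta t\}$. The remaining task is to prove $\eta = \epsilon$, and here the key observation is an index-two argument: since $|s^G|=2$, the centralizer $G^s$ has index two in $G$, and neither $r$ nor $t$ lies in $G^s$ (by $rs \ne sr$ and $st \ne ts$), so the product $rt$ lies in $G^s$. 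Consequently $s$ commutes with $rt$.

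Combining this commutativity with the two twisted relations gives the result: using $rt = tr$ we have $s(rt) = (rt)s = rts$, while on the other hand $s(rt) = (sr)t = \epsilon^{-1}(rs)t = \epsilon^{-1}r(st) = \epsilon^{-1}r(\eta ts) = \epsilon^{-1}\eta\,rts$, where the last step uses centrality of $\eta$. Comparing the two expressions for $s(rt)$ gives $\epsilon^{-1}\eta = 1$, and since $\epsilon^2 = 1$ from part (1) this yields $\eta = \epsilon$, so $st = \epsilon ts$ and $t^G = \{t,\epsilon t\}$ as required.

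The main obstacle is finding the right way to exploit the hypothesis $rt = tr$. Once one notices that the index-two structure of $G^s$ inside $G$ forces $rt \in G^s$, the rest is a short manipulation of twisted commutation relations. Without this observation it is not at all clear how to force $\eta = \epsilon$, since a priori $\epsilon$ and $\eta$ are two independent central involutions arising from different pairs of non-commuting elements.
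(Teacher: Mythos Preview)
Your proof is correct. Part (1) is essentially the paper's argument: both apply Lemma~\ref{lem:rs} symmetrically to the pair $(r,s)$, extract $r\epsilon=\epsilon^{-1}r$ together with the fact that $\epsilon$ commutes with $G^s$ (and $G^r$), deduce $\epsilon^2=1$, and conclude $\epsilon\in Z(G)$ from $G=\langle G^s,r\rangle$.

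Part (2) is where the two arguments diverge. The paper avoids introducing your auxiliary element $\eta$ altogether: since $s^G=\{s,\epsilon s\}$ by (1) and $t$ does not centralize $s$, conjugation by $t$ must send $s$ to the \emph{only} other element of $s^G$, so $tst^{-1}=\epsilon s$, i.e.\ $ts=\epsilon st$ directly. One more application of (1), now to the triple $(t,s,\epsilon)$, then yields $t^G=\{t,\epsilon t\}$. Note that this argument never touches the hypothesis $rt=tr$ --- it is actually superfluous for the conclusion. Your route, by contrast, first produces $\eta$ from the pair $(s,t)$, proves $\eta\in Z(G)$ and $\eta^2=1$ via (1), and then uses the index-two coset trick (which genuinely requires $rt=tr$) to identify $\eta$ with $\epsilon$. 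This works, but it is longer and leans on a hypothesis that the simpler argument shows to be unnecessary.
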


\begin{proof}
  (1) Lemma~\ref{lem:rs}(1) implies that $s^G=\{s,\epsilon s\}$,
	$r^G=\{r,\epsilon ^{-1}r\}$,
	$G^r$ and $G^s$ commute with $\epsilon $, and
	$r\epsilon =\epsilon ^{-1}r$.
	Thus $\epsilon ^2=1$. Since $G^s$ and $r$ generate $G$, we conclude
	 that $\epsilon \in Z(G)$.

	 (2)
	 Since $s^G=\{s,\epsilon s\}$ by (1) and since $st\ne ts$, we obtain that
	 $ts=\epsilon st$. Thus (1) with $r=t$ implies that $t^G=\{t,\epsilon t\}$ and
	 $st=\epsilon ts $.
\end{proof}

We shall also need the following lemmas. 

\begin{lem} \label{le:welldefp}
	Let $s_1,s_2\in G$ be such that $s_1\ne s_2$, and let $V\in \ydG $.
	Assume that $\dim V=2$ and that $\supp V=s_1^G=\{s_1,s_2\}$.
	Then there exist unique $p_1,p_2\in \K\setminus \{0\}$ such that
		$s_1v=p_1v$ and $s_2v=p_2v$ for all $v\in V_{s_1}$. Moreover,
	$$ s_2s_1^{-1}v=p_2p_1^{-1}v,\, s_1w=p_2w,\,
	s_2w=p_1w,\, s_1s_2^{-1}w=p_2p_1^{-1}w $$
		 for all $v\in V_{s_1}$, $w\in V_{s_2}$.
\end{lem}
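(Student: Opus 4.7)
Plan: Since $\supp V=s_1^G=\{s_1,s_2\}$ and $\dim V=2$, the grading decomposition is $V=V_{s_1}\oplus V_{s_2}$ with each summand one-dimensional. I would first verify that $s_1$ and $s_2$ commute. Conjugation by $s_2$ permutes the conjugacy class $s_1^G$, so $s_2s_1s_2^{-1}\in\{s_1,s_2\}$; the value $s_2$ would force $s_1=s_2$, so $s_2s_1s_2^{-1}=s_1$. In particular $s_1,s_2\in G^{s_1}\cap G^{s_2}$, and each therefore acts on the one-dimensional spaces $V_{s_1}$ and $V_{s_2}$ by a scalar. This gives existence and uniqueness of $p_1,p_2\in\K\setminus\{0\}$ with $s_1v=p_1v$, $s_2v=p_2v$ for all $v\in V_{s_1}$; nonvanishing of $p_1,p_2$ is automatic from the invertibility of $s_1,s_2$.

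The key step is to transfer the action from $V_{s_1}$ to $V_{s_2}$ using the Yetter-Drinfeld compatibility. Pick $g\in G$ with $gs_1g^{-1}=s_2$, which exists since $s_2\in s_1^G$. The YD condition gives $gV_{s_1}=V_{gs_1g^{-1}}=V_{s_2}$, so for $0\neq v\in V_{s_1}$ the element $w:=gv$ spans $V_{s_2}$. From $s_2g=gs_1$ I compute $s_2w=gs_1v=p_1w$. To compute $s_1w$, note that $g^{-1}s_1g\in s_1^G=\{s_1,s_2\}$, and the case $g^{-1}s_1g=s_1$ is ruled out because it would yield $gs_1g^{-1}=s_1\neq s_2$; hence $g^{-1}s_1g=s_2$, i.e.\ $s_1g=gs_2$, and therefore $s_1w=gs_2v=p_2w$. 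Since $V_{s_2}$ is one-dimensional, the formulas $s_1w=p_2w$ and $s_2w=p_1w$ propagate to every $w\in V_{s_2}$.

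The remaining two identities are now immediate: $s_2s_1^{-1}v=p_1^{-1}s_2v=p_2p_1^{-1}v$ for $v\in V_{s_1}$, and $s_1s_2^{-1}w=p_1^{-1}s_1w=p_2p_1^{-1}w$ for $w\in V_{s_2}$. The only subtlety of the argument is the small case analysis showing $g^{-1}s_1g=s_2$ (equivalently, that any element conjugating $s_1$ to $s_2$ also conjugates $s_2$ to $s_1$); once this is in place, a single pair of scalars $(p_1,p_2)$ determines the action of $s_1$ and $s_2$ on both graded components of $V$.
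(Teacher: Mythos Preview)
Your proof is correct and follows essentially the same approach as the paper: establish that $s_1,s_2$ commute, deduce from $\dim V_{s_i}=1$ that they act by scalars $p_1,p_2$ on $V_{s_1}$, then pick an element $g$ (the paper calls it $r$) conjugating $s_1$ to $s_2$ and use the Yetter--Drinfeld compatibility to transport the action to $V_{s_2}$, observing that conjugation by $g$ swaps $s_1$ and $s_2$. The only difference is cosmetic: the paper cites its Lemma~\ref{lem:rs} for the commutativity $s_1s_2=s_2s_1$, whereas you prove it directly, and the paper asserts without comment that the chosen $r$ also satisfies $rs_2=s_1r$, whereas you justify this step by a short case analysis.
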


\begin{proof}
  Since $s_1^G=\{s_1,s_2\}$ by assumption, there exists $r\in G$ such that
	$rs_1=s_2r$ and $rs_2=s_1r$.
	Moreover, $p_1,p_2$ exist since $s_1s_2=s_2s_1$ by Lemma~\ref{lem:rs} and
	since $\dim V_{s_i}=1$ for all $i\in \{1,2\}$.
	Then $s_1rv=rs_2v=p_2rv$ and $s_2rv=rs_1v=p_1rv$ for all $v\in V_{s_1}$.
	This implies the claim since $V_{s_2}=rV_{s_1}$.
\end{proof}

\begin{lem} 
  \label{lem:a=0}
  Let $V,W\in \ydG $ be non-zero Yetter-Drinfeld modules such that $(\id
  -c_{W,V}c_{V,W})(V\otimes W)=0$.  Then $\supp V$ and $\supp W$ commute, and
	for any $s\in \supp V$, $t\in \supp W$ there exists
	$\lambda _{st}\in \K\setminus \{0\}$ such that $sw=\lambda _{st}w$
	for all $w\in W_t$ and $tv=\lambda _{st}^{-1}v$ for
  all $v\in V_s$.
\end{lem}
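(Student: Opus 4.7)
The plan is to unpack the braiding on the graded pieces and then extract grading and scalar conditions from the identity $(\id-c_{W,V}c_{V,W})(V\otimes W)=0$.

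First, I would fix $s\in\supp V$ and $t\in\supp W$ and apply the equation to $v\otimes w$ for arbitrary $v\in V_s$, $w\in W_t$. Using the $\ydG$-braiding $c_{V,W}(v\otimes w)=sw\otimes v$ with $sw\in W_{sts^{-1}}$, and then $c_{W,V}(sw\otimes v)=(sts^{-1})v\otimes sw$, the hypothesis becomes
\[
v\otimes w=(sts^{-1})v\otimes sw \qquad (v\in V_s,\ w\in W_t).
\]

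Second, for the pair $s\in\supp V$, $t\in\supp W$ I would pick nonzero $v\in V_s$, $w\in W_t$; then $(sts^{-1})v$ must be nonzero as well (otherwise $v\otimes w=0$, contradiction), and lies in $V_{(sts^{-1})s(sts^{-1})^{-1}}$, while $sw\in W_{sts^{-1}}$. Comparing the $G\times G$-grading on $V\otimes W$ forces $sts^{-1}=t$, i.e.\ $st=ts$. This gives the first conclusion that $\supp V$ and $\supp W$ commute.

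Third, with $st=ts$ in hand, $sw\in W_t$ and $tv=(sts^{-1})v\in V_s$, so the displayed equation reads $tv\otimes sw=v\otimes w$ inside the single homogeneous piece $V_s\otimes W_t$. This is the statement that $(t|_{V_s})\otimes(s|_{W_t})=\id_{V_s\otimes W_t}$. A short elementary argument (fix $v_0\in V_s$ nonzero, expand $tv_0$ in a basis of $V_s$ and test against $v_0\otimes w$ for varying $w$) shows that $t$ and $s$ must act by mutually inverse nonzero scalars on $V_s$ and $W_t$ respectively; call them $\lambda_{st}^{-1}$ and $\lambda_{st}$. This yields the asserted $sw=\lambda_{st}w$ and $tv=\lambda_{st}^{-1}v$.

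The only delicate point is the tensor-product rigidity step; once one sees that $tv\otimes sw=v\otimes w$ forces both $t|_{V_s}$ and $s|_{W_t}$ to be scalar operators, everything is immediate. Everything else is purely formal unravelling of the Yetter--Drinfeld braiding and grading comparison, so this lemma is essentially bookkeeping.
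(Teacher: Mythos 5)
Your proposal is correct and follows essentially the same route as the paper: compute $c_{W,V}c_{V,W}(v\otimes w)=(sts^{-1})v\otimes sw$, compare $G$-degrees to force $st=ts$, and then observe that $v\otimes w=tv\otimes sw$ on all of $V_s\otimes W_t$ forces $t|_{V_s}$ and $s|_{W_t}$ to act by mutually inverse nonzero scalars. The paper's proof is just a terser statement of the same equivalence, with your "tensor-product rigidity" step compressed into the remark that the scalar conditions are independent of the choice of $v$ and $w$.
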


\begin{proof}
	Let $s\in \supp V$, $t\in \supp W$, $v\in V_s\setminus \{0\}$, and $w\in
	W_t\setminus \{0\}$.  Then
	$$
	(\id -c_{W,V}c_{V,W})(v\otimes w)=v\otimes w-sts^{-1}v\otimes sw.
	$$
	Since $sw\in W_{sts^{-1}}$, the latter is zero if and only if $st=ts$ and
	$sw=\lambda _{st}w$, $tv=\lambda_{st}^{-1}v$
	for some $\lambda _{st}\in \K\setminus \{0\}$.
	These conditions are independent of the choice of $v$ and $w$, and therefore
	the lemma follows.
\end{proof}

We will also need a stronger claim in a more specific context.

\begin{lem} 
  \label{lem:a=0for22}
	Let $s,t,\epsilon \in G$, $\sigma \in \widehat{G^s}$, $\tau \in
	\widehat{G^t}$, and let $V,W\in \ydG $. Assume that $\epsilon \ne 1$,
	$s^G=\{s,\epsilon s\}$, $t^G=\{t,\epsilon t\}$,
	and $V\simeq M(s,\sigma )$, $W\simeq M(t,\tau )$. Then the
	following hold:
	\begin{enumerate}
		\item $\epsilon \in G^s\cup G^t$.
		\item If $G^s\ne G^t$ and $st=ts$ then $\sigma (\epsilon )=\tau(\epsilon )=1$.
		\item The following are equivalent:
			\begin{enumerate}
				\item $(\id -c_{W,V}c_{V,W})(V\otimes W)=0$,
				\item $st=ts$, 
			    $\sigma (t)\tau (s)=1$, and $\sigma (\epsilon )\tau (\epsilon )=1$.
		\end{enumerate}
	\end{enumerate}
\end{lem}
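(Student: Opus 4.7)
The plan is to handle parts (1) and (2) by direct commutator computations in $G$, and to prove (3) by combining Lemma~\ref{lem:a=0} with explicit computations of the action on the induced modules $V\simeq M(s,\sigma)$ and $W\simeq M(t,\tau)$.

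For (1), choose any $r\in G$ with $rsr^{-1}=\epsilon s$, which exists because $\epsilon s\in s^G$. Lemma~\ref{lem:rs}(1) then gives $g\epsilon=\epsilon g$ for every $g\in G^s$, in particular $s\epsilon=\epsilon s$, so $\epsilon\in G^s$; the same argument at $t$ shows $\epsilon\in G^t$, hence $\epsilon\in G^s\cap G^t\subseteq G^s\cup G^t$ (slightly stronger than the claim). For (2), since $G^s$ and $G^t$ are both of index two, $G^s\ne G^t$ forces $G^s\setminus G^t$ to be non-empty; pick $h\in G^s\setminus G^t$. Because $h\notin G^t$ and $t^G=\{t,\epsilon t\}$, we must have $hth^{-1}=\epsilon t$, and this rewrites as $\epsilon=[h,t]$. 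Using $st=ts$ both $h$ and $t$ lie in $G^s$, so $\epsilon\in[G^s,G^s]$; the linear character $\sigma$ annihilates commutators, hence $\sigma(\epsilon)=1$. A symmetric argument with some $h'\in G^t\setminus G^s$ yields $\tau(\epsilon)=1$.

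For (3), Lemma~\ref{lem:a=0} rewrites (a) as the conjunction that $\{s,\epsilon s\}$ and $\{t,\epsilon t\}$ commute pairwise and that for each such pair $(a,b)$ there is a scalar $\lambda_{ab}\in\K^\times$ with $aw=\lambda_{ab}w$ on $W_b$ and $bv=\lambda_{ab}^{-1}v$ on $V_a$. By part (1), pairwise commutativity of the supports is equivalent to $st=ts$, and the $(s,t)$-condition reads $\sigma(t)\tau(s)=1$. For the remaining pairs I would fix $r\in G\setminus G^s$ and $r'\in G\setminus G^t$ with $rsr^{-1}=\epsilon s$ and $r'tr'^{-1}=\epsilon t$, realizing $V_{\epsilon s}=rV_s$ and $W_{\epsilon t}=r'W_t$. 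The action of $s$ on $r'w_0\in W_{\epsilon t}$ then reads $s(r'w_0)=r'(r'^{-1}sr')w_0=\tau(r'^{-1}sr')r'w_0$, and the action of $\epsilon t$ on $V_{\epsilon s}$ is computed analogously using the identity $r^{-1}\epsilon r=\epsilon^{-1}$ from Lemma~\ref{lem:rs}(1). Since $r'^{-1}sr'\in\{s,\epsilon s\}$, I split into two cases: when $G^s=G^t$ every valid $r'$ satisfies $r'^{-1}sr'=\epsilon s$, and the $(s,\epsilon t)$-condition collapses (using $\sigma(t)\tau(s)=1$) to exactly $\sigma(\epsilon)\tau(\epsilon)=1$; when $G^s\ne G^t$ part (2) already gives $\sigma(\epsilon)=\tau(\epsilon)=1$, and together with the identity above every remaining pair-condition becomes a formal consequence of $\sigma(t)\tau(s)=1$. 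The hardest step will be the systematic bookkeeping across the four pairs and the two cases, but once (2) is in hand the case $G^s\ne G^t$ is essentially automatic, so the real computation only needs to be carried out for $G^s=G^t$, where the symmetry between $r$ and $r'$ keeps it short.
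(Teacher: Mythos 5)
Your proposal is correct and follows essentially the same route as the paper: part (1) via Lemma~\ref{lem:rs}(1), part (2) by letting the linear characters kill the commutation relation (your commutator phrasing is the same computation), and part (3) by translating (a) into componentwise conditions via Lemma~\ref{lem:a=0} and splitting into the cases $G^s=G^t$ and $G^s\ne G^t$, invoking (2) in the latter. The only cosmetic differences are that you check all four support pairs directly rather than using $\K Gw=W$ to reduce to the two pairs over $W_t$ as the paper does, and that your part (1) gives the (correct) stronger conclusion $\epsilon\in G^s\cap G^t$, which is exactly what makes your bookkeeping in (3) work.
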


\begin{proof}
	Since $s^G=\{s,\epsilon s\}$ and $t^G=\{t,\epsilon t\}$,
	Lemma~\ref{lem:rs}(1) tells that $\epsilon \in G^s\cup G^t$.
	Note that $\epsilon $ is possibly not central if $s$ and $t$ commute.

	(2) Assume that $G^s\ne G^t$. Since both $G^s$ and $G^t$ have index two in
	$G$, there exists $r\in G^t$ with $rs=\epsilon sr$. If $st=ts$, then $s,
	\epsilon \in G^t$
	and hence $\tau (rs)=\tau(\epsilon )\tau (sr)$. Thus $\tau (\epsilon
	)=1$ and similarly $\sigma (\epsilon )=1$.

	(3) Let $v\in V_s\setminus \{0\}$, $w\in W_t\setminus \{0\}$, and let $r\in G$
  	be such that $rs=\epsilon sr$.
	Since $\K Gw=W$ and the braiding commutes with the action of $G$, we conclude that
	$(\id -c_{W,V}c_{V,W})(V\otimes W)=0$ if and only if
	$(\id -c_{W,V}c_{V,W})(v'\otimes w)=0$ for all $v'\in V_s\cup V_{\epsilon s}$.
	Since $V=\K v+\K rv$, by Lemma~\ref{lem:a=0} the latter claim is equivalent to
	\begin{align} \label{eq:three}
		st=ts,\quad v\otimes w=tv\otimes sw,\quad rv\otimes w=trv\otimes \epsilon sw.
	\end{align}
	The second equation in \eqref{eq:three} is equivalent to $\sigma (t)\tau (s)=1$.
	If $G^s=G^t$, then $r$ and $t$ do not commute. Hence $tr=r(\epsilon t)$, and
	the third equation in \eqref{eq:three} is equivalent to $\sigma (\epsilon
	t)\tau (\epsilon s)=1$.
	This implies (2).
	On the other hand, if $G^s\ne G^t$, then we may assume that $r\in G^t$. In
	that case the last equation in \eqref{eq:three} is equivalent to the second,
	and the last equation in (b) is a tautology because of (1). Thus again (2)
	holds.
\end{proof}

The following lemma is contained partially in
\cite[Lemmas 5.13, 5.15]{MR3276225}.

\begin{lem}
  \label{lem:noncommuting}
  Let $V,W\in \ydG $ be non-zero finite-dimensional objects
        such that $(\ad V)^2(W)=0$ in $\NA (V\oplus W)$.
  \begin{enumerate}
    \item If $(\ad V)(W)\not=0$ then $\supp V$ is commutative.
    \item Let $s\in \supp V$ and $t\in \supp W$. Assume that
      $(\id -c^2)(V_s\otimes W_t)\ne 0$,
			$st=ts$,
			and that
      there exists $\lambda \in \K $ such that $sw=\lambda w$
      for all $w\in W_t$.
      Then $G^t\subseteq G^s$.
    \item Let $s\in \supp V$ and $t\in \supp W$. Assume that
			$(\id -c^2)(V_s\otimes W_t)\ne 0$,
      $st=ts$,
			and that
      there exist $\lambda ,\lambda '\in \K $ such that
      $sw=\lambda w$ and $tv=\lambda 'v$
      for all $w\in W_t$, $v\in V_s$.
      Then $\dim V_s=1$.
    \item If $s\in \supp V$ and $t\in \supp W$ with $st\not=ts$,
      then $(\ad V)(W)\not=0$, $\varphi_t|_{\supp V}$ is 
      the transposition $(s\,\,t\trid s)$, $\dim V_s=1$, and
      $sv=-v$ for all $v\in V_s$.
  \end{enumerate}
\end{lem}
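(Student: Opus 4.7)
The plan is to translate the hypothesis $(\ad V)^2(W)=0$ via Lemma~\ref{lem:X_n} into the identity $X_2^{V,W}=0$ inside $V\ot V\ot W$, and then extract each of the four structural conclusions by evaluating on carefully chosen $G$-homogeneous tensors and tracking $G$-degrees. Since the braiding acts diagonally on the $G\times G\times G$-grading of $V\ot V\ot W$, the vanishing of $X_2^{V,W}$ decomposes into one equation per triple of supports $(s',s,t)$ with $s,s'\in\supp V$, $t\in\supp W$.

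For (1), I would assume $(\ad V)(W)\ne 0$, so that $X_1^{V,W}\ne 0$, and pick $s\in\supp V$, $t\in\supp W$, and $w\in W_t$, $v\in V_s$ with $\varphi_1(v\ot w)\ne 0$. For any $s'\in\supp V$ and $v'\in V_{s'}$, I expand $\varphi_2(v'\ot\varphi_1(v\ot w))$ using the definition in Lemma~\ref{lem:X_n}. The three summands of $\varphi_2$ live a priori in distinct bigraded components of $V\ot V\ot W$ unless $s$ and $s'$ commute; isolating the $V_{s'}\ot V_s\ot W_t$-component of the identity $X_2^{V,W}=0$ produces an equation which, thanks to $\varphi_1(v\ot w)\ne 0$, forces $s's=ss'$. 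This is essentially the calculation behind \cite[Lem.~5.13]{MR3276225}.

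For (2) and (3), the commutation $st=ts$ together with the scalar actions $sw=\lambda w$ (and $tv=\lambda'v$ in (3)) lets me write $\varphi_1(v\ot w)$ and then $\varphi_2(v'\ot\varphi_1(v\ot w))$ as explicit linear combinations supported on at most two $G$-graded components. The nondegeneracy hypothesis $(\id-c^2)(V_s\ot W_t)\ne 0$ guarantees these expressions are nonzero. To obtain (2), I specialize $v'=rv$ for $r\in G^t$; then $v'\in V_{rsr^{-1}}$, and the identity $X_2^{V,W}=0$ applied to this tensor forces $rsr^{-1}=s$, yielding $G^t\subseteq G^s$. For (3), the additional scalar action $tv=\lambda'v$ makes the equation symmetric in $v$ and $v'\in V_s$; collecting coefficients in the single component $V_s\ot V_s\ot W_t$ yields a relation forcing a linear dependence between $v$ and $v'$, hence $\dim V_s=1$.

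For (4), noncommutativity $st\ne ts$ gives $t\trid s\ne s$, and $t\trid s\in\supp V$ since $V$ is a $\K G$-module. The nonvanishing $(\ad V)(W)\ne 0$ follows by noting that $c_{W,V}c_{V,W}(v\ot w)=(sts^{-1})v\ot sw$ lies in the $V_{t\trid s}\ot W_{s\trid t}$-component, so it cannot equal $v\ot w\in V_s\ot W_t$ unless $st=ts$; hence $\varphi_1(v\ot w)\ne 0$. Part (1) then applies, so $\supp V$ is commutative, in particular $s$ and $t\trid s$ commute. Writing out $\varphi_2(v'\ot\varphi_1(v\ot w))=0$ for $v'\in V_{t\trid s}$ forces the orbit of $s$ under conjugation by $t$ to close up after one step, i.e.\ $(t\trid)^2 s=s$, so $\varphi_t|_{\supp V}$ is the transposition $(s,\,t\trid s)$. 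The residual scalar identity extracted from the $V_s\ot V_{t\trid s}\ot W_t$-component simultaneously forces $\dim V_s=1$ and $sv=-v$, the sign $-1$ arising because the only way for the two surviving summands in $\varphi_2$ to cancel (after using $st=(t\trid s)t$) is for the eigenvalue of $s$ on $V_s$ to satisfy a quadratic whose only admissible root is $-1$. The main obstacle is the bookkeeping in (4): one has to rule out all potential coincidences between $G$-degrees of the various summands of $\varphi_2(v'\ot\varphi_1(v\ot w))$, and show that no conspiratorial cancellation can rescue $X_2^{V,W}=0$ except in the precise configuration asserted.
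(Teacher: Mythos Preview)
Your treatment of (2) and (3) matches the paper's: compute $\varphi_2(\id\ot\varphi_1)$ on well-chosen homogeneous tensors and read off the constraint from the relevant $G$-graded component. For (1) the paper argues differently, citing \cite[Prop.~5.5]{MR3276225} rather than doing the direct $\varphi_2$-computation you sketch; note also that your computation, as written, only shows that the fixed $s$ commutes with every $s'\in\supp V$, not that $\supp V$ itself is commutative --- you still need to vary $s$ over its conjugacy class (or invoke the cited lemma) to close the argument.

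The real gap is in (4). The assertion $\varphi_t|_{\supp V}=(s,\,t\trid s)$ says two things: that $(t\trid)^2 s=s$, \emph{and} that $t$ commutes with every $r\in\supp V\setminus\{s,t\trid s\}$. Your computation with $v'\in V_{t\trid s}$ can at best yield the first; it says nothing about how conjugation by $t$ acts on the rest of $\supp V$, so the inference ``$(t\trid)^2 s=s$, hence $\varphi_t$ is the transposition'' is invalid. The paper obtains the missing piece by again invoking \cite[Prop.~5.5]{MR3276225}: for any $r\in\supp V\setminus\{s,t^{-1}\trid s\}$, the vanishing $(\ad V_r)(\ad V_s)(W_t)=0$ forces $rt=tr$. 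For the final claims $\dim V_s=1$ and $sv=-v$, the paper's computation also differs from yours: it takes $v_1,v_2\in V_s$ (both of degree $s$), expands $\varphi_2(\id\ot\varphi_1)(v_1\ot v_2\ot w)$, projects to the $W_{sts^{-1}}$-component, and separates further by the first $V$-degree to obtain $sv_2\ot sts^{-1}v_1+v_1\ot sts^{-1}v_2=0$, from which both conclusions follow at once.
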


\begin{proof}
	(1) Let $s\in \supp V$, $t\in \supp W$ be such that $(\ad V_s)(W_t)\not=0$.
  Assume that $\supp V$ is not commutative. Since $\supp V$ is a union of
  conjugacy classes of $G$, there exists $r\in \supp V
  \setminus \{s,t^{-1}\trid s\}$ such that $rs\not=sr$.
  Then $(\ad V_r)(\ad V_s)(W_t)\not=0$ by \cite[Prop.\,5.5]{MR3276225},
  a contradiction to $(\ad V)^2(W)=0$.

        (2) 
  Let $u\in V_s$, $w\in W_t\setminus \{0\}$,
  and $\lambda \in \K ^\times $ such that $sw=\lambda w$.
  Then
  \[ (\id -c^2)(u\otimes w)=u\otimes w-tu\otimes sw=(u-\lambda tu)\otimes w.
  \]
  Thus, by assumption, there exists $v\in V_s$ such that $tv\ne \lambda^{-1}v$.

  Let $g\in G^t$, $s'=gsg^{-1}$, and $v'=gv$. Then $v'\in V_{s'}$ and $s't=ts'$.
  Moreover,
  \[ (\id -c^2)(v'\otimes w)=v'\otimes w-tv'\otimes gsg^{-1}w=g(v-\lambda tv)\otimes w
  \]
  and hence $(\id -c^2)(v'\otimes w)\ne 0$.
  Assume that $g\notin G^s$, that is, $s'\ne s$. Recall that $(\ad V)^2(W)\simeq X_2^{V,W}$
  in $\ydG $, and that $X_2^{V,W}=\varphi _2(\id \otimes \varphi _1)(V\otimes V\otimes W)$. Then
  \begin{align*}
    &\varphi _2(\id \otimes \varphi _1)(v'\otimes v\otimes w)\\
    &\quad =(\id +c_{12}-c_{23}^2c_{12}-c_{12}c_{23}^2c_{12})(v'\otimes (v-\lambda tv)\otimes w)\\
    &\quad =(\id -c_{12}c_{23}^2c_{12})(v'\otimes (v-\lambda tv)\otimes w)
      +s'(v-\lambda tv)\otimes (\id -c^2)(v'\otimes w).
  \end{align*}
  Since $s$ and $s'$ commute by (1),
  the first summand of the last expression is in $V_{s'}\otimes V_s\otimes W$, and the second is
  non-zero in $V_s\otimes V_{s'}\otimes W$. This is a contradiction to $(\ad V)^2(W)=0$.

  (3) Assume to the contrary that $v,v'\in V_s$ are linearly independent.
  By a computation similar to one in the proof of (2), we obtain that
  \begin{align*}
    (\id -c_{12}c_{23}^2c_{12})(v'\otimes (v-\lambda tv)\otimes w)
      +s(v-\lambda tv)\otimes (\id -c^2)(v'\otimes w)=0.
  \end{align*}
  Since $tv=\lambda 'v$ and $tv'=\lambda 'v'$, we conclude that
  $\lambda \lambda '\ne 1$ and
  \begin{align*}
    (1-\lambda \lambda ')(v'\otimes v-\lambda \lambda 'sv'\otimes sv
    +(1-\lambda \lambda ')sv\otimes v') \otimes w=0.
  \end{align*}
	Applying to the second tensor factor a functional $v'^*\in V_s^*$ with
	$v'^*(v)=0$, $v'^*(v')=1$, implies that $sv\in \K sv'$,
	which yields the desired contradiction.

  (4)
  Since $(\ad V_s)(W_t)\simeq (\id -c_{W_t,V_s}c_{V_s,W_t})(V_s\otimes W_t)$
  in $\ydG $ and $st\not=ts$, we conclude from
  \cite[Prop.\,5.5]{MR3276225} that $(\ad V_s)(W_t)\not=0$.
	If $\supp V=\{s,t\trid s\}$, then $\varphi _t|_{\supp V}=(s\,(t\trid s) )$.
	So assume that
  $|\supp V|\ge 3$.
  Let $r\in \supp V$ be such that $r\notin \{s,t^{-1}\trid s\}$.
  Since $(\ad V_r)(\ad V_s)(W_t)=0$ by assumption,
  \cite[Prop.\,5.5]{MR3276225} implies that $rt=tr$. Hence $\varphi _t|_{\supp V}
  =(s\,t^{-1}\trid s)$. This implies the claim on
  $\varphi_t|_{\supp V}$.

  Let now $v_1,v_2\in V_s$ and $w\in W_t$. Then
  \begin{align*}
    \varphi _2(\id \ot \varphi _1)(&v_1\ot v_2\ot w)=
    \varphi _2(v_1\ot v_2\ot w-v_1\ot sts^{-1}v_2\ot sw)\\
    =&\; (v_1\ot v_2+sv_2\ot v_1)\ot w\\
    &\; -(sv_2\ot sts^{-1}v_1 +s^2ts^{-1}v_1\ot sv_2)\ot sw\\
    &\; -(v_1\ot sts^{-1}v_2 +s^2ts^{-1}v_2\ot v_1)\ot sw\\
    &\; +(s^2ts^{-1}v_2\ot s^2ts^{-2}v_1 +s^2ts^{-1}v_1\ot s^2ts^{-1}v_2)\ot s^2w.
  \end{align*}
  Since $sw\in W_{sts^{-1}}$ and $w,s^2w\notin W_{sts^{-1}}$,
  if $(\ad V)^2(W)=0$ then the second and third line in the last expression
  have to cancel. Since
  \[ s^2ts^{-1}V_s=V_{s^2tst^{-1}s^{-2}}\]
  and $s^2tst^{-1}s^{-2}\not=s$, we conclude that
  \[ sv_2\otimes sts^{-1}v_1+v_1\otimes sts^{-1}v_2=0. \]
  In particular, $\dim V_s=1$ and $sv+v=0$ for all $v\in V_s$.
\end{proof}

\begin{lem} 
	\label{lem:dimVi=1} Let $\theta \in \N $ and $V_1,\dots
	,V_\theta $ be Yetter-Drinfeld modules over $G$. Let $i\in \{1,\dots ,\theta
	\}$ and $J\subseteq \{1,\dots ,\theta \}\setminus \{i\}$ be such that $\supp
	V_j,\supp V_k$ commute for all $j,k\in J\cup \{i\}$.  Assume that
    $G$ is generated by $\cup _{j=1}^\theta \supp V_j$,
    $V_i$ is absolutely simple, $\dim V_i<\infty $,
    and that $(\id -c_{V_j,V_i}c_{V_i,V_j})(V_i\otimes V_j)=0$ for all $j\in \{1,\dots ,\theta
	\}\setminus (J\cup \{i\})$. Then $\dim V_i=1$.
\end{lem}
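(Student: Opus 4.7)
The plan is to show first that $\supp V_i\subseteq Z(G)$, then that $V_i$ reduces to an absolutely irreducible representation $\sigma$ of $G$, and finally that $\sigma(G)$ is abelian, which forces $\dim V_i=1$.

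For the centrality claim I would combine the two types of hypotheses to see that every $\supp V_k$ commutes with $\supp V_i$: for $k\in J\cup\{i\}$ this is the commutativity assumption, and for $k\notin J\cup\{i\}$ it follows from Lemma~\ref{lem:a=0} applied with $V=V_i$ and $W=V_k$ (after discarding trivially those $k$ for which $V_k=0$). Since $G$ is generated by $\bigcup_{j=1}^\theta \supp V_j$, every element of $G$ commutes with $\supp V_i$, that is, $\supp V_i\subseteq Z(G)$. As $V_i$ is absolutely simple, write $V_i\simeq M(s_i^G,\sigma)$ for some $s_i\in\supp V_i$ and some absolutely irreducible representation $\sigma$ of $G^{s_i}$. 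Centrality of $s_i$ forces $s_i^G=\{s_i\}$ and $G^{s_i}=G$, so $V_i$ is just $\sigma$ viewed as an absolutely irreducible $\K G$-module, and $s_i$ acts on it as a scalar.

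Next I would establish that $\sigma(G)\subseteq\GL(V_i)$ is abelian. Observe that $G$ is generated by the union of $\bigcup_{j\in J\cup\{i\}}\supp V_j$ and $\bigcup_{j\notin J\cup\{i\}}\supp V_j$. Elements of the first union pairwise commute in $G$ by the commutativity assumption, so their images under $\sigma$ also pairwise commute. Elements of the second union act on $(V_i)_{s_i}=V_i$ as scalars by Lemma~\ref{lem:a=0}; since scalars commute with everything in $\GL(V_i)$, these images commute with all the others. Thus the $\sigma$-images of a generating set of $G$ pairwise commute, so $\sigma(G)$ is abelian.

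An absolutely irreducible representation whose image is abelian must be one-dimensional: extending scalars to an algebraic closure of $\K$, the commuting operators in $\sigma(G)$ simultaneously diagonalize, and absolute simplicity then forces $\dim V_i=1$. The argument is essentially a bookkeeping exercise once the role of $J$ is correctly interpreted; the only small point to verify is that Lemma~\ref{lem:a=0} gives not only commutativity of supports but also the scalar action that makes those generators commute with everything after applying $\sigma$, and there is no real obstacle beyond this.
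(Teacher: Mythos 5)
Your proof is correct and takes essentially the same route as the paper's: Lemma~\ref{lem:a=0} gives both the commutation of $\supp V_i$ with every $\supp V_j$ and the scalar action of the generators outside $J\cup\{i\}$, one deduces that $\supp V_i$ is a central singleton, and the conclusion comes from a common-eigenvector argument over a field extension combined with absolute simplicity. The one imprecision is the phrase ``simultaneously diagonalize'': commuting operators over $\overline{\K}$ need not be diagonalizable (the paper allows arbitrary characteristic and arbitrary, possibly infinite, groups), so you should instead use that a commuting family of operators on a nonzero finite-dimensional space over an algebraically closed field has a common eigenvector, whose span is a graded $\overline{\K}G$-submodule, so that absolute simplicity forces $\dim V_i=1$; alternatively, absolute simplicity gives $\End_{\overline{\K}G}(\overline{\K}\otimes_\K V_i)=\overline{\K}$, so every element of the abelian image is already a scalar. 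This corrected form is exactly what the paper does, taking a common eigenspace for the pairwise commuting elements of $\cup_{j\in J\cup\{i\}}\supp V_j$ and letting the remaining generators act on it by scalars.
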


\begin{proof}
        Lemma~\ref{lem:a=0} and the conditions on $\supp V_i$ imply
        that $\supp V_i$ commutes with $\supp V_j$ for all $1\le j\le \theta $.
		Since $\supp V_i$ is a conjugacy class of $G$ and $G$ is
		generated by $\cup_{j=1}^\theta\supp V_j$, we conclude that $|\supp
        V_i|=1$. Let $t\in \supp V_i$ and let $J'=J\cup \{i\}$.
        By assumption, $rs=sr$ for all $r,s\in \cup _{j\in J'}\supp V_j$,
        and hence the elements of $\cup _{j\in J'}\supp V_j$ have a common eigenspace
        $\tilde{V}$ in $\overline{\K}\otimes _\K V_i$ for some field extension
        $\overline{\K}$ of $\K$.
        Further, for all $r\in \supp V_j$ with $j\in \{1,\dots ,\theta \}
        \setminus J'$ there exists $\lambda _r\in \K$ such that $rv=\lambda
        _rv$ for all $v\in V_i$ by Lemma~\ref{lem:a=0}. Since $G$ is generated by
        $\cup _{j=1}^\theta \supp V_j$, we conclude that all elements of $G$ act by a
        constant on $\tilde{V}$. Since $V_i$ is absolutely simple,
        it follows that $\dim V_i=1$.
\end{proof}

Similar calculations as in the proof of Lemma~\ref{lem:noncommuting}
prove the following claim on braided vector spaces of
diagonal type, which will be needed in the proof of 
Lemma~\ref{lem:reduction}.

\begin{lem} 
	\label{lem:rank3ab}
        Let $g_1,g_2,g_3\in G$ and let $V\in \ydG $. Assume that $g_ig_j=g_jg_i$ for
        all $1\le i<j\le 3$, and that there exist
        $(q_{ij})_{1\le i,j\le 3}\in (\K^\times)^{3\times 3}$
        and linearly independent elements
        $v_i\in V_{g_i}$ for $i\in \{1,2,3\}$
        such that $g_iv_j=q_{ij}v_j$ for all $i,j\in \{1,2,3\}$.
        Then $(\ad v_1)(\ad v_2)(v_3)=0$
        if and only if $q_{23}q_{32}=1$ or $q_{13}q_{31}=q_{12}q_{21}=1$.
\end{lem}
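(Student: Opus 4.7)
The plan is to reduce to a rank-three braided subspace of diagonal type and then exploit the braided coproduct of the Nichols algebra. Note that $W:=\K v_1+\K v_2+\K v_3\subseteq V$ is a braided subspace of $V$ of diagonal type with braiding matrix $(q_{ij})$, because the $v_i$ are common eigenvectors for the $g_j$. Since the restriction to $W^{\otimes 3}$ of the quantum symmetrizer on $V^{\otimes 3}$ coincides with the quantum symmetrizer on $W^{\otimes 3}$, the element $(\ad v_1)(\ad v_2)(v_3)\in W^{\otimes 3}$ vanishes in $\NA (V)$ if and only if it vanishes in $\NA (W)$. So we may work inside the three-dimensional diagonal Nichols algebra $\NA (W)$.

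Set $u:=(\ad v_2)(v_3)=v_2v_3-q_{23}v_3v_2$. A standard primitivity check gives $(\ad v_i)(v_j)=0$ in $\NA (W)$ if and only if $q_{ij}q_{ji}=1$. In particular, if $q_{23}q_{32}=1$ then $u=0$, hence $(\ad v_1)(u)=0$, yielding one direction. Henceforth assume $q_{23}q_{32}\ne 1$, so $u\ne 0$ in $\NA (W)$. Using $\Delta (v_i)=v_i\otimes 1+1\otimes v_i$ together with the braided multiplication on $\NA (W)\otimes \NA (W)$, a direct computation of the coproduct of $(\ad v_1)(u)=v_1u-q_{12}q_{13}\,uv_1$ yields the bidegree components
\begin{align*}
\bar\Delta_{(1,2)}&=(1-q_{12}q_{13}q_{21}q_{31})\,v_1\otimes u+q_{12}(1-q_{23}q_{32})\,v_2\otimes (\ad v_1)(v_3),\\
\bar\Delta_{(2,1)}&=(1-q_{23}q_{32})\bigl(v_1v_2-q_{12}q_{13}q_{31}\,v_2v_1\bigr)\otimes v_3.
\end{align*}
Since primitives of $\NA (W)$ lie in degree one, $(\ad v_1)(u)=0$ if and only if both components vanish in $\NA (W)\otimes \NA (W)$.

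The final step is a case split on $q_{13}q_{31}$. If $q_{13}q_{31}=1$, then $(\ad v_1)(v_3)=0$ in $\NA (W)$; the substitutions $q_{12}q_{13}q_{21}q_{31}=q_{12}q_{21}$ and $q_{12}q_{13}q_{31}=q_{12}$ reduce $\bar\Delta_{(1,2)}$ to $(1-q_{12}q_{21})\,v_1\otimes u$ and $\bar\Delta_{(2,1)}$ to $(1-q_{23}q_{32})(\ad v_1)(v_2)\otimes v_3$. Since $v_1\otimes u\ne 0$ and $(\ad v_1)(v_2)=0$ iff $q_{12}q_{21}=1$, both components vanish precisely when $q_{12}q_{21}=1$. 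If $q_{13}q_{31}\ne 1$, then $(\ad v_1)(v_3)\ne 0$; as $v_1,v_2$ are linearly independent in $W$, a short argument (splitting on whether $u$ and $(\ad v_1)(v_3)$ are proportional in $\NA (W)_2$) shows that $v_1\otimes u$ and $v_2\otimes (\ad v_1)(v_3)$ are linearly independent in $W\otimes \NA (W)_2$, so the non-zero scalar $q_{12}(1-q_{23}q_{32})$ forces $\bar\Delta_{(1,2)}\ne 0$ and hence $(\ad v_1)(u)\ne 0$. Combining the three cases gives the stated equivalence. The main (though routine) obstacle is the bookkeeping in the coproduct computation, where one must carefully track the braidings of $v_1$ past $u$ and of $u$ past $v_1$; the subsequent case analysis is short.
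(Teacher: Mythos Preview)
Your proof is correct but takes a genuinely different route from the paper's. The paper works entirely at the tensor-algebra level via the maps $\varphi_m$ of Lemma~\ref{lem:X_n}: it cites from \cite{MR2732989} that $(\ad v_1)(\ad v_2)(v_3)=0$ iff $\varphi_2(\id\otimes\varphi_1)(v_1\otimes v_2\otimes v_3)=0$, then computes this image directly in $V^{\otimes 3}$ as
\[
(1-q_{23}q_{32})\bigl((1-q_{12}q_{21}q_{13}q_{31})\,v_1\otimes v_2\otimes v_3
+q_{12}(1-q_{13}q_{31})\,v_2\otimes v_1\otimes v_3\bigr),
\]
and concludes by linear independence of $v_1\otimes v_2\otimes v_3$ and $v_2\otimes v_1\otimes v_3$. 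No passage to the Nichols algebra, no coproduct, no case split.

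You instead reduce to $\NA(W)$ and detect vanishing via the $(1,2)$ and $(2,1)$ components of the reduced braided coproduct. This is a legitimate and standard technique; your formulas for $\bar\Delta_{(1,2)}$ and $\bar\Delta_{(2,1)}$ check out, and your appeal to ``primitives live in degree one'' cleanly closes the argument. The cost is more machinery (braided Hopf structure) and a case analysis on $q_{13}q_{31}$, whereas the paper's $\varphi_m$-computation is a two-line calculation followed by a linear-independence observation. On the other hand, your method is self-contained and does not rely on the identification of $(\ad\,\cdot)$ with the $\varphi_m$-maps from \cite{MR2732989}, which may be pedagogically preferable for readers unfamiliar with that formalism.
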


\begin{proof}
        In the proof of \cite[Thm.\,1.1]{MR2732989} it was shown that
        $(\ad v_1)(\ad v_2)(v_3)=0$ if and only if
        $\varphi _2(\id \otimes \varphi _1)(v_1\otimes v_2\otimes v_3)=0$.
        Since
        \begin{align*}
                \varphi _1(v_2\otimes v_3)=&\;(1-q_{23}q_{32})v_2\otimes v_3,\\
                \varphi _2(v_1\otimes v_2\otimes v_3)=&\;
                v_1\otimes (1-q_{12}q_{21}q_{13}q_{31})v_2\otimes v_3\\
                &\;+q_{12}(1-q_{13}q_{31})v_2\otimes v_1\otimes v_3,
        \end{align*}
        the claim follows from the linear independence of $v_1,v_2,v_3$.
\end{proof}

Finally, we make an important observation on tuples with certain Cartan
matrices.

\begin{pro} \label{pro:absimple}
	Let $\theta \in \N _{\ge 2}$, $M\in \cF_\theta^G$, and $i,j\in \{1,\dots ,\theta \}$
	be such that $i\ne j$. Assume that
	$\{-a^M_{ij},-a^M_{ji}\}\in \{ \{0\},\{1\},\{1,2\}\}$. Then
	$(\ad M_i)^m(M_j)$ is absolutely simple or zero for all $m\in \N _0$.
\end{pro}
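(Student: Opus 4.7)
The plan is to reduce the proposition to the rank-two situation and then apply Theorem~\ref{thm:admabsimple}. Let $N=(M_i,M_j)\in\cF_2^G$. Because the operation $\ad $ only involves $M_i$ and $M_j$, and because the universal property of the Nichols algebra yields a graded embedding $\NA (M_i\oplus M_j)\hookrightarrow \NA (M)$, the Yetter-Drinfeld module $(\ad M_i)^m(M_j)$ coincides with $(\ad N_1)^m(N_2)$ for every $m\in \N _0$. In particular $a^N_{12}=a^M_{ij}$ and $a^N_{21}=a^M_{ji}$, so the hypothesis forces the Cartan matrix $A^N$ to be of one of the three rank-two finite types: $A_1\times A_1$, $A_2$, or $B_2$.

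The next step is to verify that $N$ admits all reflections and that the Weyl groupoid $\cW (N)$ is finite. In the decomposable case $A_1\times A_1$ (i.e.\ $a^N_{12}=a^N_{21}=0$), Lemma~\ref{lem:a=0} shows that $\supp M_i$ and $\supp M_j$ act on each other by scalars, so $R_1(N)=(M_i^*,M_j)$ and $R_2(N)=(M_i,M_j^*)$ again satisfy the same vanishing relation; hence all reflections have Cartan entries equal to $0$ and $\cW (N)$ is standard of type $A_1\times A_1$. In the $A_2$ and $B_2$ cases I would appeal to the rank-two theory, either via the general results summarized in Appendix~\ref{appendix:rank2} (originating in \cite{rank2}), or by an elementary induction on the length of the reflection sequence: starting from $A^N$ of finite type, one uses Lemma~\ref{lem:X_n} to describe $(\ad N_1)^{-a^N_{12}}(N_2)$ as an explicit subobject of $N_1^{\otimes (-a^N_{12})}\otimes N_2$, and shows that the reflected pair again has Cartan matrix of the same type. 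In each of these two cases the orbit has $6$ (respectively $8$) objects, so $\cW (N)$ is finite and standard.

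Once the hypotheses of Theorem~\ref{thm:admabsimple} are met for $N$, that theorem applied to the pair $N$ produces exactly the required conclusion: $(\ad N_1)^m(N_2)=(\ad M_i)^m(M_j)$ is absolutely simple for $0\le m\le -a^N_{12}=-a^M_{ij}$ and zero for larger $m$. This proves the proposition in all three cases simultaneously.

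The main obstacle is the admissibility of all reflections of $N$ in the $A_2$ and $B_2$ cases. Since the proposition is purely local and we are not assuming anything on $\cW (M)$, one cannot simply quote Theorem~\ref{thm:admabsimple} on $M$; one has to know a priori that a rank-two pair with Cartan matrix of finite type is automatically tame under reflections. This is the content of rank-two classification results, and depending on whether one wishes to keep the proof self-contained, either a direct combinatorial check in rank two or an appeal to \cite{rank2} is needed. Everything else is a formal consequence.
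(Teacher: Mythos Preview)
Your approach has a genuine gap at the step you yourself identify as ``the main obstacle'': verifying that $N=(M_i,M_j)$ admits all reflections and that $\cW(N)$ is finite. The results in Appendix~\ref{appendix:rank2} assume $M\in\Ggen_2^G$ with $G$ non-abelian and generated by the support, neither of which holds here; and more fundamentally, having a Cartan matrix of type $A_2$ or $B_2$ at a single point does \emph{not} in general force the Weyl groupoid to be finite (this already fails for diagonal braidings over abelian groups). Your alternative of an ``elementary induction on the length of the reflection sequence'' is not carried out, and it would in fact reprove substantial parts of the rank-two classification. So as it stands the argument is circular or incomplete.

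The paper avoids this entirely by a much shorter, local argument. The key observation you are missing is that $(\ad M_i)^{-a^M_{ij}}(M_j)=R_1(M_i,M_j)_2$ is absolutely simple \emph{by the general reflection theory} \cite[Thm.~3.8]{MR2766176}, with no finiteness hypothesis on the Weyl groupoid needed. This handles $m=0$ (trivially) and $m=-a^M_{ij}$, which already covers the cases $a^M_{ij}\in\{0,-1\}$. The only remaining case is $a^M_{ij}=-2$, $a^M_{ji}=-1$, $m=1$; here the paper uses the symmetry
\[
(\ad M_i)(M_j)\simeq(\id-c_{M_j,M_i}c_{M_i,M_j})(M_i\otimes M_j)\simeq(\ad M_j)(M_i),
\]
and the right-hand side is absolutely simple by the previous argument since $a^M_{ji}=-1$. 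No appeal to Theorem~\ref{thm:admabsimple}, to reflections beyond the first, or to the rank-two classification is needed.
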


\begin{proof}
	Since $M\in \cF_\theta^G$, $(\ad M_i)^0(M_j)=M_j$ is absolutely simple.
  On the other hand,
	$(\ad M_i)^a(M_j)=R_1(M_i,M_j)_2$ for $a=-a^M_{ij}$ is absolutely simple
	by \cite[Thm.\,3.8]{MR2766176}, and
	$(\ad M_i)^m(M_j)=0$ for all $m>a$. Thus the claim holds whenever $a_{ij}\in
	\{0,-1\}$. The only remaining case is when $a^M_{ij}=-2$, $a^M_{ji}=-1$, and
	$m=1$. In this case
	$$(\ad M_i)(M_j)\simeq (\id -c_{M_j,M_i}c_{M_i,M_j})(M_i\otimes M_j)\simeq
	(\ad M_j)(M_i)$$
	which is absolutely simple by a previous argument since $a^M_{ji}=-1$.
\end{proof}

\subsection{Cartan matrices and restrictions}

Let $H\subseteq G$ be a subgroup and let $V\in \ydG $.
If $\supp V\subseteq H$, then by restricting the $G$-module structure
of $V$ to $H$ one obtains a unique Yetter-Drinfeld module $V'\in \ydH $
which we will denote by $\Res ^G_H V$.

\begin{lem} \label{lem:Gdecomp}
  Let $H$ be a subgroup of $G$. Let $X\subset G$ be a union of
  conjugacy classes of $G$ such that
  $X\cup H$ generates $G$. Then
  $$G=\langle X\rangle H=H\langle X\rangle .$$
\end{lem}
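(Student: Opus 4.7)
The plan is to exploit the key structural hypothesis that $X$ is a union of $G$-conjugacy classes, which forces $\langle X\rangle$ to be a normal subgroup of $G$. First I would observe that for any $g\in G$, the set $gXg^{-1}$ equals $X$ (since each conjugacy class is sent to itself), and therefore $g\langle X\rangle g^{-1}=\langle X\rangle$. In particular, for every $h\in H$ one has $h\langle X\rangle = \langle X\rangle h$, which immediately yields the set equality $H\langle X\rangle = \langle X\rangle H$.

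Next I would verify that this common set $K:=H\langle X\rangle = \langle X\rangle H$ is a subgroup of $G$. This follows from the standard product criterion: for $h_1,h_2\in H$ and $w_1,w_2\in \langle X\rangle$,
\[
(h_1w_1)(h_2w_2) = h_1(w_1h_2)w_2 = h_1(h_2w_1')w_2 = (h_1h_2)(w_1'w_2)\in H\langle X\rangle,
\]
using that $w_1h_2\in \langle X\rangle H = H\langle X\rangle$. Closure under inverses is immediate from $(hw)^{-1}=w^{-1}h^{-1}\in \langle X\rangle H = H\langle X\rangle$, and $1\in K$.

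Finally, since $K$ contains both $H$ (take $w=1$) and $X\subseteq \langle X\rangle$ (take $h=1$), and since $X\cup H$ generates $G$ by hypothesis, we conclude $K=G$, that is, $G=\langle X\rangle H=H\langle X\rangle$. The main conceptual point, and the only place any work happens, is the normality observation in the first paragraph; everything else is formal manipulation of the equality $H\langle X\rangle=\langle X\rangle H$.
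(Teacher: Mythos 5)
Your proof is correct and rests on the same key observation as the paper's: since $X$ is a union of $G$-conjugacy classes, conjugation by elements of $H$ (indeed of $G$) preserves $X$, so $H$-elements can be moved past $\langle X\rangle$-elements. The paper phrases this as the word-rewriting identity $hx=(hxh^{-1})h$ applied to a generating set, while you package it as normality of $\langle X\rangle$ plus the standard fact that $H\langle X\rangle$ is then a subgroup containing the generators; the two are essentially the same argument.
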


\begin{proof}
	It follows from $hx=(hxh^{-1})h$ for all $h\in H$, $x\in X$,
  since $G$ is generated by $X\cup H$.
\end{proof}

\begin{lem} 
        \label{lem:simplerestriction}
  Let $H$ be a subgroup of $G$. Let $X\subset G$ be a union of
  conjugacy classes of $G$ such that
  $X\cup H$ generates $G$.
  \begin{enumerate}
    \item Let $V$ be a simple $\K G$-module. If $xv\in \K v$
      for all $v\in V$ and all $x\in X$,
      then $V$ is a simple $\K H$-module by restriction.
    \item Let $V$ be a simple Yetter-Drinfeld module over $G$.
      Assume that $\supp V\subseteq H$.
      Let $h\in \supp V$. If $xv\in \K v$ for all $x\in X$, $v\in V_h$,
      then $\Res ^G_HV\in \ydH $ is simple.
  \end{enumerate}
\end{lem}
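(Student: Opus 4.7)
The plan is to prove (1) directly and then bootstrap (2) from (1) applied at the stabilizer pair $(G^h,H^h)$.

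For (1), the key observation is that the hypothesis $xv\in \K v$ for every $v\in V$ forces each $x\in X$ to act as a scalar on $V$: two linearly independent eigenvectors with distinct eigenvalues have a sum that is not an eigenvector. Hence $\langle X\rangle $ acts by scalars on $V$, and in particular preserves every subspace. Given a nonzero $\K H$-submodule $W\subseteq V$, Lemma \ref{lem:Gdecomp} yields $G=\langle X\rangle H$, so $\K G\cdot W=\K \langle X\rangle \cdot \K H\cdot W=\K \langle X\rangle\cdot W=W$. Simplicity of $V$ as $\K G$-module then forces $W=V$.

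For (2), the first step is to note that each $x\in X$ acts as a nonzero scalar on $V_h\ne 0$ by the same linear-algebra argument, so $xV_h\subseteq V_h\cap V_{xhx^{-1}}$ is nonzero, forcing $x\in G^h$ and hence $\langle X\rangle \subseteq G^h$. Combined with $G=H\langle X\rangle $ this gives two useful identities: writing $g=ky$ with $k\in H$ and $y\in \langle X\rangle \subseteq G^h$ gives $ghg^{-1}=khk^{-1}$, so $\supp V=h^G=h^H$; and any $y=kz\in G^h$ with $z\in \langle X\rangle $ satisfies $k=yz^{-1}\in H\cap G^h=H^h$, so $G^h=H^h\langle X\rangle $. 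The latter means $X\cup H^h$ generates $G^h$, and $X$ is automatically a union of $G^h$-conjugacy classes because it is $G$-invariant. Since $V_h$ is a simple $\K G^h$-module (as $V$ is simple in $\ydG$) and $X$ acts on it by scalars, part (1) applies and tells us that $V_h$ is simple as a $\K H^h$-module.

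Finally, let $W\subseteq \Res^G_HV$ be a nonzero Yetter-Drinfeld submodule over $H$. It is graded with $\supp W\subseteq \supp V=h^H$, so some $W_{h''}$ is nonzero; translating by an element of $H$ conjugating $h''$ to $h$ gives $W\cap V_h\ne 0$. As a nonzero $\K H^h$-submodule of the simple module $V_h$, this intersection equals $V_h$. Since $\supp V=h^H$, the $\K H$-span of $V_h$ is all of $V$, so $W=V$. The main obstacle is recognizing the correct reduction to the stabilizer and verifying the group-theoretic identities $h^G=h^H$ and $G^h=H^h\langle X\rangle $; once these are in place, part (1) is a two-line argument and (2) follows by transport to $V_h$.
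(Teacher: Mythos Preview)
Your proof is correct. Part~(1) is essentially the same as the paper's: both observe that $\langle X\rangle$ acts by scalars and use $G=H\langle X\rangle$ to conclude; the paper phrases it as $\K Hv=\K H\langle X\rangle v=\K Gv=V$ for every nonzero $v$, you phrase it as every $\K H$-submodule being $\K G$-stable.

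For part~(2) the approaches differ. The paper stays minimalist: for $v\in V_h\setminus\{0\}$ it reuses the identity $\K Hv=\K H\langle X\rangle v=\K Gv=V$ verbatim (since $\langle X\rangle v=\K v$), and simplicity of $\Res^G_HV$ follows once one notes that any nonzero graded $H$-submodule meets some $V_{h'}$ and can be translated back to $V_h$ via $H$ (implicitly using $\supp V=h^H$, which is a consequence of the same identity). Your route is more structural: you first extract the group-theoretic consequences $\langle X\rangle\subseteq G^h$, $h^G=h^H$, and $G^h=H^h\langle X\rangle$, and then invoke part~(1) at the stabilizer level to get simplicity of $V_h$ over $\K H^h$, from which simplicity of the Yetter-Drinfeld restriction follows by the standard correspondence $V\leftrightarrow V_h$. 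Your argument is longer but makes the mechanism (compatibility of induced-module descriptions over $G$ and over $H$) fully explicit; the paper's argument is a two-line shortcut that bypasses the centralizer analysis entirely.
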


\begin{proof}
  (1) By Lemma~\ref{lem:Gdecomp}, $G=H\langle X\rangle $.
  Hence 
	\begin{align} \label{eq:KHv=V}
    V=\K Gv=\K H\langle X\rangle v=\K Hv
	\end{align}
  for all $v\in V\setminus \{0\}$. Therefore $V$ is a simple $\K H$-module by
  restriction.

  (2) Lemma~\ref{lem:Gdecomp} implies that $G=H\langle X\rangle $.
  Since $V$ is simple and $xv\in \K v$ for all $x\in X$ and $v\in V_h$,
	we conclude from~\eqref{eq:KHv=V}
  that $\K Hv=V$ for all $v\in V_h\setminus \{0\}$. Thus $\Res ^G_HV$ is simple.
\end{proof}
 
The last three lemmata in this subsection will be used for induction arguments.

\begin{lem} 
	\label{lem:A2comm}
	Let $\theta \in \N _{\ge 2}$ and $M\in \Ggen^G_\theta $.
	Assume that $a^M_{12}=a^M_{21}=-1$ and that $a^M_{1j}=0$ for all
	$j\in\{3,\dots,\theta\}$.
	Assume further that  $\supp M_1$ and $\supp M_2$
	commute.  Then $\dim M_1=1$ and $\dim M_2=1$.
\end{lem}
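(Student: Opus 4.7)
The plan is two-stage, corresponding to the two conclusions.

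\emph{Stage 1 ($\dim M_1=1$).} I apply Lemma~\ref{lem:dimVi=1} with $i=1$ and $J=\{2\}$. The commutativity of $\supp V_j,\supp V_k$ for $j,k\in J\cup\{1\}=\{1,2\}$ is exactly the given hypothesis. For $j\in\{3,\dots,\theta\}$, the required vanishing $(\id-c_{M_j,M_1}c_{M_1,M_j})(M_1\otimes M_j)=0$ is equivalent (via Lemma~\ref{lem:X_n} with $m=0$) to $(\ad M_1)(M_j)=0$, which is exactly the assumption $a^M_{1j}=0$. The remaining hypotheses (absolute simplicity of $M_1$, finite-dimensionality, and $G=\langle\bigcup_i\supp M_i\rangle$) hold because $M\in\Ggen^G_\theta$. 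Hence $\dim M_1=1$, so $\supp M_1=\{s\}$ with $s\in Z(G)$, and $M_1=\K v$ with $g\cdot v=\sigma(g)v$ for a character $\sigma\colon G\to\K^\times$.

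\emph{Stage 2 (fiber reduction for $M_2$).} Since $a^M_{21}=-1$, we have $(\ad M_2)(M_1)\ne 0$ and $(\ad M_2)^2(M_1)=0$, so Lemma~\ref{lem:noncommuting}(1) applied to $(V,W)=(M_2,M_1)$ shows that $\supp M_2$ is a commutative subset of $G$. Next I apply Lemma~\ref{lem:noncommuting}(3) to $(V,W)=(M_2,M_1)$ at each pair $(t_0,s)$ with $t_0\in\supp M_2$: the commutation $t_0s=st_0$ is automatic because $s\in Z(G)$; $s$ acts on $M_2$ by a single scalar (Schur, since $s$ is central and $M_2$ absolutely simple); $t_0$ acts on $M_1$ by $\sigma(t_0)$; and $(\id-c^2)((M_2)_{t_0}\otimes M_1)\ne 0$ because $\sigma$ is constant on the conjugacy class $t^G$ and $a^M_{21}\ne 0$. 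The lemma yields $\dim(M_2)_{t_0}=1$ for every $t_0\in\supp M_2$, and therefore $\dim M_2=|\supp M_2|$.

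\emph{Stage 3 ($|\supp M_2|=1$).} It remains to prove $t\in Z(G)$ for $t\in\supp M_2$. The subgroup $\langle s\rangle\cdot\langle t^G\rangle$ is abelian (since $s$ is central and $t^G$ is pairwise commuting), and it contains $\supp M_1\cup\supp M_2$. So if $t\notin Z(G)$, then using $G=\langle\bigcup_i\supp M_i\rangle$, some $r\in\supp M_j$ with $j\ge 3$ must satisfy $rt\ne tr$; by Lemma~\ref{lem:a=0} this forces $a^M_{2j}\ne 0$, and then also $a^M_{j2}\ne 0$. The main obstacle is to derive a contradiction from this configuration. When $a^M_{2j}=-1$, Lemma~\ref{lem:noncommuting}(4) applied to $(V,W)=(M_2,M_j)$ forces $\varphi_r|_{\supp M_2}$ to be a transposition and imposes $t\cdot w=-w$ for $w\in(M_2)_t$; combined with the character data $\sigma(s)=\tau$, $\tau(s)\sigma(t)=-1$ that the $A_2$-relations force (diagonal braiding between $M_1$ and $M_2$), together with $a^M_{1j}=a^M_{j1}=0$, the resulting rank-three configuration $(M_1,M_2,M_j)$ becomes incompatible with the simplicity of the reflected module. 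The case $a^M_{2j}\le-2$ is handled by a direct computation of $X_2^{M_2,M_j}$ via Lemma~\ref{lem:X_n}, exploiting that the action of $\supp M_2$ on itself is abelian and the fibers are one-dimensional. This elimination of $|\supp M_2|>1$ is the sole technical crux of the proof.
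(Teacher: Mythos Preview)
Your Stages~1 and~2 are essentially correct and match the paper. One small remark on Stage~1: Lemma~\ref{lem:dimVi=1} with $J=\{2\}$ requires that $\supp M_1$ and $\supp M_2$ are each \emph{individually} commutative, not merely that they commute with one another; this is not literally ``the given hypothesis'' but follows at once from Lemma~\ref{lem:noncommuting}(1) applied to $(M_1,M_2)$ and to $(M_2,M_1)$, using $a^M_{12}=a^M_{21}=-1$. The paper invokes this explicitly before applying Lemma~\ref{lem:dimVi=1}.

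Your Stage~3, however, is an unnecessary and unfinished detour. You attempt a case analysis on the unknown values $a^M_{2j}$ for $j\ge 3$ (on which there is no hypothesis), invoke unspecified ``incompatibilities with the simplicity of the reflected module'' (note: $M$ is not assumed to admit reflections here), and leave the case $a^M_{2j}\le -2$ to an unperformed computation. None of this is needed. You have already verified in Stage~2 every hypothesis of Lemma~\ref{lem:noncommuting}(2) for $V=M_2$, $W=M_1$, $s=t_0\in\supp M_2$, $t=s\in\supp M_1$: namely $t_0s=st_0$, the scalar action of $t_0$ on $(M_1)_s$, and $(\id-c^2)((M_2)_{t_0}\otimes (M_1)_s)\ne 0$. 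Part~(2) then gives $G^{s}\subseteq G^{t_0}$; since $s\in Z(G)$ you get $G^{t_0}=G$, i.e.\ $t_0\in Z(G)$, so $|\supp M_2|=1$. This is exactly how the paper proceeds. Combined with your Stage~2 conclusion $\dim(M_2)_{t_0}=1$, you are done.
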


\begin{proof}
	From Lemma~\ref{lem:noncommuting}(1) we obtain that $\supp M_1$ and $\supp
	M_2$ are commutative since $a^M_{12}=a^M_{21}=-1$. Hence $\dim M_1=1$ by
	Lemma~\ref{lem:dimVi=1} with $i=1$ and $J=\{2\}$.  Let $r_1\in Z(G)$ with
	$\supp M_1=\{r_1\}$ and let $r_2\in \supp M_2$.  Since any $s_2\in \supp
	M_2$ acts by a constant on $M_1$, Lemma~\ref{lem:noncommuting}(2) with
	$V=M_2$ and $W=M_1$ implies that $G^{r_1}\subseteq G^{r_2}$. Hence
	$G^{r_2}=G$, that is, $r_2\in Z(G)$ and $\supp M_2=\{r_2\}$. Since $r_1\in
	Z(G)$ and $M_2$ is absolutely simple, there exists $\lambda '\in \K^\times $
	such that $r_1v_2=\lambda 'v_2$ for all $v_2\in M_2$. Then
	Lemma~\ref{lem:noncommuting}(3) with $V=M_2$, $W=M_1$ implies that $\dim
	M_2=1$.
\end{proof}

\begin{lem} 
	\label{lem:A2noncomm}
	Let $\theta \in \N _{\ge 2}$ and $M\in \Ggen^G_\theta $.
	Assume that $a^M_{12}=a^M_{21}=-1$ and that $a^M_{1j}=0$
	for all $j\in\{3,\dots,\theta\}$.
	Assume further that $\supp M_1$ and $\supp M_2$ do
	not commute.
	Then $|\supp M_1|=|\supp M_2|=2$ and $\dim M_1=\dim M_2=2$.
\end{lem}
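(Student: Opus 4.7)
The plan is to exploit the commutativity of each support (a consequence of the Serre-type conditions) together with the transitive conjugation action of $G$ on $\supp M_1$. Since $a^M_{12}=a^M_{21}=-1$, we have $(\ad M_1)^2(M_2)=0$ with $(\ad M_1)(M_2)\ne 0$, and symmetrically; applying Lemma~\ref{lem:noncommuting}(1) to $(M_1,M_2)$ and to $(M_2,M_1)$ yields that both $\supp M_1$ and $\supp M_2$ are commutative subsets of $G$. By the non-commutativity hypothesis we may pick $s\in\supp M_1$ and $t\in\supp M_2$ with $st\ne ts$, and then Lemma~\ref{lem:noncommuting}(4) applied to $(M_1,M_2)$ gives $\dim(M_1)_s=1$, $sv=-v$ on $(M_1)_s$, and $\varphi_t|_{\supp M_1}=(s,\;tst^{-1})$; the symmetric statement holds for $M_2$. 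Since $M_i$ is absolutely simple and $\supp M_i$ is a single conjugacy class, all its graded components have the common dimension $\dim(M_i)_s=1$, so $\dim M_i=|\supp M_i|$ for $i=1,2$.

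It remains to prove $|\supp M_1|=|\supp M_2|=2$. By Lemma~\ref{lem:a=0} together with the hypothesis $a^M_{1j}=0$ for $j\ge 3$, $\supp M_j$ commutes with $\supp M_1$ for $j\ge 3$, and combined with the commutativity of $\supp M_1$ we conclude that every $\supp M_j$ with $j\ne 2$ lies in the centralizer $Z$ of $\supp M_1$ in $G$. The subgroup $Z$ is normal (since $\supp M_1$ is a union of conjugacy classes) and equals the kernel of the conjugation homomorphism $\pi\colon G\to\mathrm{Perm}(\supp M_1)$. Thus $G/Z$ embeds into $\mathrm{Perm}(\supp M_1)$ and, because $\supp M_1$ is a single $G$-conjugacy class, the image acts transitively. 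Since $G$ is generated by $\bigcup_i\supp M_i$ and all supports except $\supp M_2$ lie in $Z$, the image $G/Z$ is generated by $\pi(\supp M_2)$.

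Every element of $\supp M_2$ is $G$-conjugate to $t$, so its image under $\pi$ is a conjugate in $\mathrm{Perm}(\supp M_1)$ of the non-trivial transposition $\pi(t)$, hence a non-trivial transposition itself. Since $\supp M_2$ is commutative, $\pi(\supp M_2)$ consists of pairwise commuting non-trivial transpositions, and in a symmetric group such transpositions are either equal or have disjoint supports. Therefore $G/Z$ is generated by pairwise disjoint non-trivial transpositions, so $G/Z\cong(\Z/2)^k$ for some $k\ge 1$ and its orbits on $\supp M_1$ have size at most two. Transitivity of the action forces $|\supp M_1|\le 2$, and the non-commutativity assumption rules out $|\supp M_1|=1$ (a singleton conjugacy class is central). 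This yields $|\supp M_1|=2$; the symmetric argument, interchanging the roles of the two supports, gives $|\supp M_2|=2$, and the dimension equalities follow. The main obstacle is precisely this last step: identifying that the commutativity of $\supp M_2$ as a subset of $G$ forces its image in the permutation group of $\supp M_1$ to be a set of pairwise disjoint transpositions, which obstructs transitivity as soon as $|\supp M_1|\ge 3$.
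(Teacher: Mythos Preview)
Your argument for $|\supp M_1|=2$ is correct and is a more explicit version of the paper's: the key point (used implicitly in the paper as well) is that the elements of $\supp M_2$ act on $\supp M_1$ as pairwise commuting transpositions, which forces small orbits.

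There is, however, a genuine gap in the deduction of $|\supp M_2|=2$. You invoke ``the symmetric argument, interchanging the roles of the two supports'', but the hypotheses of the lemma are not symmetric in the indices $1$ and $2$: only $a^M_{1j}=0$ for $j\ge 3$ is assumed, not $a^M_{2j}=0$. In your argument for $|\supp M_1|=2$ this asymmetric hypothesis was essential: it guaranteed (via Lemma~\ref{lem:a=0}) that $\supp M_j\subseteq Z$ for every $j\ne 2$, so that $G/Z$ is generated by the image of $\supp M_2$ alone. When you try to mirror this for $\supp M_2$, the centralizer $Z'$ of $\supp M_2$ need not contain $\supp M_j$ for $j\ge 3$, and $G/Z'$ need not be generated by the image of $\supp M_1$.

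The paper closes this gap with a different, shorter observation that becomes available once $|\supp M_1|=2$ is known. Write $\supp M_1=\{s,s'\}$. You have already shown that every $t'\in\supp M_2$ maps under $\pi$ to the unique non-trivial element of $\mathrm{Perm}(\{s,s'\})$; in particular no element of $\supp M_2$ commutes with $s$. Now apply Lemma~\ref{lem:noncommuting}(4) with $V=M_2$ and $W=M_1$: the map $\varphi_s|_{\supp M_2}$ is a single transposition, hence fixes all but two elements of $\supp M_2$. But it fixes none. Therefore $|\supp M_2|\le 2$, and equality follows since $\supp M_2$ is not central.
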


\begin{proof}
	Since $a^M_{12}=a^M_{21}=-1$, Lemma~\ref{lem:noncommuting}(1) tells
	that $\supp M_1$ and $\supp M_2$ are commutative. Moreover, since
		$\supp M_1$ and $\supp M_2$ do not commute, Lemma~\ref{lem:noncommuting}(4)
	implies that $\varphi _r|_{\supp M_2}$ and
  $\varphi _s|_{\supp M_1}$ are transpositions for all $r\in \supp M_1$, $s\in
  \supp M_2$.  Let $r\in \supp M_1$ and $s\in \supp M_2$.  Then $r$ commutes
  with $\supp M_i$ for all $3\le i\le \theta $ by Lemma~\ref{lem:a=0}.  It
  follows that $\supp M_1=r^G=\{r,s\trid r\}$.
	Moreover, $\dim (M_1)_r=1$ and $\dim (M_2)_s=1$
	by Lemma~\ref{lem:noncommuting}(4).
	Since $s$ does not
  commute with any element of $r^G$, the same holds for all $s'\in s^G$.  Then
  $|\supp M_2|=2$ since $\varphi _r|_{\supp M_2}$ is a transposition.
\end{proof}


\begin{lem} 
	\label{lem:reduction}
	Let $\theta \in \N _{\ge 3}$ and $M\in \Ggen^G_\theta $.
	Assume that $a^M_{12}=a^M_{21}=a^M_{23}=-1$ and that
	$a^M_{1j}=0$ for all $j\in\{3,\dots,\theta\}$.
	Let $H=\langle \cup _{j=2}^\theta \supp
	M_j\rangle $ and $M'=(\Res _H^GM_j)_{2\le j\le \theta }$.  Then $M'\in
	\Ggen_{\theta -1}^H$. If $H$ is abelian, then $G$ is abelian.
\end{lem}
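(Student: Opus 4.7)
My plan is to verify the two assertions in turn. That $H$ is generated by the union of the supports of the $\Res^G_H M_j$ for $j\in\{2,\dots,\theta\}$ is built into the definition of $H$, so for the first claim only the absolute simplicity of each $\Res^G_H M_j$ in $\ydH$ needs to be shown.

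For $j\in\{3,\dots,\theta\}$ I plan to apply Lemma~\ref{lem:simplerestriction}(2) with $X=\supp M_1$, which is a single $G$-conjugacy class satisfying $\langle X\cup H\rangle=G$ since $M\in\Ggen^G_\theta$. The hypothesis $a^M_{1j}=0$ is equivalent to $(\ad M_1)(M_j)=0$, hence to $(\id-c_{M_j,M_1}c_{M_1,M_j})(M_1\otimes M_j)=0$; by Lemma~\ref{lem:a=0}, $\supp M_1$ commutes with $\supp M_j$ element-wise and each $r\in\supp M_1$ acts by a scalar on every $(M_j)_h$. Lemma~\ref{lem:simplerestriction}(2) then yields simplicity of $\Res^G_H M_j$, and absolute simplicity follows by repeating the argument over an algebraic closure of $\K$. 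For $j=2$ this direct approach fails because $a^M_{12}=-1$, so I split according to Lemmas~\ref{lem:A2comm} and~\ref{lem:A2noncomm}. If $\supp M_1$ and $\supp M_2$ commute, Lemma~\ref{lem:A2comm} gives $\dim M_2=1$ and absolute simplicity is trivial. Otherwise Lemma~\ref{lem:A2noncomm} gives $|\supp M_2|=2$ and $\dim(M_2)_{s_1}=1$ for $s_1\in\supp M_2$; writing $\supp M_2=\{s_1,s_1'\}$, absolute simplicity of $\Res^G_H M_2$ reduces to transitivity of the conjugation action of $H$ on $\{s_1,s_1'\}$. To exhibit transitivity I exploit $a^M_{23}=-1$ to produce $t\in\supp M_3\subseteq H$ with $ts_1\neq s_1 t$: if instead $\supp M_2$ and $\supp M_3$ commuted, Lemma~\ref{lem:noncommuting}(2) applied to $V=M_2$, $W=M_3$ would force $G^t\subseteq G^{s_1}$ for some $t\in\supp M_3$, contradicting $r_1\in G^t\setminus G^{s_1}$ (the former coming from $a^M_{13}=0$ via Lemma~\ref{lem:a=0}, the latter from the non-commutativity of $\supp M_1$ and $\supp M_2$). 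With such a $t$, Lemma~\ref{lem:noncommuting}(4) applied to $V=M_2$, $W=M_3$ forces $\varphi_t|_{\supp M_2}$ to be the non-trivial transposition, giving a conjugator in $H$ that swaps $s_1$ and $s_1'$.

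For the second assertion, suppose $H$ is abelian. Then the non-commutative case above is impossible: the element $t\in\supp M_3\subseteq H$ produced there satisfies $ts_1\neq s_1 t$ with $s_1\in H$, contradicting that $H$ is abelian. Hence $\supp M_1$ and $\supp M_2$ commute, Lemma~\ref{lem:A2comm} yields $\supp M_1=\{r_1\}$ with $r_1\in Z(G)$, and Lemma~\ref{lem:Gdecomp} gives $G=\langle r_1\rangle H$, which is abelian since $r_1$ is central.

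The main obstacle is verifying the scalar-action hypothesis needed to apply Lemma~\ref{lem:noncommuting}(2) in the non-commutative case: a priori $s_1$ could act on the higher-dimensional component $(M_3)_t$ via a non-scalar matrix, so Lemma~\ref{lem:noncommuting}(2) does not immediately apply. The one-dimensionality of $(M_2)_{s_1}$ supplies the scalar condition on the $M_2$ side automatically; for the $M_3$ side one should argue via an $s_1$-eigenspace of $(M_3)_t$ on which a scalar action exists, and use $(\ad M_2)^2(M_3)=0$ (coming from $a^M_{23}=-1$) to eliminate the remaining configurations.
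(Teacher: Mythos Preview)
Your overall architecture matches the paper's: treat $j\ge 3$ via Lemma~\ref{lem:simplerestriction}(2), and for $j=2$ split on whether $\supp M_1$ and $\supp M_2$ commute, showing in the non-commuting case that $\supp M_2$ and $\supp M_3$ cannot commute. The gap you flag is real and is exactly where the content lies. Your instinct to look at an $s_1$-eigenspace in $(M_3)_t$ is right, but the mechanism you propose for closing the argument (``use $(\ad M_2)^2(M_3)=0$ to eliminate the remaining configurations'') is not what works; Lemma~\ref{lem:noncommuting}(2) genuinely needs the scalar action on all of $(M_3)_t$, and you have not explained how to promote an eigenspace to the whole space.

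The paper supplies this missing step structurally. Assume $\supp M_2=\{s,s'\}$ commutes with $\supp M_3$ and pick $t\in\supp M_3$ with $(\id-c^2)((M_2)_s\otimes (M_3)_t)\ne 0$. Fix $r\in\supp M_1$; then $r\in G^t$ (from $a^M_{13}=0$ and Lemma~\ref{lem:a=0}) while $rs\ne sr$. Applying Lemma~\ref{lem:rs}(4) to the subgroup $G^t\ni r,s$ gives $G^t=\langle (G^t\cap G^s)\cup\{r\}\rangle$. Now $r$ acts by a scalar on $(M_3)_t$, and every $g\in G^t\cap G^s$ commutes with both $s$ and $s'$ (since $|s^G|=2$ forces $G^s=G^{s'}$), so any joint $(s,s')$-eigenspace of $(M_3)_t$ over an extension field is $G^t$-stable. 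Absolute simplicity of $M_3$ then forces this eigenspace to be all of $(M_3)_t$: $s$ and $s'$ act by scalars. The paper finishes from here with Lemma~\ref{lem:rank3ab} (with $v_1\in(M_2)_s$, $v_2\in(M_2)_{s'}$, $v_3\in(M_3)_t$) to exhibit a nonzero element of $(\ad M_2)^2(M_3)$, contradicting $a^M_{23}=-1$. Your proposed finish via Lemma~\ref{lem:noncommuting}(2), once its scalar hypothesis is in hand, is equally valid and a bit more direct: it yields $G^t\subseteq G^s$, while $r\in G^t\setminus G^s$ is the contradiction.
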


\begin{proof}
	If $\supp M_1$ and $\supp M_2$ commute, then $\dim M_1=1$ by
	Lemma~\ref{lem:A2comm}. Hence $\supp M_1$ consists of a central element of
	$G$, and the claim follows from Lemma~\ref{lem:simplerestriction}(2).

	Assume that $\supp M_1$ and $\supp M_2$ do not commute. Then $\dim
	M_1=\dim M_2=2$ and $|\supp M_1|=|\supp M_2|=2$ by
	Lemma~\ref{lem:A2noncomm}. In particular, either $\supp M_2\subseteq
		Z(H)$ or $\Res _H^GM_2\in \ydH $ is absolutely simple. Assume first that
		$\supp M_2$ does not commute with $\supp M_i$
	for some $3\le i\le \theta $. Then $\Res _H^GM_2\in \ydH $ is absolutely
	simple. Further, $\Res _H^GM_i\in \ydH $ is absolutely simple for all $i\ge 3$
	by Lemmas~\ref{lem:a=0} and \ref{lem:simplerestriction}(2).
	Then $M'\in \Ggen^H_{\theta -1}$ and $H$ is non-abelian.

	Assume that $\supp M_1$ and $\supp M_2$ do not commute, and that $\supp
	M_2$ commutes with $\supp M_3$.
	Let $r,r',s,s'\in G$ be such that $r\ne r'$,
	$s\ne s'$, and $\supp M_1=\{r,r'\}$, $\supp M_2=\{s,s'\}$. Let $t\in \supp
	M_3$ be such that $(\id -c^2)( (M_2)_s\otimes (M_3)_t)\ne 0$.  By
	Lemma~\ref{lem:a=0}, there exists $\lambda \in \K^\times $ such that
	$rw=\lambda w$ for all $w\in (M_3)_t$.  Assume that $\K $ contains all
	eigenvalues of the action of $s$ and $s'$ on $(M_3)_t$. Since $G^t$ is
		generated by $(G^t\cap G^s)\cup \{r\}$ by Lemma~\ref{lem:rs}(4), a
		joint eigenspace $W$ of $s$ and $s'$ in $(M_3)_t$
		is then invariant under the action of $G^t$.
	Since $M_3$ is absolutely simple, we conclude that
	$s$ and $s'$ act by a constant on $(M_3)_t$.  Since $rsr^{-1}=s'$, these two
	constants coincide.  By the same reason, $t$ acts by a constant on
	$M_2=(M_2)_s\oplus (M_2)_{s'}$.  Since $a^M_{23}=-1$ and $(\id -c^2)(
	(M_2)_s\otimes (M_3)_t)\ne 0$, Lemma~\ref{lem:rank3ab} implies that $\ad
	(M_2)_s\ad (M_2)_{s'}((M_3)_t)\ne 0$, which is a contradiction to
	$a^M_{23}=-1$.
\end{proof}

\subsection{Skeletons of finite type}

Here we collect two basic lemmas about skeletons and their reflections.

\begin{lem}
	\label{lem:disconnected}
	Let $J,K\subseteq\{1,\dots,\theta\}$ be disjoint non-empty subsets and let
	$i\in J$. Let $M\in\cF_\theta^G$ be such that $a^M_{ij}\in
	\Z $ for all $j\in \{1,\dots,\theta \}$. If 
	$a^{M}_{jk}=0$ for all  
	$j\in J$ and $k\in K$ then $a^{R_i(M)}_{jk}=0$ for all $j\in J$ and $k\in K$.
\end{lem}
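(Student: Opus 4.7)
The plan is to show directly that $(\ad R_i(M)_j)(R_i(M)_k)=0$ for all $j\in J$ and $k\in K$. Since $a^M_{ik}=0$ for every $k\in K$ (because $i\in J$), the definition of reflection gives $R_i(M)_k=(\ad M_i)^0(M_k)=M_k$, so it suffices to prove $(\ad R_i(M)_j)(M_k)=0$. The hypothesis $a^M_{jk}=0$ translates, via Lemma~\ref{lem:X_n} at $n=1$, to $(\id-c_{M_k,M_j}c_{M_j,M_k})(M_j\otimes M_k)=0$ for all $j\in J$, $k\in K$.

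The key observation is a \emph{slide} closure property: if $(\id-c^2)(U\otimes W)=0$ and $(\id-c^2)(V\otimes W)=0$ for $U,V,W\in\ydG$, then $(\id-c^2)((U\otimes V)\otimes W)=0$, since the braiding of $W$ past $U\otimes V$ decomposes as successive braidings past $U$ and past $V$, whose squares are trivial. In particular, the same vanishing holds on every Yetter-Drinfeld submodule of $U\otimes V$, and by iteration on any $V_1\otimes\cdots\otimes V_n$ whose factors individually satisfy the condition against $W$.

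For $j\in J\setminus\{i\}$, put $m=-a^M_{ij}\in\N_0$; by Lemma~\ref{lem:X_n}, $R_i(M)_j\simeq X_m^{M_i,M_j}$ is a $\ydG$-submodule of $M_i^{\otimes m}\otimes M_j$. Applying the slide closure iteratively to the factors $M_i,\dots,M_i,M_j$ (all of which have $(\id-c^2)(\cdot\otimes M_k)=0$ by hypothesis), we obtain $(\id-c^2)(M_i^{\otimes m}\otimes M_j\otimes M_k)=0$, and restricting yields $(\id-c^2)(R_i(M)_j\otimes M_k)=0$, i.e., $(\ad R_i(M)_j)(M_k)=0$. For $j=i$, $R_i(M)_i=M_i^*$; Lemma~\ref{lem:a=0} applied to $M_i,M_k$ produces commuting supports and scalars $\lambda_{st}$ with $sw=\lambda_{st}w$ on $(M_k)_t$ and $tv=\lambda_{st}^{-1}v$ on $(M_i)_s$. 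Dualising in $\ydG$ inverts the supports of $M_i$ and transposes the action to $t\cdot\phi=\lambda_{st}\phi$ on $(M_i^*)_{s^{-1}}$; a direct check shows $c_{M_i^*,M_k}c_{M_k,M_i^*}=\id$, hence $(\id-c^2)(M_i^*\otimes M_k)=0$ and $(\ad M_i^*)(M_k)=0$.

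In both cases $(\ad R_i(M)_j)(M_k)=0$ while $M_k\ne 0$, so $a^{R_i(M)}_{jk}=0$. The only mildly technical point is the duality step for $j=i$, but it amounts to a one-line calculation with the scalars produced by Lemma~\ref{lem:a=0}; the rest is purely formal braided-category manipulation.
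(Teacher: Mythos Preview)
Your proof is correct and follows essentially the same line as the paper's. Both arguments note that $R_i(M)_k=M_k$, translate $a^M_{jk}=0$ into $c^2=\id$ on $M_j\otimes M_k$, realise $R_i(M)_j$ inside $M_i^{\otimes m}\otimes M_j$ via Lemma~\ref{lem:X_n}, and then transfer the vanishing of $\id-c^2$. The paper phrases the transfer as ``$c^2$ is a natural isomorphism, so it commutes with $\varphi\otimes\id$'' (leaving implicit the hexagon-type fact that $c^2=\id$ on the tensor product $M_i^{\otimes m}\otimes M_j\otimes M_k$), whereas you make this explicit with your slide-closure observation and then restrict to the submodule; for $j=i$ the paper simply says the dual statement is well-known, while you spell it out using the scalars from Lemma~\ref{lem:a=0}. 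There is no substantive difference in strategy---you have just unpacked the two steps the paper compresses.
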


\begin{proof}
	Suppose that $j\ne i$. Recall that $R_i(M)_j=(\ad M_i)^m(M_j)$, where
  $m=-a_{ij}^M$, and
	$(\ad M_i)^m(M_j)\simeq\varphi(M_i^{\otimes m}\otimes M_j)$ for some morphism
	$\varphi$ in $\ydG$, see Lemma~\ref{lem:X_n}.
	In particular, $R_i(M)_k=M_k$ for all $k\in K$. Moreover, $a^M_{jk}=0$ if and
	only if $c_{M_k,M_j}c_{M_j,M_k}=\id _{M_j\otimes M_k}$.
	Since $c^2$ is a natural
	isomorphism, it commutes with $\varphi\otimes\id$. This implies the claim of the
	lemma for $j\ne i$. The case where $j=i$ means that $(\id-c_{W,V}c_{V,W})(V\otimes
	W)=0$ implies that $(\id-c_{W,V^*}c_{V^*,W})(V^*\otimes W)=0$ for $V=M_i$ and
	$W=M_k$, where $k\in K$. The latter is well-known.
\end{proof}

The following lemma and the remark below will be used to simplify the
calculations of the skeletons of reflections of tuples.

\begin{lem}
	\label{lem:triples}
	Let $\theta\geq3$, $i\in\{1,\dots,\theta\}$ and 
	let $M\in\cF_\theta^G$. Suppose that $M$ has a skeleton
	and that for all $j,k\in\{1,\dots,\theta\}\setminus\{i\}$ with $j\ne k$,
	the triple $R_1(M_i,M_j,M_k)$ has a skeleton $\cS '_{jk}$.
	Then $R_i(M)$ has a skeleton $\cS '$. Moreover, $\cS '$ is uniquely determined
	such that it restricts to $\cS '_{jk}$ when
	considering only the vertices $i$, $j$, and $k$.
\end{lem}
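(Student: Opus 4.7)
The plan is to observe that the data comprising a skeleton (as defined in Definition~\ref{def:skeleton}) is entirely local: the information attached to vertex $l$ depends only on the Yetter-Drinfeld module $R_i(M)_l$, while the information attached to the edge joining vertices $l$ and $m$ depends only on the pair $(R_i(M)_l, R_i(M)_m)$. Moreover, the two conditions required for the existence of a skeleton decompose in the same way: condition (1) is a vertex condition (each module is isomorphic to some $M(s_l,\sigma_l)$) and condition (2) is a pair condition (at least one of $a_{lm}, a_{ml}$ equals $-1$ whenever $a_{lm} \ne 0$). So the existence of a skeleton for $R_i(M)$ is equivalent to the conjunction of all vertex conditions and all pair conditions, and the skeleton itself is then uniquely determined by its vertex and edge data.

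First I would verify that every vertex index $l$ and every unordered pair $\{l,m\}$ with $l \ne m$ is covered by at least one triple $(i,j,k)$ for which the skeleton $\cS'_{jk}$ is hypothesized to exist. For a pair $\{l,m\}$ with $i \notin \{l,m\}$ I simply use $\cS'_{lm}$. For a pair $\{i,m\}$ involving $i$, I invoke the hypothesis $\theta \geq 3$ to pick $k \in \{1,\dots,\theta\}\setminus\{i,m\}$ and use $\cS'_{mk}$, which contains both vertex $i$ and vertex $m$. Vertex coverage follows a fortiori.

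From this covering I would read off the ingredients of $\cS'$: the vertex data for every $l$ and the edge data for every unordered pair. Since each such datum is an intrinsic invariant of the Yetter-Drinfeld module $R_i(M)_l$ or the pair $(R_i(M)_l, R_i(M)_m)$, there is no consistency issue between different triples covering the same vertex or edge; they must agree. Simultaneously, conditions (1) and (2) of Definition~\ref{def:skeleton} restricted to any vertex or pair of $R_i(M)$ are inherited from the corresponding conditions in some $\cS'_{jk}$. Uniqueness follows immediately: the restriction of any candidate $\cS'$ to vertices $\{i,j,k\}$ must be a skeleton of $R_1(M_i,M_j,M_k) = (R_i(M)_i, R_i(M)_j, R_i(M)_k)$, and hence must coincide with $\cS'_{jk}$.

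The main point, if there is one, is precisely to unravel this locality of skeleton data from the definition; once this observation is made the lemma is essentially bookkeeping, and the role of the hypothesis $\theta \geq 3$ is just to guarantee that every edge involving $i$ lies inside some triple different from the trivial one.
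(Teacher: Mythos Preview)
Your proposal is correct and follows the same approach as the paper: the paper's own proof is a two-sentence version of exactly the locality argument you spell out, noting that the definition of a skeleton consists of conditions each involving at most two entries $R_i(M)_j$, $R_i(M)_k$, and that these are inherited from the hypothesized skeletons of the triples. Your write-up is simply a more explicit unpacking of this, including the covering argument and the role of $\theta\ge 3$, which the paper leaves implicit.
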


Note that $R_1(M_i,M_j,M_k)$ means reflection on the first entry of the
triple, that is, on $M_i$.

\begin{proof}
	The definition of a skeleton of $R_i(M)$ and its existence consist of a family of
	conditions in each of which at most two entries $R_i(M)_j$, $R_i(M)_k$ with
	$j,k\in\{1,\dots,\theta\}$ are involved. Thus these conditions can be obtained
	from $R_1(M_i,M_j,M_k)$. This implies the claim.
\end{proof}

\begin{rem} \label{rem:connectedtriples}
	Let $\theta\geq3$, $i\in\{1,\dots,\theta\}$ and 
	let $M=(M_1,\dots,M_\theta)\in\cF_\theta^G$. Suppose that $M$ has a connected
	skeleton $\cS $.
  Lemma~\ref{lem:triples} can be used to obtain quickly the skeleton of $R_i(M)$
  for some $M\in \cF_\theta^G$ (if it exists).

	Assume that for all $j,k\in
	\{1,\dots ,\theta \}\setminus \{i\}$ such that $j\ne k$ and the skeleton of
	$(M_i,M_j,M_k)$ is connected, the triple $R_1(M_i,M_j,M_k)$ has a skeleton $\cS '_{jk}$.
	We show that then the conditions of Lemma~\ref{lem:triples} are fulfilled and hence
	$R_i(M)$ has a skeleton.

	Indeed, for any triple $(i,j,k)$ with $|\{i,j,k\}|=3$ one of the following possibilities
	occurs:
	\begin{enumerate}
		\item $j$ and $k$ are not connected with $i$ in $\cS $. Then $R_i(M)_j=M_j$,
			$R_i(M)_k=M_k$, and hence $R_1(M_i,M_j,M_k)$ has a skeleton $\cS '_{jk}$. In this
			skeleton, $j$ and $k$ are not connected with $i$ by
			Lemma~\ref{lem:disconnected}. Hence $\cS '_{jk}$
			coincides with the skeleton of $(M_i,M_j,M_k)$.
		\item $(M_i,M_j,M_k)$ has a connected skeleton. Then $R_1(M_i,M_j,M_k)$ has
			a connected skeleton by assumption.
		\item Precisely one of $j$ and $k$ (say $j$) is connected with the vertex $i$
			and the other is neither connected with $i$ nor with $j$.
			Then
			$R_i(M)_k=M_k$. Moreover, there exists $l\in \{1,\dots ,\theta \}\setminus
			\{i,j,k\}$ such that $(M_i,M_j,M_l)$ has a connected skeleton. Then
			$R_1(M_i,M_j,M_l)$ has a connected skeleton by assumption. Then
			$R_1(M_i,M_j,M_k)$ has a skeleton with two connected components by
			Lemma~\ref{lem:disconnected}. 
	\end{enumerate}
	This leads to the claim on the existence (and the shape) of the skeleton of
	$R_i(M)$.
\end{rem}

\section{Proof of Theorem \ref{thm:ADE}: The case $ADE$}
\label{section:ADE}

In this section we require that all assumptions in Theorem~\ref{thm:ADE} hold.
Thus let $\theta\in\N_{\geq 2}$ and let $G$ be a non-abelian group and
$M\in\Ggen_\theta^G$.
Assume that the Cartan matrix
$A^M$ of $M$ is a Cartan matrix of type $A_\theta $
with $\theta \geq 2$, or $D_\theta $ with $\theta \ge 4$, or $E_\theta $ with
$\theta \in \{6,7,8\}$.

\begin{lem} 
	\label{lem:ADE:noncomm}
	The following hold:
	\begin{enumerate}
		\item $|\supp M_i|=2=\dim M_i$ for all $i\in\{1,\dots,\theta\}$. 
		\item $\supp M_i$ does not commute with $\supp M_j$ whenever $a^M_{ij}=-1$.  
	\end{enumerate}
\end{lem}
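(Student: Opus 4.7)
The plan is to prove (1) and (2) simultaneously by induction on $\theta$, using Lemmas~\ref{lem:A2comm}, \ref{lem:A2noncomm}, and \ref{lem:reduction} as the main tools.

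For the base case $\theta=2$ (the only possibility is type $A_2$, with $a^M_{12}=a^M_{21}=-1$), suppose $\supp M_1$ and $\supp M_2$ commute. Then Lemma~\ref{lem:A2comm} gives $\dim M_i=1$, and absolute simplicity of $M_i$ combined with $\dim M_i=1$ forces $\supp M_i\subseteq Z(G)$. Since $G$ is generated by $\supp M_1\cup\supp M_2$, this would make $G$ abelian, contradicting the hypothesis. Hence $\supp M_1$ and $\supp M_2$ do not commute, and Lemma~\ref{lem:A2noncomm} gives $|\supp M_i|=\dim M_i=2$ for $i=1,2$.

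For the step $\theta\geq 3$, I would pick a leaf of the Dynkin diagram of $A^M$ whose unique neighbor has another neighbor; such a leaf exists in every ADE diagram with $\theta\geq 3$, and its removal leaves a connected Dynkin diagram still of type $A$, $D$, or $E$. After relabeling, assume the leaf is vertex $1$, its neighbor is vertex $2$, and $2$ also has a neighbor $3$, so that $a^M_{12}=a^M_{21}=a^M_{23}=-1$ and $a^M_{1j}=0$ for $j\geq 3$. Put $H=\langle\supp M_2\cup\cdots\cup\supp M_\theta\rangle$ and $M'=(\Res^G_H M_j)_{2\leq j\leq\theta}$. Lemma~\ref{lem:reduction} gives $M'\in\Ggen^H_{\theta-1}$, and non-abelianness of $G$ forces non-abelianness of $H$. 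The Cartan matrix of $M'$ coincides with the principal submatrix of $A^M$ indexed by $\{2,\dots,\theta\}$ (since restricting the group action does not affect $\NA(M_2\oplus\cdots\oplus M_\theta)$ as a braided Hopf algebra) and is thus again of ADE type, so the induction hypothesis applies to $M'$ over $H$, yielding $|\supp M_j|=\dim M_j=2$ for all $j\geq 2$ together with the non-commutation assertion along every edge inside $\{2,\dots,\theta\}$.

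It then remains to analyze the edge $\{1,2\}$. If $\supp M_1$ and $\supp M_2$ commute, Lemma~\ref{lem:A2comm} forces $\dim M_2=1$, contradicting the inductive conclusion $\dim\Res^G_H M_2=2$. Otherwise Lemma~\ref{lem:A2noncomm} supplies $|\supp M_1|=\dim M_1=2$ and the required non-commutation. Since $\supp\Res^G_H M_j=\supp M_j$ and commutativity in $H$ coincides with commutativity in $G$, the remaining cases of (1) and (2) transfer from $M'$ back to $M$. The main obstacle is structural rather than algebraic: one must verify that the leaf-removal step stays inside the ADE family, that Lemma~\ref{lem:reduction} is applicable in both commutation branches, and that the commuting alternative on $\{1,2\}$ genuinely contradicts the induction hypothesis via a dimension mismatch.
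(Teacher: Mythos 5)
Your proposal is correct and follows essentially the same route as the paper: induction on $\theta$ with the $A_2$ base case settled by Lemmas~\ref{lem:A2comm} and \ref{lem:A2noncomm} and the non-abelianness of $G$, and the inductive step carried out by removing a leaf adjacent to a vertex of degree at least two, restricting to $H$ via Lemma~\ref{lem:reduction}, and deriving the dimension contradiction for the commuting alternative on the edge $\{1,2\}$. The only differences are cosmetic (you spell out why $\dim M_i=1$ forces centrality of the support in the base case and why the Cartan matrix is preserved under restriction, which the paper leaves implicit).
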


\begin{proof}
	We proceed by induction on $\theta $. If $\theta =2$, then $A^M$ is of type
	$A_2$. If $\supp M_1$ and $\supp M_2$ commute, then
	Lemma~\ref{lem:A2comm} 
	implies that $G$ is commutative, which is a
	contradiction to our assumption. Hence $\supp M_1$ and $\supp M_2$ do not
	commute, and the lemma follows from 
	Lemma~\ref{lem:A2noncomm}.  
	Assume
	that $\theta \ge 3$.  Let $I=\{1,\dots ,\theta \}$.  By the assumptions on
	$A^M$ there exist $i,j,k\in I$ such that $a^M_{ij}=a^M_{ji}=a^M_{jk}=-1$,
	and $a^M_{il}=0$ for all $l\in I\setminus \{i,j\}$.  Let $H$ be the subgroup
	of $G$ generated by $\cup _{l\in I\setminus \{i\}}\supp M_l$. Then $M'=(\Res
	_H^G M_l)_{l\in I\setminus \{i\}}\in \Ggen_{\theta -1}^H$ by
	Lemma~\ref{lem:reduction}, 
	and $a^{M'}_{lm}=a^M_{lm}$ for all $l,m\in
	I\setminus \{i\}$. Hence, by induction hypothesis, the lemma holds for all
	$l\in I\setminus \{i\}$. In particular, $\dim M_j=2$.
	Then
	$\supp M_i$ and $\supp M_j$ do not commute and $|\supp M_i|=2=\dim M_i$
	by Lemmas~\ref{lem:A2comm} and \ref{lem:A2noncomm}.
\end{proof}

The following lemma describes the structure of the Yetter-Drinfeld modules
encoded in a skeleton of types $\alpha_\theta$, $\delta_\theta$,
$\varepsilon_6$, $\varepsilon_7$ and $\varepsilon_8$. 

\begin{lem}
	\label{lem:ADE:conditions}
	Let $N\in\cF^G_\theta$.
	The following are equivalent:
	\begin{enumerate}
		\item $N$ has a connected simply-laced skeleton of finite type.
		\item 
			There exist
			\begin{itemize}
				\item a symmetric indecomposable Cartan matrix $A\in \Z^{\theta \times
					\theta}$ of finite type,
				\item an element $\epsilon\in Z(G)$ with $\epsilon^2=1$, and
				\item for all $i\in\{1,\dots,\theta\}$, $s_i\in\supp N_i$
					a unique character $\sigma_i$ of $G^{s_i}$,
			\end{itemize}
			such that $\supp N_i=\{s_i,\epsilon s_i\}$ and $N_i\simeq M(s_i,\sigma_i)$
			for all $i\in\{1,\dots,\theta\}$, and  
			the following conditions hold:
			\begin{align}
				\label{eq:ADE:1}\sigma_i(s_j)\sigma_j(s_i)=\sigma_i(\epsilon
				)\sigma_j(\epsilon )=1
				&&&
				\text{for all $i,j$ such that $a_{ij}=0$},\\
				\label{eq:ADE:2}\sigma_i(\epsilon s_j^2)\sigma_j(\epsilon s_i^2)=1 &&& \text{for all
				$i,j$ such that $a_{ij}=-1$},\\
				\label{eq:ADE:3}\sigma_i(s_i)=-1 &&& \text{for all
				$i\in\{1,\dots,\theta\}$},\\
				\label{eq:ADE:4}s_is_j=\epsilon s_js_i &&& \text{for all
				$i,j$ such that $a_{ij}=-1$},\\
				\label{eq:ADE:5}s_is_j=s_js_i &&& \text{for all $i,j$ such that $a_{ij}=0$}.
			\end{align}
		\item Let $P=(\Res^G_H N_1,\dots ,\Res ^G_H N_\theta)$, where $H\subseteq G$
			is the subgroup generated by $\cup _{i=1}^\theta \supp N_i$. Then $H$ is
			non-abelian,
			$P\in \Ggen^H_\theta $, and $A^P$ is of type
			$A_\theta$ with $\theta\geq2$, $D_\theta$ with $\theta\geq4$, or $E_\theta$
			with $\theta\in\{6,7,8\}$.
	\end{enumerate}
\end{lem}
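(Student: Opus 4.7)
The plan is to establish the cycle $(1)\Rightarrow(2)\Rightarrow(3)\Rightarrow(2)\Rightarrow(1)$, with most of the work concentrated in $(1)\Leftrightarrow(2)$. The direction $(2)\Leftrightarrow(3)$ is essentially a restriction/extension argument.

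For $(1)\Rightarrow(2)$, I would read off from Figure~\ref{fig:fdNA} the common features of the simply-laced finite-type skeletons: every vertex is drawn as two points, and every edge is dashed. By Definition~\ref{def:skeleton} this means $|\supp N_i|=\dim N_i=2$ for all $i$, and $\supp N_i,\supp N_j$ commute if and only if $a_{ij}=0$. For each edge $a_{ij}=-1$ choose $s_i\in\supp N_i$, $s_j\in\supp N_j$ and apply Lemma~\ref{lem:rs}(1) to produce an element $\epsilon_{ij}$ with $s_is_j=\epsilon_{ij}s_js_i$, $\supp N_j=\{s_j,\epsilon_{ij}s_j\}$; Lemma~\ref{lem:A3:222}(1) then forces $\epsilon_{ij}^2=1$ and $\epsilon_{ij}\in Z(G)$. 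For a path $i$--$j$--$k$ in the Dynkin diagram, $a_{ik}=0$ so $s_i,s_k$ commute (by the simply-laced skeleton), and Lemma~\ref{lem:A3:222}(2) with $(r,s,t)=(s_j,s_i,s_k)$ yields $\epsilon_{ij}=\epsilon_{jk}$. Connectedness of the Dynkin diagram forces a single $\epsilon\in Z(G)$ with $\epsilon^2=1$, giving \eqref{eq:ADE:4} and, via independent choices, \eqref{eq:ADE:5}. For disconnected pairs ($a_{ij}=0$) one has $(\id-c^2)(N_i\otimes N_j)=0$, and Lemma~\ref{lem:a=0for22}(3) provides exactly \eqref{eq:ADE:1}. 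Finally, $\sigma_i(s_i)=-1$ follows from $a_{ii}^N=2$ (standard for absolutely simple Yetter-Drinfeld modules in a Cartan tuple), giving \eqref{eq:ADE:3}, and a direct computation of $(\ad N_i)^2(N_j)=0$ on the four-dimensional space $N_i\otimes N_j$ gives the quadratic identity \eqref{eq:ADE:2}.

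For $(2)\Rightarrow(1)$, I would verify clause by clause the requirements of Definition~\ref{def:skeleton}: the Cartan matrix is the simply-laced $A_\theta$, $D_\theta$ or $E_\theta$ data; each vertex carries exactly two support points (since $\epsilon\ne 1$); \eqref{eq:ADE:4} with $\epsilon\ne 1$ yields the non-commutativity of connected supports, so connections are dashed; absence of additional restrictions on $\sigma_i(\epsilon)$ means no $(p)$-label is needed; and \eqref{eq:ADE:5} gives commutativity of disconnected supports. The resulting diagram coincides with one of $\alpha_\theta,\delta_\theta,\varepsilon_6,\varepsilon_7,\varepsilon_8$.

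For $(2)\Rightarrow(3)$, let $H=\langle\cup_i\supp N_i\rangle$. Relation \eqref{eq:ADE:4} shows $s_is_js_i^{-1}=\epsilon s_j\in H$, so $\epsilon\in H$ and $H$ is non-abelian. Since $\dim(N_i)_{s_i}=1$, the action of $H^{s_i}$ on it is a character, hence absolutely irreducible; Lemma~\ref{lem:simplerestriction}(2) (or a direct argument) shows $P_i=\Res^G_HN_i$ is absolutely simple in $\ydH$. The braiding on $P_i\otimes P_j$ agrees with that on $N_i\otimes N_j$ because the coactions land in $H$, so $A^P=A^N$, and $P\in\Ggen_\theta^H$. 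For $(3)\Rightarrow(2)$, apply Lemma~\ref{lem:ADE:noncomm} to $P$ over $H$ to obtain $|\supp P_i|=\dim P_i=2$ and non-commutativity of connected supports; replay the $(1)\Rightarrow(2)$ argument inside $H$ to produce $\epsilon\in Z(H)$ of order at most two; finally promote $\epsilon$ to $Z(G)$ by observing that for any $g\in G$ the image $gs_ig^{-1}\in s_i^G=\{s_i,\epsilon s_i\}$, and case analysis forces $g\epsilon g^{-1}=\epsilon$.

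The main obstacles are expected to be twofold. First, threading a single central $\epsilon$ through the whole diagram: the iterated use of Lemma~\ref{lem:A3:222}(2) requires both commutativity of non-adjacent nodes and connectedness of the Dynkin diagram, and needs the right ordering of steps. Second, extracting \eqref{eq:ADE:2} from $a^N_{ij}=-1$: Lemma~\ref{lem:a=0for22} is only available for $a_{ij}=0$, so the quadratic character identity must be obtained by an explicit braiding calculation in $N_i^{\otimes 2}\otimes N_j$, analogous to but more delicate than the proof of Lemma~\ref{lem:a=0for22}.
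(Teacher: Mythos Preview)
Your cycle is close to the paper's, but the paper takes the shorter route $(1)\Rightarrow(3)\Rightarrow(2)\Rightarrow(1)$: the implication $(1)\Rightarrow(3)$ is immediate from Definition~\ref{def:skeleton} (the dashed edges force $H$ non-abelian and the simply-laced shape forces $A^P$ to be of $ADE$ type), so there is no need to work out $(1)\Rightarrow(2)$ separately from $(3)\Rightarrow(2)$.

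There are two genuine gaps in your $(1)\Rightarrow(2)$. First, the claim that $\sigma_i(s_i)=-1$ follows from $a^N_{ii}=2$ is wrong: the diagonal entries of a Cartan matrix are $2$ by definition and carry no information. The paper obtains \eqref{eq:ADE:3} from Lemma~\ref{lem:noncommuting}(4), using that $\supp N_i$ and $\supp N_j$ do not commute for an adjacent pair and that $(\ad N_i)^2(N_j)=0$. Second, \eqref{eq:ADE:2} does \emph{not} drop out of the vanishing of $(\ad N_i)^2(N_j)$: look at Lemmas~\ref{lem:HS:X1} and~\ref{lem:HS:X2} in the appendix, where the logical order is that $X_1^{V,W}$ is absolutely simple $\Leftrightarrow$ \eqref{eq:ADE:2}, and only \emph{under that hypothesis} does $X_2^{V,W}=0\Leftrightarrow \sigma_i(s_i)=-1$. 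So the ``direct computation in $N_i^{\otimes 2}\otimes N_j$'' you anticipate as the hard step is already done in the appendix, but it yields \eqref{eq:ADE:3}, not \eqref{eq:ADE:2}. The paper instead observes that $a^N_{ij}=a^N_{ji}=-1$ forces $(\ad N_i)(N_j)$ to be absolutely simple (Proposition~\ref{pro:absimple}), and then Lemma~\ref{lem:HS:X1} gives \eqref{eq:ADE:2}. Equivalently, Corollary~\ref{co:2-2:A2B2CartanMatrix}(1) packages both \eqref{eq:ADE:2} and \eqref{eq:ADE:3} as the characterization of $a^N_{ij}=a^N_{ji}=-1$.

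Your $(2)\Rightarrow(1)$ also skips the main content: you assert ``the Cartan matrix is the simply-laced $A_\theta$, $D_\theta$ or $E_\theta$ data'', but $A^N$ is not given---it must be computed from the braidings. The paper checks $a^N_{ij}=0$ for $a_{ij}=0$ via Lemma~\ref{lem:a=0for22}(3) (using $\epsilon\in Z(G)$ and \eqref{eq:ADE:1}, \eqref{eq:ADE:5}), and $a^N_{ij}=-1$ for $a_{ij}=-1$ via Corollary~\ref{co:2-2:A2B2CartanMatrix}(1) (using \eqref{eq:ADE:2}--\eqref{eq:ADE:4}). Your $(2)\Leftrightarrow(3)$ sketch is fine and essentially matches the paper's.
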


\begin{proof}
	The implication (1)$\Rightarrow $(3) follows from the definition of a
	simply-laced skeleton.

	We prove that (3) implies (2). Let $A=A^P(=A^N)$. Then $A$ is a symmetric
	indecomposable Cartan matrix of finite type and $|\supp N_i|=\dim N_i=2$
	for all $i\in \{1,\dots ,\theta \}$ by Lemma~\ref{lem:ADE:noncomm}.
	Moreover, Lemmas~\ref{lem:ADE:noncomm} and \ref{lem:a=0} imply that
	$\supp N_i$ commutes with $\supp N_j$, where $i\ne j$, if and only if $a_{ij}=0$.
	Let $s_i\in \supp N_i$ for all $i\in \{1,\dots ,\theta \}$. Then for all
	$i\in \{1,\dots ,\theta \}$ there exists a unique character $\sigma _i$ of
	$G^{s_i}$ such that $N_i\simeq M(s_i,\sigma _i)$. Lemma~\ref{lem:A3:222}
	implies that there exists $\epsilon \in Z(G)$ such that $\epsilon ^2=1$,
	$\supp N_i=\{s_i,\epsilon s_i\}$ for all $i\in \{1,\dots ,\theta \}$, and
	\eqref{eq:ADE:4} holds. Now \eqref{eq:ADE:3} holds by
	Lemma~\ref{lem:noncommuting}(4), and \eqref{eq:ADE:1}
	follows from Lemma~\ref{lem:a=0for22}. Finally, if $a_{ij}=-1$ then $(\ad
	N_i)(N_j)$ is absolutely simple. Therefore \eqref{eq:ADE:2} follows from
	Lemma~\ref{lem:HS:X1}.

	Finally we prove that (2) implies (1). Let $i,j\in \{1,\dots ,\theta \}$
	be such that $i\ne j$. Since $\epsilon \in Z(G)$, we conclude from
	Lemma~\ref{lem:a=0for22} and from \eqref{eq:ADE:1} and
	\eqref{eq:ADE:5}, that $(\ad N_i)(N_j)=0$ if $a_{ij}=0$.
	Finally, if $a_{ij}=-1$ then \eqref{eq:ADE:2}--\eqref{eq:ADE:4} and
	Corollary~\ref{co:2-2:A2B2CartanMatrix} imply that $a^N_{ij}=-1$.
	This proves (1).
\end{proof}

We now study some reflections. In the case of rank three one has the following
lemma.

\begin{lem}
	\label{lem:skeleton:A3}
	Let $N\in\cF_3^G$. Assume that $N$ has a skeleton $\cS $ of type
	$\alpha_3$.
	Then $\cS $ is a skeleton of $R_k(N)$ for each $k\in\{1,2,3\}$. 
\end{lem}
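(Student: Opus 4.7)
The plan is to invoke Lemma~\ref{lem:ADE:conditions} to translate the $\alpha_3$-skeleton hypothesis into concrete data: a central element $\epsilon\in Z(G)$ with $\epsilon^2=1$, support elements $s_i\in\supp N_i$, and characters $\sigma_i\in\widehat{G^{s_i}}$ with $N_i\simeq M(s_i,\sigma_i)$, satisfying the conditions \eqref{eq:ADE:1}--\eqref{eq:ADE:5} for a Cartan matrix of type $A_3$. For each $k\in\{1,2,3\}$ I will produce analogous data for $R_k(N)$ and then check the same five conditions; Lemma~\ref{lem:ADE:conditions} will then deliver the desired skeleton of type $\alpha_3$.

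By the chain symmetry of $\alpha_3$, the cases $k=1$ and $k=3$ are formally equivalent, so it suffices to handle $k=1$ and $k=2$. For $k=1$, I have $R_1(N)_3=N_3$ since $a^N_{13}=0$, and $R_1(N)_1=N_1^\ast$ is realized by $(s_1,\sigma_1^{-1})$. The nontrivial ingredient is $R_1(N)_2=(\ad N_1)(N_2)$, which is absolutely simple by Proposition~\ref{pro:absimple}. Using Lemma~\ref{lem:X_n} together with \eqref{eq:ADE:4}--\eqref{eq:ADE:5}, I would show that its support is the conjugacy class $\{s_1s_2,\epsilon s_1s_2\}$, and extract the associated character on $G^{s_1s_2}$ by tracking the action of a set of centralizer generators on the explicit vector spanning $(N_1\otimes N_2)_{s_1s_2}$ in $X_1^{N_1,N_2}$.

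With the proposed data $(s_1,s_1s_2,s_3)$ for $R_1(N)$, the verification of \eqref{eq:ADE:1}--\eqref{eq:ADE:5} splits into group-theoretic identities and character identities. The commutations $s_1\cdot s_1s_2=\epsilon\cdot(s_1s_2)\cdot s_1$ and $(s_1s_2)\cdot s_3=\epsilon s_3\cdot(s_1s_2)$ are immediate from \eqref{eq:ADE:4}--\eqref{eq:ADE:5} and $\epsilon\in Z(G)$, and $s_1s_3=s_3s_1$ persists. The delicate check is \eqref{eq:ADE:3} for the new middle vertex: the value of the induced character on $s_1s_2$ reduces, via the explicit description of $(\ad N_1)(N_2)$, to $\sigma_1(s_1)\sigma_2(s_2)$ up to a computable correction coming from the braiding and the sign in \eqref{eq:ADE:2}, and the hypothesis $\sigma_i(s_i)=-1$ makes this come out to $-1$. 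The remaining identities \eqref{eq:ADE:1}--\eqref{eq:ADE:2} reduce to bookkeeping with the original character identities.

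The case $k=2$ is structurally the same, with both end vertices changing to $R_2(N)_1=(\ad N_2)(N_1)$ and $R_2(N)_3=(\ad N_2)(N_3)$; the new support elements are essentially $s_2s_1$ and $s_2s_3$. The main obstacle here is condition \eqref{eq:ADE:1} for the pair $(1,3)$ of $R_2(N)$: commutation of the new supports follows from $s_1s_3=s_3s_1$ and centrality of $\epsilon$ after pushing the braiding factors through, and the character product identity reduces to the corresponding identity for the original pair $(1,3)$ combined with \eqref{eq:ADE:1}--\eqref{eq:ADE:2} for the pairs $(1,2)$ and $(2,3)$. The step I expect to be most technical is the explicit identification of the character of $(\ad N_i)(N_j)$ from Lemma~\ref{lem:X_n}, for which I would lean on the formulas developed in Appendix~\ref{appendix:reflections}.
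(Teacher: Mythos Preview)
Your approach is essentially the paper's own: reduce by the chain symmetry to $k\in\{1,2\}$, use the explicit identification of the reflected simples from Appendix~\ref{appendix:reflections} (specifically Lemma~\ref{lem:HS:reflections}, which packages exactly the computation you sketch with Lemma~\ref{lem:X_n}), and then verify the conditions of Lemma~\ref{lem:ADE:conditions} by direct character identities. One small slip: the dual $N_1^\ast$ is realized as $M(s_1^{-1},\sigma_1^{-1})$, not $M(s_1,\sigma_1^{-1})$; with this corrected your bookkeeping goes through as in the paper.
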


\begin{proof}	
	By symmetry of the skeleton of type $\alpha_3$,
	it suffices to prove the lemma for the reflections $R_1$ and
	$R_2$. Let $s_i\in G$ and $\sigma_i\in \chg {G^{s_i}}$ be as in
	Lemma~\ref{lem:ADE:conditions}(2).
	Let $(U,V,W)=R_1(M)$. Then Lemma~\ref{lem:HS:reflections} implies
	that $U\simeq M(s_1^{-1},\sigma_1^*)$, $V\simeq M(s_1s_2,\sigma')$ and
	$W=M_3$, where $\sigma'\in \chg{G^{s_1s_2}}$ with
	$\sigma'(s_1s_2)=-1$ and
	$\sigma'(h)=\sigma_1(h)\sigma_2(h)$ for all $h\in G^{s_1}\cap G^{s_2}$.
	For the proof of
	the claim we use Lemma~\ref{lem:ADE:conditions}. For $(U,V,W)$,
	Conditions~\eqref{eq:ADE:1} and \eqref{eq:ADE:5} follow from
	Lemmas~\ref{lem:disconnected} and \ref{lem:a=0for22}.
	Conditions~\eqref{eq:ADE:2} and \eqref{eq:ADE:4} for $\{i,j\}=\{1,2\}$
	and \eqref{eq:ADE:3} for $i\in \{1,2\}$ hold by Lemma~\ref{lem:HS:reflections}.
	Condition~\eqref{eq:ADE:3} for $i=3$ holds since $R_1(M)_3=M_3$.
	Thus we need to prove \eqref{eq:ADE:2} and \eqref{eq:ADE:4} for $i=2$, $j=3$.

    Clearly, \eqref{eq:ADE:4} follows easily, since $s_1s_3=s_3s_1$ and
$s_2s_3=\epsilon s_3s_2$ imply that $s_1s_2s_3=\epsilon s_3s_1s_2$.
Regarding \eqref{eq:ADE:2} we obtain the following:
	\begin{align*}
		&\sigma'(\epsilon s_3^2)\sigma_3(\epsilon (s_1s_2)^2)
        =\sigma_1(\epsilon s_3^2)\sigma _2(\epsilon s_3^2)\sigma_3(s_1^2s_2^2)
        =\sigma_1(\epsilon )\sigma_3(\epsilon )
        =1,
    \end{align*}
    where the last equation follows from Lemma~\ref{lem:a=0for22}.

	Let now $(U',V',W')=R_2(M)$. By Lemma~\ref{lem:HS:reflections},
	$$U'\simeq M(s_2s_1,\rho),\quad V'\simeq M(s_2^{-1},\sigma_2^*), \quad
      W'\simeq M(s_2s_3, \tau),
    $$
    where $\rho\in \chg{G^{s_2s_1}}$ with
	$\rho(s_2s_1)=-1$, $\rho(h)=\sigma_1(h)\sigma_2(h)$ for all $h\in
G^{s_1}\cap G^{s_2}$,
	and $\tau\in \chg{G^{s_2s_3}}$ with $\tau(s_2s_3)=-1$,
	$\tau(h)=\sigma_2(h)\sigma_3(h)$ for all $h\in G^{s_2}\cap G^{s_3}$.
    As in the first part of the proof of the Lemma, one
	needs to check the conditions of Lemma~\ref{lem:ADE:conditions} for
    $R_2(M)$.

	Conditions~\eqref{eq:ADE:2}--\eqref{eq:ADE:4} follow from
Lemma~\ref{lem:HS:reflections}. For \eqref{eq:ADE:5} we record that
$$ (s_2s_1)(s_2s_3)=s_2\epsilon s_2s_1s_3=s_2\epsilon s_2s_3s_1=s_2s_3s_2s_1
$$
since $\epsilon ^2=1$. Finally, $s_1^{-1}s_3\in G^{s_1}\cap G^{s_2}\cap G^{s_3}$
and hence we
get~\eqref{eq:ADE:1} from the calculations
	\begin{align*}
	  \rho (s_2s_3)\tau(s_2s_1)&=\rho(s_2s_1s_1^{-1}s_3)\tau (s_2s_3s_3^{-1}s_1)\\
	  &=(-1)\rho(s_1^{-1}s_3)(-1)\tau (s_3^{-1}s_1)\\
      &=\sigma_1(s_1^{-1}s_3)\sigma_2(s_1^{-1}s_3s_3^{-1}s_1)\sigma_3(s_3^{-1}s_1)=1
	\end{align*}
    and
	\[
		\rho(\epsilon)\tau(\epsilon)=\sigma_1(\epsilon)\sigma_2(\epsilon)^2\sigma_3(\epsilon)=1.
	\]
	This completes the proof.
\end{proof}

The reflections 
are studied by the following proposition.

\begin{pro}
	\label{pro:skeleton:ADE}
	Let  $N\in\cF_\theta^G$.  Suppose
	that $N$ has a skeleton $\cS $ of type $\alpha_\theta$, $\delta_\theta $ (with
	$\theta \ge 4$), $\varepsilon _6$, $\varepsilon _7$, or $\varepsilon _8$. Then
	$\cS $ is a skeleton of  $R_k(N)$ for all $k\in \{1,\dots ,\theta \}$.
\end{pro}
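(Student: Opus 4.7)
The plan is to reduce the entire proposition to the rank-three case already settled in Lemma~\ref{lem:skeleton:A3}. Fix $k\in\{1,\dots,\theta\}$. The strategy is to apply Remark~\ref{rem:connectedtriples} with $i=k$: it suffices to verify that for every pair $j,l\in\{1,\dots,\theta\}\setminus\{k\}$ with $j\ne l$ such that the restriction of $\cS$ to $\{k,j,l\}$ is \emph{connected}, the triple $R_1(N_k,N_j,N_l)$ has a skeleton, and that skeleton coincides with the skeleton of $(N_k,N_j,N_l)$.

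The key combinatorial observation is that each of the diagrams $\alpha_\theta$, $\delta_\theta$, $\varepsilon_6$, $\varepsilon_7$, $\varepsilon_8$ is a simply-laced tree. Consequently any connected three-vertex subdiagram is isomorphic to $\alpha_3$. Therefore every triple $(N_k,N_j,N_l)$ arising in the verification has a skeleton of type $\alpha_3$, and Lemma~\ref{lem:skeleton:A3} immediately yields that $R_1(N_k,N_j,N_l)$ again has a skeleton of type $\alpha_3$, matching that of $(N_k,N_j,N_l)$.

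Combining this with Lemma~\ref{lem:triples} (as sharpened by Remark~\ref{rem:connectedtriples}) gives the existence of a skeleton $\cS'$ for $R_k(N)$, uniquely determined by the restrictions to triples. Identification of $\cS'$ with $\cS$ then proceeds case by case along the three possibilities listed in Remark~\ref{rem:connectedtriples}: in the connected case the restriction of $\cS'$ to $\{k,j,l\}$ equals that of $\cS$ by Lemma~\ref{lem:skeleton:A3}; when $j$ and $l$ are both non-adjacent to $k$ in $\cS$ one has $R_k(N)_j=N_j$ and $R_k(N)_l=N_l$, so the restriction is unchanged; when exactly one of $j,l$ is adjacent to $k$ the missing edges are preserved by Lemma~\ref{lem:disconnected}.

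I do not expect substantive obstacles: the only subtlety is the combinatorial remark that every connected three-vertex subdiagram of a simply-laced ADE tree is $\alpha_3$, which is immediate from inspection of the diagrams in Figure~\ref{fig:fdNA}. Beyond this, the argument is a clean reduction in which Lemma~\ref{lem:skeleton:A3} handles the entire analytic content and Lemma~\ref{lem:triples} with Remark~\ref{rem:connectedtriples} does the bookkeeping.
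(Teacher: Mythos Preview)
Your approach is essentially identical to the paper's: reduce via Remark~\ref{rem:connectedtriples} to connected three-vertex subdiagrams, observe these are all of type $\alpha_3$, and apply Lemma~\ref{lem:skeleton:A3}. The only gap is that Remark~\ref{rem:connectedtriples} requires $\theta\ge 3$, so you have not covered the case $\alpha_2$; the paper handles $\theta=2$ separately using Lemmas~\ref{lem:ADE:conditions} and \ref{lem:HS:reflections}.
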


\begin{proof}
	For $\theta=2$ the claim follows from Lemmas~\ref{lem:ADE:conditions} and
	\ref{lem:HS:reflections}.

	Assume that $\theta\ge 3$.
	By Remark~\ref{rem:connectedtriples}, it is enough to prove that for all
	pairwise distinct $i,j,k\in\{1,\dots,\theta\}$ such that the skeleton $\cS
	_{ijk}$ of $(M_i,M_j,M_k)$ is connected, $\cS _{ijk}$ is a skeleton
	of $R_1(M_i,M_j,M_k)$. All such skeletons are of type $\alpha_3$. Hence
	the claim follows from Lemma~\ref{lem:skeleton:A3}.
\end{proof}

Now we are ready to complete the proof of Theorem \ref{thm:ADE}. 

\begin{proof}[Proof of Theorem \ref{thm:ADE}]
	\eqref{it:ADE:skeleton} holds by
	Lemma~\ref{lem:ADE:conditions}(3)$\Rightarrow $(1), and
	\eqref{it:ADE:groupoid} follows from \eqref{it:ADE:skeleton} and
	Proposition~\ref{pro:skeleton:ADE}.
	
	\eqref{it:ADE:Nichols} Theorem~\ref{thm:HS} applies because of
	\eqref{it:ADE:groupoid}. Since the Cartan graph of $M$ is standard, the root
	system of $M$ coincides with the root system associated with the Cartan matrix
	$A^M$. Hence
	$$ \cH(t)=\prod_{\alpha\in\mathbf{\Delta}_+}\cH _{\NA (M_\alpha )}(t^\alpha
	). $$
	The Nichols algebras $\NA (M_i)$ are quantum linear spaces with Hilbert series
	$(1+t)^2$, see also Theorem~\cite[Thm.~4.6(2)]{MR2732989}.
	Then the claim on the Hilbert series of $\NA (M)$ follows from
	Theorem~\ref{thm:HS}.
\end{proof}

\section{Proof of Theorem \ref{thm:C}: The case $C$}
\label{section:C}

In this section we require that all assumptions in Theorem~\ref{thm:C} hold.
Let $\theta\in\N_{\geq3}$ and let $G$ be a non-abelian group. Assume that
$M\in\Ggen_\theta^G$ and that $A^M$ is a Cartan matrix of type $C_\theta$, where
$a^M_{\theta-1,\theta}=-2$ and $a^M_{ij}=-1$ for $|i-j|=1$, $(i,j)\ne
(\theta-1,\theta )$.

\subsection{}
\label{sub:C:theta=3}

We first study some particular aspects for triples.

\begin{lem}
	\label{lem:C3:noncomm}
	Assume that $\theta =3$. Then the following hold:
	\begin{enumerate}
		\item $|\supp M_1|=|\supp M_2|=\dim M_1=\dim M_2=2$ and $\dim M_3=1$.
		\item $\supp M_1$ does not commute with $\supp M_2$.
	\end{enumerate}
\end{lem}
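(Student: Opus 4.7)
The plan is to prove (2) first, since the non-commutativity statement feeds directly into the Lemma~\ref{lem:A2noncomm} one needs for the dimension claims in (1). Throughout I would combine the rank-2 lemmas of Section~\ref{section:helpful} with the rank-2 classification reviewed in Appendix~\ref{appendix:rank2}.

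For (2), I would argue by contradiction. Suppose $\supp M_1$ and $\supp M_2$ commute. The Cartan entries $a^M_{12}=a^M_{21}=-1$ and $a^M_{13}=0$ put us squarely in the hypotheses of Lemma~\ref{lem:A2comm}, so $\dim M_1=\dim M_2=1$ with $\supp M_i=\{s_i\}\subseteq Z(G)$ for $i=1,2$. Because $a^M_{13}=0$, Lemma~\ref{lem:a=0} makes $s_1$ commute with every element of $\supp M_3$. Restricting to $H=\langle \supp M_2\cup \supp M_3\rangle$ produces a pair in $\Ggen_2^H$ with Cartan matrix of type $B_2$ whose first entry is one-dimensional and central. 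Inspecting Appendix~\ref{appendix:rank2} for the $B_2$ case with $\dim M_2=1$ and $s_2\in Z(H)$ rules this out unless $H$ is abelian; but then $G=\langle s_1\rangle H$ is abelian too, contradicting the hypothesis.

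Given (2), Lemma~\ref{lem:A2noncomm} applies with exactly the same Cartan-entry assumptions and yields $|\supp M_1|=|\supp M_2|=\dim M_1=\dim M_2=2$ at once. It remains to establish $\dim M_3=1$. Since $a^M_{32}=-1$ we have $(\ad M_3)(M_2)\ne 0$ and $(\ad M_3)^2(M_2)=0$, so Lemma~\ref{lem:noncommuting}(1) forces $\supp M_3$ to be a pairwise commuting set. Combined with $a^M_{13}=0$ and Lemma~\ref{lem:a=0}, this makes $\supp M_1$ lie in $G^{\supp M_3}$. If $\supp M_2$ also centralises $\supp M_3$, then $G^{\supp M_3}$ contains $\supp M_1\cup\supp M_2\cup\supp M_3$ and hence is all of $G$, so $\supp M_3\subseteq Z(G)$ and $|\supp M_3|=1$; otherwise Lemma~\ref{lem:noncommuting}(4) with $V=M_3$, $W=M_2$ gives $|\supp M_3|\le 2$ and $\dim(M_3)_s=1$ for each $s\in\supp M_3$. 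In either subcase the pair $(M_2,M_3)$ is a rank-2 $B_2$ tuple with $\dim M_2=2$, and Appendix~\ref{appendix:rank2} then forces $\dim M_3=1$.

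The main obstacle will be the two bespoke appeals to Appendix~\ref{appendix:rank2}: one must verify that the rank-2 $B_2$ tables rule out both the configuration with $\dim M_2=1$ central sitting inside a non-abelian $H$ (needed for (2)) and the configurations with $\dim M_2=2$ and $\dim M_3\ge 2$ (needed for the last step of (1)). Both facts should read off the tables of admissible skeletons, but the bookkeeping requires matching each of our pairs against its canonical form in that appendix.
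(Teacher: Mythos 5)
Your proof of part (2) reaches the right contradiction, but by an illegitimate route: the detour through $H=\langle \supp M_2\cup\supp M_3\rangle$ and Appendix~\ref{appendix:rank2} uses the rank-two classification theorem, whose hypotheses include that the pair is finite in the sense of conditions (2) or (3) of that theorem ($\dim\NA<\infty$, or all reflections exist and the Weyl groupoid is finite). Under the standing assumptions of this section only the Cartan matrix $A^M$ is known to be of type $C_3$; no finiteness of $\cW(M)$, let alone of the restricted pair over $H$, is available, and the paper deliberately postpones all such finiteness arguments to Lemma~\ref{lem:C3:more}. Knowing merely that a pair has Cartan entries $(-2,-1)$ does not place it in the classified list. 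The paper's own argument for (2) is also much shorter: since $a^M_{32}=-1$, Lemma~\ref{lem:noncommuting}(1) applied to $V=M_3$, $W=M_2$ makes $\supp M_3$ commutative, and with $\supp M_1,\supp M_2$ central (Lemma~\ref{lem:A2comm}) the group $G$ would be abelian.

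The more serious gap is your last step. Even if finiteness were granted, the appendix does \emph{not} force $\dim M_3=1$ for a pair $(M_2,M_3)$ with $\dim M_2=2$ and $a^M_{23}=-2$, $a^M_{32}=-1$: the characteristic-$3$ example over $\Gamma_2$ of dimension $1296$ (two vertices with two points each joined by a double edge, the building block of the $\beta_\theta$ series in Theorem~\ref{thm:B2}) has exactly these Cartan data with $\dim M_3=|\supp M_3|=2$. So in your subcase where $\supp M_2$ and $\supp M_3$ do not commute, the rank-two data alone cannot rule out $\dim M_3=2$; what excludes it is the interplay with vertex $1$, which your argument never uses. The paper applies Corollary~\ref{co:2-2:A2B2CartanMatrix} twice: to $(M_1,M_2)$ (type $A_2$, non-commuting $2$-element supports) it gives $\sigma(s_1)=-1$, while to $(M_2,M_3)$ in the non-commuting configuration it gives $\sigma(s_1)=1$ and $\charK=3$, a contradiction; hence $\supp M_2$ and $\supp M_3$ commute. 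Your commuting subcase is also incomplete: central support only gives $|\supp M_3|=1$, and $\dim M_3=1$ then requires an argument such as Lemma~\ref{lem:dimVi=1} (all generators of $G$ acting by scalars on an absolutely simple module), not an appeal to the rank-two tables — note the appendix even contains a module with one-point support but two-dimensional isotypic component (the double-circle vertex), so "central support" alone is not enough.
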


\begin{proof}
	Suppose that $\supp M_1$ and $\supp M_2$ commute. Then $\dim M_i=1$ for all $i\in
	\{1,2\}$ by
	Lemma~\ref{lem:A2comm}. Since $a^M_{32}=-1$,
	Lemma~\ref{lem:noncommuting}(1) implies that $\supp M_3$ is commutative.  Then
	$G$ is abelian, a contradiction. Hence $\supp M_1$ and $\supp
	M_2$ do not commute. Then Lemma~\ref{lem:A2noncomm} implies that
	$$|\supp M_1|=|\supp M_2|=\dim M_1=\dim M_2=2.$$

  Let $r\in \supp M_1$, $s_1,s_2\in \supp M_2$ with $s_1\ne s_2$, and $t\in
	\supp M_3$. Then $rt=tr$ because of $a^M_{13}=0$. Hence
	$$\supp M_3\ni s_1ts_1^{-1}=r(s_1ts_1^{-1})r^{-1}
	=s_2rtr^{-1}s_2^{-1}=s_2ts_2^{-1}.$$
	Assume that $\supp M_2$ and $\supp M_3$ do not commute.
	Then $s_1,s_2$ act on $\supp M_3$ via conjugation by the same transposition
	because of Lemma~\ref{lem:noncommuting}. Since $\supp M_3$
	is a conjugacy class of $G$, we conclude that $|\supp M_3|=2$.
	Moreover, $\dim (M_3)_t = 1$ by Lemma \ref{lem:noncommuting}(4).
	Let now $\sigma $ be a character of $G^{s_1}$ such that
	$M_2\simeq M(s_1,\sigma )$. Then Corollary~\ref{co:2-2:A2B2CartanMatrix}
	for $(M_1,M_2)$ and $(M_2,M_3)$ implies that $\sigma (s_1)=-1$ and $\sigma
	(s_1)=1$, $\charK=3$, respectively. This is clearly impossible.
	Hence $\supp M_2$ and $\supp M_3$ commute.

	Since $a^M_{32}=-1$ and $\supp M_3$ commutes
	with $\supp M_1$ and $\supp M_2$, we conclude from Lemma \ref{lem:dimVi=1}
	for $\theta =3$, $V_1=M_1$, $V_2=M_2$, $V_3=M_3$, $i=3$, $J=\{2\}$,
	that $\dim M_3=1$. 	
\end{proof}

In the following two lemmas we consider a slightly more general context, which is
motivated by Lemma~\ref{lem:C3:noncomm} and will be used crucially in the proof of
Lemma~\ref{lem:C3:more}.

Let $N\in \cF ^G_3$ and let $r\in \supp N_1$, $s\in \supp N_2$, $t\in \supp N_3$.
Assume that $|r^G|=|s^G|=2$, $t\in Z(G)$, and $rs\ne sr$.
Let $\epsilon \in G$ such that $rs=\epsilon sr$. Then $\epsilon \ne 1$.
Moreover, $r^G=\{r,\epsilon r\}$, $s^G=\{s,\epsilon s\}$, $\epsilon ^2=1$, and
$\epsilon \in Z(G)$ by Lemma~\ref{lem:A3:222}(1).
Assume further that $N_1\simeq M(r,\rho)$, $N_2\simeq M(s,\sigma)$ and
$N_3\simeq M(t,\tau)$,
where $\rho \in \chg{G^r}$, $\sigma\in \chg{G^s}$, and $\tau\in \chg G$.

\begin{lem}
	\label{lem:C3}
	The following are equivalent:
	\begin{enumerate}
		\item $A^N$ is of type $C_3$. 
		\item\label{it:C3:conditions}
			The following hold:
			\begin{align*}
				&\rho(\epsilon s^2)\sigma(\epsilon r^2)=\rho(t)\tau(r)=1, &&\rho(r)=\sigma(s)=-1,\\
				&(\tau(t)+1)(\sigma(t)\tau(st)-1)=0,&&\sigma(t)\tau(s)\ne1.
			\end{align*}
	\end{enumerate}
\end{lem}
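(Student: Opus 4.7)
The plan is to characterize the Cartan matrix $A^N$ entry by entry, reducing each condition on a pair $(N_i,N_j)$ to explicit conditions on $\rho,\sigma,\tau$ via the rank-2 machinery from Appendix~\ref{appendix:rank2} and the lemmas of Section~\ref{section:helpful}. Since $A^N$ is of type $C_3$ precisely when $a_{12}^N=a_{21}^N=-1$, $a_{13}^N=a_{31}^N=0$, $a_{23}^N=-2$, and $a_{32}^N=-1$, and since each $a_{ij}^N$ depends only on the pair $(N_i,N_j)$, I will verify each of these three pairwise conditions separately and then conjoin the resulting scalar equations to obtain (2).

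For the pair $(N_1,N_2)$ both modules are 2-dimensional with non-commuting size-2 supports related through the common central involution $\epsilon$, so the pair fits the hypotheses of the rank-2 classification for two absolutely simple 2-dimensional summands. The equivalence between the Cartan entries $a_{12}^N=a_{21}^N=-1$ (i.e.\ type $A_2$) and the conditions $\rho(r)=\sigma(s)=-1$ together with $\rho(\epsilon s^2)\sigma(\epsilon r^2)=1$ is exactly the content of Corollary~\ref{co:2-2:A2B2CartanMatrix}. This handles the first two conditions of (2).

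For the pair $(N_1,N_3)$ the centrality of $t$ forces $\dim N_3=1$, and $t$ acts on all of $N_1$ by the single scalar $\rho(t)$ (since any group element translating $(N_1)_r$ to $(N_1)_{\epsilon r}$ commutes with $t$). A direct computation of $c^2$ on $N_1\otimes N_3$ then reduces the vanishing $(\id-c^2)(N_1\otimes N_3)=0$, equivalent to $a_{13}^N=a_{31}^N=0$, to the single equation $\rho(t)\tau(r)=1$, with the parallel equation on $(N_1)_{\epsilon r}$ producing $\tau(\epsilon)=1$ as an automatic consistency. For the pair $(N_2,N_3)$ we are in a mixed rank-2 configuration with $\dim N_2=2$, $\dim N_3=1$, and I will invoke the appropriate statement from Appendix~\ref{appendix:rank2} characterizing when such a pair has Cartan matrix of type $B_2$ with the short simple root on the 1-dimensional side. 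Non-triviality of the braiding ($a_{23}^N<0$) gives $\sigma(t)\tau(s)\ne 1$; the Cartan entry on the $N_2$ side is governed by $\sigma(s)=-1$, already obtained from the pair $(N_1,N_2)$; and the vanishing $(\ad N_3)^2(N_2)=0$, equivalent to $a_{32}^N\ge -1$, splits into two subcases — either $\tau(t)=-1$ or $\sigma(t)\tau(st)=1$ — which combine into the compact product condition $(\tau(t)+1)(\sigma(t)\tau(st)-1)=0$.

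The hard step will be the pair $(N_2,N_3)$: one must extract from the appendix the exact rank-2 statement covering this mixed-dimension $B_2$-type configuration and check that the two scenarios for the vanishing of $(\ad N_3)^2(N_2)$ are faithfully encoded by the factored equation $(\tau(t)+1)(\sigma(t)\tau(st)-1)=0$, along with verifying that the $N_3$-side condition $a_{23}^N=-2$ (rather than $-1$) is automatic from the remaining data. Once all three pairwise translations are established in both directions, combining them yields the equivalence of (1) and (2).
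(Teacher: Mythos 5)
Your proposal is correct and follows essentially the same route as the paper: both arguments work pairwise, using Corollary~\ref{co:2-2:A2B2CartanMatrix} for $(N_1,N_2)$, the $X_1$-computation for $(N_1,N_3)$ and for the non-degeneracy $\sigma(t)\tau(s)\ne1$, Lemmas~\ref{lem:X2} and \ref{lem:X3}(1) for $a^N_{23}=-2$ (indeed automatic here because $\epsilon^2=1$ and $\sigma(s)=-1$), and Lemma~\ref{lem:genRosso} for $a^N_{32}=-1$, which produces exactly your factored condition $(\tau(t)+1)(\sigma(t)\tau(st)-1)=0$. The ``appropriate rank-2 statements'' you defer to are precisely Lemmas~\ref{lem:X1}--\ref{lem:X3} and \ref{lem:genRosso} of Appendix~\ref{appendix:reflections} (rather than Appendix~\ref{appendix:rank2}), and they substantiate each of your pairwise translations.
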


\begin{proof}
	We first prove that (1) implies (2).
    Since $N\in \cF ^G_3$ and $A^N$ is of type $C_3$,
	Proposition~\ref{pro:absimple} implies that $(\ad N_i)^m(N_j)$ is absolutely
	simple or zero for all $m\in \N_0$ and all $i,j\in \{1,2,3\}$ with $i\ne j$.
	By Corollary~\ref{co:2-2:A2B2CartanMatrix}, $a^N_{12}=a^N_{21}=-1$ implies
	that $\rho (\epsilon s^2)\sigma (\epsilon r^2)=1$ and $\rho (r)=\sigma (s)=-1$.
	Further, from Lemma~\ref{lem:X1} and from $a^N_{13}=0$, $a^N_{23}\ne 0$ we obtain that
	$\rho(t)\tau(r)=1$, $\sigma(t)\tau(s)\ne 1$.
	Finally, since $a^N_{32}=-1$, Lemma~\ref{lem:genRosso} implies that
	$(\tau (t)+1)(\sigma (t)\tau (st)-1)=0$.

	Now assume that (2) holds. Then $a^N_{12}=a^N_{21}=-1$ by
	Corollary~\ref{co:2-2:A2B2CartanMatrix},
	$a^N_{13}=a^N_{31}=0$ by Lemma~\ref{lem:X1}, and
	$a^N_{23}=-2$ by Lemmas~\ref{lem:X2} and \ref{lem:X3}(1). Finally, $a^N_{32}=-1$
	by Lemma~\ref{lem:genRosso}. This proves (1).
\end{proof}

The classes $\wp _0^G$ and $\wp _1^G$ of pairs are introduced in
Definition~\ref{def:wp22}.
 
\begin{lem}
	\label{lem:C3:conditions}
	Suppose that $N$ admits all reflections and the Weyl groupoid of
	$N$ is finite. Then $(N_2,N_3)\in \wp_0^G$ or $(N_2,N_3)\in\wp_1^G$.
\end{lem}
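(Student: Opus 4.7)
The plan is to combine Lemma~\ref{lem:C3} with the classification of rank~$2$ pairs reviewed in Appendix~\ref{appendix:rank2}. The key observation is that under the running hypotheses the pair $(N_2,N_3)$ has exactly the shape required by the classes $\wp_0^G$ and $\wp_1^G$: namely $N_2\simeq M(s,\sigma)$ with $|s^G|=2$, and $N_3\simeq M(t,\tau)$ with $t\in Z(G)$.

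First I would check that the pair $(N_2,N_3)$, viewed as a rank-$2$ tuple, inherits from $N$ the properties of admitting all reflections and of having a finite Weyl groupoid. This is because the reflections $R_2$ and $R_3$ of $N$ only modify the entries $N_2$ and $N_3$ (up to reindexing), so the Weyl subgroupoid generated by $s_2$ and $s_3$ embeds into the finite $\cW(N)$, and every iterated reflection of the pair is a sub-tuple of a corresponding reflection of $N$. Since the running assumptions of Section~\ref{section:C} already tell us that $A^N$ is of type $C_3$, Lemma~\ref{lem:C3} applies and yields the key algebraic relations
\[
\sigma(s)=-1,\qquad \sigma(t)\tau(s)\ne 1,\qquad (\tau(t)+1)(\sigma(t)\tau(st)-1)=0,
\]
together with $\rho(\epsilon s^2)\sigma(\epsilon r^2)=\rho(t)\tau(r)=1$.

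The central step is to read the displayed product identity as the split between the classes $\wp_0^G$ and $\wp_1^G$ of Definition~\ref{def:wp22}: the factor $\tau(t)=-1$ places $(N_2,N_3)$ in one class, and the factor $\sigma(t)\tau(st)=1$ places it in the other. Once the correct case is selected, the remaining defining data of $\wp_0^G$ or $\wp_1^G$ (centrality of $t$, two-element conjugacy class $s^G$, normalization $\sigma(s)=-1$, and the character compatibilities on $G^s$ and $G^{t}$) are already supplied by the running setup together with Lemma~\ref{lem:C3}(\ref{it:C3:conditions}).

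The main obstacle is matching the bundle of scalar equalities extracted from Lemma~\ref{lem:C3} with the precise technical requirements of Definition~\ref{def:wp22}; this is largely bookkeeping, but care must be taken if either $\wp_0^G$ or $\wp_1^G$ imposes a finer constraint (for instance on the order of $\tau(t)$, or on how the element $s$ acts on $N_3$) that is not immediately visible in Lemma~\ref{lem:C3}. In that case one would fall back on the finiteness of $\cW(N)$ to exclude parameters that would force $(N_2,N_3)$ out of $\wp_0^G\cup\wp_1^G$, using the rank-$2$ classification as stated in Appendix~\ref{appendix:rank2}.
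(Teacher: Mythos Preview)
Your proposal has a genuine gap at its very first substantive step. You write that ``the running assumptions of Section~\ref{section:C} already tell us that $A^N$ is of type $C_3$, so Lemma~\ref{lem:C3} applies.'' But the paragraph preceding Lemma~\ref{lem:C3} explicitly introduces $N$ in a \emph{slightly more general context}: the only standing hypotheses on $N$ are that $|r^G|=|s^G|=2$, $t\in Z(G)$, $rs\ne sr$, together with the character data. No Cartan-type assumption is made on $A^N$. This generality is essential, since in the proof of Lemma~\ref{lem:C3:more} the present lemma is applied to the triple $(M'_2,M'_1,M'_3)$ with $M'=R_2(M)$, whose Cartan matrix need not be of type $C_3$; in that application the alternative $(N_2,N_3)\in\wp_0^G$ is genuinely live. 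So you cannot invoke Lemma~\ref{lem:C3} to extract $\sigma(s)=-1$ or the product identity $(\tau(t)+1)(\sigma(t)\tau(st)-1)=0$.

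Even granting $A^N$ of type $C_3$, your case split misreads Definition~\ref{def:wp22}. The class $\wp_0^G$ is defined by $\sigma(t)\tau(s)=1$, which is incompatible with $a^N_{23}\ne 0$; thus ``$\tau(t)=-1$'' does not land you in $\wp_0^G$. In fact the dichotomy coming from $(\tau(t)+1)(\sigma(t)\tau(st)-1)=0$ corresponds, in the language of Table~\ref{tab:wpH}, to $\wp_1^G$ versus (for instance) $\wp_2^G$, not to $\wp_0^G$ versus $\wp_1^G$.

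The paper's actual argument is structurally different. It first applies Corollary~\ref{cor:finiteW} to the pair $(N_2,N_3)$, placing it in some $\wp_i^G$ with $0\le i\le 8$; the classes $i\ge 5$ are ruled out by $\epsilon^2=1$. The class $\wp_4^G$ is excluded using information about the \emph{other} pair $(N_1,N_2)$: Lemma~\ref{lem:HS:CartanMatrix} forces $\sigma(s)^2=1$, contradicting the primitive-third-root condition in $\wp_4^G$. Finally, $\wp_2^G$ and $\wp_3^G$ are eliminated by a reflection bootstrap: one checks that $R_3(N)$ and $R_2(N)$ again satisfy the standing hypotheses on $N$, reruns the exclusion of $\wp_4^G$ (resp.\ $\wp_2^G$) for the reflected pair, and uses the reflection table in Remark~\ref{rem:wpreflections} to transport this back to $(N_2,N_3)$. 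Your vague fallback to ``the rank-$2$ classification'' does not capture this, and in particular misses that the role of $N_1$ is indispensable for excluding $\wp_4^G$.
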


\begin{proof}
  Regard $N_1$ and $N_2$ as absolutely simple Yetter-Drinfeld modules over
	$H=\langle \supp (N_1\oplus N_2)\rangle $. Then $H$ is a non-abelian
	epimorphic image of $\Gamma _2$.
	By Theorem~\ref{thm:admabsimple},
	the Yetter-Drinfeld modules $(\ad N_1)^m(N_2)$ and $(\ad N_2)^m(N_1)$ are
	absolutely simple or zero for all $m\ge 0$, and they are zero for
	some $m\in \N$. Thus Lemma~\ref{lem:HS:CartanMatrix} implies that
	\begin{equation} \label{eq:22finite}
		\begin{gathered}
	    \rho (r)^2=\sigma(s)^2=1, \quad
		  \rho (\epsilon s^2)\sigma (\epsilon r^2)=1,\quad \text{and}\\
		  \rho(r)=\sigma(s)=-1\text{ if $\charK =0$.}
	  \end{gathered}
  \end{equation}
	Moreover, Corollary~\ref{cor:finiteW} applied to $(N_2,N_3)$ implies that
	$$(N_2,N_3)\in \wp^G_i\quad \text{for some $i\in \{0,1,2,3,4\}$,}$$
	since $\epsilon ^2=1$ --- see
	also Definition~\ref{def:wp22} and Table~\ref{tab:wpH}. But
  $\sigma (s)^2=1$ implies that
	$$(N_2,N_3)\notin \wp^G_4.$$

  Consider $R_3(N)=(U,V,W)$. Since $\supp N_3=\{t\}$ and $t\in Z(G)$,
  Lemma~\ref{lem:genRosso} implies that $(U,V,W)$ satisfies the assumptions of the
  lemma. In particular, $(V,W)\notin \wp^G_4$, and hence
	$$(N_2,N_3)\notin \wp^G_2$$
  by Remark~\ref{rem:wpreflections}.

	Consider $R_2(N)=(U',V',W')$. Then \eqref{eq:22finite} and
	Lemma~\ref{lem:HS:reflections} for $(N_2,N_1)$ imply that
	$$\dim U'=\dim V'=|\supp U'|=|\supp V'|=2$$
	and $\supp U'$, $\supp V'$ do not commute.
	Moreover, Remark~\ref{rem:wpreflections} for $(N_2,N_3)$ implies that $\dim
	W'=1$.
	In particular, $(U',V',W')$ satisfies the
	assumptions of the lemma.
  Therefore
  $(V',W')\notin \wp^G _2$, and hence
	$$(N_2,N_3)\notin \wp^G _3$$
	by Remark~\ref{rem:wpreflections}. This finishes the proof of the lemma.
\end{proof}

Now we look again at our main tuple $M$.

\begin{lem}
	\label{lem:C3:more}
	Suppose that $\theta=3$, $M$ admits all reflections,
	and the Weyl groupoid of $M$ is finite. Then
	$(M_2,M_3)\in\wp_1^G(2)$ and $\charK\ne 2$.
\end{lem}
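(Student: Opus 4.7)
The plan is to exploit Lemma~\ref{lem:C3:noncomm}, recast its conclusion via the character identities of Lemma~\ref{lem:C3}, and then refine the dichotomy from Lemma~\ref{lem:C3:conditions} using the specific shape of the $C_3$ Cartan matrix together with the reflection conditions on $M$.

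First I would invoke Lemma~\ref{lem:C3:noncomm} to get $\dim M_3=1$ and $|\supp M_3|=1$, so that $\supp M_3=\{t\}$ is a single conjugacy class of $G$, hence $t\in Z(G)$. Combined with the non-commutativity of $\supp M_1$ and $\supp M_2$ and $|\supp M_1|=|\supp M_2|=2$, the triple $M$ fits the setup preceding Lemma~\ref{lem:C3}. Applying Lemma~\ref{lem:C3}(1)$\Rightarrow$(2) then gives the character identities
\[
\rho(\epsilon s^2)\sigma(\epsilon r^2)=\rho(t)\tau(r)=1,\qquad \rho(r)=\sigma(s)=-1,
\]
together with $(\tau(t)+1)(\sigma(t)\tau(st)-1)=0$ and $\sigma(t)\tau(s)\ne 1$.

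Next, Lemma~\ref{lem:C3:conditions} yields $(M_2,M_3)\in \wp_0^G\cup\wp_1^G$. To single out $\wp_1^G$, I would compare the Cartan matrix $\left(\begin{smallmatrix}2 & -2\\ -1 & 2\end{smallmatrix}\right)$ of $(M_2,M_3)$, read off from $a^M_{23}=-2$ and $a^M_{32}=-1$, against the Cartan matrices of the classes recorded in Definition~\ref{def:wp22}: under the constraints $\dim M_3=1$ and $\sigma(s)=-1$ derived above, only $\wp_1^G$ is compatible with this $B_2/C_2$ shape, eliminating $\wp_0^G$. To pinpoint the subclass $\wp_1^G(2)$ I would substitute the character data above into the parametric description of the subclasses of $\wp_1^G$ from Appendix~\ref{appendix:rank2}, and resolve the disjunction $(\tau(t)+1)(\sigma(t)\tau(st)-1)=0$ by ruling out the second factor via a reflection analysis: reflecting $M$ along index~$3$ by Lemma~\ref{lem:HS:reflections} and reapplying the preceding analysis to $R_3(M)$ forces the surviving case to be precisely $\wp_1^G(2)$.

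The characteristic condition $\charK\ne 2$ then falls out of the same identification. Indeed, in characteristic $2$ the equality $\sigma(s)=-1$ collapses to $\sigma(s)=1$, and substituting this into the defining relations of $\wp_1^G(2)$ from Appendix~\ref{appendix:rank2} contradicts either the non-triviality $\sigma(t)\tau(s)\ne 1$ or the Cartan entry $a^M_{23}=-2$ (via Lemmas~\ref{lem:X2} and~\ref{lem:X3}(1)). The main obstacle I foresee is precisely the bookkeeping in the previous paragraph: matching the explicit character conditions against the catalogue of parameters of $\wp_1^G$ in Appendix~\ref{appendix:rank2} and eliminating all rival subclasses requires an iterative reflection analysis on both $R_2(M)$ and $R_3(M)$, tracking how the pair data of $(M_2,M_3)$ transforms under each step and verifying closure inside $\wp_1^G(2)$.
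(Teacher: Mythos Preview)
Your opening moves are fine: Lemma~\ref{lem:C3:noncomm}, Lemma~\ref{lem:C3}, and Lemma~\ref{lem:C3:conditions} together do yield $(M_2,M_3)\in\wp_1^G$ (the class $\wp_0^G$ being excluded simply because $a^M_{23}\ne 0$). The gap is in the step where you try to upgrade $\wp_1^G$ to $\wp_1^G(2)$.

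First, you have the target backwards. Membership in $\wp_1^G$ already \emph{forces} the second factor $\sigma(t)\tau(st)-1$ to vanish (see Table~\ref{tab:wpH}); what remains is to show that the \emph{first} factor $\tau(t)+1$ vanishes as well. There is nothing to ``rule out'' --- you need an additional constraint forcing $\tau(t)=-1$. Second, the reflection you propose cannot supply that constraint. Reflecting at index~$3$ (which is governed by Lemma~\ref{lem:genRosso} and Lemma~\ref{lem:21reflections}(5), not Lemma~\ref{lem:HS:reflections}) sends the pair $(M_2,M_3)$ to another pair in $\wp_1^G$ with the \emph{same} value of $\tau(t)$; reapplying Lemma~\ref{lem:C3:conditions} to $R_3(M)$ therefore reproduces exactly what you already know.

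The paper's argument reflects at index~$2$ instead, and the decisive idea is to apply Lemma~\ref{lem:C3:conditions} not to the pair $(R_2(M)_2,R_2(M)_3)$ but to the \emph{reordered} triple $(R_2(M)_2,R_2(M)_1,R_2(M)_3)$, extracting information about the new pair $(R_2(M)_1,R_2(M)_3)$. One then computes $\rho'(\epsilon s^2t)\tau_2(sr)=-\sigma(t)\tau(s)$ for this pair. If it lies in $\wp_0^G$, this product equals $1$, so $\sigma(t)\tau(s)=-1$, and combined with $\sigma(t)\tau(st)=1$ one gets $\tau(t)=-1$. If instead it lies in $\wp_1^G$, then $a^{R_2(M)}_{13}=a^{R_2(M)}_{23}=-2$, which violates Corollary~\ref{cor:columns} once one knows (via Theorem~\ref{thm:ADE}) that no point of $\cC(M)$ has Cartan matrix of type $A_3$. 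This appeal to the rank-three Cartan-graph structure is the missing ingredient in your outline; no amount of bookkeeping on $R_3(M)$ alone will produce it. Once $\wp_1^G(2)$ is established, $\charK\ne 2$ is immediate: $\tau(t)=-1$ together with $\tau(t)\ne 1$ (from $\wp_1^G$) is impossible in characteristic~$2$.
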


\begin{proof}
	By Lemma~\ref{lem:C3:noncomm}, $M$ satisfies the assumptions on $N$ above
	Lemma~\ref{lem:C3}. Let $r,s,t\in G$ and $\rho,\sigma ,\tau $ as there.
	Since $a^M_{23}\ne 0$, we obtain from Lemma~\ref{lem:C3:conditions} that
	$(M_2,M_3)\in \wp^G_1$.
	In particular, $\sigma (t)\tau (st)=1$ and $\tau (t)\ne
	1$. Moreover, since $A^M$ is of type $C_3$, the formulas in
	Lemma~\ref{lem:C3}\eqref{it:C3:conditions} hold.
	Let $M'=R_2(M)$.
	Since $a^M_{21}=-1$, Lemma~\ref{lem:HS:reflections} implies that
	$M'_1\simeq M(sr,\rho ')$, where
	$\rho '\in \chg{G^{sr}}$ with $\rho '(sr)=-1$, $\rho '(h)=\rho
	(h)\sigma (h)$ for all $h\in G^r\cap G^s$. Further, $M'_2\simeq
	M(s^{-1},\sigma ^*)$ and $M'_3\simeq M(\epsilon s^2t,\tau _2)$ by
	Lemma~\ref{lem:X2}, since $a^M_{23}=-2$. Since $\epsilon ^2=1$ and
	$\sigma(s)=-1$, we obtain that
	$$ \tau_2(r)=-\sigma(\epsilon r^2)\tau (r),\quad
	\tau _2(s)=\sigma (\epsilon s^2)\tau (s),\quad \tau_2(t)=\sigma (t^2)\tau (t).
	$$
	Then
	\begin{align} \label{eq:R2new}
		\rho '(\epsilon s^2t)\tau _2(sr)=\rho(\epsilon s^2t)\sigma (\epsilon s^2t)
		\sigma(\epsilon s^2)\tau (s)(-\sigma (\epsilon r^2)\tau (r))=-\sigma (t)\tau (s).
	\end{align}
	Now
	Lemma~\ref{lem:C3:conditions} for $N=(M'_2,M'_1,M'_3)$
	implies that $(M'_1,M'_3)\in \wp^G_0$ or
	$(M'_1,M'_3)\in \wp^G_1$. In the first case $\sigma (t)\tau (s)=-1$, and hence
	$\tau (t)=-1$ and
	$$(M_2,M_3)\in\wp_1^G(2).$$
	Moreover, $\charK\ne 2$ since $\tau (t)\ne 1$ by the
	first paragraph.

	Assume that $(M'_1,M'_3)\in \wp^G_1$. Since $(M'_2,M'_3)\in \wp^G_1$,
	Remark~\ref{rem:wpreflections} implies that
	$a_{13}^{M'}=a_{23}^{M'}=-2$.
	Moreover, since $A^M$ is of type $C_3$, the Cartan graph of $M$
	has no point with a Cartan matrix of type $A_3$ by Theorem~\ref{thm:ADE}.
	This is a contradiction to
	$a_{13}^{M'}=a_{23}^{M'}=-2$ because of
	Corollary~\ref{cor:columns}.
\end{proof}

\subsection{}
\label{subs:C2}

Recall the assumptions of Section~\ref{section:C}:
$\theta\in\N_{\geq3}$, $G$ is a non-abelian group, and
$M\in\Ggen_\theta^G$ such that $A^M$ is of type $C_\theta$. 

\begin{lem} 
	\label{lem:Ctheta:noncomm}
	The following hold:
	\begin{enumerate}
		\item $|\supp M_i|=2=\dim M_i$ for all $1\le i\le \theta -1$, and
			$\dim M_\theta =1$.
		\item $\supp M_i$ does not commute with $\supp M_{i+1}$ for
			$1\le i\le \theta -2$.
	\end{enumerate}
\end{lem}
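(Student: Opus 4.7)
The plan is to proceed by induction on $\theta$, with base case $\theta=3$ handled by Lemma~\ref{lem:C3:noncomm}. For the inductive step, assume $\theta\geq 4$ and that the statement holds for $\theta-1$.

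First I would peel off vertex $1$ by invoking Lemma~\ref{lem:reduction}. Its hypotheses are satisfied: $a^M_{12}=a^M_{21}=-1$, $a^M_{23}=-1$ (since $\theta\geq 4$ so $(2,3)$ is not the special pair with a double bond), and $a^M_{1j}=0$ for all $j\geq 3$. Setting $H=\langle\cup_{j=2}^\theta \supp M_j\rangle$, the lemma gives $M'=(\Res^G_H M_j)_{2\leq j\leq\theta}\in \Ggen^H_{\theta-1}$, and forces $H$ to be non-abelian (otherwise $G$ would be abelian, contradicting our assumption). After reindexing $\{2,\dots,\theta\}\to\{1,\dots,\theta-1\}$, the Cartan matrix $A^{M'}$ is of type $C_{\theta-1}$ with the double bond still between the last two vertices. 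The induction hypothesis then yields $\dim M_i=|\supp M_i|=2$ for $2\leq i\leq \theta-1$, $\dim M_\theta=1$, and $\supp M_i$ does not commute with $\supp M_{i+1}$ for $2\leq i\leq\theta-2$, since dimensions and supports are preserved under restriction.

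It remains to control the pair $(M_1,M_2)$. Since $a^M_{12}=a^M_{21}=-1$ and $a^M_{1j}=0$ for $j\geq 3$, both Lemmas~\ref{lem:A2comm} and \ref{lem:A2noncomm} apply. If $\supp M_1$ and $\supp M_2$ commuted, Lemma~\ref{lem:A2comm} would force $\dim M_2=1$, contradicting the inductive conclusion $\dim M_2=2$. Hence $\supp M_1$ and $\supp M_2$ do not commute, and Lemma~\ref{lem:A2noncomm} then gives $|\supp M_1|=\dim M_1=2$ (and reconfirms $|\supp M_2|=\dim M_2=2$). This completes both assertions of the inductive step.

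No real obstacle arises in the inductive step; the delicate analysis — where the double bond interacts with the non-commutativity of supports, and where one must rule out that $\supp M_{\theta-1}$ still acts non-commutatively on $\supp M_\theta$ (which would force $\dim M_\theta>1$) — is absorbed entirely into the $\theta=3$ base case already dispatched in Lemma~\ref{lem:C3:noncomm}. The only point requiring care is making sure the reduction targets the correct end of the diagram: we must strip the $A$-end vertex $1$, not the $C$-end vertex $\theta$, so that $a^M_{23}=-1$ and Lemma~\ref{lem:reduction} is genuinely applicable.
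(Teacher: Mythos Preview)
Your proof is correct and follows essentially the same route as the paper: induction on $\theta$ with base case Lemma~\ref{lem:C3:noncomm}, the reduction Lemma~\ref{lem:reduction} to strip vertex~$1$ and pass to a $C_{\theta-1}$ situation over a non-abelian subgroup, and then Lemmas~\ref{lem:A2comm} and~\ref{lem:A2noncomm} to recover the claim for $i=1$. Your write-up is a bit more explicit about checking the hypotheses of Lemma~\ref{lem:reduction}, but the argument is the same.
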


\begin{proof}
	We proceed by induction on $\theta$. For $\theta=3$ the claim holds by
	Lemma~\ref{lem:C3:noncomm}.

	Assume that $\theta >3$.
	Let $H$ be the subgroup of $G$ generated by $\cup _{i=2}^\theta \supp M_i$.
	Then
	$$M'=(\Res ^G_HM_2,\dots ,\Res^G_HM_\theta )\in \Ggen^H_{\theta -1}$$
	by Lemma~\ref{lem:reduction}, and $H$ is non-abelian.
  Clearly, $A^{M'}$ is of type $C_{\theta -1}$. Then induction hypothesis yields
	the claim except for $i=1$. In particular,
	$\dim M_2=|\supp M_2|=2$. Then
	$\supp M_1$ and $\supp M_2$ do not commute by Lemma~\ref{lem:A2comm}, and
	$|\supp M_1|=2=\dim M_1$
	by Lemma~\ref{lem:A2noncomm}.
\end{proof}

Before we prove Theorem~\ref{thm:C}, we have to study skeletons of type $\gamma_\theta$.

\begin{lem}
	\label{lem:C:conditions}
	Assume that $\charK\ne 2$.
	Let $\theta\in\N_{\geq3}$ and let
	$N\in\cF^G_\theta$. The following are equivalent:
	\begin{enumerate}
		\item $N$ has a skeleton of type $\gamma_\theta$. 
		\item 
			There exists $\epsilon\in Z(G)$
			with $\epsilon^2=1$, $\epsilon \ne 1$,
			and for all $i\in\{1,\dots,\theta\}$ and all
			$s_i\in\supp N_i$ there exists a unique character $\sigma_i$ of $G^{s_i}$
			such that $\supp N_i=\{s_i,\epsilon s_i\}$ for all
			$i\in\{1,\dots,\theta-1\}$, $\supp N_\theta=\{s_\theta\}$ and $N_i\simeq M(s_i,\sigma_i)$
			for all $i\in\{1,\dots,\theta\}$, and  
			the following hold:
			\begin{align}
				\label{eq:C:1}\sigma_i(s_j)\sigma_j(s_i)=1 &
				&&\text{if $|i-j|\ge 2$ and $1\le i,j\le \theta$,}\\
				\label{eq:C:1a}\sigma_i(\epsilon)\sigma_j(\epsilon)=1 &
				&&\text{if $|i-j|\ge 2$, $i,j<\theta$,}\\
				\label{eq:C:3}\sigma_{\theta-1}(s_\theta)\sigma_\theta(s_{\theta-1})=-1,\\
				\label{eq:C:4}\sigma_i(\epsilon s_{i+1}^2)\sigma_{i+1}(\epsilon s_i^2)=1 
				&&&\text{for all $i\in \{1,\dots ,\theta -2\}$},\\
				\label{eq:C:5}\sigma_i(s_i)=-1 &&& \text{for all
				$i\in\{1,\dots,\theta\}$},\\
				\label{eq:C:6}s_is_{i+1}=\epsilon s_{i+1}s_i &&& \text{for all
					$i\in \{1,\dots ,\theta -2\}$},\\
				\label{eq:C:7}s_is_j=s_js_i &&& \text{if $j\ge i+2$ or $j=\theta$}.
			\end{align}
	\end{enumerate}
\end{lem}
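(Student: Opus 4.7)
The plan is to treat this lemma in close analogy with Lemma~\ref{lem:ADE:conditions}: both directions amount to a careful translation between the pictorial data of Definition~\ref{def:skeleton}, specialised to $\gamma_\theta$, and the algebraic data attached to a tuple of absolutely simple Yetter-Drinfeld modules.

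For the direction (1)$\Rightarrow$(2), I would start from the shape of $\gamma_\theta$, which forces $\dim N_i=|\supp N_i|=2$ for $1\le i\le \theta-1$ and $\dim N_\theta=1$. Definition~\ref{def:skeleton}(1) then supplies $s_i\in\supp N_i$ and $\sigma_i\in\widehat{G^{s_i}}$ with $N_i\simeq M(s_i,\sigma_i)$. The dashed edges along $1$--$2$--$\cdots$--$(\theta-1)$ record that consecutive supports do not commute, and an inductive application of Lemma~\ref{lem:A3:222} produces a single central element $\epsilon\ne 1$ with $\epsilon^2=1$, $\supp N_i=\{s_i,\epsilon s_i\}$ for $i<\theta$, together with the commutation relation \eqref{eq:C:6}. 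The continuous double edge at the tail and $|\supp N_\theta|=1$ give $s_{\theta-1}s_\theta=s_\theta s_{\theta-1}$; combined with the commuting part of Lemma~\ref{lem:A3:222} this yields \eqref{eq:C:7}. The vertex labels $-1$ and the label on the double edge yield \eqref{eq:C:5} at the last two vertices and \eqref{eq:C:3}, while for the inner $2$-point vertices Lemma~\ref{lem:noncommuting}(4) supplies the remaining cases of \eqref{eq:C:5}. For non-adjacent pairs, Lemma~\ref{lem:a=0for22} converts $a^N_{ij}=0$ into \eqref{eq:C:1} and \eqref{eq:C:1a}; for the dashed consecutive pairs, Corollary~\ref{co:2-2:A2B2CartanMatrix} applied to $(N_i,N_{i+1})$ with $a^N_{i,i+1}=a^N_{i+1,i}=-1$ produces \eqref{eq:C:4}.

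For the converse (2)$\Rightarrow$(1), I would compute the Cartan matrix of $N$ from the data in (2). For $|i-j|\ge 2$, combining \eqref{eq:C:1}, \eqref{eq:C:1a}, \eqref{eq:C:7} with Lemma~\ref{lem:a=0for22} yields $a^N_{ij}=0$. For a consecutive pair $(i,i+1)$ with $i\le\theta-2$, the non-commutation implied by \eqref{eq:C:6}, together with \eqref{eq:C:4}, \eqref{eq:C:5}, and Corollary~\ref{co:2-2:A2B2CartanMatrix}, forces $a^N_{i,i+1}=a^N_{i+1,i}=-1$. At the tail $(\theta-1,\theta)$, the commutation from \eqref{eq:C:7} together with \eqref{eq:C:3} and \eqref{eq:C:5} feeds into Lemmas~\ref{lem:X1}--\ref{lem:X3} and Lemma~\ref{lem:genRosso} to yield $a^N_{\theta-1,\theta}=-2$ and $a^N_{\theta,\theta-1}=-1$; the hypothesis $\charK\ne 2$ enters decisively to exclude the forbidden branch of these pair lemmas. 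Once the Cartan data are in place, the dimensions, commutation pattern, and character labels imposed by Definition~\ref{def:skeleton} match $\gamma_\theta$ exactly, because every decoration prescribed by the diagram is precisely what \eqref{eq:C:1}--\eqref{eq:C:7} encode.

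The main obstacle I anticipate is the bookkeeping at the transition from the dashed chain of $2$-dimensional vertices to the final $1$-dimensional vertex: one must check that \eqref{eq:C:3}, \eqref{eq:C:5}, \eqref{eq:C:7} together with $\charK\ne 2$ really force $a^N_{\theta-1,\theta}=-2$ and $a^N_{\theta,\theta-1}=-1$ without producing any extra constraint that would exclude legitimate families. This is where the clean parallel with the $ADE$ case breaks down and where the mixed-dimension pair-analysis developed in Section~\ref{sub:C:theta=3}, in particular the framework surrounding Lemma~\ref{lem:C3}, has to be invoked in place of the purely symmetric rank-two argument used for the $ADE$ series.
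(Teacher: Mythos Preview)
Your approach is essentially the same as the paper's, and the overall structure is correct. Two points deserve adjustment.

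First, when you invoke Lemma~\ref{lem:a=0for22} for all non-adjacent pairs, note that this lemma assumes both supports have size~$2$; it does not apply to the pairs $(N_i,N_\theta)$ with $i\le\theta-2$, since $|\supp N_\theta|=1$. For those pairs the paper uses Lemma~\ref{lem:X1} (equivalently Lemma~\ref{lem:a=0}) in both directions: $a^N_{i\theta}=0$ if and only if $\sigma_i(s_\theta)\sigma_\theta(s_i)=1$, which is exactly \eqref{eq:C:1} for $j=\theta$; condition \eqref{eq:C:1a} is only stated for $i,j<\theta$, so nothing further is required there.

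Second, your anticipated obstacle at the tail is not one: you do not need the framework of Lemma~\ref{lem:C3}. With $V=N_{\theta-1}$, $W=N_\theta$, $s=s_{\theta-1}$, $t=s_\theta$, you have $\sigma(s)=-1$ and $\sigma(\epsilon^2)=1$ (since $\epsilon^2=1$), so Lemma~\ref{lem:X2}(1) gives $X_2^{V,W}$ absolutely simple and Lemma~\ref{lem:X3}(1) gives $X_3^{V,W}=0$, hence $a^N_{\theta-1,\theta}=-2$. For the other entry, Lemma~\ref{lem:genRosso} with $\tau(t)=\sigma_\theta(s_\theta)=-1$ yields $(2)_{-1}=0$, so $a^N_{\theta,\theta-1}=-1$; the hypothesis $\charK\ne2$ is what keeps $\sigma(t)\tau(s)=-1$ distinct from $1$ so that $a^N_{\theta-1,\theta}\ne 0$. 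No extra constraints arise. (Also, only the vertex $\theta$ carries a label in $\gamma_\theta$; the value $\sigma_{\theta-1}(s_{\theta-1})=-1$ comes from Corollary~\ref{co:2-2:A2B2CartanMatrix} applied to $(N_{\theta-2},N_{\theta-1})$, not from a vertex label.)
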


\begin{proof}
	We first prove that (2) implies (1). For this, the only non-trivial task is to
	show that the Cartan matrix $A^N$ is of type $C_\theta $.
	Now $a^N_{i\,i+1}=a^N_{i+1\,i}=-1$ for all
	$i\in \{1,\dots ,\theta -2\}$ by
	Corollary~\ref{co:2-2:A2B2CartanMatrix},
	$a^N_{i\theta}=a^N_{\theta i}=0$ for $i\in \{1,\dots ,\theta -2\}$
	by Lemma~\ref{lem:X1}, $a^N_{ij}=a^N_{ji}=0$ for $i,j\in \{1,\dots ,\theta
	-1\}$ with $|i-j|>1$ by Lemma~\ref{lem:a=0for22}, and
	$a^N_{\theta -1\,\theta }=-2$ by Lemmas~\ref{lem:X2} and \ref{lem:X3}.
	Finally, $a^N_{\theta\,\theta-1}=-1$
	by Lemma~\ref{lem:genRosso}. This proves (1).

	Assume now that (1) holds. Then the claims in (2) on $\epsilon $ and
	$\supp N_i$ for $i\in
	\{1,\dots ,\theta \}$ including \eqref{eq:C:6} and \eqref{eq:C:7}
	follow from Lemma~\ref{lem:A3:222}(1). Moreover, $A^N$ is of
	type $C_\theta $, and hence
	Proposition~\ref{pro:absimple} implies that $(\ad N_i)^m(N_j)$ is absolutely
	simple or zero for all $i,j\in \{1,\dots ,\theta \}$ with $i\ne j$.
	Then \eqref{eq:C:3} holds by assumption on the skeleton and
	\eqref{eq:C:1}--\eqref{eq:C:5} follow from
	Lemmas~\ref{lem:a=0for22}, \ref{lem:X1}, \ref{lem:genRosso}, and from
	Corollary~\ref{co:2-2:A2B2CartanMatrix}.
\end{proof}

For the reflections one needs the following lemmas. 

\begin{lem}
	\label{lem:skeleton:C3}
	Let $N\in \cF ^G_3$.
	Assume that $\charK\ne 2$ and that $N$ has a skeleton $\cS $ of type $\gamma_3$.
	Then $\cS $ is a skeleton of $R_k(N)$ for all $k\in \{1,2,3\}$.
\end{lem}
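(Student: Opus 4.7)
The strategy parallels that of Lemma~\ref{lem:skeleton:A3}. First, I would invoke Lemma~\ref{lem:C:conditions} to repackage $\cS$ as a tuple of data $(\epsilon, s_1, s_2, s_3, \sigma_1, \sigma_2, \sigma_3)$ satisfying the explicit character conditions \eqref{eq:C:1}--\eqref{eq:C:7}; note in particular that $s_3 \in Z(G)$ (since $\dim N_3 = 1$) and $\epsilon \in Z(G)$ with $\epsilon^2 = 1$. For each $k \in \{1,2,3\}$ I would then compute $R_k(N)$ explicitly and verify that the resulting data again satisfies \eqref{eq:C:1}--\eqref{eq:C:7}, so by Lemma~\ref{lem:C:conditions} the same skeleton $\cS$ is realized.

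The case $k = 1$ only modifies the pair $(1,2)$: Lemma~\ref{lem:HS:reflections} gives $R_1(N)_1 \simeq M(s_1^{-1}, \sigma_1^*)$ and $R_1(N)_2 \simeq M(s_1 s_2, \sigma')$ with $\sigma'(s_1 s_2) = -1$ and $\sigma'|_{G^{s_1}\cap G^{s_2}} = \sigma_1\sigma_2$, while $R_1(N)_3 = N_3$. The verification of \eqref{eq:C:1}--\eqref{eq:C:7} for $R_1(N)$ coincides on the pair $(1,2)$ with that in the proof of Lemma~\ref{lem:skeleton:A3}; the conditions involving vertex $3$ reduce to direct calculations using the centrality of $s_3$ and the identity $s_1 s_2 s_3 = s_3 s_1 s_2$. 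Case $k = 3$ is analogous, only affecting the pair $(2,3)$: since $s_3 \in Z(G)$ and $a^N_{32} = -1$, Lemma~\ref{lem:X1} yields $R_3(N)_3 \simeq M(s_3^{-1}, \sigma_3^{-1})$ and $R_3(N)_2 \simeq M(s_3 s_2, \sigma'')$ with $\sigma''$ given analogously to $\sigma'$, and the conditions follow by routine character manipulations.

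The main case is $k = 2$, where all three entries are modified; this is essentially the computation already performed in the proof of Lemma~\ref{lem:C3:more}. By Lemma~\ref{lem:HS:reflections} one has $R_2(N)_1 \simeq M(s_2 s_1, \rho)$ and $R_2(N)_2 \simeq M(s_2^{-1}, \sigma_2^*)$, while by Lemma~\ref{lem:X2} one has $R_2(N)_3 \simeq M(\epsilon s_2^2 s_3, \tau_2)$, with $\tau_2$ given by the formulas recorded in that proof. Conditions \eqref{eq:C:1}--\eqref{eq:C:7} for $R_2(N)$ then become a small list of character identities among $\rho$, $\sigma_2^*$, $\tau_2$ evaluated on the new support elements, each reducing, after a short manipulation invoking $\epsilon^2 = 1$ and $\epsilon \in Z(G)$, to a corresponding identity for $N$.

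The key obstacle is the verification, in case $k = 2$, of condition \eqref{eq:C:3} for the new pair $(R_2(N)_2, R_2(N)_3)$ and of condition \eqref{eq:C:4} for the new pair $(R_2(N)_1, R_2(N)_2)$: both require the sign $-1$ to emerge from the product of the relevant characters. Here one must exploit that $\tau_2(s_2) = \sigma_2(\epsilon)\sigma_3(s_2)$ and $\sigma_2^*(\epsilon s_2^2 s_3) = \sigma_2(\epsilon)^{-1}\sigma_2(s_3)^{-1}$, whose product simplifies to $(\sigma_2(s_3)\sigma_3(s_2))^{-1} = -1$ by the original \eqref{eq:C:3}, the hypothesis $\charK \neq 2$ guaranteeing that the requisite character values are genuinely non-degenerate (so that $\tau_2$ is not trivial on the relevant subgroup). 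Once these identities are in place, all remaining conditions are similarly direct.
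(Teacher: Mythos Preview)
Your approach is essentially the same as the paper's: translate $\cS$ into the data of Lemma~\ref{lem:C:conditions}, compute each $R_k(N)$ via the appropriate reflection lemmas, and re-verify \eqref{eq:C:1}--\eqref{eq:C:7}. Two small slips to fix. For $k=3$ you should invoke Lemma~\ref{lem:genRosso} rather than Lemma~\ref{lem:X1}: here $N_3$ is the one-dimensional module with central support and you are computing $(\ad N_3)(N_2)$, which is the setting of Lemma~\ref{lem:genRosso}; Lemma~\ref{lem:X1} treats $(\ad V)(W)$ with $|\supp V|=2$. More importantly, in your ``key obstacle'' computation for $k=2$ the condition \eqref{eq:C:3} for $R_2(N)$ reads $\sigma_2^*(\epsilon s_2^2 s_3)\,\tau_2(s_2^{-1})=-1$, since the new support element of $R_2(N)_2$ is $s_2^{-1}$; your product uses $\tau_2(s_2)$ and the resulting expression $\sigma_3(s_2)\sigma_2(s_3)^{-1}$ is \emph{not} equal to $(\sigma_2(s_3)\sigma_3(s_2))^{-1}$ in general. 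With $\tau_2(s_2^{-1})=\sigma_2(\epsilon)\sigma_3(s_2)^{-1}$ the product does collapse to $(\sigma_2(s_3)\sigma_3(s_2))^{-1}=-1$, exactly as the paper obtains from Lemma~\ref{lem:21reflections}(1). (Also note \eqref{eq:C:4} requires the value $1$, not $-1$; the paper handles \eqref{eq:C:4} for the pair $(1,2)$ directly via Lemma~\ref{lem:HS:reflections} and reserves its explicit calculation for \eqref{eq:C:1} on the pair $(1,3)$.)
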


\begin{proof}
	Since $N$ has a skeleton of type $\gamma _3$,
	by Lemma~\ref{lem:C:conditions} there exist
	$r,s,t,\epsilon \in G$ and $\rho \in \chg{G^r}$, $\sigma \in \chg{G^s}$,
	$\tau \in \chg{G}$ as above Lemma~\ref{lem:C3}. Moreover, $\rho ,\sigma ,\tau
	$ satisfy the equations in Lemma~\ref{lem:C:conditions}(2) with $s_1=r$,
	$s_2=s$, $s_3=t$, $\sigma_1=\rho $, $\sigma_2=\sigma $, $\sigma_3=\tau $.
	In particular, $\sigma (t)\tau (s)=\tau (t)=-1$.

	Let $(U,V,W)=R_1(N)$. Then
	Lemma~\ref{lem:HS:reflections} implies
	that $U\simeq M(r^{-1},\rho^*)$, $V\simeq M(rs,\sigma')$ and $W'=W$, where
	$\sigma'\in \chg{G^{rs}}$ with $\sigma'(rs)=-1$ and
	$\sigma'(h)=\rho(h)\sigma(h)$ for all $h\in G^r\cap G^s$. Now we use
	Lemma~\ref{lem:C:conditions} to prove that $\cS $ is a skeleton of $R_1(N)$.
	Lemma~\ref{lem:C:conditions}(2) for $N$, especially Equations~\eqref{eq:C:1} and
	\eqref{eq:C:5}, imply that $\rho (t)\tau (r)=1$ and $\rho (r)=-1$. Hence
	$\rho^*(t)\tau(r^{-1})=1$ and $\rho ^*(r^{-1})=-1$. Further, 
	$\rho^*(\epsilon (rs)^2)\sigma '(\epsilon r^{-2})=1$ by
	Lemma~\ref{lem:HS:reflections}. Finally,
	\[
	\sigma'(t)\tau(rs)=\rho(t)\sigma(t)\tau(r)\tau(s)=-1.
	\]

	Let now $(U',V',W')=R_2(N)$. Lemmas~\ref{lem:HS:reflections} and
	\ref{lem:21reflections}(1) imply that
	$U'\simeq M(sr,\rho')$, $V'\simeq M(s^{-1},\sigma^*)$
	and $W'\simeq M(\epsilon s^2t,\tau')$, where $\rho' \in \chg{G^{sr}}$
	with $\rho'(sr)=-1$, $\rho'(h)=\rho(h)\sigma(h)$ for all $h\in G^r\cap G^s$,
	and $\tau'\in \chg G$ with $\tau'(r)=-\sigma(\epsilon
	r^2)\tau(r)$, $\tau'(z)=\sigma(zr^{-1}zr)\tau(z)$ for all $z\in G^s$.
	Again we use Lemma~\ref{lem:C:conditions} to prove that $\cS $ is a skeleton
	of $R_2(N)$.
	Lemmas~\ref{lem:HS:reflections} and \ref{lem:21reflections}(1) imply
	that $\rho '(sr)=-1$, $\sigma^*(s^{-1})=-1$,
	$\tau '(\epsilon s^2t)=-1$, and
	$$\rho '(\epsilon s^{-2})\sigma ^*(\epsilon (rs)^2)=1,\quad
	\sigma ^*(\epsilon s^2t)\tau _2(s^{-1})=-1.$$
	Finally,
	\begin{align*}
		\rho'(\epsilon s^2t)\tau'(sr)&=\rho(\epsilon s^2t)\sigma(\epsilon
		s^2t)(-\sigma(\epsilon r^2)\tau(r))\sigma(sr^{-1}sr)\tau(s)\\
		&=-\rho(\epsilon s^2)\sigma(\epsilon
		r^2)\rho(t)\tau(r)\sigma(\epsilon^2s^4)\sigma(t)\tau(s)\\
		&=1.
	\end{align*}
	Thus $\cS $ is a skeleton of $R_2(N)$.

	Now let $(U'',V'',W'')=R_3(N)$. Then
	Lemmas~\ref{lem:21reflections}(5) and \ref{lem:genRosso}
	imply that $U''=U$, $V''\simeq M(st, \sigma'')$ and $W''\simeq
	M(t^{-1},\tau^*)$ where $\sigma''\in \chg{G^{st}}$ with
	$\sigma''(z)=\sigma(z)\tau(z)$ for all $z\in G^s$.
	Lemmas~\ref{lem:21reflections}(5) and \ref{lem:genRosso} imply all conditions in
	Lemma~\ref{lem:C:conditions}(2) for $(U'',V'',W'')$ except
	\eqref{eq:C:4} and
	\eqref{eq:C:1} for $i=1$, $j=3$.
	These two we obtain as follows:
	\begin{gather*}
		\rho(\epsilon s^2t^2)\sigma''(\epsilon r^2)
		=\rho(\epsilon s^2)\sigma(\epsilon r^2)\rho(t^2)\tau(\epsilon r^2)=1,\\
	  \rho(t^{-1})\tau^*(r)=\rho(t)^{-1}\tau(r)^{-1}=1.
	\end{gather*}
	Thus $\cS $ is a skeleton of $R_3(N)$.
\end{proof}

\begin{pro}
	\label{pro:skeleton:C}
	Let $\theta\geq3$ and  $N\in\cF_\theta^G$. If
	$N$ has a skeleton $\cS $ of type $\gamma_\theta$, then
	$A^N$ is of type $C_\theta $ and
	$\cS $ is a skeleton of  $R_k(N)$ for all $k\in \{1,\dots ,\theta \}$.
\end{pro}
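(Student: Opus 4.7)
The plan is to mirror the proof of Proposition~\ref{pro:skeleton:ADE} for the ADE case. First, the assertion that $A^N$ is of type $C_\theta$ is essentially built into the definition of a skeleton: by item~(2) of Definition~\ref{def:skeleton} and the constraint that at least one of $a^N_{ij}$, $a^N_{ji}$ is $-1$ whenever they are nonzero, the edges and orientations of the diagram $\gamma_\theta$ in Figure~\ref{fig:fdNA} determine $A^N$ uniquely, and they match the Cartan matrix of type $C_\theta$. (Alternatively this is exactly the implication (1)$\Rightarrow$(2) in Lemma~\ref{lem:C:conditions} combined with Corollary~\ref{co:2-2:A2B2CartanMatrix} and Lemmas~\ref{lem:X1}, \ref{lem:X2}, \ref{lem:X3}, \ref{lem:genRosso}.)

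For the reflection claim, I would apply Remark~\ref{rem:connectedtriples}: it suffices to verify that for every triple $(i,j,k)$ of pairwise distinct indices in $\{1,\dots,\theta\}$ such that the restriction of $\cS$ to the vertices $\{i,j,k\}$ is connected, the restricted triple $(N_i,N_j,N_k)$ admits a skeleton after the reflection $R_1$ on its first entry, and that this skeleton agrees with the restriction of $\cS$.

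Since $\gamma_\theta$ is a linear chain, the connected three-vertex sub-skeletons are exactly those obtained by restricting to three consecutive vertices $\{i,i+1,i+2\}$ with $1\le i\le \theta-2$. These split into two cases: for $1\le i\le \theta-3$, the restriction is of type $\alpha_3$ (all vertices are doubled and all edges are simply-laced), and Lemma~\ref{lem:skeleton:A3} yields the desired invariance under $R_1$; for $i=\theta-2$, the restriction is of type $\gamma_3$ (two doubled vertices followed by a single-point vertex joined by a double edge), and Lemma~\ref{lem:skeleton:C3} yields the claim. The hypothesis $\charK\ne 2$ needed by Lemma~\ref{lem:skeleton:C3} is automatic, since it is part of the definition of a skeleton of type $\gamma_\theta$ as indicated in Figure~\ref{fig:fdNA}.

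I do not anticipate a genuine obstacle: the argument is a bookkeeping reduction via Remark~\ref{rem:connectedtriples} to the two rank-three sub-cases already handled. The only point requiring some care is to confirm, using Lemma~\ref{lem:disconnected}, that any non-connected triple $(N_i,N_j,N_k)$ behaves as expected under $R_1$, but this is exactly what Remark~\ref{rem:connectedtriples} encapsulates, so the invocation is direct.
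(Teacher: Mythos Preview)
Your proposal is correct and follows essentially the same approach as the paper: reduce via Remark~\ref{rem:connectedtriples} to connected rank-three sub-skeletons and then invoke Lemmas~\ref{lem:skeleton:A3} and~\ref{lem:skeleton:C3}. The paper's proof simply reads ``Proceed as in the proof of Proposition~\ref{pro:skeleton:ADE} and apply Lemmas~\ref{lem:skeleton:C3} and~\ref{lem:skeleton:A3},'' and you have spelled out exactly that.
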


\begin{proof}
	Proceed as in the proof of Proposition~\ref{pro:skeleton:ADE} and apply
	Lemmas~\ref{lem:skeleton:C3} and \ref{lem:skeleton:A3}.
\end{proof}

We are now ready to prove Theorem~\ref{thm:C}. 

\begin{proof}[Proof of Theorem \ref{thm:C}]
	We prove the implications
	\eqref{it:C:skeleton}$\Rightarrow $\eqref{it:C:NA}$\Rightarrow
	$\eqref{it:C:fgroupoid}$\Rightarrow$\eqref{it:C:skeleton}
	and \eqref{it:C:skeleton}$\Rightarrow
	$\eqref{it:C:sgroupoid}$\Rightarrow$\eqref{it:C:fgroupoid}.

  \eqref{it:C:skeleton}$\Rightarrow $\eqref{it:C:NA}.
	Since $M\in \Ggen ^G_\theta $ has a skeleton of type
	$\gamma _\theta $, Proposition~\ref{pro:skeleton:C} implies
	that $M$ admits all reflections and $\cW (M)$ is standard of type
  $C_\theta $.
  Moreover,
	from Lemma~\ref{lem:C:conditions} we conclude that
	$\NA (M_i)$ is
	finite-dimensional for all $i\in \{1,\dots ,\theta \}$.
	More precisely,
	$$\cH _{\NA (M_i)}(t)=(2)_t^2,\quad \cH_{\NA (M_\theta )}(t)=(2)_t$$
	for all $i\in \{1,\dots ,\theta -1\}$.
	Since the long roots are on the orbit of $\alpha_\theta $ and the short roots
	on the orbit of (any) $\alpha_i$ with $i<\theta $,
	Theorem~\ref{thm:HS}
	implies that
	$\NA (M)$ is finite-dimensional with the claimed Hilbert series.

	\eqref{it:C:NA}$\Rightarrow $\eqref{it:C:fgroupoid}.
  Since $\dim \NA (M)<\infty $, the tuple $M$ admits all
	reflections by \cite[Cor. 3.18]{MR2766176} and the Weyl groupoid is finite by
	\cite[Prop. 3.23]{MR2766176}.

	\eqref{it:C:fgroupoid}$\Rightarrow $\eqref{it:C:skeleton}.
	It is assumed that $M$ admits all
  reflections, $A^M$ is of type $C_\theta $, and $\cW (M)$ is finite. Thus
  $\cC (M)$ is a connected indecomposable finite Cartan graph by
Theorem~\ref{thm:C(M)finiteCartan}.
	For the proof of \eqref{it:C:skeleton} we just have to verify
	the conditions in Lemma~\ref{lem:C:conditions}(2)
	and that $\charK \ne 2$. Now Lemma~\ref{lem:Ctheta:noncomm} tells that
	$\supp M_i=\dim M_i=2$ for all $i\in \{1,\dots ,\theta -1\}$, $\dim M_\theta
	=1$, and $\supp M_i$ and $\supp M_{i+1}$ do not commute
	for $1\le i\le \theta -2$. Moreover, an iterated application of
	Lemma~\ref{lem:reduction} implies that
  $$(\Res^G_H M_{\theta -2},\Res^G_H M_{\theta -1},\Res^G_H M_\theta )\in 
	\Ggen^H_3,$$
	where $H$ is the (non-abelian) subgroup of $G$ generated by $\cup
	_{i=\theta-2}^\theta \supp M_i$. Therefore Lemma~\ref{lem:C3:more}
	implies that $\charK=2$ and that the conditions in 
	Lemma~\ref{lem:C:conditions}(2) hold whenever $i,j\in \{\theta-2,\theta-1,\theta \}$.
	The remaining claims in Lemma~\ref{lem:C:conditions}(2)
	follow from Lemmas~\ref{lem:a=0for22}, \ref{lem:genRosso}, and
Corollary~\ref{co:2-2:A2B2CartanMatrix}.

  \eqref{it:C:skeleton}$\Rightarrow $\eqref{it:C:sgroupoid}.
	Since $M\in \Ggen ^G_\theta $ has a skeleton of type
	$\gamma _\theta $, Proposition~\ref{pro:skeleton:C} implies
	that $M$ admits all reflections and $\cW (M)$ is standard of type
  $C_\theta $.

  \eqref{it:C:sgroupoid}$\Rightarrow $\eqref{it:C:fgroupoid}.
  This is clear, see e.\,g.~\cite[Thm.\,3.3]{MR2498801}.
\end{proof}

\section{Proof of Theorems \ref{thm:B1} and \ref{thm:B2}: The case $B$}
\label{section:B}

In the whole section let $G$ be a non-abelian group. In order to prove
Theorems~\ref{thm:B1} and \ref{thm:B2}, we collect first some information on
skeletons of type $\beta _\theta $, $\beta '_\theta $ and $\beta ''_\theta $ for
$\theta \ge 3$, on tuples in
$\cF^G_\theta $ with such skeletons, and on a
particular Cartan graph.

Extending the definition of a skeleton of type $\beta '_3$ and $\beta ''_3$, we
say that the skeletons in Figure~\ref{fig:beta'} \textbf{are of type} $\beta _\theta '$
and $\beta ''_\theta $, respectively. We will need them for the proof of
Theorem~\ref{thm:B1}. We want to stress that the skeletons of type $\beta
'_\theta $ and $\beta ''_\theta $ are of finite type if and only if $\theta=3$.

\begin{figure}
	\raisebox{8pt}{$\beta '_\theta $}
  \begin{picture}(180,25)
	  \DDvertexone{10}{10}
    \put(9,15){\small $p$}
    \put(13,10){\line(1,0){23}}
	  \put(20,15){\small $p^{-1}$}
	  \DDvertexone{39}{10}
    \put(38,15){\small $p$}
    \put(42,10){\line(1,0){14}}
	  \put(48,15){\small $p^{-1}$}
	  \put(60,7){$\cdots$}
    \put(76,10){\line(1,0){18}}
	  \put(77,15){\small $p^{-1}$}
	  \DDvertexone{97}{10}
    \put(96,15){\small $p$}
    \put(100,10){\line(1,0){23}}
	  \put(107,15){\small $p^{-1}$}
	  \DDvertexone{126}{10}
    \put(125,15){\small $p$}
    \put(129,11){\line(1,0){23}}
    \put(129,9){\line(1,0){23}}
	  \put(137,15){\small $p^{-1}$}
    \put(137,7){$>$}
	  \DDvertexthree{155}{10}
  \end{picture}
	\raisebox{8pt}{$(3)_{-p}=0$}\\
	\raisebox{8pt}{$\beta ''_\theta $}
  \begin{picture}(180,25)
	  \DDvertexone{10}{10}
    \put(13,10){\line(1,0){23}}
    \put(9,15){\small $p$}
	  \put(20,15){\small $p^{-1}$}
	  \DDvertexone{39}{10}
    \put(38,15){\small $p$}
    \put(42,10){\line(1,0){14}}
	  \put(48,15){\small $p^{-1}$}
	  \put(60,7){$\cdots$}
    \put(76,10){\line(1,0){18}}
	  \put(77,15){\small $p^{-1}$}
	  \DDvertexone{97}{10}
    \put(96,15){\small $p$}
    \put(100,9){\line(1,0){26}}
    \put(100,11){\line(1,0){26}}
	  \put(103,15){\small $p^{-1}$}
    \put(107,7){$>$}
	  \DDvertextwo{129}{8}
    \put(120,15){\small $(-p)$}
    \multiput(132,11)(5,0)5{\line(1,0)3}
    \multiput(132,9)(5,0)5{\line(1,0)3}
    \put(140,7){$>$}
	  \DDvertexthree{158}{10}
  \end{picture}
	\raisebox{8pt}{$(3)_{-p}=0$}
  \caption{Skeletons of type $\beta '_\theta $ and $\beta ''_\theta $.}
  \label{fig:beta'}
\end{figure}

For tuples with skeletons of type $\beta _\theta $, $\beta'_\theta $, and
$\beta''_\theta $,
respectively, where $\theta \ge 3$, one obtains the following.

\begin{lem}
	\label{lem:B:conditions}
	Suppose that $\charK=3$. 
	Let $\theta\in\N_{\ge 3}$ and $M\in\cF^G_\theta$.
	The following are equivalent:
	\begin{enumerate}
		\item $M$ has a skeleton of type $\beta_\theta$. 
		\item There exist $\epsilon\in Z(G)$ with $\epsilon^2=1$, $\epsilon \ne 1$,
			and for all
			$i\in\{1,\dots,\theta\}$ and $s_i\in\supp M_i$ a unique character
			$\sigma_i$ of $G^{s_i}$ such that $\supp M_i=\{s_i,\epsilon s_i\}$
			and $M_i\simeq M(s_i,\sigma_i)$ for all $i\in\{1,\dots,\theta\}$,
			and such that the following conditions hold:
			\begin{align}
				\label{eq:B:1}\sigma_i(s_j)\sigma_j(s_i)=\sigma_i(\epsilon
)\sigma_j(\epsilon )=1
				&&&
				\text{for all $i,j$ such that $|i-j|\ge 2$},\\
				\label{eq:B:2}\sigma_i(\epsilon s_{i+1}^2)\sigma_{i+1}(\epsilon s_i^2)=1 &&&
				\text{for all $i\in \{1,\dots ,\theta -1\}$},\\
				\label{eq:B:3}\sigma_i(s_i)=-1 &&& \text{for all
				$i\in\{1,\dots,\theta-1\}$},\\
				\label{eq:B:3a}\sigma_\theta(s_\theta)=1,\\
				\label{eq:B:4}s_is_{i+1}=\epsilon s_{i+1}s_i &&& \text{for all
					$i\in \{1,\dots ,\theta -1\}$},\\
				\label{eq:B:5}s_is_j=s_js_i &&& \text{for all $i,j$ such that $|i-j|\ge
					2$}.
			\end{align}
	\end{enumerate}
\end{lem}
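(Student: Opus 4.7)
The plan is to follow the template of Lemmas~\ref{lem:ADE:conditions} and~\ref{lem:C:conditions}, splitting the argument into the two implications. For $(1)\Rightarrow (2)$, the decorations of a skeleton of type $\beta_\theta$ tell me immediately that each $M_i$ has $|\supp M_i|=2=\dim M_i$, that consecutive supports $\supp M_i,\supp M_{i+1}$ do not commute (dashed edges) while non-consecutive ones do (no edge), and that the Cartan matrix is of type $B_\theta$ with the double bond at the $(\theta{-}1,\theta)$ end. Applying Lemma~\ref{lem:A3:222} to each adjacent pair yields $\epsilon_i\in Z(G)$ with $\epsilon_i^2=1$ and $s_is_{i+1}=\epsilon_is_{i+1}s_i$; an induction using that the other $M_j$ have supports commuting with both $\supp M_i$ and $\supp M_{i+1}$ forces all the $\epsilon_i$ to coincide with a common $\epsilon$, which gives~\eqref{eq:B:4}, and then~\eqref{eq:B:5} is automatic. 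Writing each $M_i\simeq M(s_i,\sigma_i)$, I obtain $\sigma_i(s_i)=-1$ for $i<\theta$ from Lemma~\ref{lem:noncommuting}(4), while $\sigma_\theta(s_\theta)=1$ is forced by the $B_2$-type structure of the pair $(M_{\theta-1},M_\theta)$ together with the characteristic-$3$ hypothesis (the relevant case of Appendix~\ref{appendix:rank2}). Conditions~\eqref{eq:B:1} and~\eqref{eq:B:2} then follow from Lemma~\ref{lem:a=0for22} for commuting pairs and from Corollary~\ref{co:2-2:A2B2CartanMatrix} for the simply-laced adjacent pairs.

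For $(2)\Rightarrow (1)$, I reverse the process and verify that $A^M$ is of type $B_\theta$ from the listed data. Lemma~\ref{lem:a=0for22}, combined with~\eqref{eq:B:1} and~\eqref{eq:B:5}, yields $a^M_{ij}=0$ for $|i-j|\ge 2$. Corollary~\ref{co:2-2:A2B2CartanMatrix} applied to each pair $(M_i,M_{i+1})$ with $i<\theta-1$, using~\eqref{eq:B:2}--\eqref{eq:B:4}, gives $a^M_{i,i+1}=a^M_{i+1,i}=-1$. The corner $(M_{\theta-1},M_\theta)$ is controlled by Lemmas~\ref{lem:X2} and~\ref{lem:X3} (to compute $a^M_{\theta-1,\theta}$) together with Lemma~\ref{lem:genRosso} (to compute $a^M_{\theta,\theta-1}$); the combination $\charK=3$, $\sigma_\theta(s_\theta)=1$, $\sigma_{\theta-1}(s_{\theta-1})=-1$ and~\eqref{eq:B:2} is exactly what produces $a^M_{\theta-1,\theta}=-1$, $a^M_{\theta,\theta-1}=-2$. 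The remaining decorations of the skeleton (dashed versus continuous lines, label-free edges since $\dim M_i>1$ for every $i$, and the orientation of the double bond) are then read off directly from~\eqref{eq:B:3}--\eqref{eq:B:5}, completing the verification.

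The main obstacle I expect is the $B_2$-corner between vertices $\theta-1$ and $\theta$: one must check that the listed characters, in characteristic~$3$, produce exactly the $B_2$ Cartan matrix at $(M_{\theta-1},M_\theta)$ and rule out degenerations (for instance to an $A_2$ pattern or to the rank-two cases appearing in type $\beta'_3$ or $\beta''_3$ of Figure~\ref{fig:beta'}). This reduces to a rank-two computation governed by the results recalled in Appendix~\ref{appendix:rank2} and by the $(\ad M_i)^m(M_j)$ computations of Appendix~\ref{appendix:reflections}; once that single corner is nailed down, the rest of the verification in both directions is a routine book-keeping exercise entirely parallel to the proofs of Lemmas~\ref{lem:ADE:conditions} and~\ref{lem:C:conditions}.
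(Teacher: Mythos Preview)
Your overall strategy mirrors the paper's proof closely, and most of the argument is sound. There is, however, one genuine misstep at the $B_2$-corner in the $(2)\Rightarrow(1)$ direction.

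You propose to compute $a^M_{\theta-1,\theta}$ via Lemmas~\ref{lem:X2} and~\ref{lem:X3}, and $a^M_{\theta,\theta-1}$ via Lemma~\ref{lem:genRosso}. But all three of these lemmas are stated under the hypothesis that one of the two Yetter-Drinfeld modules is one-dimensional (supported on a central element $t\in Z(G)$, respectively $r\in Z(H)$). In the $\beta_\theta$ setting, \emph{both} $M_{\theta-1}$ and $M_\theta$ have two-dimensional support $\{s_i,\epsilon s_i\}$, so none of these lemmas applies. You may have carried this over from the type-$C$ argument (Lemma~\ref{lem:C:conditions}), where indeed $\dim M_\theta=1$ and those lemmas are the right tools; here they are not.

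The fix is immediate: the paper handles this corner with Corollary~\ref{co:2-2:A2B2CartanMatrix}(2), which is tailored precisely to the $2+2$ situation over an epimorphic image of $\Gamma_2$. That corollary says $a^{(V,W)}_{12}=-1$, $a^{(V,W)}_{21}=-2$ if and only if $\rho(\epsilon h^2)\sigma(\epsilon g^2)=1$, $\rho(g)=-1$, $\sigma(h)=1$, and $\charK=3$ --- exactly your conditions~\eqref{eq:B:2}, \eqref{eq:B:3}, \eqref{eq:B:3a}. The same corollary, in the only-if direction, also gives you $\sigma_\theta(s_\theta)=1$ in the $(1)\Rightarrow(2)$ direction more directly than your appeal to Appendix~\ref{appendix:rank2}. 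With this single substitution your argument is complete and matches the paper's.
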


\begin{proof}
	We first prove that (2) implies (1). By
	Definition~\ref{def:skeleton} and the assumptions in (2),
	it only remains to prove that the Cartan matrix $A^M$ is of type $B_\theta $.
	Now $a^M_{i\,i+1}=a^M_{i+1\,i}=-1$ for all
	$i\in \{1,\dots ,\theta -2\}$ by
	Corollary~\ref{co:2-2:A2B2CartanMatrix}(1),
	$a^M_{ij}=a^M_{ji}=0$ for $i,j\in \{1,\dots ,\theta \}$
	with $|i-j|>1$ by Lemma~\ref{lem:a=0for22}, and
	$a^M_{\theta -1\,\theta }=-1$,
	$a^M_{\theta \,\theta -1}=-2$
	by Corollary~\ref{co:2-2:A2B2CartanMatrix}(2).
	This proves (1).

	Assume now that (1) holds. Then the claims in (2) on $\supp M_i$ for all $i\in
	\{1,\dots ,\theta \}$ including \eqref{eq:B:4} and \eqref{eq:B:5}
	follow from Lemma~\ref{lem:A3:222}(1) and the shape of the skeleton of $M$.
	Moreover, $A^M$ is of type $B_\theta $ by (1) and the definition of a
	skeleton.
	Then \eqref{eq:B:1}--\eqref{eq:B:3a} follow from
	Lemmas~\ref{lem:a=0for22} and from
	Corollary~\ref{co:2-2:A2B2CartanMatrix}.
\end{proof}

\begin{lem}
	\label{lem:B':conditions}
	Let $\theta\in\N_{\ge 3}$ and $M\in\cF^G_\theta$.
	The following are equivalent:
	\begin{enumerate}
		\item $M$ has a skeleton of type $\beta '_\theta $, and there exist $t_1,t_2\in
			\supp M_\theta $ such that $t_1t_2\ne t_2t_1$.
		\item 
			Let $s_i\in \supp M_i$ for all $i\in \{1,\dots ,\theta \}$.
			There exists $\epsilon \in G$
			with $\epsilon^3=1$, $\epsilon \ne 1$,
			and unique characters $\sigma_i$ of $G^{s_i}$
			such that $\supp M_i=\{s_i\}$ for all
			$i\in\{1,\dots,\theta-1\}$, $\supp M_\theta=\{s_\theta,\epsilon s_\theta
			,\epsilon ^2s_\theta \}$ and $M_i\simeq M(s_i,\sigma_i)$
			for all $i\in\{1,\dots,\theta\}$, and  
			the following hold:
			\begin{align}
				\label{eq:B':1}\sigma_i(s_j)\sigma_j(s_i)=1 &
				&&\text{if $|i-j|\ge 2$,}\\
				\label{eq:B':2}\sigma_i(s_{i+1})\sigma_{i+1}(s_i)=p^{-1} &
				&&\text{for all $i\in \{1,\dots ,\theta -1\}$,}\\
				\label{eq:B':3}\sigma_i(s_i)=p &&& \text{for all
				$i\in\{1,\dots,\theta-1\}$},\\
				\label{eq:B':4}\sigma_\theta (s_\theta )=-1,\\
				\label{eq:B':5}\epsilon s_\theta =s_\theta \epsilon ^{-1},
			\end{align}
			where $p\in \K$ with $1-p+p^2=0$.
	\end{enumerate}
\end{lem}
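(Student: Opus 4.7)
My proof would follow the pattern of Lemma~\ref{lem:C:conditions}. For the direction (2)$\Rightarrow $(1) I would start from the data in (2) and verify each ingredient of a skeleton of type $\beta '_\theta $: the shapes of the vertices ($\dim M_i=|\supp M_i|=1$ with label $p$ for $i<\theta $, and three points at vertex $\theta $), the multiplicities and orientations of the edges, and the scalar labels. The nontrivial content reduces to computing the entries of the Cartan matrix $A^M$. For $|i-j|\ge 2$, the relations \eqref{eq:B':1} and \eqref{eq:B':5} combined with Lemma~\ref{lem:X1} yield $a^M_{ij}=0$. For $i<\theta -1$, \eqref{eq:B':2}--\eqref{eq:B':3} give $a^M_{i,i+1}=a^M_{i+1,i}=-1$ by a standard rank-two diagonal computation. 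For the last edge, \eqref{eq:B':2}--\eqref{eq:B':5} together with the hypothesis $1-p+p^2=0$ yield $a^M_{\theta -1,\theta }=-1$ and $a^M_{\theta ,\theta -1}=-2$ via the analysis of $(\ad M_{\theta -1})^k(M_\theta )$ in Lemmas~\ref{lem:X2}, \ref{lem:X3}, and \ref{lem:genRosso}.

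For (1)$\Rightarrow $(2), I would first extract from the skeleton that $\dim M_i=|\supp M_i|=1$ for $i<\theta $ with label $\sigma _i(s_i)=p$, and that $M_\theta $ is three-dimensional with $\supp M_\theta =s_\theta ^G$ consisting of three elements. The conjugation action of $G$ on $\supp M_\theta $ gives a homomorphism $\varphi \colon G\to S_3$ with transitive image. Since every $t\in \supp M_\theta $ fixes itself, $\varphi (t)$ is either the identity or a transposition; the assumption $t_1t_2\ne t_2t_1$ forces $\varphi (t_2)$ to be a transposition, and together with transitivity this forces $\varphi (G)=S_3$. Pulling back a $3$-cycle produces an element cyclically permuting $\supp M_\theta $ under conjugation, and after multiplying by a suitable element of $\ker \varphi \subseteq G^{s_\theta }$ and using that $\varphi (s_\theta )$ is a transposition inverting this $3$-cycle, I can produce $\epsilon \in G$ of order exactly $3$ satisfying $s_\theta \epsilon s_\theta ^{-1}=\epsilon ^{-1}$; then $\supp M_\theta =\{s_\theta ,\epsilon s_\theta ,\epsilon ^2s_\theta \}$ is immediate. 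The scalar identities \eqref{eq:B':1}--\eqref{eq:B':4} now translate the labels of the skeleton via Lemma~\ref{lem:a=0} (for non-adjacent pairs) and Corollary~\ref{co:2-2:A2B2CartanMatrix} together with Lemma~\ref{lem:genRosso} for the $(\theta -1,\theta )$ edge, where the value $\sigma _\theta (s_\theta )=-1$ is forced by the Rosso-type relation $1-p+p^2=0$ combined with $a^M_{\theta ,\theta -1}=-2$.

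The main obstacle is the rigid shape requested for $\epsilon $: not merely an element cyclically permuting $\supp M_\theta $, but one of order exactly $3$ that is inverted by $s_\theta $. The homomorphism $\varphi $ pins $\epsilon $ down only modulo $\ker \varphi $, so to isolate a canonical representative with the required commutation relations I would analyze the subgroup $H=\langle \supp M_\theta \rangle $ directly. This $H$ surjects onto $S_3$ and is generated by three conjugate elements whose images in $S_3$ are transpositions, which constrains $H$ enough to exhibit a splitting of $H\to S_3$ and thereby produce $\epsilon $ inside the preimage of the alternating subgroup $A_3$ with the required properties; the relation $\epsilon s_\theta =s_\theta \epsilon ^{-1}$ then reflects the fact that transpositions in $S_3$ invert $3$-cycles under conjugation.
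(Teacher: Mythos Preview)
Your overall strategy matches the paper's, but there are two concrete issues.

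First, the lemmas you invoke for the $(\theta-1,\theta)$ edge do not apply. Lemmas~\ref{lem:X2} and \ref{lem:X3} (and Corollary~\ref{co:2-2:A2B2CartanMatrix}) treat a pair $(V,W)$ with $|\supp V|=2$ and $|\supp W|=1$; here $|\supp M_{\theta-1}|=1$ and $|\supp M_\theta|=3$, so neither direction fits. The paper instead uses Lemma~\ref{lem:genRosso} for $a^M_{\theta-1,\theta}=-1$ and $a^M_{i\theta}=0$ (since $s_{\theta-1}\in Z(G)$ acts by a scalar on $M_\theta$), and the dedicated Lemma~\ref{lem:1+3=typeB2} for $a^M_{\theta,\theta-1}=-2$ and for \eqref{eq:B':4}. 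Your claim that $\sigma_\theta(s_\theta)=-1$ follows from a ``Rosso-type relation'' is not right: it comes from the $\Gamma_3$-specific computation in Lemma~\ref{lem:1+3=typeB2}.

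Second, your construction of $\epsilon$ is both more complicated and not fully justified. You assert that the surjection $H\to S_3$ splits because $H$ is generated by three conjugate elements mapping to transpositions, but no such general splitting principle exists, and you give no argument. The paper bypasses this entirely via Lemma~\ref{lem:3}: given noncommuting $t_1,t_2\in\supp M_\theta$ with $|t_1^G|=3$, simply set $\epsilon:=t_2t_1^{-1}$. A short direct computation (the content of Lemma~\ref{lem:3}) then shows $\epsilon^3=1$, $t_1\epsilon=\epsilon^{-1}t_1$, and $\supp M_\theta=\{t_1,\epsilon t_1,\epsilon^2 t_1\}$. No splitting, no pullback of a $3$-cycle, no adjustment modulo $\ker\varphi$ is needed.
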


\begin{proof}
	We first prove that (2) implies (1). According to
	Definition~\ref{def:skeleton} and the assumptions in (2),
	it only remains to prove that the Cartan matrix $A^M$ is of type $B_\theta $.
	Now $a^M_{i\,i+1}=a^M_{i+1\,i}=-1$ and
	$a^M_{ij}=0$ for all $i\in \{1,\dots ,\theta -2\}$ and all $j>i+1$ with $j\ne
	\theta $ by Lemma~\ref{lem:Rosso}.
	Further, $a^M_{\theta -1\,\theta}=-1$ and
	$a^M_{i\theta }=0$ (and hence $a^M_{\theta i}=0$) for all $i<\theta-1$
	by Lemma~\ref{lem:genRosso}.
    Finally $a^M_{\theta \,\theta -1}=-2$ because of Lemma~\ref{lem:1+3=typeB2}.
	This proves (1).

	Assume now that (1) holds. In particular, $A^M$ is of type $B_\theta $
	by the definition of a skeleton and a skeleton of type $\beta '_\theta $.
	Let $s_\theta \in \supp M_\theta $.
	Since $|\supp M_\theta |=3$, (1) and Lemma~\ref{lem:3} imply
	that there exists $\epsilon \in G$
	such that $\epsilon ^3=1$, $\epsilon \ne 1$,
  $\epsilon s_\theta =s_\theta \epsilon ^{-1}$, and
	$\supp M_\theta =\{s_\theta ,\epsilon s_\theta,\epsilon ^2s_\theta \}$.
	Then \eqref{eq:B':1} follows from Lemma~\ref{lem:genRosso}, since $a^M_{ij}=0$
	whenever $|i-j|\ge 2$. Equations~\eqref{eq:B':2} and \eqref{eq:B':3} are given
	in the skeleton. Since $a^M_{\theta\,\theta-1}=-2$, \eqref{eq:B':4}
follows from Lemma~\ref{lem:1+3=typeB2}.
\end{proof}

\begin{lem}
	\label{lem:B'':conditions}
	Let $M\in\cF^G_\theta $. The
	following are equivalent:
	\begin{enumerate}
		\item $M$ has a skeleton of type $\beta ''_\theta $.
		\item 
			Let $s_i\in \supp M_i$ for all $i\in \{1,\dots ,\theta \}$.
			There exists $\epsilon \in G$
			with $\epsilon^3=1$, $\epsilon \ne 1$,
			and $\sigma_i\in \chg{G^{s_i}}$ for all $i\in \{1,\dots ,\theta \}$
			such that $\supp M_i=\{s_i\}$ for all
			$i\in\{1,\dots,\theta-2\}$, $\supp M_{\theta -1}=\{s_{\theta -1},\epsilon
			s_{\theta -1}\}$,
			$\supp M_\theta=\{s_\theta,\epsilon s_\theta
			,\epsilon ^2s_\theta \}$, $M_i\simeq M(s_i,\sigma_i)$
			for all $i\in\{1,\dots,\theta\}$, and  
			the following hold:
			\begin{align}
				\label{eq:B'':1}\sigma_i(s_j)\sigma_j(s_i)=1 &
				&&\text{if $|i-j|\ge 2$, $i,j\le \theta $,}\\
				\label{eq:B'':2}\sigma_i(s_{i+1})\sigma_{i+1}(s_i)=p^{-1} &
				&&\text{for all $i\in \{1,\dots ,\theta -2\}$,}\\
				\label{eq:B'':3}\sigma_i(s_i)=p &&& \text{for all
				$i\in\{1,\dots,\theta-2\}$},\\
				\label{eq:B'':4}\sigma_{\theta -1}(s_{\theta -1})=\sigma_\theta (s_\theta )=-1,\\
				\label{eq:B'':4a}\sigma_{\theta-1}(\epsilon )=-p, &&&\\
				\label{eq:B'':5}\sigma_{\theta-1}(\epsilon s_\theta ^2)\sigma_\theta
				(\epsilon s_{\theta -1}^2)=1, &&&\\
				\label{eq:B'':7}s_\theta s_{\theta -1}=\epsilon s_{\theta -1}s_\theta ,\\
				\label{eq:B'':8}\epsilon s_\theta =s_\theta \epsilon ^{-1},
			\end{align}
			where $p\in \K$ with $1-p+p^2=0$.
	\end{enumerate}
\end{lem}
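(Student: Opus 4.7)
The plan is to prove the two implications separately, exactly mirroring the structure of the proofs of Lemmas~\ref{lem:B:conditions} and \ref{lem:B':conditions}, and to extract each Cartan matrix entry and each character identity from the appropriate vertex-to-vertex result in Appendix~\ref{appendix:reflections}.

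For the direction $(2)\Rightarrow(1)$, Definition~\ref{def:skeleton} fixes the shape of the skeleton of $\beta''_\theta$ as soon as one knows $|\supp M_i|$, the action of $\epsilon$, and the characters $\sigma_i$, so everything except the Cartan matrix is read off from the hypotheses. I then identify $A^M$ entry by entry: for $|i-j|\ge 2$ the equation $a^M_{ij}=0$ follows from \eqref{eq:B'':1} via Lemma~\ref{lem:a=0for22}, Lemma~\ref{lem:X1}, or Lemma~\ref{lem:genRosso} according to the dimensions involved; for consecutive dim-$1$ pairs $(i,i+1)$ with $i\le\theta-3$ the entries $a^M_{i,i+1}=a^M_{i+1,i}=-1$ come from Lemma~\ref{lem:Rosso} applied with \eqref{eq:B'':2}, \eqref{eq:B'':3} and $1-p+p^2=0$; for the mixed dim-$1$/dim-$2$ pair $(\theta-2,\theta-1)$, Corollary~\ref{co:2-2:A2B2CartanMatrix}(2) combined with \eqref{eq:B'':2}, \eqref{eq:B'':3}, \eqref{eq:B'':4}, and \eqref{eq:B'':4a} yields $a^M_{\theta-2,\theta-1}=-1$ and $a^M_{\theta-1,\theta-2}=-2$; and for the mixed dim-$2$/dim-$3$ pair $(\theta-1,\theta)$ the non-commutation relation \eqref{eq:B'':7}, the cubic identity satisfied by $p$, and \eqref{eq:B'':4}, \eqref{eq:B'':5}, \eqref{eq:B'':8} feed into the appropriate dim-$2$/dim-$3$ analog of Lemma~\ref{lem:1+3=typeB2} to give $a^M_{\theta-1,\theta}=-1$ and $a^M_{\theta,\theta-1}=-2$.

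For the direction $(1)\Rightarrow(2)$, the skeleton determines the Cartan matrix $A^M$ described above, and hence Proposition~\ref{pro:absimple} guarantees that every iterated adjoint action $(\ad M_i)^m(M_j)$ is absolutely simple or zero. The triple support of $M_\theta$ together with Lemma~\ref{lem:3} produces the element $\epsilon\in G$ of order three with $\supp M_\theta=\{s_\theta,\epsilon s_\theta,\epsilon^2 s_\theta\}$, giving \eqref{eq:B'':8}. The dashed edge between vertices $\theta-1$ and $\theta$ records the non-commutation of $\supp M_{\theta-1}$ and $\supp M_\theta$, and then Lemma~\ref{lem:A3:222}(1) applied to the subgroup generated by $s_{\theta-1}$ and $s_\theta$ forces $\supp M_{\theta-1}=\{s_{\theta-1},\epsilon s_{\theta-1}\}$ as well as \eqref{eq:B'':7}. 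The remaining character identities \eqref{eq:B'':1}--\eqref{eq:B'':4a} are then transcribed from the diagonal calculations of Lemma~\ref{lem:Rosso}, from Lemma~\ref{lem:a=0for22} or Lemma~\ref{lem:genRosso} at the pairs with $|i-j|\ge2$, and from Corollary~\ref{co:2-2:A2B2CartanMatrix}(2) at the pair $(\theta-2,\theta-1)$; finally, \eqref{eq:B'':5} is extracted by applying the dim-$2$/dim-$3$ analog of Lemma~\ref{lem:1+3=typeB2} at $(\theta-1,\theta)$.

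The principal new difficulty compared with Lemma~\ref{lem:B':conditions} is precisely this last pair. In the $\beta'$ case the vertex adjacent to $M_\theta$ was one-dimensional and therefore acted on $M_\theta$ by a scalar; here the two-dimensional module $M_{\theta-1}$ carries a non-trivial internal action whose support is entangled with the same order-three element $\epsilon$ that cyclically permutes $\supp M_\theta$. The delicate bookkeeping consists of aligning \eqref{eq:B'':7} and $1-p+p^2=0$ so that the reflection/adjoint-action computations in the style of Lemmas~\ref{lem:X_n} and \ref{thm:admabsimple} deliver exactly the Cartan entries $(-1,-2)$ rather than a degenerate or inadmissible pair. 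Once this pair is handled, all other cases are small modifications of arguments already executed for $\beta_\theta$ and $\beta'_\theta$, so the proof reduces to a careful, mostly mechanical, assembly.
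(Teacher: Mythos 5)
Your overall architecture — pairwise identification of the Cartan entries from the appendix lemmas for (2)$\Rightarrow$(1), and Proposition~\ref{pro:absimple} plus the same pairwise results for the converse — is the paper's, but at the two pairs that carry the genuinely new content the tools you invoke do not apply. In (1)$\Rightarrow$(2) you first extract $\epsilon$ from Lemma~\ref{lem:3} and then claim that Lemma~\ref{lem:A3:222}(1), applied to $\langle s_{\theta-1},s_\theta\rangle$, gives $\supp M_{\theta-1}=\{s_{\theta-1},\epsilon s_{\theta-1}\}$ and \eqref{eq:B'':7}. Lemma~\ref{lem:A3:222} assumes \emph{both} conjugacy classes have exactly two elements and concludes $\epsilon^2=1$; here $|s_\theta^G|=3$ and $\epsilon$ must have order $3$, so the lemma is inapplicable and its conclusion would even contradict what you need. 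Moreover Lemma~\ref{lem:3} requires two non-commuting elements inside $\supp M_\theta$, which is not part of the skeleton data (a class of size three may well be abelian); in the $\beta'_\theta$ case this had to be imposed as an extra hypothesis in Lemma~\ref{lem:B':conditions}(1). The correct device, which you never invoke, is Lemma~\ref{lem:2+3}: from $|s_{\theta-1}^G|=2$, $|s_\theta^G|=3$ and the non-commutation recorded by the dashed edge it produces a single $\epsilon$ with $\epsilon^3=1$ satisfying \eqref{eq:B'':7} and \eqref{eq:B'':8} and relating both supports simultaneously; the identities \eqref{eq:B'':4} and \eqref{eq:B'':5} then come from Lemma~\ref{lem:2+3=typeB2}, which is exactly the ``dim-$2$/dim-$3$ analog'' you allude to and should be cited rather than postulated.

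Second, at the pair $(\theta-2,\theta-1)$ you invoke Corollary~\ref{co:2-2:A2B2CartanMatrix}(2) in both directions. That corollary concerns two Yetter-Drinfeld modules whose supports are non-commuting conjugacy classes of size two; here $\supp M_{\theta-2}$ is a single central element, so it does not apply, and its part (2) would in addition force $\charK=3$, which is not assumed for $\beta''_\theta$ (one only has $1-p+p^2=0$). The entries $a^M_{\theta-2\,\theta-1}=-1$ and $a^M_{\theta-1\,\theta-2}=-2$ are instead obtained from Lemma~\ref{lem:genRosso} and from Lemmas~\ref{lem:X2} and \ref{lem:X3}(1), using $\sigma_{\theta-1}(s_{\theta-1})=-1$ together with \eqref{eq:B'':2} and \eqref{eq:B'':4a}; and in the converse direction \eqref{eq:B'':2}, \eqref{eq:B'':3}, \eqref{eq:B'':4a} are simply the labels of the skeleton, so no two-by-two corollary is needed there at all. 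With these substitutions the remaining steps of your outline (Lemma~\ref{lem:Rosso} along the one-dimensional string, Lemma~\ref{lem:genRosso} or \ref{lem:a=0for22} for the vanishing entries) do coincide with the paper's proof.
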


\begin{proof}
	Again we first prove that (2) implies (1). According to
	Definition~\ref{def:skeleton} and the assumptions in (2),
	it only remains to prove that the off-diagonal entries of $A^M$
	correspond to the integers obtained from the skeleton of type $\beta ''_\theta
	$.
	Now $a^M_{i\,i+1}=a^M_{i+1\,i}=-1$
	for all $i\in \{1,\dots ,\theta -3\}$
	by Lemma~\ref{lem:Rosso}
	and $a^M_{ij}=0$ for all $1\le i,j\le \theta $ with $j\ge i+2$
	by Lemma~\ref{lem:genRosso}.
	Also, $a^M_{\theta -2\,\theta -1}=-1$	by Lemma~\ref{lem:genRosso}.
	Moreover, $a^M_{\theta -1\,\theta -2}=-2$	by Lemmas~\ref{lem:X2} and
	\ref{lem:X3}(1).
	Finally, $a^M_{\theta -1\,\theta }=-1$ and $a^M_{\theta \,\theta -1}=-2$
	because of Lemma~\ref{lem:2+3=typeB2}.
	This proves (1).

	Assume now that (1) holds. 
	Since $s_{\theta -1}^G$ and $s_\theta ^G$ do not commute and since
	$|s_{\theta -1}^G|=2$, we obtain that
	$s_\theta s_{\theta -1}\ne s_{\theta -1}s_\theta $. Let $\epsilon \in G$
	be such
	that $s_{\theta -1}^G=\{s_{\theta -1},\epsilon s_{\theta -1}\}$. Then
	$\epsilon ^3=1$, $\supp M_\theta =\{s_\theta ,\epsilon s_\theta
	,\epsilon^2s_\theta \}$,
	and \eqref{eq:B'':7}, \eqref{eq:B'':8} hold by Lemma~\ref{lem:2+3}.
	It remains to prove \eqref{eq:B'':1}--\eqref{eq:B'':5}.

	Now \eqref{eq:B'':1} follows from Lemma~\ref{lem:genRosso}, since $a^M_{ij}=0$
	whenever $1\le i<j-1$.
	By Proposition~\ref{pro:absimple}, all $(\ad M_i)^m(M_j)$ for $i\ne j$, $m\ge
	0$, are absolutely simple or zero because of (1).
	Since $a^M_{\theta\,\theta-1}=-1$ and $a^M_{\theta ,\theta -1}=-2$,
	\eqref{eq:B'':4} and \eqref{eq:B'':5} follow from Lemma~\ref{lem:2+3=typeB2}. 
	Finally, Conditions~\eqref{eq:B'':2}, \eqref{eq:B'':3}, and \eqref{eq:B'':4a}
	are given in the skeleton.
\end{proof}

In the following three propositions we study reflections of skeletons of type
$\beta _\theta $, $\beta'_\theta $, and $\beta''_\theta $ with $\theta \ge 3$.

\begin{pro}
	\label{pro:skeleton:B}
	Let $\theta \in \N $ with $\theta \ge 3$ and let $M\in\cF _\theta ^G$.
	Assume that $M$ has a skeleton $\cS $ of type $\beta _\theta $.
	Then the Cartan matrix of $M$ is of type $B_\theta $, and
	$\cS $ is a skeleton of $R_k(M)$ for all $k\in \{1,\dots ,\theta \}$.
\end{pro}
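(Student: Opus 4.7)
The plan follows the pattern of Propositions~\ref{pro:skeleton:ADE} and~\ref{pro:skeleton:C}. The fact that $A^M$ is of type $B_\theta$ is immediate from Lemma~\ref{lem:B:conditions}, since a skeleton of type $\beta_\theta$ was defined to encode precisely this Cartan matrix. For the statement about reflections, I would invoke Remark~\ref{rem:connectedtriples} to reduce to a rank-three question: it suffices to show that whenever $i,j,k\in\{1,\dots,\theta\}$ are pairwise distinct and the triple $(M_i,M_j,M_k)$ has a connected skeleton, the reflection $R_1(M_i,M_j,M_k)$ has a skeleton coinciding with the original.

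The connected rank-three sub-skeletons of $\beta_\theta$ come in exactly two shapes. Triples consisting of three consecutive vertices lying in the simply-laced portion of the diagram give sub-skeletons of type $\alpha_3$, and these are already handled by Lemma~\ref{lem:skeleton:A3}. The only other connected possibility is the triple $(M_{\theta-2},M_{\theta-1},M_\theta)$ at the short-root end, whose sub-skeleton is of type $\beta_3$. The proposition therefore reduces to an auxiliary rank-three lemma stating that a skeleton of type $\beta_3$ is preserved by each of the three reflections $R_1$, $R_2$, $R_3$.

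I would prove this $\beta_3$-lemma in the same style as Lemma~\ref{lem:skeleton:C3}: start from the explicit data $s_1,s_2,s_3,\epsilon,\sigma_1,\sigma_2,\sigma_3$ supplied by Lemma~\ref{lem:B:conditions}(2), compute each $R_k(N)$ by combining Lemma~\ref{lem:HS:reflections} with the Rosso-type formulas from Appendix~\ref{appendix:reflections}, and then verify the conditions \eqref{eq:B:1}--\eqref{eq:B:5} on the resulting characters and group elements. Reflections $R_1$ and $R_2$ at the simply-laced end essentially reproduce the computations of Lemma~\ref{lem:skeleton:A3}, modified only by the fact that $\sigma_3(s_3)=1$ rather than $-1$; the commutation relations involving the central element $\epsilon$ propagate verbatim because $\epsilon^2=1$ and $\epsilon\in Z(G)$.

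The main obstacle will be the reflection $R_3$ at the short-root vertex. Since $a_{32}=-2$, the new second entry is $(\ad M_3)^2(M_2)$, whose $G$-degree involves $s_3^2$, and its character is a twist of $\sigma_2$ by squared values of $\sigma_3$. Because $\sigma_3(s_3)=1$, the ``standard'' sign cancellations that drive the $A_3$ case are no longer available, and verifying the reflected versions of \eqref{eq:B:2} and \eqref{eq:B:3a} will pin down exactly where $\charK=3$ is used: the computation should reduce to an identity of the form $(3)_{-1}=0$ (equivalently $1+(-1)+1=0$ in $\K$), allowing the $B_2$-style relation on the reflected side. Once this is in place, disjointness of the remaining components of the skeleton follows from Lemma~\ref{lem:disconnected}, completing the argument.
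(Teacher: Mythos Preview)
Your reduction is exactly the paper's: invoke Remark~\ref{rem:connectedtriples} (as in Proposition~\ref{pro:skeleton:ADE}) to reduce to connected rank-three sub-skeletons, dispose of the $\alpha_3$ ones by Lemma~\ref{lem:skeleton:A3}, and treat the single $\beta_3$ triple directly via Lemma~\ref{lem:B:conditions} and the same explicit reflection computations as in Lemma~\ref{lem:skeleton:A3}. The paper compresses this into two sentences, but the content is identical.

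One small correction on the mechanism at the short-root vertex. The identity $(3)_{-1}=0$ is false in every characteristic ($1-1+1=1$), so that is not where $\charK=3$ enters. What actually happens for $R_3$ on the pair $(M_3,M_2)$ is that you feed $\rho=\sigma_3$, $g=s_3$ into Lemma~\ref{lem:HS:reflections}: since $\sigma_3(s_3)=1$ by \eqref{eq:B:3a} and $\charK=3$, the lemma gives $m=p-1=2$, so $R_1(M_3,M_2)_2\simeq M(s_3^2s_2,\sigma')$ with $\sigma'(s_3^2s_2)=\sigma_2(s_2)=-1$ and $\sigma_3^*(s_3^{-1})=1$ automatically. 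The $B_2$-shape of the reflected pair (i.e.\ the reflected versions of \eqref{eq:B:2}, \eqref{eq:B:3}, \eqref{eq:B:3a}) is thus delivered directly by Lemma~\ref{lem:HS:reflections}; the role of $\charK=3$ is to fix $m=2$ (equivalently, $(3)_{\sigma_3(s_3)}=(3)_1=0$ via Lemma~\ref{lem:HS:X3}), not the expression you wrote. With that adjustment, your outline goes through verbatim.
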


\begin{proof}
	Following the arguments in the proof of Proposition~\ref{pro:skeleton:ADE}
	and using	Lemma~\ref{lem:skeleton:A3}, it suffices to prove the claim for
	$\theta =3$. In this case, one obtains the claim following
	the proof of Lemma~\ref{lem:skeleton:A3} and using
	Lemma~\ref{lem:B:conditions}.
\end{proof}

\begin{pro}
	\label{pro:skeleton:B'}
	Let $M\in\cF_\theta ^G$.
	Assume that $M$ has a skeleton $\cS $ of type $\beta '_\theta $.
	Then $\cS $ is a skeleton of $R_k(M)$ for $1\le k\le \theta -1$,
	and	$R_\theta (M)$ has a skeleton of type $\beta ''_\theta $.
\end{pro}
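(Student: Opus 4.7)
The plan is to follow the reduction used in Propositions \ref{pro:skeleton:ADE} and \ref{pro:skeleton:C}: by Remark \ref{rem:connectedtriples} combined with Lemma \ref{lem:triples} and Lemma \ref{lem:disconnected}, it suffices to check, for each $k\in\{1,\dots,\theta\}$ and each triple $(M_k,M_i,M_j)$ with connected sub-skeleton, that $R_1(M_k,M_i,M_j)$ carries the expected skeleton. In a skeleton of type $\beta'_\theta$ there are exactly two kinds of connected sub-skeletons on three vertices: three consecutive single vertices $(M_{i-1},M_i,M_{i+1})$ with $i+1\le\theta-1$, which form a skeleton of type $\alpha_3$; and the unique triple $(M_{\theta-2},M_{\theta-1},M_\theta)$, which forms a skeleton of type $\beta'_3$. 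The $\alpha_3$ case is already handled by Lemma \ref{lem:skeleton:A3}, so the whole proposition reduces to a rank $3$ statement for skeletons of type $\beta'_3$.

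The key rank $3$ lemma I would establish, in analogy with Lemma \ref{lem:skeleton:C3}, is the following: if $N\in\cF_3^G$ has a skeleton of type $\beta'_3$, then $R_1(N)$ and $R_2(N)$ have skeletons of type $\beta'_3$, while $R_3(N)$ has a skeleton of type $\beta''_3$. To prove it, I would first unpack the skeleton via Lemma \ref{lem:B':conditions}, extracting $s_1,s_2,s_3\in G$, an element $\epsilon\in G$ with $\epsilon^3=1$, and characters $\sigma_i\in\chg{G^{s_i}}$ satisfying \eqref{eq:B':1}--\eqref{eq:B':5}. For the reflections $R_1$ and $R_2$, which act on the $A_2$ subpair $(N_1,N_2)$ of one-dimensional modules, the reflected modules remain one-dimensional and their supports and characters are given by Lemma \ref{lem:HS:reflections}; their interaction with $N_3$ is controlled by Lemma \ref{lem:genRosso}. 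Direct substitution in the character identities then verifies the conditions of Lemma \ref{lem:B':conditions} for the reflected tuple, exactly as in the proof of Lemma \ref{lem:skeleton:C3}. For $R_3(N)$, the module $(R_3 N)_2=(\ad N_3)^2(N_2)$ becomes two-dimensional, with its support and character data extracted from Lemmas \ref{lem:X2}, \ref{lem:X3}, and \ref{lem:1+3=typeB2}; one then matches these against the conditions of Lemma \ref{lem:B'':conditions}.

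With the rank $3$ lemma in hand, the full proposition assembles as follows. For $k\le\theta-1$, each connected triple containing $k$ is of type $\alpha_3$ or of type $\beta'_3$, so Lemmas \ref{lem:skeleton:A3} and the rank $3$ lemma imply that $R_1(M_k,M_i,M_j)$ has the same skeleton as $(M_k,M_i,M_j)$; the disconnected triples are controlled by Lemma \ref{lem:disconnected}, so $R_k(M)$ inherits a skeleton of type $\beta'_\theta$. For $k=\theta$, the only connected triple containing $\theta$ is $(M_{\theta-2},M_{\theta-1},M_\theta)$, which by the rank $3$ lemma is transformed into a triple with skeleton of type $\beta''_3$; the vertices $M_1,\dots,M_{\theta-3}$ are left untouched since $a^M_{i\theta}=0$ for $i<\theta-1$, and the $A_{\theta-3}$ chain on the first $\theta-2$ vertices is preserved by Lemma \ref{lem:disconnected}. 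Pasting the local pieces together produces a skeleton of type $\beta''_\theta$.

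The main obstacle will be the $R_3$ case of the rank $3$ lemma: here the support of $N_3$ has three elements and the reflection produces a two-dimensional module on a two-element support, so one must carefully track how the new character sits inside $G^{s_{\theta-1}}$ and verify the nontrivial vertex label $\sigma_{\theta-1}(\epsilon)=-p$ of \eqref{eq:B'':4a}, together with the compatibility condition \eqref{eq:B'':5} and the sign $\sigma_\theta(s_\theta)=-1$ of \eqref{eq:B'':4}. This is where the identity $(3)_{-p}=0$ from the definition of the $\beta'_3$ skeleton enters decisively, ensuring the resulting eigenvalues match those prescribed by the $\beta''_3$ skeleton; all other verifications reduce to routine substitutions in the reflection formulas from Appendix \ref{appendix:reflections}.
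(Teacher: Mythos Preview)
Your overall strategy---reduce via Remark~\ref{rem:connectedtriples} to connected three-vertex subskeletons and then handle the rank~$3$ case by direct computation---is exactly what the paper does. However, several of the specific lemmas you invoke do not apply, because you have conflated the one-dimensional (diagonal) setting of $\beta'_\theta$ with the two-dimensional setting of $\alpha_\theta$.

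Concretely: the triples $(M_{i-1},M_i,M_{i+1})$ with $i+1\le\theta-1$ do \emph{not} form a skeleton of type $\alpha_3$. In a $\beta'_\theta$ skeleton each vertex $1,\dots,\theta-1$ is a single point, i.e.\ $\dim M_i=1$, whereas a skeleton of type $\alpha_3$ has two-point vertices. So Lemma~\ref{lem:skeleton:A3} is inapplicable. The paper instead observes (via Lemma~\ref{lem:B':conditions}) that $M_{i_1}\oplus M_{i_2}\oplus M_{i_3}$ is a braided vector space of Cartan type with Cartan matrix $A_3$, and the invariance under reflections then follows from the classical diagonal theory.

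The same confusion recurs in your rank~$3$ computation. For $R_1$ and $R_2$ you cite Lemma~\ref{lem:HS:reflections}, but that lemma requires $|g^G|=|h^G|=2$ and $gh\ne hg$; here $s_1,s_2\in Z(G)$ and the modules are one-dimensional, so the relevant tools are Lemmas~\ref{lem:Rosso} and~\ref{lem:genRosso}. For $R_3$ you cite Lemmas~\ref{lem:X2} and~\ref{lem:X3}, which treat a pair with $|\supp V|=2$, $|\supp W|=1$; here $|\supp N_3|=3$, so those lemmas do not apply either. The paper uses Proposition~\ref{pro:R2:1+3}, which is tailored to exactly this $1+3$ configuration and already packages the verification of \eqref{eq:B'':4}, \eqref{eq:B'':5}, and the computation of $\sigma'(\epsilon)$ needed for \eqref{eq:B'':4a}. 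Once you substitute the correct auxiliary results, the remaining character checks are indeed routine and your outline goes through.
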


\begin{proof}
	By Remark~\ref{rem:connectedtriples}, it is enough to consider connected
	subgraphs of $\cS$ with three vertices $i_1,i_2,i_3$. If $\theta \notin
	\{i_1,i_2,i_3\}$ and $k\in \{i_1,i_2,i_3\}$,
	then Lemma~\ref{lem:B':conditions} implies that
	$M_{i_1}\oplus M_{i_2}\oplus M_{i_3}$ is a braided
	vector space of Cartan type with Cartan matrix of type $A_3$,
	and hence the tuple $R_j(M_{i_1},M_{i_2},M_{i_3})$ for $j\in \{1,2,3\}$
    has the same skeleton as $(M_{i_1},M_{i_2},M_{i_3})$.
	Thus it remains to prove the proposition for $\theta =3$ and $k\in \{1,2,3\}$.

	Assume first that $k=1$. Then $\dim M_k=1$, $a^M_{12}=-1$, and $a^M_{13}=0$.
	Hence $R_1(M)_1=M_1^*$, $R_1(M)_2\simeq M_1\otimes M_2$ by
	Lemma~\ref{lem:Rosso}, and $R_1(M)_3=M_3$.
	We now verify the conditions in Lemma~\ref{lem:B':conditions} for $R_1(M)$.
	The only non-trivial condition is \eqref{eq:B':2} for $i=2$.
	For this we obtain that
	\begin{gather*}
		\sigma_1\sigma_2(s_3)\sigma_3(s_1s_2)=
		\sigma_1(s_3)\sigma_3(s_1)\sigma_2(s_3)\sigma_3(s_2)=p^{-1},
	\end{gather*}
	and hence $\cS $ is a skeleton of $R_1(M)$.
	
	Assume now that $k=2$. Then $\dim M_k=1$ and $a^M_{21}=a^M_{23}=-1$.
	Hence $R_2(M)_1\simeq M_2\otimes M_1$, $R_2(M)_2\simeq M_2^*$, and
	$R_2(M)_3\simeq M_2\otimes M_3$ by Lemma~\ref{lem:genRosso}.
	We verify the conditions in Lemma~\ref{lem:B':conditions} for $R_2(M)$.
	We obtain that
	\begin{gather*}
		\sigma_1\sigma_2(s_2s_3)\sigma_2\sigma_3(s_2s_1)
		=\sigma_1(s_3)\sigma_3(s_1)\sigma_1(s_2)\sigma_2(s_1s_2^2s_3)\sigma_3(s_2)
		=p^{-1}p^2p^{-1}=1,\\
		\sigma_1\sigma_2(s_2^{-1})\sigma_2^*(s_2s_1)
		=(\sigma_1(s_2)\sigma_2(s_1))^{-1}\sigma_2(s_2)^{-2}=pp^{-2}=p^{-1},\\
		\sigma_2^*(s_2s_3)\sigma_2\sigma_3(s_2^{-1})
		=(\sigma_2(s_3)\sigma_3(s_2))^{-1}\sigma_2(s_2)^{-2}=pp^{-2}=p^{-1},\\
		\sigma_1\sigma_2(s_1s_2)=pp^{-1}p=p,\\
		\sigma_2^*(s_2^{-1})=p,\quad
		\sigma_2\sigma_3(s_2s_3)=pp^{-1}\sigma_3(s_3)=\sigma_3(s_3).
	\end{gather*}
	Condition~\eqref{eq:B':5} for $R_2(M)$ is clear.
	Therefore $\cS $ is a skeleton of $R_2(M)$.

	Finally, assume that $k=3$. Then $R_3(M)=(M_1,(\ad M_3)^2(M_2),M_3^*)$.
	We have to show that $R_3(M)$ has a skeleton of type $\beta''_3$.
	To do so we apply Lemma~\ref{lem:B'':conditions}.
	By Proposition~\ref{pro:R2:1+3}, $R_3(M)_2\simeq M(s',\sigma ')$
	and $M_3^*\simeq M(s_3^{-1},\sigma_3^*)$, where $s'=\epsilon s_2s_3^2$,
	$\sigma '(\epsilon )=p^{-2}=-p$ (which proves \eqref{eq:B'':4a}),
	and $\sigma'(h)=\tau(h)^2\sigma(h)$
	for all $h\in G^\epsilon \cap G^{s_3}$. Now Conditions~\eqref{eq:B'':1},
	\eqref{eq:B'':3}, \eqref{eq:B'':7} and \eqref{eq:B'':8} are clear. Moreover,
	\eqref{eq:B'':4} and \eqref{eq:B'':5} follow from the last claim of
	Proposition~\ref{pro:R2:1+3}. We verify now \eqref{eq:B'':2}:
	$$ \sigma _1(s')\sigma '(s_1)=\sigma_1(\epsilon s_2s_3^2)\sigma_2(s_1)\sigma_3
	(s_1)^2=\sigma_1(s_2)\sigma_2(s_1)(\sigma_1(s_3)\sigma_3(s_1))^2=p^{-1}$$
	and the proof is completed.
\end{proof}

For the proof of the third of three propositions we will use the following
lemma, which will also play a role in the proof of
Proposition~\ref{pro:skeleton:F4}.

\begin{lem} \label{lem:skeleton:112}
	Let $M\in \cF_3^G$. Assume that $M$ has a skeleton $\cS $ as in
	Figure~\ref{fig:skeleton:112}, where $p=-1$ if $q^2=1$.
	Then $\cS $ is a skeleton of $R_k(M)$ for all $k\in \{1,2,3\}$.
\end{lem}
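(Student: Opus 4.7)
The plan is to follow the scheme used in Lemma~\ref{lem:skeleton:A3} and Propositions~\ref{pro:skeleton:B}, \ref{pro:skeleton:B'}, combined with the characterisation pattern of Lemmas~\ref{lem:B':conditions} and \ref{lem:B'':conditions}. First I would translate the hypothesis that $M$ has a skeleton of the prescribed shape (a rank-three graph of mixed dimensions $1,1,2$) into an explicit list of identities among the distinguished elements $s_i\in \supp M_i$, the characters $\sigma_i\in \chg{G^{s_i}}$, the auxiliary element $\epsilon$ governing the two-point support of the $\dim 2$ vertex, and the parameters $p,q$ labelling the $1$-dimensional vertices. This translation rests on Definition~\ref{def:skeleton} together with the Cartan-entry identifications from Appendix~\ref{appendix:reflections}, in particular Corollary~\ref{co:2-2:A2B2CartanMatrix}, Lemma~\ref{lem:a=0for22}, and Lemmas~\ref{lem:X1}, \ref{lem:genRosso}. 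The case distinction ``$p=-1$ when $q^2=1$'' is precisely the one recorded in Corollary~\ref{co:2-2:A2B2CartanMatrix} when a rank-two Cartan entry is forced to be $-1$ against a symmetric alternative.

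Next, for each $k\in\{1,2,3\}$ I would compute $R_k(M)$ using the explicit reflection formulas. When $M_k$ is one-dimensional I would apply Lemmas~\ref{lem:Rosso} and \ref{lem:genRosso} (and Proposition~\ref{pro:R2:1+3} if a rank-two reflection involving the $\dim 2$ neighbour arises), and when $M_k$ is two-dimensional I would apply Lemmas~\ref{lem:HS:reflections} and \ref{lem:21reflections}. Each such computation yields in closed form the new $G$-degrees and the new characters of the reflected summands, and the pairwise Cartan entries of $R_k(M)$ can then be read off from the same lemmas.

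Finally, I would verify by direct substitution that $R_k(M)$ satisfies the same list of identities, thereby exhibiting the skeleton $\cS$ on $R_k(M)$. For the reflections at the one-dimensional vertices the verification reduces to short products of character values at known group elements, analogous to the calculations carried out in the proof of Proposition~\ref{pro:skeleton:B'}. The main obstacle is the reflection at the two-dimensional vertex: one must track how $\epsilon$ (satisfying $\epsilon s_k=s_k\epsilon^{-1}$ or $\epsilon\in Z(G)$, depending on the sublabel) interacts with the twisted characters after the replacements $\sigma_k\mapsto \sigma_k^*$ and $\sigma_j\mapsto \sigma_j\sigma_k^{-a_{kj}^M}$, and check that the identities involving $p$ and $q$ remain valid. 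It is exactly the hypothesis $p=-1$ when $q^2=1$ that secures this consistency, closing the verification.
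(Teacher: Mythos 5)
Your overall route coincides with the paper's: encode the skeleton as identities among $r,t\in Z(G)$, $s,\epsilon$, the characters $\rho,\tau,\sigma$ and the labels $p,q$; compute $R_1(M)$, $R_2(M)$, $R_3(M)$ from the appendix reflection formulas; and re-verify the identities. You also place the hypothesis ``$p=-1$ if $q^2=1$'' correctly, namely at the reflection of the two-dimensional vertex. However, several of the tools you invoke do not apply to this configuration, and the one structural step that actually carries the argument is missing. The supports here have sizes $1,1,2$, so Corollary~\ref{co:2-2:A2B2CartanMatrix} and Lemma~\ref{lem:HS:reflections} (both about two modules with two-element supports) and Proposition~\ref{pro:R2:1+3} (support of size three) are not available; in particular the hypothesis ``$p=-1$ if $q^2=1$'' is not ``recorded in Corollary~\ref{co:2-2:A2B2CartanMatrix}'' --- it is a genuine extra assumption whose role only appears later.

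What the double edge at the two-dimensional vertex really yields, via Proposition~\ref{pro:absimple} and Lemmas~\ref{lem:X1}, \ref{lem:X2}, \ref{lem:X3}, is a three-way alternative for the pair $(M_3,M_2)$: in the notation of Definition~\ref{def:wp22}, $(M_3,M_2)\in\wp_1^G\cup\wp_5^G\cup\wp_7^G$ (the cases $\sigma(s)=-1$ with $\sigma(\epsilon^2)=1$ or $\sigma(\epsilon^2)\ne 1$, and $\sigma(st)\tau(s)=1$ with $\sigma(\epsilon^2s^2)=1$). This case split determines the character $\tau_2$ of $R_3(M)_2$ through the factor $\lambda$ of Lemma~\ref{lem:X2} and decides which of parts (1), (2), (3) of Lemma~\ref{lem:21reflections} applies; the generic replacement $\sigma_j\mapsto\sigma_j\sigma_k^{-a^M_{kj}}$ you propose is not the correct formula for this reflection, since $\tau_2$ is twisted by conjugation and by $\lambda$. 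Concretely, in the $\wp_5^G$ and $\wp_7^G$ cases the reflected middle label is $\tau(t)=p$ automatically, whereas in the $\wp_1^G$ case (which is exactly $q^2=\sigma(\epsilon^2)=1$) it comes out as $p^{-1}$, and since $p\ne 1$ (forced by $a^M_{32}=-2$ and Lemma~\ref{lem:X1}) one needs $p=-1$ there. So your placement of the hypothesis is right, but the case analysis through the classes $\wp_1^G,\wp_5^G,\wp_7^G$ --- the substantive content of the paper's proof --- is the step your plan would still have to supply, and the lemmas you cite for it would not do the job as stated.
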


\begin{figure}
  \begin{picture}(80,25)
	  \DDvertexone{10}{10}
    \put(9,15){\small $p$}
    \put(13,10){\line(1,0){23}}
	  \put(20,15){\small $p^{-1}$}
	  \DDvertexone{39}{10}
    \put(38,15){\small $p$}
    \put(42,11){\line(1,0){28}}
    \put(42,9){\line(1,0){28}}
	  \put(50,15){\small $p^{-1}$}
    \put(50,7){$>$}
	  \DDvertextwo{73}{8}
    \put(66,17){\small $(q)$}
  \end{picture}
	\caption{The skeleton in Lemma~\ref{lem:skeleton:112}.}
	\label{fig:skeleton:112}
\end{figure}

\begin{proof}
	By assumption, there exist $r,t\in Z(G)$, $s,\epsilon \in G$
	and $\rho ,\tau \in \chg G$, $\sigma \in \chg{G^s}$ such that
	$M_1\simeq M(r,\rho )$, $M_2\simeq M(t,\tau )$, $M_3\simeq M(s,\sigma)$,
	$s^G=\{s,\epsilon s\}$, and $\epsilon \ne 1$.
	By Lemma~\ref{lem:rs}, there exists $x\in G$ such that $xs=\epsilon sx$ and
	$x\epsilon =\epsilon ^{-1}x$. The skeleton contains additionally the following
	information, see Lemmas~\ref{lem:Rosso} and \ref{lem:X1}:
	\begin{align*}
		\rho (r)&=p,& \tau (t)&=p,& \sigma (\epsilon )&=q,\\
		\rho (t)\tau (r)&=p^{-1},&\rho (s)\sigma (r)&=1,&\tau (s)\sigma (t)&=p^{-1},
	\end{align*}
	and that $a^M_{32}=-2$.
	Since $a^M_{32}=-2$, Lemma~\ref{lem:X1} implies that $p\ne 1$.
	Since $a^M_{ij}a^M_{ji}\in \{0,1,2\}$ for all $i,j\in \{1,2,3\}$ with $i\ne j$,
	Proposition~\ref{pro:absimple} and Lemmas~\ref{lem:X2}, \ref{lem:X3} imply
	that the relations in one of the following three lines hold:
	\begin{gather*}
		\sigma (s)=-1,\quad \sigma (\epsilon ^2)\ne 1,\quad \sigma (\epsilon
		^2t^2)\tau (s^2)=1,\\
		\sigma (s)=-1,\quad \sigma (\epsilon ^2)=1,\\
		\sigma (s)\ne -1,\quad \sigma (\epsilon ^2)\ne 1,\quad \sigma (st)\tau
		(s)=1,\quad \sigma (\epsilon ^2s^2)=1.
	\end{gather*}
	Since $\sigma (t)\tau (st)=1$, we conclude that
	$(M_3,M_2)\in \wp _5\cup \wp _1\cup \wp _7$.

	Let now $(U,V,W)=R_1(M)$. Then $U\simeq M(r^{-1},\rho ^*)$, $V\simeq M(rt,\rho
	\tau )$, and $W=M_3\simeq M(s,\sigma )$. In particular,
	$$ \rho \tau (s)\sigma (rt)=\tau (s)\sigma (t)=p^{-1}.$$
  Using the above formulas and the definition of a skeleton,
	we conclude that $\cS $ is a skeleton of $R_1(M)$.

	Let $(U',V',W')=R_2(M)$. Then $U'\simeq M(tr,\tau \rho )$, $V'\simeq
	M(t^{-1},\tau ^*)$, and $W'\simeq M(ts,\tau \sigma )$. Then
	$$ \tau \rho (ts)\tau \sigma (tr)=\tau (t^2)\tau (s)\sigma (t)\rho (t)\tau
	(r)\rho (s)\sigma (r)=p^2p^{-1}p^{-1}=1 $$
	and $\tau \sigma (\epsilon ts)=\tau (ts)\sigma (t)\sigma (\epsilon s)=q$.
	Then Lemma~\ref{lem:21reflections}(5) implies that $\cS $ is a skeleton of
	$R_2(M)$.

	Now let $(U'',V'',W'')=R_3(M)$. Then $U''=M_1\simeq M(r,\rho )$, $V''\simeq
	M(\epsilon s^2t,\tau _2)$, and $W''\simeq M(s^{-1},\sigma ^*)$, where
	$\tau_2\in \chg G$ as in Lemma~\ref{lem:X2}.
    We record that $(s^{-1})^G=\{s^{-1},\epsilon ^{-1}s^{-1}\}$ and that
$\sigma^*(\epsilon^{-1})=\sigma(\epsilon )$. Moreover,
	$$\tau _2(r)\rho (\epsilon s^2t)=\sigma (r^2)\tau(r)\rho (s^2t)=\rho (t)\tau
	(r)=p^{-1}.$$
	Thus, if $(M_3,M_2)\in
	\wp_5$, $\wp _1$, and $\wp_7$, respectively,
	then Lemma~\ref{lem:21reflections}(2), (1), and (3), respectively, implies that
  $\cS $ is a skeleton of $R_3(M)$. Here, in the case of $\sigma (\epsilon^2)=1$
	we used (and needed) that $p=-1$ in order to identify $\cS $ as a
	skeleton of $R_3(M)$.
	This completes the proof.
\end{proof}

\begin{pro}
	\label{pro:skeleton:B''}
	Let $M\in\cF_\theta ^G$.
	Assume that $M$ has a skeleton $\cS $ of type $\beta ''_\theta $.
	Then $\cS $ is a skeleton of $R_k(M)$ for $1\le k\le \theta -1$,
	and	$R_\theta (M)$ has a skeleton of type $\beta '_\theta $.
\end{pro}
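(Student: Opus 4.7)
The plan is to follow closely the strategy of Proposition \ref{pro:skeleton:B'}, reducing everything to the case of triples via Remark \ref{rem:connectedtriples} and then handling each type of connected three-vertex subskeleton of $\beta''_\theta$ that can occur. Three situations arise: (a) a triple contained entirely in the first $\theta-2$ vertices, where the subskeleton is of Cartan type $A_3$; (b) a triple containing the vertex $\theta-1$ but not $\theta$, which only occurs if $\theta\ge 4$ and must be $(M_{\theta-3},M_{\theta-2},M_{\theta-1})$; (c) a triple containing the vertex $\theta$.

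For case (a), the three Yetter--Drinfeld modules are all one-dimensional of diagonal type with parameters drawn from $\{p,p^{-1}\}$, so the subskeleton is of type $\alpha_3$. Then Lemma~\ref{lem:skeleton:A3} (or a direct application of Lemmas~\ref{lem:Rosso} and \ref{lem:genRosso}) shows that the reflections $R_{i_1},R_{i_2},R_{i_3}$ preserve the subskeleton. For case (b), the triple $(M_{\theta-3},M_{\theta-2},M_{\theta-1})$ has exactly the shape analyzed by Lemma~\ref{lem:skeleton:112} (with $q=-p$), so that lemma immediately produces the required invariance of the subskeleton under all three reflections.

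Thus the whole proposition reduces to the case $\theta=3$, where $M$ has the skeleton $\beta''_3$ and one must show: $R_1(M)$ and $R_2(M)$ again have skeleton $\beta''_3$, while $R_3(M)$ has skeleton $\beta'_3$. For $R_1$ and $R_2$, I would unpack the supports and characters using Lemmas~\ref{lem:HS:reflections} and \ref{lem:21reflections} and then check item by item the conditions in Lemma~\ref{lem:B'':conditions}. All the required identities involving $\sigma_i(s_j)$ and $\sigma_i(\epsilon)$ are direct algebraic consequences of \eqref{eq:B'':1}--\eqref{eq:B'':8}, just as in the proof of Lemma~\ref{lem:skeleton:A3} and in cases $k=1,2$ of Proposition~\ref{pro:skeleton:B'}. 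For $R_3$, since $a^M_{32}=-2$, one has $R_3(M)_2\simeq (\ad M_3)^2(M_2)$, and this Yetter--Drinfeld module is computed by the same structural result Proposition~\ref{pro:R2:1+3} that powered the $R_\theta$ step of Proposition~\ref{pro:skeleton:B'}, applied in reverse. This yields an absolutely simple module supported on a size-three conjugacy class, together with a character $\sigma'$ whose values on $s_1$ and on the generators of the new centralizer can be read off the formulas of Proposition~\ref{pro:R2:1+3}.

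The main obstacle is precisely this last computation: one must verify that the resulting triple $(M_1,(\ad M_3)^2(M_2),M_3^*)$ satisfies all the conditions of Lemma~\ref{lem:B':conditions} defining a skeleton of type $\beta'_3$. This comes down to a careful bookkeeping of characters: the diagonal value at the new middle vertex must become $p$ (not $-1$ anymore), the edge label $\sigma_1(\cdot)\sigma'(s_1)$ must equal $p^{-1}$, the three-point structure on the new last vertex must inherit the relation $\epsilon s_3^{-1} = s_3^{-1}\epsilon^{-1}$, and the diagonal value on the last vertex must be $-1$. Each of these identities should follow from the $\beta''_3$ relations \eqref{eq:B'':1}--\eqref{eq:B'':8} combined with $1-p+p^2=0$, mirroring the opposite direction established in the $k=\theta$ step of Proposition~\ref{pro:skeleton:B'}; in particular $\sigma'(\epsilon)=p^{-2}=-p$ should be used to recognize that the new skeleton is $\beta'_3$ rather than something else.
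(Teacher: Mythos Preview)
Your reduction via Remark~\ref{rem:connectedtriples} to connected three-vertex subskeletons is exactly the paper's approach, and cases (a) and (b) are handled correctly (though for (b) you should note that Lemma~\ref{lem:skeleton:112} carries the hypothesis ``$p=-1$ if $q^2=1$''; since $q=-p$ here, one must check that $(-p)^2=1$ forces $p=-1$, which follows from $(3)_{-p}=0$ and $p\ne 1$).

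The genuine gap is in your treatment of the $\theta=3$ case at the $(M_2,M_3)$ edge. In the skeleton $\beta''_3$ the pair $(M_2,M_3)$ has $|\supp M_2|=2$ and $|\supp M_3|=3$: this is the ``$2+3$'' situation governed by Lemma~\ref{lem:2+3=typeB2}, \emph{not} the ``$1+3$'' situation of Proposition~\ref{pro:R2:1+3}. The latter computes $(\ad V)^2(U)$ when $\dim U=1$, which is what happens in Proposition~\ref{pro:skeleton:B'} going from $\beta'$ to $\beta''$; it cannot simply be ``applied in reverse'' here because the input module $M_2$ is now two-dimensional. Consequently your description of $R_3(M)_2=(\ad M_3)^2(M_2)$ as ``supported on a size-three conjugacy class'' is wrong: by Lemma~\ref{lem:2+3=typeB2} one gets $R_3(M)_2\simeq M(\epsilon^{-1}s_3^2s_2,\sigma'')$ with $\sigma''\in\chg G$, a one-dimensional module on a central element --- exactly what the middle vertex of $\beta'_3$ requires. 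The same lemma is also needed in the $R_2$ step to compute $R_2(M)_3=(\ad M_2)(M_3)$; Lemmas~\ref{lem:HS:reflections} and \ref{lem:21reflections} do not cover this edge. For $R_1$, since $\dim M_1=1$, the relevant tool is Lemma~\ref{lem:genRosso} rather than Lemma~\ref{lem:HS:reflections}. Once the correct lemma is invoked, the verification of Lemmas~\ref{lem:B'':conditions} and \ref{lem:B':conditions} proceeds as you outline.
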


\begin{proof}
	By Remark~\ref{rem:connectedtriples}, it is enough to consider connected
	subgraphs of $\cS$ with three vertices $i_1,i_2,i_3$ and their reflections.
	If $i_1,i_2,i_3\le \theta-2$, then $M_{i_1}\oplus M_{i_2}\oplus M_{i_3}$ is a
	braided vector space of Cartan type and their reflections have the same
	skeleton. If $\{i_1,i_2,i_3\}=\{\theta -3,\theta -2,\theta -1\}$, then the
	reflections of $M_{i_1}\oplus M_{i_2}\oplus M_{i_3}$ have the same skeleton as
	$M_{i_1}\oplus M_{i_2}\oplus M_{i_3}$. Indeed,
	$(3)_{-p}=0$ by assumption and hence $p\ne 1$. Therefore $(-p)^2=1$ implies
that $p=-1$ and hence Lemma~\ref{lem:skeleton:112} applies.

	We are left to determine the skeleton of
	$R_k(M_{\theta -2},M_{\theta -1},M_\theta )$ for all $k\in \{1,2,3\}$, that is,
	to prove the claim for $\theta =3$. To do so, assume that $\theta=3$, and let
	$s_1,s_2,s_3,\epsilon \in G$ and
	$\sigma_1,\sigma_2,\sigma_3$ as in Lemma~\ref{lem:B'':conditions}.

	Let $(U,V,W)=R_1(M)$. Then $U\simeq M(s_1^{-1},\sigma^*)$, $V\simeq
	M(s_1s_2,\sigma_1\sigma_2)$, and $W\simeq M(s_3,\sigma_3)$ by
	Lemma~\ref{lem:genRosso}. Then
	$$ \sigma_1\sigma_2(\epsilon s_3^2)\sigma_3(\epsilon s_1^2s_2^2)
	=(\sigma_1(s_3)\sigma_3(s_1))^2\sigma_2(\epsilon s_3^2)\sigma_3(\epsilon
	s_2^2)=1
	$$
	and hence $\cS $ is a skeleton of $R_1(M)$ by Lemmas~\ref{lem:B'':conditions}
	and \ref{lem:21reflections}(5).

	Let $(U',V',W')=R_2(M)$. Then
	$$ V'\simeq M(s_2^{-1},\sigma^*),\quad
    (s_2^{-1})^G=\{s_2^{-1},\epsilon^{-1}s_2^{-1}\},$$
	and $U'\simeq M(\epsilon s_2^2s_1,\rho ')$ for some $\rho '\in \chg G$ by Lemmas~\ref{lem:X2},
	\ref{lem:X3}. Moreover, the skeleton of $(M_1,M_2)$ is a skeleton of $(U',V')$
	by Lemma~\ref{lem:21reflections}(1) (if $p\ne -1$) and by
	Lemma~\ref{lem:21reflections}(2) (if $p=-1$), since $\sigma_2(s_2)=-1$.
	Further, since $(3)_{\sigma_2(\epsilon )}=0$ by \eqref{eq:B'':4a}, we conclude
	from Lemma~\ref{lem:2+3=typeB2} that $W'\simeq M(\epsilon^{-1}s_2s_3,\tau ')$,
	where $\tau '\in \chg{G^{s_3}}$ with $\tau '(s_3)=\sigma_3(\epsilon
	s_2^{-1})\sigma_2(\epsilon)$ and $\tau'(h)=\sigma_2(h)\sigma_3(h)$ for all
	$h\in G^{s_2}\cap G^{s_3}$. Then
	\begin{gather*}
	  \sigma_2^*(s_2^{-1})=\sigma_2(s_2)=-1,\\
      \tau'(\epsilon^{-1}s_2s_3)=\sigma_2(\epsilon^{-1}s_2)\sigma_3(\epsilon^{-1}s_2)
      \sigma_3(\epsilon s_2^{-1})\sigma_2(\epsilon )=\sigma_2(s_2)=-1,\\
	  \sigma_2^*(\epsilon^{-1})=\sigma_2(\epsilon )=-p,\\
      \sigma_2^*(\epsilon ^{-1} (\epsilon ^{-1}s_2s_3)^2)
      \tau '(\epsilon ^{-1}s_2^{-2})
      =\sigma_2(s_2^2s_3^2)^{-1}\sigma_2\sigma_3(\epsilon s_2^2)^{-1}=1,\\
      \epsilon ^{-1}s_2s_3 \,s_2^{-1}=\epsilon ^{-2}s_3s_2s_2^{-1}
      =\epsilon ^{-1} s_2^{-1}(\epsilon^{-1}s_2s_3).
	\end{gather*}
    Therefore $\cS $ is a skeleton of $R_2(M)$ by
    Lemma~\ref{lem:B'':conditions}.

	Let $(U'',V'',W'')=R_3(M)$. Then $U''=M_1$ and
    $W''\simeq M(s_3^{-1},\sigma_3^*)$. Lemma~\ref{lem:2+3=typeB2} implies that
	$V''\simeq M(\epsilon ^{-1}s_3^2s_2,\sigma '')$, where $\sigma''\in \chg G$
    such that
		$$ \sigma''(\epsilon )=1,\quad \sigma''(s_3)=-\sigma_3(\epsilon
		s_2^{-1})\sigma_2(\epsilon),\quad \sigma ''(h)=\sigma_3(h)^2\sigma_2(h)
    $$
		for all $h\in G^{s_2}\cap G^{s_3}$. Now we verify the conditions in
		Lemma~\ref{lem:B':conditions} for $R_3(M)\in \cF^G_3$. Except
		\eqref{eq:B':3} for $i=2$ and except \eqref{eq:B':3}, everything is clear or
		can be seen directly. Since $\epsilon^{-1}s_2\in Z(G)$ by
		Lemma~\ref{lem:2+3}, for \eqref{eq:B':3}, $i=2$, we obtain that
		$$ \sigma ''(\epsilon^{-1}s_3^2s_2)=\sigma_3(\epsilon
		s_2^{-1})^2\sigma_2(\epsilon)^2
		\sigma_3(\epsilon^{-1}s_2)^2\sigma_2(\epsilon^{-1}s_2)=\sigma_2(\epsilon
		s_2)=p.$$
		Finally, for \eqref{eq:B':2} we calculate the following:
		\begin{gather*}
			\sigma_1(\epsilon^{-1}s_3^2s_2)\sigma''(s_1)=\sigma_1(s_3)^2\sigma_1(s_2)
			\sigma_3(s_1)^2\sigma_2(s_1)=p^{-1},\\
			\sigma ''(s_3^{-1})\sigma_3^*(\epsilon ^{-1}s_3^2s_2)
			=-\sigma_3(\epsilon^{-1}s_2)\sigma_2(\epsilon^{-1})\sigma_3(\epsilon
			s_2^{-1})=-\sigma_2(\epsilon^{-1})=p^{-1}.
		\end{gather*}
		Thus $R_3(M)$ has a skeleton of type $\beta'_3$.
		This completes the proof of the proposition.
\end{proof}

Before proving Theorem~\ref{thm:B1} we also need more information on the
finite Cartan graph in Lemma~\ref{lem:noA}\eqref{it:noA:4}.

\begin{lem} \label{lem:CGtwopoints}
	Let $\cC =\cC (I,\cX ,r,A)$ be the Cartan graph with $I=\{1,2,3\}$, $\cX
	=\{X,Y\}$, such that $r_1=r_2=\id $, $r_3$ is the transposition $(X\,Y)$
	and
	$$
	  A^X=\begin{pmatrix} 2 & -1 & 0 \\ -1 & 2 & -1\\ 0 & -2 & 2 \end{pmatrix},
		\quad
	  A^Y=\begin{pmatrix} 2 & -1 & 0 \\ -2 & 2 & -1\\ 0 & -2 & 2 \end{pmatrix}.
	$$
	Let $W_0\subset W(\cC )$ be the automorphism group of $X$. Then
	\begin{align*}
		\roots ^X_+=&\;\{1,2,3,12,23,123,23^2,123^2,12^23^2,12^23^3,12^23^4,12^33^4,1^22^33^4\},\\
		\roots ^Y_+=&\;\{1,2,3,12,23,12^2,123,23^2,12^23,123^2,12^23^2,12^33^2,1^22^33^2\},
	\end{align*}
  and the orbits of $\roots ^X$ with respect to the action of $W_0$ are
	\begin{gather*}
		\{\pm 1,\pm 2,\pm 12,\pm 12^23^4,\pm 12^33^4,\pm 1^22^33^4 \},\\
		\{\pm 3,\pm 23,\pm 123,\pm 12^23^3\},\quad
		\{\pm 23^2,\pm 123^2, \pm 12^23^2\},
	\end{gather*}
	where $1^a2^b3^c$ and $-1^a2^b3^c$ mean $a\alpha_1+b\alpha_2+c\alpha_3$
	and $-a\alpha_1-b\alpha_2-c\alpha_3$, respectively, for all $a,b,c\in \Z $.
\end{lem}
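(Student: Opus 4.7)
The argument proceeds by explicit computation. First, one establishes the positive real root sets $\roots^X_+$ and $\roots^Y_+$. This Cartan graph is item $14$ in Appendix A of \cite{MR2869179} (cf.\ Lemma~\ref{lem:noA}\eqref{it:noA:4}), so the lists can be read off from there; a self-contained derivation proceeds via \cite[Cor.~2.9]{MR2869179}, by iterating the reflections $s_i^Z$ (for $Z \in \{X,Y\}$, $i \in \{1,2,3\}$) on the simple roots and using the bijection $s_i^Z \colon \roots^Z_+\setminus\{\alpha_i\}\to \roots^{r_i(Z)}_+\setminus\{\alpha_i\}$. The process stabilises after finitely many steps and yields exactly the two lists of thirteen positive roots in the statement.

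Next, I would identify the automorphism group $W_0 = \hom_{\cW(\cC)}(X,X)$. Comparing the rows of $A^X$ and $A^Y$, one sees that $s_1^X = s_1^Y$ and $s_3^X = s_3^Y$ as elements of $\Aut(\Z^3)$, whereas $s_2^X \ne s_2^Y$ because $a_{21}^X = -1$ while $a_{21}^Y = -2$. Since $r_1 = r_2 = \id$ and $r_3 = (X\,Y)$, every loop at $X$ is a composition of the $s_i^Z$'s with an even number of $s_3^{\bullet}$'s. A short calculation shows that $s_1^X$ commutes with $s_3^X$, so conjugation of $s_1^Y$ by $s_3^X$ yields only $s_1^X$, and $(s_3^X)^2 = \id$ eliminates the immediate back-and-forth. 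The only genuinely new generator arising from a loop through $Y$ is
\[
  w := s_3^X\, s_2^Y\, s_3^X,
\]
and hence $W_0 = \langle s_1^X,\, s_2^X,\, w \rangle$.

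Finally, I would compute the orbits of $W_0$ on $\roots^X$. A direct calculation yields
\[
  w(\alpha_1) = \alpha_1 + 2\alpha_2 + 4\alpha_3, \qquad
  w(\alpha_2) = \alpha_2, \qquad
  w(\alpha_3) = -(\alpha_2 + \alpha_3),
\]
which, together with the standard action of $s_1^X, s_2^X$ on the simple roots, allows one to trace the $W_0$-orbits of $\alpha_1$, $\alpha_3$, and $\alpha_2 + 2\alpha_3$. One verifies that these orbits have sizes $12$, $8$, and $6$ respectively (each closed under negation), matching the three sets in the statement, and since $12+8+6 = 26 = |\roots^X|$ they exhaust $\roots^X$. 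The only obstacle is bookkeeping: one must confirm that iterating the three generators on a representative of each orbit closes within the listed set, a mechanical but slightly tedious check that can be completed by hand or by a short computer computation.
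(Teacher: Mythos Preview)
Your proposal is correct and follows essentially the same line as the paper: identify the root sets via entry~14 of \cite[Appendix~A]{MR2869179} (the paper notes that an interchange of $\alpha_1$ and $\alpha_2$ is needed there, and that this entry corresponds to the point $Y$), show that $W_0=\langle s_1^X,\,s_2^X,\,s_3s_2s_3^X\rangle$, compute this third generator on the simple roots (your $w$ coincides with the paper's $t$), and then trace the orbits. The only difference is that the paper obtains the generating set of $W_0$ by citing \cite[Thm.\,5.4, Eqs.\,(5.4)--(5.5)]{MR2498801}, whereas you argue directly; your reasoning is sound, but the passage from ``an even number of $s_3$'s'' to exactly three generators could be phrased more carefully as a Reidemeister--Schreier reduction using $s_3^Y s_3^X=\id_X$ and $s_3^X s_1^Y s_3^X=s_1^X$.
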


\begin{proof}
	It is clear from the definition that $\cC $ is a semi-Cartan graph. It is a
	Cartan graph by \cite[Thm.~5.4]{MR2498801}. The root system with number 14 in
	\cite[Appendix A]{MR2869179}, where one interchanges $\alpha_1$ and
	$\alpha_2$, has $\cC $ as a Cartan graph and corresponds to the point $Y$, see
	also the proof of Lemma~\ref{lem:noA}. {}From this one obtains easily the set
	$\roots ^X=s_3^Y(\roots ^Y)$.

	By the proof of \cite[Thm.~5.4]{MR2498801}, see \cite[Eqs.\,(5.4),(5.5)]{MR2498801},
	$W_0$ is generated as a group by
	$s_1^X$, $s_2^X$, and $t=s_3s_2s_3^X$. (Observe that in \cite{MR2498801} the
	role of $1$ and $3$ in $I$ are interchanged.) We record that
	$$ t(\alpha_1)=\alpha_1+2\alpha_2+4\alpha_3,\quad
	t(\alpha_2)=\alpha_2,\quad t(\alpha_3)=-(\alpha_2+\alpha_3).$$
	Applying successively these generators of $W_0$ to
	the elements of $\roots ^X$ one obtains the last claim of the lemma.
\end{proof}

Now we are able to prove Theorems~\ref{thm:B1} and \ref{thm:B2}. 

\begin{proof}[Proof of Theorem~\ref{thm:B1}: $\dim M_1=1$] \

	(1)$\Rightarrow $(3). Since $\theta =3$ and $M$ has a skeleton of type
	$\beta'_3$, Propositions~\ref{pro:skeleton:B'} and \ref{pro:skeleton:B''} imply
	that $M$ admits all reflections and the skeletons of $M$ and of $R_3(M)$ form
	the points of the semi-Cartan graph $\cC $ in Lemma~\ref{lem:CGtwopoints}.
	This semi-Cartan graph is a finite Cartan graph, and the positive roots of its
	points are given in Lemma~\ref{lem:CGtwopoints}. Since $M$ has a skeleton of
	type $\beta '_3$, Lemma~\ref{lem:B':conditions} implies that $\NA (M_i)$ is
	finite-dimensional for all $i\in \{1,2,3\}$. More precisely,
	$$\cH _{\NA (M_1)}(t)=\cH_{\NA (M_2)}(t)=(h)_t,\quad \cH_{\NA
    (M_3)}(t)=(2)_t^2(3)_t,$$
	where $h=3$ if $\charK=2$, $h=2$ if $\charK=3$, and $h=6$ otherwise.
	Similarly, Lemma~\ref{lem:B'':conditions} implies that
	$R_3(M)_2$ is a braided vector space of diagonal type with braiding matrix
	$$ \begin{pmatrix} -1 & -\zeta \\ -\zeta & -1 \end{pmatrix} $$
	where $\zeta =\sigma (\epsilon )$ in the notation of
	Lemma~\ref{lem:B'':conditions}. Therefore
	$$ \cH _{\NA (R_3(M)_2)}(t)=(2)_t(h')_t=
      \begin{cases}
        (2)_t^2 & \text{if $\charK =3$,}\\
        (2)_t^2(3)_{t^2} & \text{if $\charK \ne 3$,}
      \end{cases}
    $$
	where $h'=6$ if $\charK\ne 3$ and $h'=2$ if $\charK=3$.
	Now Theorem~\ref{thm:HS}, using the decomposition of $\roots _+^X$
	into $W_0$-orbits in Lemma~\ref{lem:CGtwopoints}, implies that
	$\NA (M)$ is finite-dimensional with the claimed Hilbert series.

	(3)$\Rightarrow $(2).
    Since $\dim \NA (M)<\infty $, the tuple $M$ admits all
	reflections by \cite[Cor. 3.18]{MR2766176} and the Weyl groupoid is finite by
	\cite[Prop. 3.23]{MR2766176}.

	(2)$\Rightarrow $(1). It is assumed that $\dim M_1=1$, $M$ admits all
    reflections, $A^M$ is of type $B_\theta $, and $\cW (M)$ is finite. Thus
		Theorem~\ref{thm:C(M)finiteCartan} tells that $\cC (M)$ is a connected
		indecomposable finite Cartan graph.

    Assume first that $\theta =3$. If $\cC (M)$ has a point with a Cartan matrix
    of type $A_3$ or $C_3$, then $M$ is standard of type $A_3$ and $C_3$,
    respectively, by Theorems~\ref{thm:ADE} and \ref{thm:C}.
    Since the Cartan matrix $A^M$ is of type $B_3$, from
    Corollary~\ref{cor:noAC} we conclude that either $M$ is standard of
    type $B_3$ or each point of $\cC (M)$ has one of the two Cartan matrices
		in Lemma~\ref{lem:noA}\eqref{it:noA:4}.

    Since $\dim M_1=1$, Lemma~\ref{lem:A2comm} implies that $\dim M_2=1$.
    Let $H$ be the subgroup generated by $\supp M_2\cup \supp M_3$.
    Then $H$ is non-abelian,
    $M'=(\Res ^G_H M_2,\Res ^G_H M_3)\in \Ggen_2^H$, $M'$ admits all
    reflections, and $\cW (M')$ is standard of type $B_2$ because of
    Corollary~\ref{cor:noAC}. Now \cite[Thm.\,2.1,\,Table\,1]{rank2},
		especially the claim on the support of $M'$,
    imply immediately that $\supp M_3$ is non-abelian and
		$|\supp M_3|\in \{3,4\}$. Moreover, the only possible example with
			$|\supp M_3|=4$ would be \cite[Ex.\,1.7]{rank2}.
		However, this example has a root system which is standard of type $G_2$, and
	hence a Cartan matrix of type $B_2$ is impossible if $|\supp M_3|=4$.
    On the other hand, $M'$ being standard implies that $M'\notin \wp _5$ in the
    notation of \cite[7.1,8.4]{rank2}.
    The only remaining possibility is discussed in \cite[Thm.\,8.2]{rank2}:
    There exist $r,s\in Z(G)$, $t,\epsilon \in G$, characters $\rho ,\sigma $ of
    $G$ and $\tau $ of $G^t$ such that
    $$ M_1\simeq M(r,\rho ),\quad M_2\simeq M(s,\sigma ),\quad M_3\simeq
    M(t,\tau ),$$
    and $G$ is generated by $r,s,t,\epsilon $, the relations
    $t\epsilon =\epsilon ^{-1}t$ and $\epsilon ^3=1$ hold in $G$, and
    \begin{align}
      (3)_{-\sigma (s)}=0,\quad \sigma (st)\tau (s)=1,\quad \tau (t)=-1.
    \end{align}
		Moreover, the condition $a^M_{13}=0$ is equivalent to $\rho (t)\tau (r)=1$.

		Both if $M$ is standard and if $\roots ^M_+$ is the root system of $X$ in
		Lemma~\ref{lem:CGtwopoints}, we obtain that 
		$$\roots ^M_+=\roots ^{R_1(M)}_+=\roots ^{R_2(M)}_+,\quad
		A^M=A^{R_1(M)}=A^{R_2(M)}.$$
    Since $R_1(M)\simeq (M_1^*, M_1\otimes M_2,M_3)$ and $M_1\otimes M_2\simeq
		M(rs,\rho \sigma )$, the above arguments for $M$ applied to $R_1(M)$
		imply that
		$$ (3)_{-\rho (rs)\sigma	(rs)}=0,\quad \rho (rst)\sigma (rst)\tau (rs)=1
		$$
		and hence $\rho (rs)\sigma (r)=1$. Similarly,
		$R_2(M)\simeq (M_1\otimes M_2,M_2^*,M_2\otimes M_3)$. Then
		$a^{R_2(M)}_{13}=0$ implies that
	 	$$\rho\sigma (st)\sigma\tau (rs)=1,$$
		and therefore $\rho (s)\sigma (rs)=1$. Thus $M$ has a skeleton of type
		$\beta'_3$ by Lemma~\ref{lem:B':conditions}.

		Assume now that $\theta \ge 4$.
    Since $\dim M_1=1$, Lemma~\ref{lem:A2comm} implies that $\dim M_2=1$.
		Let $H$ be the subgroup generated by $\cup _{i=2}^\theta \supp M_i$.
    Then $H$ is non-abelian,
		$M'=(\Res ^G_H M_i)_{2\le i\le \theta }\in \Ggen_{\theta -1}^H$, $M'$ admits all
		reflections, $\cW (M')$ is finite, and $A^{M'}$ is of type $B_{\theta -1}$.
		Thus it suffices to lead these assumptions to a contradiction in the case
		$\theta =4$.

		Assume that $\theta =4$. By the claim for $\theta =3$ we conclude that
		there exist $r',r,s\in Z(G)$, $t,\epsilon \in G$, and characters
		$\rho ',\rho ,\sigma $ of $G$ and $\tau $ of $G^t$ such that
		$r',r,s,t,\epsilon $ generate $G$, and the relations $\epsilon ^3=1$, $t\epsilon
		=\epsilon^{-1}t$ hold in $G$. Moreover,
		$$ M_1\simeq M(r',\rho '),\quad M_2\simeq M(r,\rho ),\quad
		M_3\simeq M(s,\sigma ),\quad M_4\simeq M(t,\tau ),$$
		and the characters satisfy the relations
		\begin{align*}
			\rho '(s)\sigma (r')&=1,& \rho '(t)\tau (r')&=1,&
			\rho (rs)\sigma	(r)&=1,& \rho (t)\tau (r)&=1,\\
			\rho (s)\sigma (rs)&=1, & (3)_{-\sigma (s)}&=0,&
			\sigma (st)\tau (s)&=1,& \tau (t)&=-1.
		\end{align*}
    Since $R_1(M)\in \Ggen^G_4$ and $\dim R_1(M)_1=1$, we conclude that
    $$M'=(R_1(M)_i)_{i\in \{2,3,4\}}\in \Ggen^H_3,$$
    where $H$ is the subgroup of
    $G$ generated by $\cup _{i=2}^4\supp R_1(M)_i$.
    We record that
    $$M'_1\simeq M_1\otimes M_2,\quad M'_2\simeq M_3,\quad M'_3\simeq M_4.$$
    We now apply Theorem~\ref{thm:main} for $\theta=3$. This is possible
		since the proof does not use results on tuples in $\cF ^G_n$,
		$n\ge 4$. Since $c_{M'_3,M'_2}c_{M'_2,M'_3}\ne \id _{M'_2\ot M'_3}$,
		according to Theorem~\ref{thm:main} and the equations $\dim M'_1=\dim M'_2=1$
		we conclude that either $c_{M'_2,M'_1}c_{M'_1,M'_2}=\id_{M'_1\ot M'_2}$
	or $(M'_1,M'_2,M'_3)$ has a skeleton of type $\beta '_3$. This implies that
    $$ \rho '\rho (s)\sigma (rr')=1 \text{ or }
		\rho '\rho (r'r)\rho '\rho (s)\sigma (rr')=1.$$
    The first case is impossible since $\rho (s)\sigma (r)\ne 1$,
    $\rho '(s)\sigma (r')=1$.
    Therefore $\rho '(r'r)\rho (r')=1$.
    
    Since $a^M_{21}=-1$, we know that $\rho (r)=-1$ or $\rho (rr')\rho '(r)=1$.
    Assume first that $\rho (rr')\rho'(r)=1$. Then
    Propositions~\ref{pro:skeleton:B'} and \ref{pro:skeleton:B''} imply that
    there is a finite Cartan graph with two points corresponding to the skeleton
		of $M$ and of $R_4(M)$, respectively, such that the Cartan
    matrices of these points are
    \begin{align*}
      \begin{pmatrix}
        2 & -1 & 0 & 0 \\
        -1 & 2 & -1 & 0 \\
        0 & -1 & 2 & -1 \\
        0 & 0 & -2 & 2
      \end{pmatrix},\quad
      \begin{pmatrix}
        2 & -1 & 0 & 0 \\
        -1 & 2 & -1 & 0 \\
        0 & -2 & 2 & -1 \\
        0 & 0 & -2 & 2
      \end{pmatrix}.
    \end{align*}
	  However, by \cite[Thm.\,5.4]{MR2498801} there is no such finite Cartan
    graph, which establishes the desired contradiction.

    Assume now that $\rho '(r)\rho (r'r)\ne 1$ and $\rho (r)=-1$.
    Since $(3)_{-\rho (r)}=0$, this implies that
    $\charK =3$. Let $M''=(R_2(M)_1,R_2(M)_3,R_2(M)_4)$ and let now $H$ be the
    subgroup of $G$ generated by $\supp M''$. Since $R_2(M)\in \Ggen^G_4$ and
    $\dim R_2(M)_2=1$, we conclude that $M''\in \Ggen^H_3$. Moreover,
    $$ M''_1\simeq M_1\otimes M_2,\quad M''_2\simeq M_2\otimes M_3,\quad
       M''_3\simeq M_4.$$
    Since
    $$\rho '\rho (rs)\rho \sigma (r'r)=\rho '(r)\rho (r'r)\ne 1,$$
    the tuple $M''$ is braid-indecomposable. 
    {}From Theorem~\ref{thm:main} for $\theta=3$
    and from the facts that $\dim M''_1=\dim M''_2=1$ and $\rho \sigma (rs)=-1$
    we conclude that
    $\rho'\rho (rs)\rho\sigma (r'r)=-1$. This immediately implies that
    $\rho '(r)\rho (r')=1$, a contradiction to $a^M_{12}\ne 0$.
    Thus $\theta \ne 4$ and the proof of the theorem is completed.
\end{proof}

\begin{proof}[Proof of Theorem~\ref{thm:B2}: $\dim M_1>1$] \

	(1)$\Rightarrow $(3),(4). Since $M\in \Ggen ^G_\theta $ has a skeleton of type
	$\beta _\theta $, Proposition~\ref{pro:skeleton:B} implies
	that $M$ admits all reflections and $\cW (M)$ is standard of type
    $B_\theta $.
    Lemma~\ref{lem:B:conditions} implies that $\NA (M_i)$ is
	finite-dimensional for all $i\in \{1,\dots ,\theta \}$. More precisely,
    $$\cH_{\NA (M_i)}(t)=(2)_t^2,\quad \cH _{\NA (M_\theta )}(t)=(3)_t^2.$$
	Now Theorem~\ref{thm:HS}
	implies that
	$\NA (M)$ is finite-dimensional with the claimed Hilbert series.

	(4)$\Rightarrow $(2).
    Since $\dim \NA (M)<\infty $, the tuple $M$ admits all
	reflections by \cite[Cor. 3.18]{MR2766176} and the Weyl groupoid is finite by
	\cite[Prop. 3.23]{MR2766176}.

	(2)$\Rightarrow $(1). It is assumed that $\dim M_1>1$, $M$ admits all
    reflections, $A^M$ is of type $B_\theta $, and $\cW (M)$ is finite. Thus
		Theorem~\ref{thm:C(M)finiteCartan} tells that $\cC (M)$ is a connected
		indecomposable finite Cartan graph.

    Since $\theta \ge 3$ and $\dim M_1>1$,
		it follows from Lemma~\ref{lem:A2comm} and Lemma~\ref{lem:A2noncomm}
    that $\supp M_1$ and $\supp M_2$ do not commute and that
    $$\dim M_1=\dim M_2=|\supp M_1|=|\supp M_2|=2.$$
		Let $H$ be the subgroup generated by $\cup _{i=2}^\theta \supp M_i$. Then
    Lemma~\ref{lem:reduction} implies that
		$M'=(\Res ^G_H M_i)_{2\le i\le \theta }\in \Ggen_{\theta -1}^H$.

		Assume first that $\theta =3$.
    Let $r\in \supp M_1$, $s\in \supp M_2$, and $t\in \supp M_3$.
		Since $a^M_{13}=0$, we conclude that
		$$ s\triangleright t=r\triangleright (s\triangleright t)
		   =(r\triangleright s)\triangleright (r\triangleright t)
			 =(r\triangleright s)\triangleright t,$$
			 where $\triangleright $ means conjugation: $s\triangleright
			 t=sts^{-1}$.
		Since $s\ne r\triangleright s$, this means that
		both elements of $\supp M_2$ act in the same way on $\supp M_3$.
		Then \cite[Thm.\,2.1]{rank2} implies that $\charK=3$, $\dim M_3=|\supp
		M_3|=2$, and that the conditions in Lemma~\ref{lem:B:conditions}(2) hold.
		Then Lemma~\ref{lem:B:conditions} implies (1).

		Assume now that $\theta >3$.
		Since $\dim M_2>1$, the claim for $\theta -1$ implies that
		$\charK=3$ and $M'$ has a skeleton of type $\beta _{\theta -1}$.
		In particular, by Lemma~\ref{lem:B:conditions} there exist $s_2,\dots
		,s_\theta ,\epsilon \in G$ such that
		$\epsilon s_i=s_i\epsilon $ and $s_i^H=\{s_i,\epsilon s_i\}$ for $2\le
		i\le \theta $, where $H\subseteq G$ is the subgroup generated by $s_2,\dots
		,s_\theta ,\epsilon $. Let $s_1\in \supp M_1$. Since $s_1s_2\ne s_2s_1$
		and $\epsilon	^2=1$, we conclude from Lemma~\ref{lem:rs} that
		$\supp M_1=\{s_1,\epsilon s_1\}$ and $s_1\epsilon =\epsilon s_1$.
		Since $G$ is generated by $s_1,\dots ,s_\theta ,\epsilon $, we conclude that
		$\epsilon \in Z(G)$. In order to prove that $M$ has a skeleton of type $\beta _\theta $,
		one has	to check conditions \eqref{eq:B:1}--\eqref{eq:B:5} in
		Lemma~\ref{lem:B:conditions} for $i=1$.
		These follow from Lemmas~\ref{lem:HS:X1}, \ref{lem:HS:X2}, and \ref{lem:a=0for22}.

		(3)$\Rightarrow $(2) is clear.
\end{proof}

\section{Proof of Theorem \ref{thm:F4}: The case $F_4$}
\label{section:F4}

In this section we require that all the assumptions of Theorem~\ref{thm:F4}
hold. Thus let $G$ be a non-abelian group
and let $M=(M_1,M_2,M_3,M_4)\in\Ggen_4^G$.
Assume that $A^M$ is a Cartan matrix of type $F_4$. More precisely,
$$a^M_{12}=a^M_{21}=a^M_{23}=a^M_{34}=a^M_{43}=-1, \quad a^M_{32}=-2,$$
and $a^M_{ij}=0$ otherwise if $i\ne j$.

\begin{lem}
	\label{lem:F4:step1}
	Let $H=\langle\cup_{i=2}^4\supp M_i\rangle$ and $M'=(\Res_H^G M_i)_{2\leq
	i\leq 4}$. Then $H$ is non-abelian, $M'\in\Ggen_3^H$ and $A^{M'}$ is of type
	$C_3$. Moreover, dim $M_1=1$.
\end{lem}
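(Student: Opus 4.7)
The plan is to derive all the conclusions of the lemma by applying, in sequence, Lemma~\ref{lem:reduction} (to produce $M'$ over $H$), Lemma~\ref{lem:Ctheta:noncomm} (to extract dimensional information from the $C_3$ structure of $M'$), and Lemma~\ref{lem:A2comm} (to deduce $\dim M_1=1$ from the $A_2$ block formed by $M_1$ and $M_2$).

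First, the $F_4$ Cartan relations give exactly the hypotheses $a^M_{12}=a^M_{21}=a^M_{23}=-1$ and $a^M_{1j}=0$ for $j\in\{3,4\}$ required by Lemma~\ref{lem:reduction} (with $\theta=4$). That lemma directly yields $M'\in\Ggen_3^H$; its second assertion (``$H$ abelian implies $G$ abelian'') combined with the hypothesis that $G$ is non-abelian forces $H$ to be non-abelian as well. Reading off the $3\times 3$ submatrix of $A^M$ on the indices $\{2,3,4\}$ immediately gives
\[
A^{M'}=\begin{pmatrix}2&-1&0\\-2&2&-1\\0&-1&2\end{pmatrix},
\]
which is a Cartan matrix of type $C_3$.

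To exploit this, I would relabel indices by setting $\tilde M'=(\Res^G_H M_4,\Res^G_H M_3,\Res^G_H M_2)$, so that the double bond $a_{23}=-2$ sits at the end of the diagram, matching the convention used in Section~\ref{section:C}. Then $\tilde M'\in\Ggen_3^H$, the group $H$ is non-abelian, and $A^{\tilde M'}$ is of type $C_3$ in that convention; so Lemma~\ref{lem:Ctheta:noncomm} applies to $\tilde M'$ and yields $\dim\tilde M'_3=1$, i.e.\ $\dim M_2=1$. Since $M_2$ is absolutely simple in $\ydG$ and one-dimensional, it must be of the form $M(s_2,\sigma_2)$ with $s_2\in Z(G)$; in particular $\supp M_2=\{s_2\}$ commutes elementwise with $\supp M_1$. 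The data $a^M_{12}=a^M_{21}=-1$, $a^M_{1j}=0$ for $j\in\{3,4\}$, and this commutation are precisely the hypotheses of Lemma~\ref{lem:A2comm}, which then delivers $\dim M_1=1$.

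The main obstacle is essentially bookkeeping: matching the $C_3$ submatrix of $A^M$ (whose double bond sits between the $M$-indices $2$ and $3$) to the end-of-diagram convention of Section~\ref{section:C}, and then translating the dimensional conclusion of Lemma~\ref{lem:Ctheta:noncomm} back through the relabeling to the original indexing. No new reflection-theoretic input is required, since Lemma~\ref{lem:Ctheta:noncomm} relies only on the standing Cartan-matrix hypothesis of Theorem~\ref{thm:C}, which we have already verified for $\tilde M'$.
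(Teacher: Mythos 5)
Your proposal is correct and follows essentially the same route as the paper: Lemma~\ref{lem:reduction} gives $M'\in\Ggen_3^H$ with $H$ non-abelian, the $C_3$ shape of $A^{M'}$ is read off from $A^M$, Lemma~\ref{lem:Ctheta:noncomm} yields $\dim M_2=1$, and then centrality of $\supp M_2$ together with Lemma~\ref{lem:A2comm} gives $\dim M_1=1$. The only difference is that you make explicit the reordering $(M_4,M_3,M_2)$ needed to match the index convention of Section~\ref{section:C}, a point the paper leaves implicit.
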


\begin{proof}
	Lemma~\ref{lem:reduction} implies that $M'\in \Ggen_3^H$ and that $H$ is
	non-abelian. Since $A^M$ is of
	type $F_4$, we conclude that $A^{M'}$ is of type $C_3$.

  Since $H$ is non-abelian, Lemma~\ref{lem:Ctheta:noncomm} for $M'$
	implies that $\dim M_2=1$. Therefore $\supp M_2$ commutes with $\supp M_1$ and
	hence $\dim M_1=1$ by Lemma~\ref{lem:A2comm}.
\end{proof}

The skeleton of type $\varphi_4$ is described in the following
lemma.

\begin{lem}
	\label{lem:F4:conditions}
	Assume that $\charK\ne2$. Let $N\in \cF^G_4$. The
	following are equivalent:
	\begin{enumerate}
		\item $N$ has a skeleton of type $\varphi_4$. 
		\item 
			There exists $\epsilon\in Z(G)$ with $\epsilon^2=1$ and for all
			$i\in\{1,\dots,4\}$ and all $s_i\in\supp M_i$ there exists a unique
			character $\sigma_i$ of $G^{s_i}$ such that $\supp M_i=\{s_i\}$ for $i\in
			\{1,2\}$, $\supp M_i=\{s_i,\epsilon s_i\}$ for $i\in \{3,4\}$,
			$M_i\simeq M(s_i,\sigma_i)$ for all
			$i\in\{1,\dots,4\}$, and  the following conditions hold:
			\begin{align}
				\label{eq:F4:1}&\sigma_1(s_1)=\sigma_2(s_2)=\sigma_3(s_3)=\sigma_4(s_4)=-1,\\
				\label{eq:F4:2}&\sigma_4(\epsilon s_3^2)\sigma_3(\epsilon s_4^2)=1,\\
				\label{eq:F4:3}&\sigma_4(s_1)\sigma_1(s_4)=\sigma_3(s_1)\sigma_1(s_3)
			  =\sigma_4(s_2)\sigma_2(s_4)=1,\\
				\label{eq:F4:4}&\sigma_3(s_2)\sigma_2(s_3)=-1,\\
				\label{eq:F4:5}&\sigma_1(s_2)\sigma_2(s_1)=-1,\\
				\label{eq:F4:6}&s_3s_4=\epsilon s_4s_3.
			\end{align}
	\end{enumerate}
\end{lem}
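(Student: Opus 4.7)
The proof will follow the template of Lemmas~\ref{lem:ADE:conditions}, \ref{lem:C:conditions}, and \ref{lem:B:conditions}, establishing the two directions separately. The idea in both directions is to translate between the diagrammatic data of a skeleton of type $\varphi_4$ and the explicit identities on the characters $\sigma_i$ and the elements $s_i,\epsilon$ appearing in (2), using the arsenal of Section~\ref{section:helpful} and of Appendix~\ref{appendix:reflections}.

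For (2)$\Rightarrow$(1), I would assume the data of (2) and verify that $A^N$ is of type $F_4$ with the correct skeleton labels. The centrality of $s_1,s_2,\epsilon$ and the relations in (2) fix the supports: $|\supp N_i|=1$ for $i\in\{1,2\}$ and $|\supp N_i|=2$ for $i\in\{3,4\}$, with $\supp N_3$ and $\supp N_4$ non-commuting by \eqref{eq:F4:6}. I would then compute the off-diagonal entries of $A^N$ pair by pair: Lemmas~\ref{lem:a=0for22} and \ref{lem:X1} together with \eqref{eq:F4:3} give $a^N_{ij}=a^N_{ji}=0$ for every non-adjacent pair; Lemma~\ref{lem:Rosso} combined with \eqref{eq:F4:1} and \eqref{eq:F4:5} gives $a^N_{12}=a^N_{21}=-1$; Corollary~\ref{co:2-2:A2B2CartanMatrix} applied with \eqref{eq:F4:1}, \eqref{eq:F4:2} and \eqref{eq:F4:6} produces $a^N_{34}=a^N_{43}=-1$; and the mixed pair $(N_2,N_3)$, via Lemmas~\ref{lem:X2}, \ref{lem:X3} and \ref{lem:genRosso}, yields $a^N_{23}=-1$ and $a^N_{32}=-2$. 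The remaining skeleton labels are then read off directly from \eqref{eq:F4:1}, \eqref{eq:F4:4} and \eqref{eq:F4:5}.

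For (1)$\Rightarrow$(2), the skeleton $\varphi_4$ immediately supplies the dimensions, the support sizes, and the required (non-)commu\-tation pattern. I would invoke Lemma~\ref{lem:A3:222}(1) with a choice of $s_3\in\supp N_3$ and $s_4\in\supp N_4$ to obtain a central involution $\epsilon$ with $\supp N_i=\{s_i,\epsilon s_i\}$ for $i\in\{3,4\}$ and the commutation rule \eqref{eq:F4:6}, and conclude $s_1,s_2\in Z(G)$ from the singleton support. Since each $a^N_{ij}a^N_{ji}\in\{0,1,2\}$, Proposition~\ref{pro:absimple} applies and all $(\ad N_i)^m(N_j)$ are absolutely simple or zero; the character identities \eqref{eq:F4:1}--\eqref{eq:F4:5} then fall out by applying the matching helper lemma to each pair exactly as in the reverse direction (Lemmas~\ref{lem:a=0for22} and \ref{lem:X1} for disconnected pairs, Lemma~\ref{lem:Rosso} for $(N_1,N_2)$, Corollary~\ref{co:2-2:A2B2CartanMatrix} for $(N_3,N_4)$, and the $B_2$-type analysis for $(N_2,N_3)$). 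The main technical obstacle is precisely this mixed-dimension pair $(N_2,N_3)$ of Cartan type $B_2$: here the hypothesis $\charK\ne 2$ is essential to guarantee that exactly the branch of Lemmas~\ref{lem:X2}--\ref{lem:X3} producing the relation \eqref{eq:F4:2} and the value $\sigma_3(s_3)=-1$ applies, rather than a degenerate alternative that would be incompatible with the $F_4$ Cartan matrix.
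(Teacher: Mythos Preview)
Your proposal is correct and follows essentially the same route as the paper: both directions are handled pair by pair, invoking Lemma~\ref{lem:A3:222} for the structure of $\epsilon$ and the supports, Corollary~\ref{co:2-2:A2B2CartanMatrix} for the $(N_3,N_4)$ pair, Lemma~\ref{lem:genRosso} (or Lemma~\ref{lem:X1}) for the vanishing entries, and the $\wp$-type analysis via Lemmas~\ref{lem:X2}--\ref{lem:X3} for the mixed pair $(N_3,N_2)$.

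One small slip: in your final paragraph you attribute \eqref{eq:F4:2} to the $(N_2,N_3)$ analysis, but \eqref{eq:F4:2} is $\sigma_4(\epsilon s_3^2)\sigma_3(\epsilon s_4^2)=1$, which concerns $(N_3,N_4)$ and comes from Corollary~\ref{co:2-2:A2B2CartanMatrix}. The $(N_2,N_3)$ pair instead contributes the value $\sigma_3(s_3)=-1$ (via Lemma~\ref{lem:X3}(1), since $\epsilon^2=1$) and the edge label \eqref{eq:F4:4}; the paper extracts $\sigma_3(s_3)=-1$ from the $(N_3,N_4)$ pair, but your route through $(N_2,N_3)$ works just as well.
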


\begin{proof}
	Suppose that $N$ has a skeleton of type $\varphi_4$. Then $A^N$ is of type
	$F_4$. Lemma~\ref{lem:A3:222}(1) implies now the existence of $\epsilon $ such that
	\eqref{eq:F4:6} holds and the supports of $M_3,M_4$ are of the given form. 
	Since $A^{(N_3,N_4)}$ is of type $A_2$,
	Corollary~\ref{co:2-2:A2B2CartanMatrix} implies \eqref{eq:F4:2} and that
	$\sigma_4(s_4)=\sigma_3(s_3)=-1$. The remaining conditions in \eqref{eq:F4:1}
	and \eqref{eq:F4:4}, \eqref{eq:F4:5}
	hold by definition of the skeleton. Now \eqref{eq:F4:3} follows from
	Lemma~\ref{lem:genRosso} since $a_{14}^M=a_{13}^M=a_{24}^M=0$.

	The converse
	follows immediately from the definition of a skeleton of type $\varphi_4$
	using Lemmas~\ref{lem:genRosso}, \ref{lem:X2}, \ref{lem:X3},
	and Corollary~\ref{co:2-2:A2B2CartanMatrix}.
\end{proof}

Reflections of the skeleton of type $\varphi_4$ are considered in the following lemma.

\begin{pro}
	\label{pro:skeleton:F4}
	Let $M\in\cF_4^G$.  Assume that $M$ has a skeleton $\cS $ of type $\varphi_4$. Then
	$\cS $ is a skeleton of $R_k(M)$ for all $k\in \{1,2,3,4\}$.
\end{pro}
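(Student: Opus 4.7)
The strategy is to mimic the structure of Propositions~\ref{pro:skeleton:ADE}, \ref{pro:skeleton:C}, \ref{pro:skeleton:B}, \ref{pro:skeleton:B'}, and \ref{pro:skeleton:B''}: use Lemma~\ref{lem:triples} together with Remark~\ref{rem:connectedtriples} to reduce the claim for a 4-tuple to a family of checks on 3-vertex sub-skeletons. For each $k\in\{1,2,3,4\}$ and each pair $\{j,l\}\subseteq\{1,2,3,4\}\setminus\{k\}$ such that the sub-skeleton of $\cS$ on $\{k,j,l\}$ is connected, one needs to verify that $R_1(M_k,M_j,M_l)$ has the same skeleton as $(M_k,M_j,M_l)$; the disconnected triples are automatically handled by cases~(1) and~(3) of Remark~\ref{rem:connectedtriples}. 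Since vertex~1 of $\cS$ meets only vertex~2, vertex~4 meets only vertex~3, and vertex~2 and vertex~3 are joined, the only connected 3-vertex restrictions of $\cS$ are, up to reordering of the three entries, the triples $(M_1,M_2,M_3)$ and $(M_2,M_3,M_4)$.

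For the triple $(M_1,M_2,M_3)$ the first step is to read off its sub-skeleton from Lemma~\ref{lem:F4:conditions}: by~\eqref{eq:F4:1} one has $\sigma_1(s_1)=\sigma_2(s_2)=\sigma_3(s_3)=-1$, by~\eqref{eq:F4:5} one has $\sigma_1(s_2)\sigma_2(s_1)=-1$, by~\eqref{eq:F4:4} one has $\sigma_2(s_3)\sigma_3(s_2)=-1$, and by~\eqref{eq:F4:3} one has $\sigma_1(s_3)\sigma_3(s_1)=1$. Together with $a^M_{12}=a^M_{21}=a^M_{23}=-1$, $a^M_{32}=-2$ this matches precisely the diagram of Figure~\ref{fig:skeleton:112} with $p=-1$ and $q=\sigma_3(\epsilon)$ arbitrary; in particular the auxiliary hypothesis "$p=-1$ if $q^2=1$" is automatic. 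Lemma~\ref{lem:skeleton:112} therefore yields reflection-stability of this sub-skeleton under $R_1$, $R_2$ and $R_3$ of the triple, settling all required checks at triples of the form (reorderings of) $(M_1,M_2,M_3)$.

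For the triple $(M_2,M_3,M_4)$, reversing the order to $(M_4,M_3,M_2)$ one recognizes a skeleton of type $\gamma_3$: vertices~1 and~2 of the reordered triple are the doubles $M_4,M_3$ (joined by a simple edge since $a^M_{34}=a^M_{43}=-1$), and vertex~3 is the single $M_2$ attached to vertex~2 by a double edge whose orientation and edge-label are the ones required by $\gamma_3$ (using $a^M_{32}=-2$, $a^M_{23}=-1$, $\sigma_2(s_2)=-1$ from~\eqref{eq:F4:1}, and $\sigma_2(s_3)\sigma_3(s_2)=-1$ from~\eqref{eq:F4:4}). Since $\charK\ne2$ by the hypothesis of $\varphi_4$, Lemma~\ref{lem:skeleton:C3} applies and gives reflection-stability under $R_1$, $R_2$, $R_3$ of the reordered triple; because reflecting commutes with reordering the entries of a tuple, all required checks at the triples obtained from $(M_2,M_3,M_4)$ follow.

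Combining the two displays with Lemma~\ref{lem:triples} and Remark~\ref{rem:connectedtriples} gives that $\cS$ is a skeleton of $R_k(M)$ for every $k\in\{1,2,3,4\}$. The only genuinely delicate point is the verification that the two restricted sub-skeletons really coincide with those appearing in Lemmas~\ref{lem:skeleton:112} and~\ref{lem:skeleton:C3} (in particular the orientations of the double edges and the equations on the characters); this is a direct book-keeping exercise from Lemma~\ref{lem:F4:conditions}, and will constitute the bulk of the short proof.
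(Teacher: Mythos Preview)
Your proposal is correct and follows exactly the paper's approach: reduce via Remark~\ref{rem:connectedtriples} to the two connected three-vertex sub-skeletons and invoke Lemmas~\ref{lem:skeleton:112} and \ref{lem:skeleton:C3} respectively. The paper's own proof is the same, only much terser; your explicit identification of $(M_1,M_2,M_3)$ with Figure~\ref{fig:skeleton:112} at $p=-1$ and of $(M_4,M_3,M_2)$ with $\gamma_3$ is precisely what the paper leaves implicit.
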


\begin{proof}
	According to Remark~\ref{rem:connectedtriples} it suffices to determine the
	skeletons of $R_k(M_{i_1},M_{i_2},M_{i_3})$, where $i_1,i_2,i_3$ correspond to
	three vertices of a connected subgraph of $\cS $ and $k\in \{1,2,3\}$.
	There are only two such subgraphs and hence the proposition follows from
	Lemmas~\ref{lem:skeleton:112} and \ref{lem:skeleton:C3}.
\end{proof}

We are now ready to prove Theorem~\ref{thm:F4}. 

\begin{proof}[Proof of Theorem~\ref{thm:F4}]
	We prove the implications
	\eqref{it:C:skeleton}$\Rightarrow $\eqref{it:C:NA}$\Rightarrow
	$\eqref{it:C:fgroupoid}$\Rightarrow$\eqref{it:C:skeleton}
	and \eqref{it:C:skeleton}$\Rightarrow
	$\eqref{it:C:sgroupoid}$\Rightarrow$\eqref{it:C:fgroupoid}.

  \eqref{it:F4:sgroupoid}$\Rightarrow $\eqref{it:F4:fgroupoid}.
  This is clear, see e.\,g.~\cite[Thm.\,3.3]{MR2498801}.

	\eqref{it:F4:skeleton}$\Rightarrow $\eqref{it:F4:sgroupoid},\eqref{it:F4:Nichols}.
	Since $M\in \Ggen ^G_4$ has a skeleton of type
	$\varphi _4$, Proposition~\ref{pro:skeleton:F4} implies
	that $M$ admits all reflections and $\cW (M)$ is standard of type
  $F_4$.
	The longest element of the Weyl
	group of type $F_4$ is
	\[
	s_1s_2s_1s_3s_2s_1s_3s_2s_3s_4s_3s_2s_1s_3s_2s_3s_4s_3s_2s_1s_3s_2s_3s_4. 
	\]
	The Nichols algebras $\NA(M_i)$ are finite-dimensional for
	$i\in\{1,\dots,4\}$ and
	\[
    \cH_{\NA(M_i)}(t)=\begin{cases}
		(2)_t & \text{if $i\in\{1,2\}$},\\
		(2)_t^2 & \text{if $i\in\{3,4\}$}.
	\end{cases}
	\]
	With respect to the Cartan matrix of type $F_4$ 
	one computes 
	\begin{align*}
		&\beta_{1}=\alpha_1,
		&&\beta_{2}=\alpha_1+\alpha_2,\\
		&\beta_{3}=\alpha_2,
		&&\beta_{4}=\alpha_1+\alpha_2+\alpha_3,\\
		&\beta_{5}=\alpha_1+2\alpha_2+2\alpha_3,
		&&\beta_{6}=\alpha_1+\alpha_2+2\alpha_3,\\
		&\beta_{7}=\alpha_2+\alpha_3,
		&&\beta_{8}=\alpha_2+2\alpha_3,\\
		&\beta_{9}=\alpha_3,
		&&\beta_{10}=\alpha_1+2\alpha_2+3\alpha_3+\alpha_4,\\
		&\beta_{11}=\alpha_1+2\alpha_2+2\alpha_3+\alpha_4,
		&&\beta_{12}=2\alpha_1+3\alpha_2+4\alpha_3+2\alpha_4,\\
		&\beta_{13}=\alpha_1+3\alpha_2+4\alpha_3+2\alpha_4,
		&&\beta_{14}=\alpha_1+\alpha_2+2\alpha_3+\alpha_4,\\
		&\beta_{15}=\alpha_1+2\alpha_2+4\alpha_3+2\alpha_4,
		&&\beta_{16}=\alpha_2+2\alpha_3+\alpha_4,\\
		&\beta_{17}=\alpha_1+2\alpha_2+3\alpha_3+2\alpha_4,
		&&\beta_{18}=\alpha_1+\alpha_2+\alpha_3+\alpha_4,\\
		&\beta_{19}=\alpha_1+2\alpha_2+2\alpha_3+2\alpha_4,
		&&\beta_{20}=\alpha_1+\alpha_2+2\alpha_3+2\alpha_4,\\
		&\beta_{21}=\alpha_2+\alpha_3+\alpha_4,
		&&\beta_{22}=\alpha_2+2\alpha_3+2\alpha_4,\\
		&\beta_{23}=\alpha_3+\alpha_4,
		&&\beta_{24}=\alpha_4.
	\end{align*}
	The long and short roots are $\beta_j$ with
	$$j\in \{1,2,3,5,6,8,12,13,15,19,20,22\}$$
	and
	$$j\in \{4,7,9,10,11,14,16,17,18,21,23,24\},$$
	respectively.
	By Theorem~\ref{thm:HS}, 
	\[
	\NA(M)\simeq\NA(M_{\beta_{24}})\otimes\cdots\otimes\NA(M_{\beta_1})
	\]
	as $\N_0^4$-graded objects in $\ydG$.  Thus a direct calculation shows that
	$\NA(M)$ is finite-dimensional with the claimed Hilbert series.
	
	\eqref{it:F4:Nichols}$\Rightarrow $\eqref{it:F4:fgroupoid}.
  Since $\dim \NA (M)<\infty $, the tuple $M$ admits all
	reflections by \cite[Cor. 3.18]{MR2766176} and the Weyl groupoid is finite by
	\cite[Prop. 3.23]{MR2766176}.

	\eqref{it:F4:fgroupoid}$\Rightarrow$\eqref{it:F4:skeleton}.
	Let $H$ be the subgroup of $G$ generated by $\cup _{i=2}^4\supp M_i$.
	Let $N=(\Res ^G_H M_i)_{i\in \{2,3,4\}}$.
	Lemma~\ref{lem:F4:step1} implies that $N\in	\cE^H_3$ and
	$A^N$ is of type $C_3$.
	Therefore, by
	Theorem~\ref{thm:C}\eqref{it:C:fgroupoid}$\Rightarrow$\eqref{it:C:skeleton},
	$N$ has a skeleton of type $\gamma _3$ and $\charK\ne 2$.
	Moreover, $\dim M_1=1$ by Lemma~\ref{lem:F4:step1}. For all $i\in \{1,2,3,4\}$
	let $s_i\in G$ and $\sigma_i\in \chg{G^{s_i}}$ such that $M_i\simeq
  M(s_i,\sigma_i)$. Then $\sigma_2(s_2)=-1$, $\sigma_2(s_3)\sigma_3(s_2)=-1$, and
	\begin{align} \label{eq:MtypeF4}
		(\sigma_1(s_1)+1)(\sigma_1(s_1s_2)\sigma_2(s_1)-1)=0,
  	\quad \sigma_1(s_3)\sigma_3(s_1)=1
  \end{align}
	by Lemma~\ref{lem:Rosso}, since $a^M_{12}=-1$ and $a^M_{13}=0$.
	We are left to show that $\sigma_1(s_1)=\sigma_1(s_2)\sigma_2(s_1)=-1$.

	Let $M'=R_2(M)$. Then $M'_1\simeq M(s_2s_1,\sigma_2\sigma_1)$, $M'_3\simeq
	M(s_2s_3,\sigma_2\sigma_3)$, and $M'_4=M_4$ by Lemma~\ref{lem:genRosso}.
	In particular, $\dim M'_1=1$, $\dim M'_3=\dim M'_4=2$, and $\supp M'_3$ and $\supp
	M'_4$ do not commute. Moreover, $a^{M'}_{14}=0$
	by Lemma~\ref{lem:disconnected}. Then $(M'_1,M'_3,M'_4)\in \cF^{H'}_3$, where
	$H'\subseteq G$ is the subgroup generated by $\supp M'_1\cup \supp M'_3\cup \supp
	M'_4$. The Weyl groupoid of $(M'_1,M'_3,M'_4)$ is
	finite by assumption.
	We apply Theorem~\ref{thm:main} for $\theta=3$, which is possible, since
	its proof for $\theta =3$ does not use anything about $\theta $-tuples
	with $\theta\geq4$.
We obtain that either $a^{M'}_{13}=0$ or the triple $(M'_1,M'_3,M'_4)$
	has a skeleton of type
	$\gamma_3$. In the second case, necessarily
	$\sigma_2\sigma_1(s_2s_3)\sigma_2\sigma_3(s_2s_1)=-1$ holds.
	Equations $\sigma_2(s_2)=\sigma_2(s_3)\sigma_3(s_2)=-1$ and
	\eqref{eq:MtypeF4} imply that
	$$ \sigma_2\sigma_1(s_2s_3)\sigma_2\sigma_3(s_2s_1)=
	 -\sigma_1(s_2)\sigma_2(s_1), $$
	 and hence in the second of the above two cases necessarily
	 $\sigma_1(s_2)\sigma_2(s_1)=1$ holds.
  Since $\sigma_1(s_2)\sigma_2(s_1)\ne 1$ because of $a^M_{12}\ne 0$ and
	Lemma~\ref{lem:genRosso},
	we conclude that the second case is impossible and hence $a^{M'}_{13}=0$. Then
	$\sigma_1(s_2)\sigma_2(s_1)=-1$, in which case $\sigma_1(s_1)=-1$ by
	\eqref{eq:MtypeF4}. Thus we are done, as said at the end of the previous
	paragraph.
\end{proof}

\section{Proof of Theorem \ref{thm:main}: The classification}
\label{section:main}

Recall that $\theta\in\N_{\geq3}$, $G$ is a non-abelian group and
$M\in\Ggen_\theta^G$ is a braid-indecomposable tuple.

\begin{proof}[Proof of Theorem \ref{thm:main}]
	(1)$\Rightarrow $(2) Assume that $M$ has a skeleton $\cS$ of finite type.
	If $M$ has a skeleton of type $\alpha _\theta $ or $\delta _\theta $ or
	$\varepsilon _\theta $, then $\dim \NA (M)<\infty $ by Theorem~\ref{thm:ADE}.
	If $M$ has a skeleton of type $\gamma _\theta $ or $\varphi _4$,
	then $\dim \NA (M)<\infty $ by Theorem~\ref{thm:C} and \ref{thm:F4},
	respectively.
	If $M$ has a skeleton of type $\beta _\theta $, then $\dim M_1>1$
	by Lemma~\ref{lem:B:conditions} and hence
	$\dim \NA (M)<\infty $ by Theorem~\ref{thm:B2}.
	If $M$ has a skeleton of type $\beta '_3$, then $\dim M_1=1$
	by Lemma~\ref{lem:B':conditions} and hence
	$\dim \NA (M)<\infty $ by Theorem~\ref{thm:B1}.
	Finally, if $M$ has a skeleton of type $\beta ''_3$, then $R_3(M)$ has a
	skeleton of type $\beta '_3$ by Proposition~\ref{pro:skeleton:B''}.
	Hence $\dim \NA (R_3(M))<\infty $. Since $R_3(R_3(M))\simeq M$ by
	\cite[Thm.\,3.12]{MR2766176}, we conclude from \cite[Thm.\,1]{MR2766176}
	that $\dim \NA (M)=\dim \NA (R_3(M))<\infty $.
  
	(2)$\Rightarrow $(3)
	Since $\dim \NA (M)<\infty $, the tuple $M$ admits all
	reflections by \cite[Cor. 3.18]{MR2766176} and the Weyl groupoid is finite by
	\cite[Prop. 3.23]{MR2766176}.

	(3)$\Rightarrow $(1)
	Recall that $M$ is braid-indecomposable.
	Suppose that $M$ admits all reflections and $\cW(M)$ is finite. Then
	$\cC (M)$ is a connected indecomposable finite Cartan graph by 
	Theorem~\ref{thm:C(M)finiteCartan}.
	Therefore by Theorem~\ref{thm:finitetype}
	there exist $k\in \N _0$ and $i_1,\dots ,i_k\in \{1,\dots ,\theta \}$
	such that $A^N$ is an indecomposable Cartan matrix of finite type
	for $N=R_{i_1}\cdots R_{i_k}(M)$. The set of all indecomposable Cartan
	matrices of finite type is well-known: They are of $ADE$ types or of type
	$B_\theta $, $C_\theta $, or $F_4$.
	By Theorems~\ref{thm:ADE}, \ref{thm:B1}, \ref{thm:B2}, \ref{thm:C}, and
	\ref{thm:F4} the tuple $N$ has a skeleton of finite type. Since
	$M\simeq R_{i_k}\cdots R_{i_1}(N)$, from
	Propositions~\ref{pro:skeleton:ADE}, \ref{pro:skeleton:C},
	\ref{pro:skeleton:F4},
	\ref{pro:skeleton:B},
	\ref{pro:skeleton:B'}, and \ref{pro:skeleton:B''} we conclude that
	$M$ has a skeleton of finite type.
\end{proof}

\appendix
\section{Reflections of a pair} 
\label{appendix:reflections}

\subsection{}

For one-dimensional Yetter-Drinfeld modules $U,V$ over a group
$H$, the
Yetter-Drinfeld modules $(\ad U)^m(V)$ and $(\ad V)^m(U)$ for $m\ge 1$ are
well-known by the theory of Nichols algebras of diagonal type. The following
lemma goes back to Rosso, see \cite[Lemma 14]{MR1632802}.

\begin{lem}[Rosso]
	\label{lem:Rosso}
	Let $H$ be a group and let $U,V\in \ydH $.
	Assume that $U\simeq M(r,\rho )$ and $V\simeq M(s,\sigma )$, where
	$r,s\in Z(H)$ and $\rho ,\sigma $ are characters of $H$. Then
	$(\ad U)^m(V)\ne 0$ for a given $m\in \N $ if and only if
	\[
		(m)^!_{\rho(r)}\prod _{i=0}^{m-1}(\rho(r^i s)\sigma(r)-1)\ne 0.
	\]
	In this case, $(\ad U)^m(V)\simeq M(r^ms,\sigma_m)$, where
	$\sigma_m$ is the character of $H$ given by
	$\sigma_m(h)=\rho(h)^m\sigma(h)$ for all $h\in H$.
\end{lem}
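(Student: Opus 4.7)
The plan is to apply Lemma~\ref{lem:X_n} in a setting where everything becomes one-dimensional. Since $r,s\in Z(H)$, one has $H^r=H^s=H$, so $U=\K u$ and $V=\K v$ are one-dimensional with $u$ of $H$-degree $r$, $v$ of $H$-degree $s$, and $h\cdot u=\rho(h)u$, $h\cdot v=\sigma(h)v$ for all $h\in H$. Setting $q=\rho(r)$ and $p=\rho(s)\sigma(r)$, every tensor power $U^{\otimes m}\otimes V$ is one-dimensional and the braiding acts diagonally in $q,p$, and $\rho,\sigma$ on $r,s$. In particular the operator $\varphi_m$ of Lemma~\ref{lem:X_n} acts on $u^{\otimes m}\otimes v$ by a scalar $\lambda_m\in\K$.

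I would then unpack the recursion for $\varphi_{m+1}$ applied to the basis vector $u\otimes u^{\otimes m}\otimes v$. Direct evaluation using $r\cdot(u^{\otimes m}\otimes v)=q^m\sigma(r)\,u^{\otimes m}\otimes v$ and $(r^ms)\cdot u=q^m\rho(s)u$ shows that $\id-c_{U^{\otimes m}\otimes V,U}c_{U,U^{\otimes m}\otimes V}$ contributes $1-q^{2m}p$, while $(\id\otimes\varphi_m)c_{1,2}$, with $c_{1,2}$ interpreted as the braiding of the first two $U$-factors, contributes $q\lambda_m$. Hence
\[
  \lambda_{m+1}=1-q^{2m}p+q\lambda_m,\qquad \lambda_1=1-p.
\]
A short induction, using the identity $q^m+(m)_q=(m+1)_q$ to collapse the $p$- and $p$-free parts separately, yields the closed form $\lambda_m=(m)_q(1-q^{m-1}p)$.

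Now $X_m^{U,V}=\varphi_m(U\otimes X_{m-1}^{U,V})$ is always a subspace of the one-dimensional $U^{\otimes m}\otimes V$, and by induction it is nonzero exactly when $\lambda_1\lambda_2\cdots\lambda_m\ne 0$. Telescoping gives
\[
  \lambda_1\cdots\lambda_m=(m)^!_q\prod_{i=0}^{m-1}(1-q^ip)=(-1)^m(m)^!_{\rho(r)}\prod_{i=0}^{m-1}\bigl(\rho(r^is)\sigma(r)-1\bigr),
\]
which, up to sign, is the condition in the statement. When this is nonzero, $(\ad U)^m(V)\simeq X_m^{U,V}$ is spanned by $u^{\otimes m}\otimes v$, which has $H$-degree $r^ms$ (using $r\in Z(H)$) and on which each $h\in H$ acts by $\rho(h)^m\sigma(h)=\sigma_m(h)$, identifying $(\ad U)^m(V)$ with $M(r^ms,\sigma_m)$.

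There is no real obstacle: the whole argument reduces to one diagonal scalar recursion and its closed-form solution. The only point that deserves care is the parsing of $c_{1,2}$ in Lemma~\ref{lem:X_n} as the braiding on the first two $U$-factors inside $U^{\otimes(m+1)}\otimes V$, which is forced by the requirement that $(\id\otimes\varphi_m)$ be applied afterwards; once this is fixed, every step is a mechanical evaluation on the single basis vector $u^{\otimes m}\otimes v$.
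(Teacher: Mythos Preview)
Your proof is correct and follows essentially the same route as the paper: the paper treats this lemma as the special case of Lemma~\ref{lem:genRosso}, whose proof likewise reduces via Lemma~\ref{lem:X_n} to showing that $\varphi_m$ acts on $u^{\otimes m}\otimes v$ by the scalar $\gamma_m=(m)_{\rho(r)}\bigl(1-\rho(r^{m-1}s)\sigma(r)\bigr)$, establishes the same recursion $\gamma_{m+1}=(1-q^{2m}p)+q\gamma_m$ you found, and concludes that $X_m^{U,V}=\gamma_1\cdots\gamma_m\,U^{\otimes m}\otimes V$. Your scalars $\lambda_m$ coincide with the paper's $\gamma_m$, and the identification of the module structure is identical.
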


Rosso's lemma is a special case of a more general statement which we prove here.

\begin{lem}
	\label{lem:genRosso}
	Let $H$ be a group and let $U,V\in \ydH $.
	Assume that $U\simeq M(r,\rho )$ and $V\simeq M(s,\sigma )$, where
	$r\in Z(H)$, $s\in H$, $\rho \in \chg H$, and $\sigma $ is a
	representation of $H^s$. Assume also that $\sigma (r)$ is a constant
	automorphism of $V$. Then
	$(\ad U)^m(V)\ne 0$ for a given $m\in \N $ if and only if
	\[
		(m)^!_{\rho(r)}\prod _{i=0}^{m-1}(1-\rho(r^i s)\sigma(r))\ne 0.
	\]
	In this case, $(\ad U)^m(V)\simeq M(r^ms,\sigma_m)$, where
	$\sigma_m$ is the representation of $H^s$ given by
	$\sigma_m(h)=\rho(h)^m\sigma(h)$ for all $h\in H^s$.
\end{lem}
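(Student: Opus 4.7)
The proof reduces to the rank-two diagonal case of Rosso's lemma, exploiting that $U = M(r,\rho)$ is one-dimensional: since $\rho \in \chg H$ is linear and $r \in Z(H)$, fix $u \in U_r \setminus \{0\}$ and write $q = \sigma(r)$. Because $r$ is central, for any $v \in V_g$ with $g \in s^H$ one computes $c(u \otimes v) = q\, v \otimes u$ and $c(v \otimes u) = \rho(g)\, u \otimes v = \rho(s)\, u \otimes v$, so the cross-braiding of $u$ with any element of $V$ is diagonal with parameters $q_{11}=\rho(r)$ and $q_{12}q_{21}=\rho(s)\sigma(r)$.

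To establish the non-vanishing criterion, I would iterate the braided commutator $(\ad u)(x) = ux - (r\cdot x)\,u$, which by centrality of $r$ and the scalar action of $r$ on $V$ becomes $(\ad u)(x) = ux - \lambda_x\, xu$ for a scalar $\lambda_x$ depending only on the $H$-degree of $x$. The standard induction used in the derivation of Rosso's formula then yields an explicit expansion
\[
(\ad u)^m(v) \;=\; \sum_{k=0}^m c_k\bigl(\rho(r),\,\rho(s)\sigma(r)\bigr)\, u^{m-k}\, v\, u^k
\]
in $\NA(U \oplus V)$, whose coefficients $c_k$ are universal polynomials in $\rho(r)$ and $\rho(s)\sigma(r)$ that do not depend on $v$. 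To decide when this element vanishes, I pass, if necessary, to an algebraic closure $\overline\K$ and choose an eigenvector $v' \in \overline\K \otimes V_s$ of $\sigma(s)$. Then $\K u \oplus \K v'$ is a braided subspace of diagonal type in $\overline\K \otimes (U \oplus V)$, its inclusion induces an embedding of Nichols algebras $\NA(\K u \oplus \K v') \hookrightarrow \NA(U \oplus V)_{\overline\K}$, and Rosso's lemma in the one-dimensional case supplies exactly the criterion $(m)^!_{\rho(r)}\prod_{i=0}^{m-1}(1 - \rho(r^i s)\sigma(r)) \neq 0$. Because the expansion above is universal in $v$, this criterion descends to $\K$ and characterizes whether $(\ad u)^m(v)$ is nonzero for some (equivalently, every) nonzero $v \in V_s$, hence whether $(\ad U)^m(V) \neq 0$.

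When the criterion holds, I identify the Yetter-Drinfeld structure via the $\K$-linear map $\Psi_m \colon V \to \NA(U \oplus V)$, $v \mapsto (\ad u)^m(v)$, which shifts $H$-degree by $r^m$. Using $h \cdot u = \rho(h) u$ and that $H$ acts by braided bialgebra automorphisms, a direct computation gives
\[
h \cdot (\ad u)^m(v) \;=\; (\ad (hu))^m(hv) \;=\; \rho(h)^m\, (\ad u)^m(hv)
\]
for all $h \in H$ and $v \in V$. Restricting to $h \in H^s$ and $v \in V_s$, this shows that $\Psi_m$ restricts to an $H^s$-equivariant bijection from $V_s$ onto the fiber of $(\ad U)^m(V)$ over $r^m s$ with the $H^s$-action on the target matching $\rho^m \sigma$; injectivity is clear from the previous paragraph. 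Consequently the fiber carries the representation $\sigma_m = \rho^m \sigma$ of $H^s = H^{r^m s}$, and $(\ad U)^m(V) \simeq M(r^m s, \sigma_m)$ as required.

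The main obstacle I anticipate is the reduction in the second paragraph when $\K$ does not contain an eigenvalue of $\sigma(s)$ on $V_s$. The scalar-extension approach is clean because Nichols algebras and the quantum-binomial criterion are both compatible with base change; alternatively, one could work directly in the braided subspace $\K u \oplus V_s$, which is stable under the braiding (since $V_s$ is $H^s$-stable and $s \in H^s$), and argue that the symbolic expansion of $(\ad u)^m(v)$ is valid verbatim in $\NA(\K u \oplus V_s) \hookrightarrow \NA(U \oplus V)$ without passing to an algebraic closure.
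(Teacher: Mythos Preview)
There is a genuine gap in the step where you pass from the eigenvector $v'$ back to an arbitrary $v\in V_s$. Your claim that ``$(\ad u)^m(v)$ is nonzero for some (equivalently, every) nonzero $v\in V_s$'' does not follow from the universality of the expansion coefficients $c_k$ alone: whether $\sum_k c_k\,u^{m-k}vu^k$ vanishes in $\NA(U\oplus V)$ depends on the relations among the monomials $u^{m-k}vu^k$, and you have not shown these relations are independent of $v$. Concretely, your embedding argument only yields $\Psi_m(v')\neq 0$ (resp.\ $=0$) for a single eigenvector when the criterion holds (resp.\ fails). Since $\sigma$ is not assumed irreducible, $\ker\Psi_m$ could a priori be a proper nontrivial $H^s$-submodule of $V_s$; and since $\sigma(s)$ need not be diagonalisable (e.g.\ in positive characteristic), eigenvectors need not span $V_s$. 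Thus neither the implication ``criterion fails $\Rightarrow(\ad U)^m(V)=0$'' nor the injectivity of $\Psi_m$ needed for the isomorphism $(\ad U)^m(V)\simeq M(r^ms,\sigma_m)$ is established. (A minor point: the $c_k$ are polynomials in $\rho(r)$ and $\sigma(r)$, not in $\rho(s)\sigma(r)$.)

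The paper's proof avoids this difficulty by working not in $\NA(U\oplus V)$ but with the model $X_m^{U,V}\subseteq U^{\otimes m}\otimes V$ of Lemma~\ref{lem:X_n}, and computing by a short induction on the recursion for $\varphi_m$ that $\varphi_m(u^{\otimes m}\otimes v)=\gamma_m\,u^{\otimes m}\otimes v$ for \emph{every} $v\in V_s$, where $\gamma_m=(m)_{\rho(r)}(1-\rho(r^{m-1}s)\sigma(r))$. The key observation is that this recursion involves only the braidings $c_{U,U}$, $c_{U,V_s}$, $c_{V_s,U}$, all of which are scalar under the hypotheses; the braiding $c_{V_s,V_s}$ never appears because $V$ occurs only once in $U^{\otimes m}\otimes V$. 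Hence $\varphi_m\circ\cdots\circ(\id^{\otimes(m-1)}\otimes\varphi_1)$ acts on $u^{\otimes m}\otimes V_s$ as multiplication by the scalar $\gamma_1\cdots\gamma_m$, which is either zero or invertible, giving simultaneously the non-vanishing criterion and the bijection $V_s\to(X_m^{U,V})_{r^ms}$. Your alternative route via the braided subspace $\K u\oplus V_s$ is pointed in the right direction, but completing it amounts to exactly this observation.
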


\begin{proof}
	By Lemma~\ref{lem:X_n},
	it suffices to prove the claim for $X_m^{U,V}$ instead of $(\ad U)^m(V)$.

	Let $u\in U\setminus \{0\}$ and $v\in V_s\setminus \{0\}$.
  For all $m\ge 1$ let
	$$\gamma _m=(m)_{\rho (r)}(1-\rho (r^{m-1}s)\sigma (r)).$$
  We prove that
	\begin{align} \label{eq:Xm}
	  X_m^{U,V}=\gamma _1\cdots \gamma _mU^{\otimes m}\otimes V
	\end{align}
	for all $m\ge 1$. Then $X_m^{U,V}=0$ if $\gamma_i=0$ for some $i\in \{1,\dots
	,m\}$, and otherwise $X_m^{U,V}\simeq M(r^ms,\sigma _m)$. Indeed,
	$$ X_m^{U,V}=\oplus _{t\in \supp V}(U^{\otimes m}\otimes V_t)$$
	in the latter case and
	$$ h(u^{\otimes m}\otimes w)=\rho (h)^m u^{\otimes m}\otimes hw $$
	for all $w\in V_s$.

	We prove by induction on $m$ that
	\begin{align}\label{eq:varphim(umv)}
		\varphi _m(u^{\otimes m}\otimes v)=\gamma _mu^{\otimes m}\otimes v
  \end{align}
  for all $m\ge 1$ and all $v\in V_s$. This clearly implies \eqref{eq:Xm}.

	Let $v\in V_s$.
	For $m=1$ we have $\varphi _1(u\otimes v)=(\id -c^2)(u\otimes v)$ and
	\begin{align*}
		c^2(u\otimes v)=c(rv\otimes u)=\sigma (r)su\otimes v=\rho (s)\sigma
		(r)u\otimes v.
	\end{align*}
	Therefore $\varphi _1(u\otimes v)=\gamma_1u\otimes v$. Assume now that
	\eqref{eq:varphim(umv)} holds for some $m\ge 1$. Then
  \begin{align*}
		\varphi _{m+1}(u^{\otimes m+1}\otimes v)&=
		u^{\otimes m+1}\otimes v-c^2(u\otimes (u^{\otimes m}\otimes v))\\
		&\qquad
		+(\id \ot \varphi _m)c_{12}(u\ot u\ot (u^{\otimes m-1}\ot v))\\
    &=\big((1-\rho (r)^m\sigma (r)\rho (r^ms))+\rho (r)\gamma _m\big)
		u^{\ot m+1}\ot v\\
		&=\gamma _{m+1}u^{\ot m+1}\ot v.
  \end{align*}
	This proves the lemma.
\end{proof}

\subsection{}

In this section we collect some auxiliary results regarding reflections of
\cite[\S4]{MR2732989}.
Let $G$ be a non-abelian group.

Let $g,h,\epsilon\in G$. Assume that $|g^G|=|h^G|=2$, $gh\ne hg$, and
	$gh=\epsilon hg$.
	By Lemma~\ref{lem:A3:222}
	the subgroup $\langle g,h,\epsilon \rangle $ of $G$ is an epimorphic
image of $\Gamma _2$. Let
$V,W\in\ydG$ with 
$V\simeq M(g,\rho)$ and
$W\simeq M(h,\sigma)$, where $\rho\in \chg{G^g}$ and $\sigma\in\chg{G^h}$. 
Let $v\in V_g\setminus\{0\}$. Then $\{v,hv\}$ is a basis of $V$. The degrees of
these basis vectors are $g$ and $\epsilon g$, respectively. Similarly let $w\in
W_h\setminus\{0\}$. Then $\{w,gw\}$ is a basis of $W$ and the degrees of these
basis vectors are $h$ and $\epsilon h$, respectively. In particular,
$\Res ^G_{\langle g,h,\epsilon \rangle} V$ and
$\Res ^G_{\langle g,h,\epsilon \rangle} W$
are absolutely simple Yetter-Drinfeld modules over $\langle g,h,\epsilon \rangle
$. Since $z$ acts on $V^{\otimes m}\otimes W^{\otimes n}$ for $z\in G^g\cap G^h$
and $m,n\in \N _0$ by $\rho (z)^m\sigma(z)^n\id $, the following claims follow
directly from the corresponding results in \cite{MR2732989}.

\begin{lem}{\cite[Lemma 4.1]{MR2732989}}\
	\label{lem:HS:X1}
	\begin{enumerate}
		\item $X_1^{V,W}\ne0$. Moreover, 
			$X_1^{V,W}$ is absolutely simple if and only if $\rho(\epsilon
			h^2)\sigma(\epsilon g^2)=1$. 
		\item Assume that $\rho(\epsilon
			h^2)\sigma(\epsilon g^2)=1$. Then $X_1^{V,W}\simeq M(gh,\widetilde{\sigma})$,
			where $\widetilde{\sigma}\in \chg{G^{gh}}=\langle \{gh\}\cup (G^g\cap
			G^h)\rangle$ with
			$\widetilde{\sigma}(gh)=-\rho (g)\sigma(h)$, and 
			$\widetilde{\sigma}(z)=\rho(z)\sigma(z)$ for all $z\in G^g\cap G^h$. 
	\end{enumerate}
\end{lem}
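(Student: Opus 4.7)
The plan is to deduce the lemma from its already-established version in the $\Gamma_2$ setting, namely \cite[Lemma~4.1]{MR2732989}. The setup preceding the statement already does most of the work: the subgroup $H=\langle g,h,\epsilon\rangle$ is, by Lemma~\ref{lem:A3:222}, an epimorphic image of $\Gamma_2$ (since $\epsilon\in Z(G)$ and $\epsilon^2=1$), and the restrictions $\Res^G_HV\simeq M_H(g,\rho|_{H^g})$ and $\Res^G_HW\simeq M_H(h,\sigma|_{H^h})$ are absolutely simple Yetter-Drinfeld modules over $H$.

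The key observation is that the braiding on $V\otimes W$ uses only the action of elements of $\supp V\cup\supp W\subseteq H$. Hence the subspace $X_1^{V,W}$ equals $X_1^{\Res V,\Res W}$ as a subspace of $V\otimes W$, and $X_1^{V,W}$ is nonzero (respectively absolutely simple) over $\K G$ if and only if $X_1^{\Res V,\Res W}$ is nonzero (respectively absolutely simple) over $\K H$: the transfer follows because any $\K G$-submodule is a fortiori a $\K H$-submodule, and conversely induction from an absolutely simple $\K H$-Yetter-Drinfeld module whose support is a single $G$-conjugacy class yields an absolutely simple $\K G$-Yetter-Drinfeld module. Thus \cite[Lemma~4.1]{MR2732989} applied to $(\Res V,\Res W)$ furnishes the first item, the criterion $\rho(\epsilon h^2)\sigma(\epsilon g^2)=1$ for absolute simplicity, and an identification $X_1^{\Res V,\Res W}\simeq M_H(gh,\widetilde{\sigma}_H)$ with $\widetilde{\sigma}_H(gh)=-\rho(g)\sigma(h)$ and $\widetilde{\sigma}_H(z)=\rho(z)\sigma(z)$ for $z\in H^g\cap H^h$.

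The final step is to promote $\widetilde{\sigma}_H$ to a character $\widetilde{\sigma}$ of the full centralizer $G^{gh}$ and to identify $X_1^{V,W}\simeq M(gh,\widetilde{\sigma})$ over $G$. I would set $\widetilde{\sigma}(gh)=-\rho(g)\sigma(h)$ and $\widetilde{\sigma}(z)=\rho(z)\sigma(z)$ for $z\in G^g\cap G^h$, and then verify that this prescription extends consistently to all of $G^{gh}$. The main obstacle will be precisely this verification, which amounts to showing that $G^{gh}$ is generated by $\{gh\}\cup(G^g\cap G^h)$ and that the conjugation relations within $G^{gh}$ are compatible with the assignment; both hinge on the centrality of $\epsilon$, on $\epsilon^2=1$, and on $\rho$ and $\sigma$ being class functions on $G^g$ and $G^h$ respectively. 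Once this is in hand, the $G$-module structure on $X_1^{V,W}$ induced from the $H$-module $M_H(gh,\widetilde{\sigma}_H)$ is exactly $M(gh,\widetilde{\sigma})$, completing the proof.
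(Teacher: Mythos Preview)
Your approach is essentially the one the paper takes. The paper does not give a standalone proof of this lemma; the paragraph immediately preceding it observes that every $z\in G^g\cap G^h$ acts on $V^{\otimes m}\otimes W^{\otimes n}$ by the scalar $\rho(z)^m\sigma(z)^n$, and then states that Lemmas~\ref{lem:HS:X1}--\ref{lem:HS:X3} follow directly from the corresponding results in \cite{MR2732989}. This is exactly your reduction to the $\Gamma_2$ setting.

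One small imprecision worth flagging: your transfer of absolute simplicity between $H$ and $G$ appeals to ``induction,'' which is not the right mechanism here---$X_1^{V,W}$ is already a $G$-Yetter--Drinfeld submodule of $V\otimes W$, not an induced module. The clean argument, and the one implicit in the paper's scalar observation, is that $[G:G^g\cap G^h]=4$ with coset representatives $1,g,h,gh\in H$, so every element of $G$ acts on $V\otimes W$ as an element of $H$ times a scalar; hence $H$-Yetter--Drinfeld submodules and $G$-Yetter--Drinfeld submodules of $V\otimes W$ coincide. The same decomposition gives $G^{gh}=\langle gh\rangle\cdot(G^g\cap G^h)$ at once, making your final verification of $\widetilde\sigma$ straightforward.
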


\begin{lem}{\cite[Lemma 4.2]{MR2732989}}
	\label{lem:HS:X2}
	Assume that $\rho(\epsilon h^2)\sigma(\epsilon g^2)=1$. 
	\begin{enumerate}
		\item $X_2^{V,W}=0$ if and only if $\rho(g)=-1$. 
		\item $X_2^{V,W}$ is absolutely simple if and only if $\rho(g)=1$ and
			$\charK\ne2$. 
	\end{enumerate}
\end{lem}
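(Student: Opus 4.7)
The plan is to reduce directly to the analogous result \cite[Lemma 4.2]{MR2732989} by restriction of the base group. I set $H=\langle g,h,\epsilon \rangle \subseteq G$; by Lemma~\ref{lem:A3:222} we have $\epsilon^2=1$ and $\epsilon$ is central in $H$, so $H$ is an epimorphic image of $\Gamma_2$. The restrictions $V'=\Res^G_H V$ and $W'=\Res^G_H W$ are absolutely simple Yetter-Drinfeld modules over $H$ whose parameters still satisfy $\rho(\epsilon h^2)\sigma(\epsilon g^2)=1$, so the cited lemma is set up to apply to $(V',W')$ once the reduction is justified.

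The key step is to verify that $X_2^{V,W}=0$ (respectively, $X_2^{V,W}$ is absolutely simple) over $G$ if and only if the same holds for $X_2^{V',W'}$ over $H$. The essential ingredient is the observation in the paragraph preceding Lemma~\ref{lem:HS:X1}: every $z\in G^g\cap G^h$ acts on $V^{\otimes m}\otimes W^{\otimes n}$ by the scalar $\rho(z)^m\sigma(z)^n$. This implies that the braiding on $V\oplus W$ depends only on the $H$-action, hence the maps $\varphi_i$ from Lemma~\ref{lem:X_n} commute with the restriction, and $X_2^{V,W}$ is nothing but the $G$-induction of $X_2^{V',W'}$ as an $\N_0^2$-graded object in $\ydG$. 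Induction from the subgroup $H$ (whose complement is spanned by elements acting by scalars) preserves both the property of being zero and that of being absolutely simple.

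Having reduced to the $\Gamma_2$-situation, I would then quote \cite[Lemma 4.2]{MR2732989} applied to $(V',W')$, which gives exactly the two equivalences claimed. The main obstacle in the plan is precisely this first reduction: one needs to check carefully that the $\ad$-construction commutes with $\Res^G_H$ and that induction does not introduce new submodules or fail to detect zero. Both points follow from the fact that the braiding is determined by $\supp V\cup\supp W\subseteq H$ and that the additional elements of $G$ outside $H$ act by fixed scalars on every tensor power. Once this compatibility is recorded, the rest of the argument is bookkeeping, and the concrete equivalences---vanishing iff $\rho(g)=-1$, absolute simplicity iff $\rho(g)=1$ and $\charK\ne 2$---are inherited verbatim from the $\Gamma_2$-case already established in \cite{MR2732989}.
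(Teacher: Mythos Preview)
Your approach is essentially the paper's own: the paragraph immediately preceding Lemma~\ref{lem:HS:X1} already records that every $z\in G^g\cap G^h$ acts on $V^{\otimes m}\otimes W^{\otimes n}$ by the scalar $\rho(z)^m\sigma(z)^n$, and the paper simply says that the claims then ``follow directly from the corresponding results in \cite{MR2732989}''. Your reduction to $H=\langle g,h,\epsilon\rangle$ and appeal to \cite[Lemma~4.2]{MR2732989} is exactly this.

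One terminological quibble: describing $X_2^{V,W}$ as the ``$G$-induction of $X_2^{V',W'}$'' is misleading. Induction would enlarge the module. What actually happens is that $X_2^{V,W}$ and $X_2^{V',W'}$ are literally the same subspace of $V\otimes V\otimes W$ (the maps $\varphi_i$ depend only on the braiding, which is already determined by $H$), and since $G=H\cdot(G^g\cap G^h)$ with the second factor acting by scalars on $V\otimes V\otimes W$, any $H$-Yetter--Drinfeld submodule is automatically a $G$-Yetter--Drinfeld submodule and vice versa. That is the correct justification for why vanishing and absolute simplicity transfer in both directions; once phrased this way, the argument is complete.
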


\begin{lem}{\cite[Lemma 4.3]{MR2732989}}
	\label{lem:HS:X3}
	Assume that $\rho(\epsilon h^2)\sigma(\epsilon g^2)=1$, $\rho(g)=1$ and
	$\charK\ne2$. Let $n\in \N $.
	\begin{enumerate}
		\item If $n\ge 3$ then $X_n^{V,W}=0$ if and only if $0<\charK\leq n$.  
		\item If $n\ge 1$ and $X_n^{V,W}\ne0$ then $X_n^{V,W}\simeq M(g^nh,\widetilde{\sigma})$,
			where $\widetilde{\sigma}$ is a character of 
			$G^{g^nh}=\langle \{g^nh\}\cup (G^g\cap G^h)\rangle$ with
			$\widetilde{\sigma}(g^nh)=(-1)^n\sigma(h)$ and
			$\widetilde{\sigma}(z)=\rho (z)^n\sigma(z)$ for all $z\in G^g\cap G^h$.
	\end{enumerate}
\end{lem}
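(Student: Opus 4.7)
The plan is to reduce to the corresponding result \cite[Lem.~4.3]{MR2732989}, mirroring the strategy already used for Lemmas~\ref{lem:HS:X1} and \ref{lem:HS:X2}. By Lemma~\ref{lem:A3:222} the subgroup $H=\langle g,h,\epsilon\rangle$ satisfies $\epsilon^2=1$ and is an epimorphic image of $\Gamma_2$. As noted in the paragraph preceding Lemma~\ref{lem:HS:X1}, the restrictions $V':=\Res^G_H V$ and $W':=\Res^G_H W$ are absolutely simple objects in $\ydH$. The operators $\varphi_m$ defining $X_m^{V,W}$ depend only on the braiding of $V\oplus W$, which in turn is determined by the $H$-grading and the $H$-action on $V$ and $W$. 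Consequently $\Res^G_H X_n^{V,W}=X_n^{V',W'}$ as objects in $\ydH$, so both the vanishing of $X_n^{V,W}$ and the $H$-isomorphism class of $X_n^{V,W}$ (when nonzero) are read off from the $\Gamma_2$-case.

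Applying \cite[Lem.~4.3]{MR2732989} to $(V',W')$, whose hypotheses are exactly $\rho(\epsilon h^2)\sigma(\epsilon g^2)=1$, $\rho(g)=1$, $\charK\ne 2$, yields (1) at once and, in the nonzero case of (2), identifies $X_n^{V,W}$ as an absolutely simple $H$-Yetter--Drinfeld module supported on the conjugacy class of $g^nh$ in $H$, with $g^nh$ acting by the scalar $(-1)^n\sigma(h)$ on its $g^nh$-isotypic component. To upgrade this to a $G$-Yetter--Drinfeld structure I would invoke the observation made immediately before Lemma~\ref{lem:HS:X1}: every $z\in G^g\cap G^h$ acts on $V^{\otimes n}\otimes W$ by the scalar $\rho(z)^n\sigma(z)$, hence on the subspace $X_n^{V,W}$ as well. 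Combined with the $H$-structure above, this pins down the character $\widetilde\sigma$ on the subgroup generated by $g^nh$ and $G^g\cap G^h$, giving the two formulas in (2).

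Finally, one has to verify the equality $G^{g^nh}=\langle\{g^nh\}\cup(G^g\cap G^h)\rangle$. Here I would pick any $r\in G$ with $rg=\epsilon gr$ (which exists since $|g^G|=2$) and use Lemma~\ref{lem:rs}(1)--(2) together with the parallel relation $rhr^{-1}\in\{h,\epsilon h\}$ to compute $r(g^nh)r^{-1}\in\{\epsilon^{\pm 1}g^nh,g^nh\}$. Since by hypothesis $gh\ne hg$, a short case check shows $r(g^nh)r^{-1}\ne g^nh$, so $G^{g^nh}$ has index two in $G$; because both $g^nh$ and $G^g\cap G^h$ clearly lie in $G^{g^nh}$ and together generate a subgroup of index at most two (by the index-two argument of Lemma~\ref{lem:rs}(4)), the claimed generation follows. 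The main technical load of the whole lemma is the $n$-fold $q$-binomial computation behind part (1), but that is precisely what \cite[Lem.~4.3]{MR2732989} carries out over $\Gamma_2$, so nothing further has to be redone here.
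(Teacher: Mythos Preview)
Your approach is exactly the paper's: the three lemmas \ref{lem:HS:X1}--\ref{lem:HS:X3} are not given separate proofs there, but are declared to ``follow directly from the corresponding results in \cite{MR2732989}'' once one observes that every $z\in G^g\cap G^h$ acts on $V^{\otimes m}\otimes W$ by the scalar $\rho(z)^m\sigma(z)$. You have simply spelled this out, including the centralizer identification that the paper leaves implicit.

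One small glitch in your final paragraph: the element $r$ you pick with $rg=\epsilon gr$ need \emph{not} fail to commute with $g^nh$. Indeed $r=h$ is such an element, and $h(g^nh)h^{-1}=\epsilon^n g^nh$, which equals $g^nh$ whenever $n$ is even. The fix is immediate: use $g$ instead, since $g(g^nh)g^{-1}=g^n(\epsilon h)=\epsilon g^nh\ne g^nh$ for every $n$. This shows $g\notin G^{g^nh}$, hence $[G:G^{g^nh}]=2$; and since $g^nh\notin G^g\cap G^h$ (again because conjugation by $g$ moves it) while $g^2,h^2,\epsilon\in G^g\cap G^h$, the index-two argument you outline via Lemma~\ref{lem:rs}(4) goes through.
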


With the previous calculations and exchanging $V$ and $W$ one immediately
obtains the following lemma, see \cite[Prop. 4.4]{MR2732989}. 

\begin{lem}
	\label{lem:HS:CartanMatrix}
	The Yetter-Drinfeld modules 
	$(\ad V)^m(W)$ and $(\ad W)^m(V)$ are absolutely simple or zero for
	all $m\geq0$ if and only if $\rho(\epsilon h^2)\sigma(\epsilon g^2)=1$
	and $\rho(g)^2=\sigma(h)^2=1$. In this case, the non-diagonal
	entries of the Cartan matrix $A^{(V,W)}$ are 
	\[
		a_{12}^{(V,W)}=\begin{cases}
			-1 & \text{if $\rho(g)=-1$},\\
			1-p & \text{if $\rho(g)=1$ and $\charK=p>2$},
		\end{cases}
	\]
	and otherwise $(\ad V)^m(W)\ne0$ for all $m\geq0$, and similarly
	\[
		a_{21}^{(V,W)}=\begin{cases}
			-1 & \text{if $\sigma(h)=-1$},\\
			1-p & \text{if $\sigma(h)=1$ and $\charK=p>2$},
		\end{cases}
	\]
	and otherwise $(\ad W)^m(V)\ne0$ for all $m\geq0$.
\end{lem}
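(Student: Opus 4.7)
The plan is to derive this lemma immediately from the three preceding lemmas (Lemmas~\ref{lem:HS:X1}, \ref{lem:HS:X2}, \ref{lem:HS:X3}), applying them both to $(V,W)$ and, by symmetry, to $(W,V)$, and using the identification $(\ad V)^m(W)\simeq X_m^{V,W}$ in $\ydG$ from Lemma~\ref{lem:X_n}. No new calculation is required; the content is pure bookkeeping.

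For the forward implication, I would proceed by increasing $m$. Since $X_1^{V,W}\ne 0$ unconditionally by Lemma~\ref{lem:HS:X1}(1), the absolute simplicity hypothesis for $m=1$ forces $\rho(\epsilon h^2)\sigma(\epsilon g^2)=1$. Under this condition, Lemma~\ref{lem:HS:X2} shows that $X_2^{V,W}$ is zero exactly when $\rho(g)=-1$ and absolutely simple exactly when $\rho(g)=1$ (and $\charK\ne 2$); in every other case $X_2^{V,W}$ is neither, contradicting the hypothesis. Hence $\rho(g)\in\{1,-1\}$, i.e.\ $\rho(g)^2=1$. Swapping $V$ and $W$ gives $\sigma(h)^2=1$ in the same way.

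For the converse, assume $\rho(\epsilon h^2)\sigma(\epsilon g^2)=1$ and $\rho(g)^2=\sigma(h)^2=1$. The case $m=0$ is trivial; the case $m=1$ is Lemma~\ref{lem:HS:X1}(2); for $m=2$ the result follows from Lemma~\ref{lem:HS:X2}, and for $m\ge 3$ from Lemma~\ref{lem:HS:X3}. The formula for $a_{12}^{(V,W)}$ is then read off the smallest $m$ with $X_m^{V,W}=0$: if $\rho(g)=-1$, Lemma~\ref{lem:HS:X2}(1) gives $X_2^{V,W}=0$, so $a_{12}^{(V,W)}=-1$; if $\rho(g)=1$ and $\charK=p>2$, Lemma~\ref{lem:HS:X3}(1) gives $X_n^{V,W}=0$ iff $p\le n$, so $a_{12}^{(V,W)}=1-p$; if $\rho(g)=1$ and $\charK=0$, then $X_n^{V,W}\ne 0$ for all $n$, matching the stated exceptional clause. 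The formula for $a_{21}^{(V,W)}$ follows by interchanging the roles of $V$ and $W$.

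There is no real obstacle here; the only point requiring mild care is to keep track of the characteristic-$2$ situation (where $\rho(g)=1$ and $\rho(g)=-1$ coincide, so the $\rho(g)=-1$ branch of Lemma~\ref{lem:HS:X2} applies and gives $X_2^{V,W}=0$), so that the two disjoint cases in the forward direction remain exhaustive and the Cartan-matrix formula is still consistent.
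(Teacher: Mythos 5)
Your proposal is correct and follows exactly the route the paper intends: the paper derives this lemma from Lemmas~\ref{lem:HS:X1}--\ref{lem:HS:X3} "by exchanging $V$ and $W$" (citing \cite[Prop.~4.4]{MR2732989}), and your argument is precisely that bookkeeping, including the correct treatment of characteristic $2$ and of the characteristic-$0$ case where $(\ad V)^m(W)\ne 0$ for all $m$.
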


\begin{cor}
	\label{co:2-2:A2B2CartanMatrix}
	Let $V,W$ be as above.
	\begin{enumerate}
		\item We have $a^{(V,W)}_{12}=a^{(V,W)}_{21}=-1$ if and only if
			$\rho (\epsilon h^2)\sigma (\epsilon g^2)=1$ and
			$\rho (g)=\sigma (h)=-1$.
		\item We have $a^{(V,W)}_{12}=-1$, $a^{(V,W)}_{21}=-2$ if and only if
			$\rho (\epsilon h^2)\sigma (\epsilon g^2)=1$, $\rho (g)=-1$, $\sigma
			(h)=1$, and $\charK =3$.
	\end{enumerate}
\end{cor}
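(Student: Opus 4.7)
The corollary will be proved by direct case analysis from Lemma~\ref{lem:HS:CartanMatrix}. For the reverse direction of both parts, the stated hypotheses immediately force $\rho(g)^2 = \sigma(h)^2 = 1$, which together with $\rho(\epsilon h^2)\sigma(\epsilon g^2) = 1$ put us squarely in the setting of Lemma~\ref{lem:HS:CartanMatrix}; reading off the appropriate sub-cases of its formulas then gives $a_{12}^{(V,W)} = a_{21}^{(V,W)} = -1$ in part (1), and $a_{12}^{(V,W)} = -1$ together with $a_{21}^{(V,W)} = 1 - p = -2$ for $p = \charK = 3$ in part (2).

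For the forward implications, the task will be to verify that the specified finite values of the Cartan entries force the niceness conditions $\rho(\epsilon h^2)\sigma(\epsilon g^2) = 1$ and $\rho(g)^2 = \sigma(h)^2 = 1$. If any of these conditions failed, the equivalence in Lemma~\ref{lem:HS:CartanMatrix} would produce some $(\ad V)^m(W)$ or $(\ad W)^m(V)$ which is neither absolutely simple nor zero. A direct inspection of the explicit descriptions of $X_1^{V,W}$ and $X_2^{V,W}$ in Lemmas~\ref{lem:HS:X1}--\ref{lem:HS:X3} then shows that in each such pathological configuration the relevant Cartan entry is $-\infty$, contradicting the assumed finite values $-1$ and $-2$. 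Once niceness is established, the four sub-cases in the formulas of Lemma~\ref{lem:HS:CartanMatrix} determine $\rho(g)$, $\sigma(h)$, and (in part (2)) $\charK$ uniquely as claimed.

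The main obstacle will be this forward-direction bookkeeping, since the Cartan matrix only records the vanishing of $(\ad V)^m(W)$ and not its absolute simplicity. The argument will reduce to the observation that the map from the niceness parameters $(\rho(\epsilon h^2)\sigma(\epsilon g^2), \rho(g), \sigma(h), \charK)$ to Cartan matrix entries in Lemma~\ref{lem:HS:CartanMatrix} is one-to-one on the finite-type sub-cases relevant here, so that the target pairs $(-1,-1)$ and $(-1,-2)$ pin down exactly one set of parameters each.
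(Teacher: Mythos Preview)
Your reverse implication is fine and matches the paper. The forward implication, however, has a genuine gap. You assert that when the niceness conditions $\rho(\epsilon h^2)\sigma(\epsilon g^2)=1$ and $\rho(g)^2=\sigma(h)^2=1$ fail, ``direct inspection'' of Lemmas~\ref{lem:HS:X1}--\ref{lem:HS:X3} shows the Cartan entry is $-\infty$. But Lemmas~\ref{lem:HS:X2} and \ref{lem:HS:X3} are \emph{conditional} on $\rho(\epsilon h^2)\sigma(\epsilon g^2)=1$; they say nothing about $X_m^{V,W}$ when this fails. So from these lemmas alone you cannot rule out that $\rho(\epsilon h^2)\sigma(\epsilon g^2)\ne 1$ yet $X_2^{V,W}=0$, i.e.\ $a_{12}^{(V,W)}=-1$. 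The Cartan entry records only vanishing, not absolute simplicity, and the stated lemmas give no vanishing information in the pathological regime.

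The paper closes this gap differently: it invokes Proposition~\ref{pro:absimple}, which says that whenever $\{-a_{12}^{(V,W)},-a_{21}^{(V,W)}\}\in\{\{1\},\{1,2\}\}$, all $(\ad V)^m(W)$ and $(\ad W)^m(V)$ are absolutely simple or zero. This is a structural fact relying on reflection theory (specifically \cite[Thm.~3.8]{MR2766176}), not on the explicit $X_m$ computations. Once absolute simplicity is secured, Lemma~\ref{lem:HS:X1}(1) forces $\rho(\epsilon h^2)\sigma(\epsilon g^2)=1$ (since $X_1^{V,W}\ne 0$ is now absolutely simple), and then Lemmas~\ref{lem:HS:X2} and \ref{lem:HS:X3} apply to pin down $\rho(g)$, $\sigma(h)$, and $\charK$. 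Your argument is missing precisely this bridge from finite Cartan entries to absolute simplicity.
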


\begin{proof}
	The if part of the claim follows directly from
	Lemma~\ref{lem:HS:CartanMatrix}.

	For the only if part observe first that $a^{(V,W)}_{12}=-1$,
	$a^{(V,W)}_{21}\ge -2$
	imply that $(\ad V)(W)$ and $(\ad W)^m(V)$ with $0\le m\le -a^{(V,W)}_{21}$
	are absolutely simple by Proposition~\ref{pro:absimple}.
	Then $\rho (\epsilon h^2)\sigma (\epsilon g^2)=1$ by Lemma~\ref{lem:HS:X1},
	and the only if parts of (1) and (2) follow from Lemmas~\ref{lem:HS:X2} and
	\ref{lem:HS:X3}.
\end{proof}

Finally to compute the reflections of the pair $(V,W)$ one has the following
lemma.

\begin{lem}{\cite[Lemma 4.5]{MR2732989}}
	\label{lem:HS:reflections}
	Assume that 
	\[
		\rho(\epsilon h^2)\sigma(\epsilon g^2)=1,\quad
		\rho(g)^2=\sigma(h)^2=1,
	\]
	and that $\rho(g)=-1$ if $\charK=0$. Let $m=1$ if $\rho (g)=-1$ and let
	$m=p-1$ if $\rho (g)=1$ and $\charK =p>0$. Let $g'=g^{-1}$ and $h'=g^mh$. Then 
	\begin{align*}
		|g'^G|=|h'^G|=2,\quad g'h'\ne h'g',\quad g'h'=\epsilon h'g',\quad
		G^g\cap G^h=G^{g'}\cap G^{h'}.
	\end{align*}
	Moreover, $R_1(V,W)=(V',W')$ with $V'\simeq M(g',\rho')$ and $W'\simeq
	M(h',\sigma')$, where $\rho'\in \chg{G^{g'}}$ and $\sigma'\in \chg{G^{h'}}$ with 
	\begin{align*}
		\rho'(\epsilon h'^2)\sigma'(\epsilon g'^2)=1,
		&&
		\rho'(g')=\rho(g),
		&&
		\sigma'(h')=\sigma(h),
	\end{align*}
	and $\rho '(z)=\rho (z)^{-1}$, $\sigma '(z)=\rho (z)^m\sigma (z)$
	for all $z\in G^g\cap G^h$.
\end{lem}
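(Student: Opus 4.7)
The plan is to reduce the statement to a sequence of direct calculations, each relying on a previously established lemma. The starting observation is that the hypotheses $|g^G|=|h^G|=2$ together with $gh=\epsilon hg$ and $gh\ne hg$ put us in the scope of Lemma~\ref{lem:A3:222}, which immediately gives $\epsilon\in Z(G)$ and $\epsilon^2=1$. These two facts will be used throughout.

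\emph{Step 1: Group-theoretic claims.} With $\epsilon$ central and of order two, one verifies directly that conjugation by $g$ sends $h'=g^mh$ to $\epsilon h'$ (since $ghg^{-1}=\epsilon h$ and $g^m$ commutes with itself), so $h'^G\supseteq\{h',\epsilon h'\}$ with these two elements distinct. Index-two-ness of $G^{h'}$ follows from a short case analysis on whether $z\in G^g$ or $z\notin G^g$, giving at once $|h'^G|=2$, the commutation $g'h'=\epsilon h'g'$ (via $hg^{-1}=\epsilon g^{-1}h$ and $\epsilon^{-1}=\epsilon$), and the centralizer identity $G^{g'}\cap G^{h'}=G^g\cap G^h$ (the inclusion $\supseteq$ is obvious; the other uses that any $z\in G^g$ commuting with $g^mh$ automatically commutes with $h$).

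\emph{Step 2: Identifying the reflection.} Under the stated hypotheses, Lemma~\ref{lem:HS:CartanMatrix} yields $a_{12}^{(V,W)}=-m$ where $m=1$ if $\rho(g)=-1$ and $m=p-1$ if $\rho(g)=1$, $\charK=p>2$. Therefore $R_1(V,W)=(V^*,(\ad V)^m(W))$. The first factor is $M(g^{-1},\rho^*)$ with $\rho^*(z)=\rho(z)^{-1}$ for $z\in G^g$, yielding immediately $\rho'(g')=\rho(g)^{-1}=\rho(g)$ (using $\rho(g)^2=1$) and $\rho'|_{G^g\cap G^h}=\rho^{-1}$. The second factor is described by Lemma~\ref{lem:HS:X1} (case $m=1$) or Lemma~\ref{lem:HS:X3} (case $m=p-1$), giving $M(g^mh,\widetilde\sigma)$ with $\widetilde\sigma(h')=(-1)^m\rho(g)^{[m=1]}\sigma(h)$, which equals $\sigma(h)$ in both cases (either $-\rho(g)\sigma(h)=\sigma(h)$ when $\rho(g)=-1$, or $(-1)^{p-1}\sigma(h)=\sigma(h)$ since $p-1$ is even). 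The formula $\sigma'(z)=\rho(z)^m\sigma(z)$ for $z\in G^g\cap G^h$ is read off directly from the same lemmas.

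\emph{Step 3: The compatibility $\rho'(\epsilon h'^2)\sigma'(\epsilon g'^2)=1$.} This is the step I expect to be the most delicate, and it is the only claim requiring an actual computation. First one rewrites $h'^2=g^mhg^mh=\epsilon^mg^{2m}h^2$ using $hg=\epsilon gh$ and the centrality of $\epsilon$; note that $g^2,h^2\in G^g\cap G^h$ (for instance $hgh^{-1}=\epsilon g$ gives $h^2gh^{-2}=g$). Substituting and using $\rho'(z)=\rho(z)^{-1}$, $\sigma'(z)=\rho(z)^m\sigma(z)$ on $G^g\cap G^h$, the factor $\rho(g)^{2m}$ cancels against $\rho(g)^{-2m}$, and everything collapses to
\[
\rho'(\epsilon h'^2)\sigma'(\epsilon g'^2)
=\rho(\epsilon h^2)^{-1}\sigma(\epsilon g^2)\cdot\sigma(\epsilon)^{-2}\rho(\epsilon)^{?}\cdots
\]
after grouping terms; the hypothesis $\rho(\epsilon h^2)\sigma(\epsilon g^2)=1$ eliminates the first two factors, and the remaining residue is a product of squares of $\rho(\epsilon)$ and $\sigma(\epsilon)$, each equal to $1$ because $\epsilon^2=1$. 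The main obstacle is therefore purely bookkeeping: one must track the parities of $m$, the cancellation of $\rho(g)^{2m}$ via $\rho(g)^2=1$, and the cancellation of $\rho(\epsilon)^k,\sigma(\epsilon)^k$ via $\epsilon^2=1$, simultaneously in both the $\rho(g)=-1$ and $\rho(g)=1$ regimes.
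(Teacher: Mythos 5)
Your proposal is correct, but note that the paper itself offers no proof of this lemma: it is imported verbatim as \cite[Lemma 4.5]{MR2732989}, with only the remark that the claims ``follow directly from the corresponding results'' there (the point being that $z\in G^g\cap G^h$ acts on $V^{\otimes m}\otimes W$ by the scalar $\rho(z)^m\sigma(z)$, so the computations over $\Gamma_2$ transfer to $G$). What you have written is a self-contained reconstruction from the surrounding material, and it holds up: Step 1 is exactly the content of Lemmas~\ref{lem:rs} and \ref{lem:A3:222} ($\epsilon\in Z(G)$, $\epsilon^2=1$, $hg^m=\epsilon^mg^mh$, hence $h'^G=\{h',\epsilon h'\}$ and $G^g\cap G^{h'}=G^g\cap G^h$); Step 2 correctly reads off $-a_{12}^{(V,W)}=m$ from Lemma~\ref{lem:HS:CartanMatrix} and the descriptions of $X_1^{V,W}$, $X_{p-1}^{V,W}$ from Lemmas~\ref{lem:HS:X1} and \ref{lem:HS:X3}, including the sign bookkeeping $-\rho(g)\sigma(h)=\sigma(h)$ versus $(-1)^{p-1}\sigma(h)=\sigma(h)$. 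In Step 3 your displayed formula is left vague (the ``$?$'' exponent and the trailing dots), but the cancellation you describe is the right one: using $h'^2=\epsilon^mg^{2m}h^2$ and $g^2,h^2\in G^g\cap G^h$ one gets
\[
\rho'(\epsilon h'^2)\sigma'(\epsilon g'^2)
=\rho(\epsilon)^{-1}\rho(g)^{-4m}\rho(h^2)^{-1}\sigma(\epsilon)\sigma(g^2)^{-1}
=\rho(\epsilon h^2)^{-1}\sigma(\epsilon g^2)\cdot\sigma(\epsilon)^2=\sigma(\epsilon)^2=1,
\]
so the residue is precisely $\sigma(\epsilon)^2$, killed by $\epsilon^2=1$, as you anticipated. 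The only thing I would ask you to tighten is that display, replacing the placeholder exponents with the actual computation above.
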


\subsection{}

Here we recall results on particular pairs of Yetter-Drinfeld modules which play
an important role in the study of skeletons of type $\beta '_\theta$ and
$\beta ''_\theta $.

By Proposition~\ref{pro:absimple}, for any pair $(U,V)\in \cF^G_2$ the
Yetter-Drinfeld modules $(\ad U)^m(V)$ and $(\ad V)^m(U)$ are absolutely simple
or zero if $a^{(U,V)}_{12}a^{(U,V)}_{21}$ is one of $0$, $1$, $2$.
Therefore Lemmas ~\ref{lem:1+3=typeB2} and \ref{lem:2+3=typeB2} below
are special cases of \cite[Prop.\,6.6]{rank2}
and \cite[Prop.\,4.12]{rank2}, respectively.

\begin{lem} \label{lem:3}
  Let $t,t'\in G$. Assume that $tt'\ne t't$, $|t^G|=3$, and
  $t^G=t'^G$. Let $\epsilon \in G$ be such that $t'=\epsilon t$.
Then $\epsilon ^3=1$, $t\epsilon =\epsilon ^{-1}t$, and
$t^G=\{t,\epsilon t,\epsilon ^2t\}$.
\end{lem}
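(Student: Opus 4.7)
The plan is to exploit the fact that $|t^G|=3$ by analyzing how conjugation by $t$ (and by other elements of $t^G$) permutes the three-element set $t^G$. First I would note that $\epsilon\neq 1$: otherwise $t'=t$ and then $tt'=t't$, contradicting the hypothesis. Conjugation by $t$ acts on $t^G$, fixing $t$; since $tt'\neq t't$, it does not fix $t'$, so it must swap $t'$ with the third element $t''$ of $t^G$. A direct computation gives $t''=tt't^{-1}=t(\epsilon t)t^{-1}=t\epsilon$. Applying the swap in the other direction, $tt''t^{-1}=t'$, yields $t^2\epsilon t^{-1}=\epsilon t$, i.e.\ $\epsilon$ commutes with $t^2$.

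Next I would obtain a second relation by conjugating $t$ by the element $\epsilon t\in t^G$: the result $(\epsilon t)\,t\,(\epsilon t)^{-1}=\epsilon t\epsilon^{-1}$ must again lie in $t^G=\{t,\epsilon t,t\epsilon\}$. The value $t$ is excluded because $\epsilon$ does not commute with $t$ (since $\epsilon t=t'\neq t''=t\epsilon$), and $\epsilon t$ is excluded because $\epsilon\neq 1$. Hence $\epsilon t\epsilon^{-1}=t\epsilon$, which rearranges to $t^{-1}\epsilon t=\epsilon^2$.

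Combining the two relations, I would compute
\[
\epsilon \;=\; t^{-2}\epsilon t^2 \;=\; t^{-1}(t^{-1}\epsilon t)t \;=\; t^{-1}\epsilon^2 t \;=\; (t^{-1}\epsilon t)^2 \;=\; \epsilon^4,
\]
which forces $\epsilon^3=1$. Then $\epsilon^2=\epsilon^{-1}$, so $t^{-1}\epsilon t=\epsilon^{-1}$ is equivalent to $t\epsilon=\epsilon^{-1}t$. Finally, the third element $t''=t\epsilon=\epsilon^{-1}t=\epsilon^2 t$, so $t^G=\{t,\epsilon t,\epsilon^2 t\}$. No step presents a real obstacle; the whole argument reduces to recognizing that conjugation by $t$ acts on the $3$-element set $t^G$ as the unique transposition fixing $t$, and then extracting the two key commutation relations.
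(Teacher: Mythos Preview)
Your proof is correct. Both your argument and the paper's rest on the same idea: analyze how conjugation permutes the three-element set $t^G$ and read off relations between $t$ and $\epsilon$. The only difference is the choice of conjugating element. You conjugate by $t$ (obtaining the transposition $(t'\;t'')$ and hence $t''=t\epsilon$ and $[\epsilon,t^2]=1$) and then by $t'=\epsilon t$ (obtaining $t^{-1}\epsilon t=\epsilon^2$), combining the two to force $\epsilon^3=1$. The paper instead conjugates by $\epsilon$ itself, observes that this acts as the $3$-cycle $t\mapsto t''\mapsto t'\mapsto t$, and extracts from this single action the two relations $t\epsilon t^{-1}=\epsilon^2$ and $t\epsilon t^{-1}=\epsilon^{-1}$, whence $\epsilon^3=1$. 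Both routes are equally short and elementary; yours has the minor advantage of not needing to first argue that $\epsilon$ does not commute with $\epsilon t$.
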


\begin{proof}
  Since $tt'\ne t't$, we conclude that $t\epsilon \ne \epsilon t$. Therefore
	$\epsilon $ commutes neither with $t$ nor with $\epsilon t$. Let $t''\in
	t^G$ be such that $t^G=\{t,\epsilon t,t''\}$. Then
$\epsilon t\epsilon ^{-1}\notin \{t,\epsilon t\}$, and hence $\epsilon
t\epsilon^{-1}=t''$. Thus conjugation by $\epsilon $ permutes $t^G$ via
$t\mapsto t''$, $t''\mapsto t'$, $t'\mapsto t$.
Hence $\epsilon ^2t\epsilon^{-1}=\epsilon t'\epsilon^{-1}=t$.
Then $t''=\epsilon t\epsilon^{-1}=\epsilon^{-1}t$ and $\epsilon t=\epsilon
t''\epsilon^{-1}=t\epsilon^{-1}=\epsilon^{-2}t$. Thus $\epsilon^3=1$ which
implies the rest.
\end{proof}

\begin{lem} \label{lem:1+3=typeB2}
	Let $s\in Z(G)$ and $t,\epsilon \in G$ be such that $\epsilon ^3=1$, $\epsilon
\ne 1$, $t\epsilon =\epsilon ^{-1}t$, and $|t^G|=3$. Let $\sigma \in \chg G$ and $\tau \in
\chg{G^t}$ and let $U,V\in \ydG $ be such that $U\simeq M(s,\sigma )$ and $V\simeq
M(t,\tau )$. Then $a^{(U,V)}_{12}=-1$ and $a^{(U,V)}_{21}=-2$ if and only if
$$ \tau (t)=-1, \quad (3)_{-\sigma (t)\tau (s)}=0, \quad
(1+\sigma (s))(1-\sigma (st)\tau (s))=0.
$$
\end{lem}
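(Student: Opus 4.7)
The plan is to reduce this lemma to the rank-two classification result \cite[Prop.\,6.6]{rank2}, exactly as the remark preceding the statement indicates. Since $\dim U=1$ with $s\in Z(G)$, and $V$ is absolutely simple of dimension $3$ with $|\supp V|=3$ (by Lemma~\ref{lem:3}, $t^G=\{t,\epsilon t,\epsilon^2t\}$), the pair $(U,V)$ lies precisely in the setting of that proposition, and the conditions $a^{(U,V)}_{12}=-1$, $a^{(U,V)}_{21}=-2$ pick out one of the specific pair classes treated there.

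For the entry $a^{(U,V)}_{12}=-1$, I would apply Lemma~\ref{lem:genRosso} directly. Since $s\in Z(G)$, we have $s\in G^{t'}$ for every $t'\in t^G$, so $s$ acts on the simple module $V$ by the scalar $\tau(s)$. Thus the hypotheses of Lemma~\ref{lem:genRosso} are met with $r=s$, $\rho=\sigma$, and second entry $(t,\tau)$. The lemma yields $(\ad U)^m(V)\neq 0$ if and only if
\[
(m)^!_{\sigma(s)}\prod_{i=0}^{m-1}\bigl(1-\sigma(s^it)\tau(s)\bigr)\neq 0,
\]
and reading this at $m=1,2$ translates $a^{(U,V)}_{12}=-1$ into the two conditions $\sigma(t)\tau(s)\ne 1$ and $(1+\sigma(s))(1-\sigma(st)\tau(s))=0$. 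The condition $(3)_{-\sigma(t)\tau(s)}=0$ in the statement already forces $\sigma(t)\tau(s)\ne 1$, so no separate condition is needed.

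For the entry $a^{(U,V)}_{21}=-2$, the iterated braided commutators $(\ad V)^m(U)$ must be analyzed on the three-dimensional $V$ whose support $t^G$ is non-commutative. Since $a_{12}a_{21}=2$, Proposition~\ref{pro:absimple} guarantees that all $(\ad U)^m(V)$ and $(\ad V)^m(U)$ are absolutely simple or zero, which places us squarely in the hypotheses of \cite[Prop.\,6.6]{rank2}. I would then translate that proposition's parametrization against the present data: the condition $\tau(t)=-1$ is the self-braiding constraint that prevents premature vanishing of $(\ad V)^2(U)$, and the condition $(3)_{-\sigma(t)\tau(s)}=0$ is the $q$-integer relation on the mixed braiding eigenvalue forcing $(\ad V)^3(U)=0$. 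The matching in the reverse direction follows the same dictionary.

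The principal obstacle is the computation of $(\ad V)^m(U)$ for $V$ with non-commuting support, where the diagonal-type formulas underlying Lemma~\ref{lem:genRosso} no longer apply directly. This is precisely the content of \cite[Prop.\,6.6]{rank2}, so the plan is to invoke that result rather than redo the calculation; the remaining work is the book-keeping needed to match the parameters of the $\wp_5^G$-class from \cite{rank2} with the present $(\sigma,\tau,s,t,\epsilon)$, which is straightforward given Lemma~\ref{lem:3}.
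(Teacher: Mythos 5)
Your proposal matches the paper's proof: the paper likewise applies Lemma~\ref{lem:genRosso} (with $r=s$, $\rho=\sigma$) to characterize $a^{(U,V)}_{12}=-1$, notes that $\langle t,\epsilon,s\rangle$ is a non-abelian quotient of $\Gamma_3$, and then reduces the condition $a^{(U,V)}_{21}=-2$ to the rank-two results of \cite{rank2} (citing Lemmas~6.2 and 6.3 there, the ingredients of the Prop.~6.6 you invoke), using the absolute simplicity of $(\ad V)^2(U)=R_2(U,V)_1$ exactly as you argue via Proposition~\ref{pro:absimple}. So this is essentially the same approach, with only cosmetic differences in which statement of \cite{rank2} is cited.
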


\begin{proof}
	The assumptions imply that $\langle t,\epsilon ,s\rangle $ is a non-abelian
	epimorphic image of $\Gamma _3$.
	By Lemma~\ref{lem:genRosso}, $a^{(U,V)}_{12}=-1$ if and only if
	$\sigma(s)\tau(t)\ne 1$ and $(1+\sigma(s))(1-\sigma(st)\tau(s))=0$.
	The rest follows from
	\cite[Lemmas\,6.2,6.3]{rank2} since $a^{(U,V)}_{21}=-2$ implies that
	$(\ad V)^2(U)=R_2(U,V)_1$ is absolutely simple.
\end{proof}

\begin{pro} 
	\label{pro:R2:1+3}
	Let $s\in Z(G)$ and $t,\epsilon \in G$ be such that $\epsilon ^3=1$, $\epsilon
\ne 1$, $t\epsilon =\epsilon ^{-1}t$, and $|t^G|=3$.
Let $\sigma \in \chg G$ and $\tau \in
\chg{G^t}$ be such that
$$ \tau (t)=-1, \quad (3)_{-\sigma (t)\tau (s)}=0, \quad
\sigma (st)\tau (s)=1 $$
and let $U,V,U',V'\in \ydG $ such that $U\simeq M(s,\sigma )$,
$V\simeq M(t,\tau )$, and $(U',V')=R_2(U,V)$.
Then $U'\simeq M(s',\sigma ')$ and $V'\simeq M(t^{-1},\tau ^*)$, where
$s'=\epsilon st^2$ and
$\sigma '\in \chg{G^\epsilon }$ such that $\sigma '(\epsilon )=(\sigma
(t)\tau(s))^2$, $\sigma'(h)=\tau(h)^2\sigma(h)$ for all $h\in G^t\cap
G^\epsilon $. Moreover, $\epsilon ^G=\{\epsilon,\epsilon^{-1}\}$, $t^2\in
Z(G)$, and
$$
 \sigma'(\epsilon t^{-2})\tau^*(\epsilon s'^2)=1, \quad
 \sigma '(s')=-1,\quad
 \tau^*(t^{-1})=-1.
$$
\end{pro}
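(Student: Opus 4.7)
The plan is to first establish the auxiliary structural facts about $G$, and then identify the reflected pair by combining the results on $(\ad V)^m(U)$ with a direct computation of the character $\sigma'$.

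For the structural claims: since $t^G=\{t,\epsilon t,\epsilon^2 t\}$ is a conjugacy class and $\epsilon=(\epsilon t)t^{-1}$, any $g\in G$ normalizes the subgroup $H_0=\langle t,\epsilon\rangle$. The relation $t\epsilon t^{-1}=\epsilon^{-1}$ shows that $\langle\epsilon\rangle$ is the commutator subgroup of $H_0$, and is in particular characteristic in $H_0$. Therefore $g\epsilon g^{-1}\in\langle\epsilon\rangle\setminus\{1\}=\{\epsilon,\epsilon^{-1}\}$ for every $g\in G$, and since $t$ actually inverts $\epsilon$ we obtain $\epsilon^G=\{\epsilon,\epsilon^{-1}\}$. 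For any $g\in G$ we can write $gtg^{-1}=\epsilon^a t$ with $a\in\{0,\pm 1\}$, and then $gt^2g^{-1}=\epsilon^a t\epsilon^a t=\epsilon^a(\epsilon^{-a}t)t=t^2$, so $t^2\in Z(G)$.

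For the reflection itself, Lemma~\ref{lem:1+3=typeB2} yields $a^{(U,V)}_{12}=-1$ and $a^{(U,V)}_{21}=-2$ under the standing hypotheses, so $R_2(U,V)=((\ad V)^2(U),V^*)$, and Theorem~\ref{thm:admabsimple} guarantees that $U'=(\ad V)^2(U)$ is absolutely simple. The identification $V^*\simeq M(t^{-1},\tau^*)$ is standard, and gives $\tau^*(t^{-1})=\tau(t)^{-1}=-1$ immediately. To identify $U'$, I would use Lemma~\ref{lem:X_n} together with the rank-2 analysis in \cite[Prop.\,4.12]{rank2} (which treats exactly pairs of the form ``one-dimensional central $\oplus$ rack of size $3$''): a nonzero homogeneous element of $U'$ is represented by a combination of tensors $v_1\otimes v_2\otimes u$ with $v_1\in V_{\epsilon t}$, $v_2\in V_t$, $u\in U_s$, which has $G$-degree $\epsilon t\cdot t\cdot s=\epsilon s t^2=s'$; since $s,t^2\in Z(G)$ and $\epsilon^G=\{\epsilon,\epsilon^{-1}\}$, the conjugacy class of $s'$ equals $\{\epsilon st^2,\epsilon^{-1}st^2\}$, matching $\dim U'=2$. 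The action of $h\in G^t\cap G^\epsilon$ on such tensors is the scalar $\tau(h)^2\sigma(h)$, giving the asserted formula $\sigma'(h)=\tau(h)^2\sigma(h)$; the value $\sigma'(\epsilon)=(\sigma(t)\tau(s))^2$ comes from tracking the braiding of $\epsilon t\otimes t\otimes s$ against $t\otimes\epsilon t\otimes s$, which is the delicate step and the one I would copy verbatim from the rank-2 reference.

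The three trailing identities are then a routine verification. For $\sigma'(s')$, using that $s,t^2\in Z(G)$, $\tau(t)=-1$ and $\sigma(st)\tau(s)=1$, we compute
\[
\sigma'(\epsilon s t^2)=\sigma'(\epsilon)\,\tau(st^2)^2\sigma(st^2)=(\sigma(t)\tau(s))^2\tau(s)^2\sigma(s)\sigma(t)^2=\sigma(t)^3\tau(s)^3,
\]
and the hypothesis $(3)_{-\sigma(t)\tau(s)}=0$ gives $(\sigma(t)\tau(s))^3=-1$, so $\sigma'(s')=-1$. For $\sigma'(\epsilon t^{-2})\tau^*(\epsilon s'^2)$, since $\epsilon^3=1$ and $s,t^2$ are central, $\epsilon s'^2=s^2t^4$ and $\tau^*(s^2t^4)=\tau(s)^{-2}$, while $\sigma'(\epsilon t^{-2})=(\sigma(t)\tau(s))^2\sigma(t)^{-2}=\tau(s)^2$; their product is $1$.

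The main obstacle is the explicit determination of $\sigma'(\epsilon)$: the abstract theory guarantees that $U'$ is absolutely simple of the asserted $G$-degree, but pinning down the scalar by which $\epsilon$ acts on a chosen generator requires a careful explicit braiding computation (which is precisely what is carried out in \cite[Prop.\,4.12]{rank2} and can be reused here). Everything else reduces to arithmetic with the given character identities.
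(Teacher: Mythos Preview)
Your proposal is correct and follows essentially the same approach as the paper: establish the structural facts $\epsilon^G=\{\epsilon,\epsilon^{-1}\}$ and $t^2\in Z(G)$, then defer the identification of $U'=(\ad V)^2(U)$ to the rank-two reference, with your additional explicit verification of the three trailing identities being a welcome bonus.

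Two small citation slips, neither affecting correctness. First, invoking Theorem~\ref{thm:admabsimple} is not legitimate here, since its hypotheses (that the pair admits all reflections and $\cW$ is finite) are not assumed; the absolute simplicity of $(\ad V)^2(U)$ follows instead from Proposition~\ref{pro:absimple} (the case $\{-a_{12},-a_{21}\}=\{1,2\}$), or directly from the general fact that $R_i(M)_j$ is absolutely simple. Second, the relevant external result for the pair ``one-dimensional central $\oplus$ rack of size $3$'' is \cite[Lemmas~6.2, 6.3]{rank2} (as in Lemma~\ref{lem:1+3=typeB2} and in the paper's own proof), not \cite[Prop.\,4.12]{rank2}, which treats the different case $|s^G|=2$, $|t^G|=3$ (cf.\ Lemma~\ref{lem:2+3=typeB2}).
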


\begin{proof}
	First we prove that $\epsilon ^G=\{\epsilon ,\epsilon^{-1}\}$ and that
	$t^2\in Z(G)$. Indeed, the assumptions imply that
	$t^G=\{t,\epsilon t,\epsilon ^2t\}$ and hence $G=\langle t,\epsilon ,G^t\cap
  G^\epsilon \rangle $. Let $H$ be the subgroup of $G$ generated by
	$s,t$, and $\epsilon $. Then $\Res^G_H V$, $\Res ^G_H U \in \ydH $
	are absolutely simple. The calculation of $V^*=R_2(U,V)_2$ is standard.
	We conclude from \cite[Lemmas~6.2 and 6.3]{rank2} that
	$a^{(U,V)}_{21}=-2$. From \cite[Lemma\,6.2]{rank2} we obtain that
    $R_2(U,V)_1\simeq M(s',\sigma')$ and that the remaining claims hold.
\end{proof}

\begin{lem} \label{lem:2+3}
	Let $s,t,\epsilon \in G$ be such that $\epsilon \ne 1$,
	$st\ne ts$, $s^G=\{s,\epsilon s\}$, and $|t^G|=3$.
	Then $\epsilon ^3=1$, $s\epsilon =\epsilon s$, $t\epsilon =\epsilon
	^{-1}t$, $ts=\epsilon st$, and $t^G=\{t,\epsilon t,\epsilon ^2t\}$. Moreover,
	$\epsilon^{-1}s\in Z(G)$.
\end{lem}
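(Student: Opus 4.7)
My plan is as follows. From the hypothesis $s^G=\{s,\epsilon s\}$ with $st\ne ts$ and since conjugation by $t$ sends $s$ into $s^G\setminus\{s\}$, I immediately get $tst^{-1}=\epsilon s$, i.e.\ the relation $ts=\epsilon st$. Invoking Lemma~\ref{lem:rs}(1) with $r=t$, since $s\in G^s$, yields simultaneously $t\epsilon=\epsilon^{-1}t$ and $s\epsilon=\epsilon s$. From $ts=\epsilon st$ and $s\epsilon=\epsilon s$ one then computes $s^{-1}ts=\epsilon t$, and by induction $s^{-n}ts^n=\epsilon^n t$ for all $n\in\Z$.

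The orbit $\{\epsilon^n t\mid n\in\Z\}$ of $t$ under conjugation by $\langle s\rangle$ therefore has size equal to the order of $\epsilon$, and lies inside $t^G$. Since $|t^G|=3$ and $\epsilon\neq 1$, this order is $2$ or $3$. The main obstacle is ruling out the case $\epsilon^2=1$; once this is done, $\epsilon$ has order $3$, the $\langle s\rangle$-orbit $\{t,\epsilon t,\epsilon^2 t\}$ already exhausts $t^G$, and all the asserted relations are established.

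To exclude $\epsilon^2=1$, I assume it and argue by contradiction. Under this assumption $\epsilon$ commutes with both $s$ and $t$, and $t^G=\{t,\epsilon t,t'\}$ for some third element $t'=gtg^{-1}$ with $g\in G$. Since $s^G$ has only two elements, either $g\in G^s$ or $gsg^{-1}=\epsilon s$. In the first case Lemma~\ref{lem:rs}(1) gives $g\epsilon=\epsilon g$, so conjugation by $g$ preserves the relations among $s,\epsilon,t$, whence $\{t',\epsilon t'\}$ also lies in $t^G$, forcing $\epsilon t'\in\{t,\epsilon t,t'\}$; each of the three sub-cases contradicts $\epsilon\ne 1$ or $t'\notin\{t,\epsilon t\}$. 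In the second case Lemma~\ref{lem:rs}(1) applied to $r=g$ gives $g\epsilon g^{-1}=\epsilon^{-1}=\epsilon$, and conjugating the relation $sts^{-1}=\epsilon^{-1}t$ by $g$ leads again to $st's^{-1}=\epsilon t'$, producing the same contradiction. So $\epsilon^3=1$, yielding also $t^G=\{t,\epsilon t,\epsilon^2 t\}$.

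For the final assertion $\epsilon^{-1}s\in Z(G)$, I will use the decomposition $G=G^s\sqcup tG^s$ (since $[G:G^s]=2$). Any $g\in G^s$ commutes with $s$ by definition and with $\epsilon$ by Lemma~\ref{lem:rs}(1), hence with $\epsilon^{-1}s$. For the coset $tG^s$, a short computation using $t\epsilon=\epsilon^{-1}t$ gives $t\epsilon^{-1}=\epsilon t$, and combining this with $ts=\epsilon st$ and $\epsilon^3=1$ shows $\epsilon^{-1}s\cdot t=\epsilon ts=t\epsilon^{-1}s$; together with commutation through any $h\in G^s$, this gives $(\epsilon^{-1}s)(th)=(th)(\epsilon^{-1}s)$. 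Hence $\epsilon^{-1}s$ is central.
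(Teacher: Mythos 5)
Your proof is correct, and its overall skeleton matches the paper's: both derive $ts=\epsilon st$, $s\epsilon=\epsilon s$, $t\epsilon=\epsilon^{-1}t$ from Lemma~\ref{lem:rs}(1), both use the $\langle s\rangle$-conjugation orbit $\{\epsilon^{n}t\}$ inside the $3$-element class $t^G$ to force $\epsilon$ to have order $2$ or $3$, and both then rule out order $2$ via the third element $t'$ of $t^G$. Where you diverge is in the two sub-steps. For the exclusion of $\epsilon^2=1$, the paper argues more directly: conjugation by $s$ swaps $t$ and $\epsilon t$, hence fixes $t'$, so $t'\in G^s$ commutes with $\epsilon$ by Lemma~\ref{lem:rs}(1) and thus with all of $s^G$, contradicting that $t'$ is conjugate to $t$. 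You instead write $t'=gtg^{-1}$ and split on whether $g$ centralizes $s$ or conjugates it to $\epsilon s$, showing in both cases that $\epsilon$ commutes with $g$ so that $\epsilon t'\in t^G$, and then checking the three possible values of $\epsilon t'$; this works (in your second case you implicitly use that $\epsilon$ also commutes with $t'$, which follows from $g\epsilon=\epsilon g$ and $t\epsilon=\epsilon t$, so the gap is trivial to fill), though the case split is avoidable: if $\epsilon^2=1$ then $\epsilon$ is centralized by $G^s$ and by $t$, hence by $G=G^s\cup tG^s$, so $\epsilon\in Z(G)$ and $\epsilon t'\in t^G$ is automatic. For the last claim, the paper simply quotes Lemma~\ref{lem:rs}(3), which gives $(\epsilon^{-1}s)^G=\{\epsilon^{-1}s,\epsilon^{2}s\}=\{\epsilon^{-1}s\}$ once $\epsilon^3=1$, whereas you verify centrality by hand on the coset decomposition $G=G^s\cup tG^s$; your computation is correct and is essentially a proof of that special case of Lemma~\ref{lem:rs}(3), so the paper's route is shorter but nothing is gained or lost mathematically.
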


\begin{proof}
	We assumed that $st\ne ts$ and $s^G=\{s,\epsilon s\}$, and hence $ts=\epsilon
	st$. Thus $\epsilon s=s\epsilon $ and $t\epsilon =\epsilon^{-1}t$ by
	Lemma~\ref{lem:rs}(1). Therefore $s^kts^{-k}=\epsilon ^{-k}t\in t^G$ for all
	$k\ge 1$, that is, $\epsilon ^2=1$ or $\epsilon ^3=1$ because of $|t^G|=3$.
	To conclude the lemma it suffices to show that $\epsilon ^3=1$ and that
	$\epsilon^{-1}s\in Z(G)$.

	Assume to the contrary that $\epsilon ^2=1$. Let $t'\in t^G\setminus
	\{t,\epsilon t\}$. Then $st'=t's$ and $\epsilon t'=t'\epsilon $. In
	particular, $t'$ commutes with $s^G$, which is a contradiction, since $t'\in
	t^G$ and $t$ does not commute with $s^G$.

	Finally, Lemma~\ref{lem:rs}(3) implies that
	$(\epsilon^{-1}s)^G=\{\epsilon^{-1}s\}$.
\end{proof}

\begin{lem} \label{lem:2+3=typeB2}
	Let $s,t,\epsilon \in G$ be as in Lemma~\ref{lem:2+3}.
	Let $\sigma \in \chg {G^s}$, $\tau \in \chg{G^t}$
	and let $U,V\in \ydG $ be such that $U\simeq M(s,\sigma )$ and
	$V\simeq M(t,\tau )$. Then $a^{(U,V)}_{12}=-1$ and $a^{(U,V)}_{21}=-2$
	if and only if
	$$ \sigma (\epsilon t^2)\tau (\epsilon s^2)=1,\quad
	   \sigma (s)=-1,\quad \tau (t)=-1.
  $$
	In this case, if $(3)_{\sigma(\epsilon)}=0$ then
	$(\ad U)(V)\simeq M(\epsilon^{-1}st,\tau ')$ and $(\ad V)^2(U)\simeq
	M(\epsilon^{-1}t^2s,\sigma')$, where $\tau'\in \chg{G^t}$ with
	$\tau'(t)=\tau(\epsilon s^{-1})\sigma(\epsilon )$, $\tau'(h)=\sigma(h)\tau(h)$
	for all $h\in G^s\cap G^t$,
	and $\sigma'\in \chg G$ with $\sigma'(\epsilon )=1$,
	$\sigma'(t)=-\tau(\epsilon s^{-1})\sigma(\epsilon)$, and $\sigma'
	(h)=\tau(h)^2\sigma(h)$ for all $h\in G^s\cap G^t$.
\end{lem}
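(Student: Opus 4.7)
The plan is to reduce the statement to a calculation over the subgroup $H=\langle s,t,\epsilon\rangle$ of $G$ and then invoke the rank-two classification. By Lemma~\ref{lem:2+3}, the relations $\epsilon^3=1$, $\epsilon s=s\epsilon$, $t\epsilon=\epsilon^{-1}t$, $ts=\epsilon st$ hold and determine the conjugacy classes of $s$ and $t$ inside $H$, so $H$ is an epimorphic image of a standard two-generator group of the type studied in \cite{rank2}. Because $\supp U\cup \supp V\subseteq H$, the restrictions $\Res^G_H U$ and $\Res^G_H V$ are absolutely simple, and Cartan entries, the modules $(\ad U)^m(V)$ and $(\ad V)^m(U)$, and their absolute simplicity are all preserved under this restriction; thus it is enough to prove the statement inside $\ydH$.

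For the equivalence, Proposition~\ref{pro:absimple} shows that once $a^{(U,V)}_{12}=-1$ and $a^{(U,V)}_{21}=-2$, each of the modules $(\ad U)(V)$, $(\ad V)(U)$, $(\ad V)^2(U)$ is absolutely simple and $(\ad V)^3(U)=0$. This is exactly the hypothesis under which the pair $(U,V)$ falls into the regime covered by \cite[Prop.\,4.12]{rank2} (the counterpart of Lemma~\ref{lem:1+3=typeB2} for conjugacy classes of sizes $2$ and $3$), and reading off the conditions from that proposition yields the three equations $\sigma(\epsilon t^2)\tau(\epsilon s^2)=1$, $\sigma(s)=-1$, $\tau(t)=-1$. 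Conversely, if these three equations hold, the same proposition produces the claimed Cartan entries, so only one direction of numerical bookkeeping is actually required.

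For the explicit description of $(\ad U)(V)=R_1(U,V)_2$ and $(\ad V)^2(U)=R_2(U,V)_1$ under the extra hypothesis $(3)_{\sigma(\epsilon)}=0$, I would compute the $G$-degrees on a generator: any $0\ne u\otimes v$ with $u\in U_s$, $v\in V_t$ has degree $st=\epsilon^{-1}ts$, so the support of $(\ad U)(V)$ contains $\epsilon^{-1}st$; similarly a generator of $(\ad V)^2(U)\subseteq V^{\otimes 2}\otimes U$, obtained via Lemma~\ref{lem:X_n}, has degree $t^2 s=\epsilon^{-1}st^2$. The characters $\tau'$ on $G^t$ and $\sigma'$ on $G$ are then determined by the action of $G^s\cap G^t$ (multiplicative, giving $\sigma\tau$ and $\tau^2\sigma$ respectively) together with the $t$-eigenvalue on a highest-weight-like generator, which is forced by the non-trivial commutation $t\epsilon=\epsilon^{-1}t$ combined with $\sigma(s)=-1$ and the vanishing of $(3)_{\sigma(\epsilon)}$.

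The main obstacle is the last point: determining $\tau'(t)$ and $\sigma'(t)$ requires choosing an explicit generating vector (not merely a weight vector, since $t$ is not central) and evaluating its image under $\varphi_2(\id\otimes\varphi_1)$ in Lemma~\ref{lem:X_n}, collapsing the resulting $\sigma(\epsilon)$-sums via $1+\sigma(\epsilon)+\sigma(\epsilon)^2=0$. This bookkeeping is standard but delicate; once carried out it produces the factor $-\tau(\epsilon s^{-1})\sigma(\epsilon)$ appearing in both formulas (with opposite signs coming from the extra tensor slot in $(\ad V)^2(U)$), and the remaining assertions about $\epsilon^G$, centrality of $\epsilon^{-1}s$, and the character values on $G^s\cap G^t$ then follow directly from Lemma~\ref{lem:2+3} and the reflection formulas in \cite[Lemma\,6.2]{rank2}.
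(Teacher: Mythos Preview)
Your approach is essentially the paper's: reduce to the subgroup generated by $s,t$ (your $H$ equals $\langle s,t\rangle$ since $\epsilon=tst^{-1}s^{-1}$), recognize it as a non-abelian epimorphic image of $\Gamma_3$, and invoke \cite[Prop.\,4.12]{rank2}; the paper's proof is exactly this, stated in two lines. One small slip in your extra commentary: the equality $t^2s=\epsilon^{-1}st^2$ is false (in fact $t^2s=st^2$, since $(\epsilon^kt)^2=t^2$ shows $t^2\in Z(G)$); the relevant generator of $(\ad V)^2(U)$ sits in $V_t\otimes V_{\epsilon t}\otimes U_s$ and has degree $t\cdot\epsilon t\cdot s=\epsilon^{-1}t^2s$, which is the central element appearing in the lemma.
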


\begin{proof}
	By Lemma~\ref{lem:2+3}, the subgroup $\langle s,t\rangle \subseteq G$
	is a non-abelian epimorphic
	image of $\Gamma _3$. Hence $U$ and $V$
	satisfy the assumptions of
	\cite[Prop.\,4.12]{rank2} when viewed as Yetter-Drinfeld modules over $\langle
	s,t\rangle $. This leads to the claim.
\end{proof}

\subsection{}

In this section we study reflections of a particular pair of Yetter-Drinfeld
modules.  Let $G$ be a group and let $s\in G$.  Assume that $|s^G|=2$. Let
$r,\epsilon \in G$ be such that $rs=\epsilon sr$, $\epsilon \ne 1$.

Let $t\in Z(G)$, $\sigma \in \chg{G^s}$, and $\tau \in \chg G$.
In particular, $\tau (\epsilon )=1$.
Let $V,W\in \ydG $ be such that
$V\simeq M(s,\sigma )$ and $W\simeq M(t,\tau )$.
We determine the Yetter-Drinfeld modules $X_m^{V,W}$ for all $m\ge 1$.

\begin{lem}
  \label{lem:X1}
  The Yetter-Drinfeld module $X_1^{V,W}$ is non-zero
  if and only if $\sigma (t)\tau (s)\ne 1$.
  In this case, $X_1^{V,W}\simeq M(st,\tau _1)$, where
  $\tau _1$ is the character of $G^s=G^{st}$ with
  $\tau _1(h)=\sigma (h)\tau (h)$ for all $h\in G^s$.
\end{lem}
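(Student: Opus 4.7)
The plan is to compute $\varphi_1 = \id - c_{W,V}c_{V,W}$ as an explicit scalar on $V \otimes W$. Since $t \in Z(G)$, the module $W = M(t,\tau)$ is one-dimensional with $G$-degree $t$, and $\supp W = \{t\}$. The central element $t$ lies in $G^s$, and I would first record that $t$ acts on all of $V$ by the scalar $\sigma(t)$: on $V_s$ this is the action of $G^s$ via $\sigma$, and this extends to $V_{\epsilon s} = rV_s$ because $t$ commutes with $r$. Dually, for any homogeneous $v \in V$ of $G$-degree $s' \in s^G = \{s,\epsilon s\}$, one has $s' w = \tau(s')w = \tau(s)\, w$, since $\tau \in \widehat{G}$ vanishes on the commutator $\epsilon = (rs)(sr)^{-1}$.

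Combining these two observations, for every $v \otimes w \in V \otimes W$,
\[
c_{W,V}c_{V,W}(v \otimes w) = c_{W,V}\bigl(\tau(s)\, w \otimes v\bigr) = \sigma(t)\tau(s)\, v \otimes w,
\]
so $\varphi_1 = \bigl(1 - \sigma(t)\tau(s)\bigr)\id_{V \otimes W}$. This immediately yields the first claim: $X_1^{V,W} \ne 0$ if and only if $\sigma(t)\tau(s) \ne 1$, in which case $X_1^{V,W} = V \otimes W$ as an object of $\ydG$.

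To identify $V \otimes W \simeq M(st,\tau_1)$ in the non-vanishing case, I would check that $(st)^G = \{st,\epsilon st\}$ (conjugation by $r$ uses $rtr^{-1}=t$ and $rsr^{-1} = \epsilon s$) and that $G^{st} = G^s$ since $t \in Z(G)$. Hence $V \otimes W$ has support $(st)^G$ with one-dimensional isotypic components, so it is absolutely simple. The character on $(V \otimes W)_{st} = V_s \otimes W$ is read off from $h(v \otimes w) = \sigma(h)\tau(h)\, v \otimes w$ for $h \in G^s$, giving $\tau_1 = \sigma \cdot \tau|_{G^s}$. There is no serious obstacle; the only bookkeeping needed is the centrality of $t$, which collapses the $G^s$-action on $V$ and the $G$-action on $W$ to scalars, and the fact that $\tau(\epsilon) = 1$.
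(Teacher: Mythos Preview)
Your proof is correct and takes essentially the same approach as the paper's: both compute $(\id - c_{W,V}c_{V,W})$ as the scalar $1-\sigma(t)\tau(s)$ and identify the resulting module. The paper is terser, computing only on a generator $v\otimes w$ with $v\in V_s$ and invoking that $G\triangleright(s,t)=s^G\times\{t\}$ to conclude this generates $X_1^{V,W}$; you instead show directly that $\varphi_1$ is a global scalar on $V\otimes W$ by checking both homogeneous components, which requires the extra observations $\tau(\epsilon)=1$ and $t\cdot rV_s=\sigma(t)rV_s$ that the paper leaves implicit.
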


\begin{proof}
  Let $v\in V_s$ and $w\in W$ with $v,w\ne 0$.
  Since $G\triangleright (s,t)=s^G\times \{t\}$,
  $(\id -c_{W,V}c_{V,W})(v\otimes w)=(1-\sigma (t)\tau (s))v\otimes w$
  generates $X_1^{V,W}$ as a $\K G$-module. This implies the claim.
\end{proof}

\begin{lem} \label{lem:X2}
  Assume that $\sigma (t)\tau (s)\ne 1$.
  Then $X_2^{V,W}\ne 0$. Moreover,
  $X_2^{V,W}$ is absolutely simple if and only if one of the following hold.
  \begin{enumerate}
    \item $\sigma (\epsilon ^2)=1$,
      $(1+\sigma (s))(1-\sigma (st)\tau (s))=0$.
    \item $\sigma (s)=-1$, $\sigma (\epsilon ^2t^2)\tau (s^2)=1$.
    \item $\sigma (st)\tau (s)=1$, $\sigma (\epsilon ^2s^2)=1$.
  \end{enumerate}
  In this case, let $\lambda =-\sigma (\epsilon )$ in case (1),
  $\lambda =\sigma (\epsilon t)\tau (s)$ in case (2), and 
  $\lambda =\sigma (\epsilon s)$ in case (3). Then
  $X_2^{V,W}\simeq M(\epsilon s^2t,\tau _2)$,
  where $\tau _2\in \chg G$ with
  $$ 
  \tau _2(r)=\lambda \sigma (r^2)\tau (r),\quad
  \tau _2(g)=\sigma (g\,r^{-1}gr)\tau (g)$$
  for all $g\in G^s$,
  and $w_2=v\otimes rv\otimes w+\lambda rv\otimes v\otimes w$
  is a basis of $X_2^{V,W}$.
\end{lem}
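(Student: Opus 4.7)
Since $\sigma(t)\tau(s)\ne 1$ is assumed in the conclusion and is needed to make things interesting, Lemma~\ref{lem:X1} gives $X_1^{V,W}\simeq V\otimes W$, so the plan reduces to analyzing the image of $\varphi_2$ on $V\otimes V\otimes W$. Fix a basis $v\in V_s$ of $V_s$ so that $\{v,rv\}$ is a basis of $V$, with $rv\in V_{\epsilon s}$, and $w\in W_t$ is a basis of $W$. The four pure tensors $v\ot v\ot w$, $v\ot rv\ot w$, $rv\ot v\ot w$, $rv\ot rv\ot w$ carry $G$-degrees $s^2t$, $\epsilon s^2t$, $\epsilon s^2t$, $\epsilon^2 s^2t$, respectively; here I use Lemma~\ref{lem:rs}(1) to justify $s\epsilon =\epsilon s$ and $t\in Z(G)$ to place $t$ wherever needed.

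Next I would verify that $\epsilon s^2 t\in Z(G)$: Lemma~\ref{lem:rs}(1),(2) gives $r\epsilon =\epsilon^{-1}r$ and $rs^2=\epsilon^2 s^2r$, so $r(\epsilon s^2)=\epsilon s^2 r$, and $\epsilon s^2$ commutes with $G^s$ as $\epsilon$ and $s$ both do. Combined with $t\in Z(G)$ and $G=\langle G^s\cup\{r\}\rangle$ (Lemma~\ref{lem:rs}(4)), this centrality follows. Consequently the degree-$\epsilon s^2 t$ subspace of $V\otimes V\otimes W$, two-dimensional with basis $v\ot rv\ot w$ and $rv\ot v\ot w$, is a $G$-invariant direct summand, and under this summand an absolutely simple submodule is automatically one-dimensional.

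The technical core is to compute $\varphi_2=\id-c_{V\otimes W,V}c_{V,V\otimes W}+(\id\otimes\varphi_1)c_{12}$ on each of the four basis tensors. The result decomposes as a direct sum of three homogeneous pieces: a scalar multiple of $v\ot v\ot w$ of degree $s^2t$, a scalar multiple of $rv\ot rv\ot w$ of degree $\epsilon^2 s^2t$, and an element of the form $\alpha (v\ot rv\ot w)+\beta(rv\ot v\ot w)$ of degree $\epsilon s^2t$, where the scalars $\alpha,\beta$ and the two outer coefficients are explicit polynomials in $\sigma(s)$, $\sigma(\epsilon)$, $\sigma(t)$, $\tau(s)$. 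By construction (since $X_2^{V,W}$ is a subobject in $\ydG$) $X_2^{V,W}$ is the $G$-submodule generated by this image. Non-vanishing of $X_2^{V,W}$ follows by exhibiting one non-zero coefficient under the mere hypothesis $\sigma(t)\tau(s)\ne 1$.

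For absolute simplicity I would observe that the three central degrees $s^2 t$, $\epsilon s^2 t$, $\epsilon^2 s^2 t$ give rise to $G$-submodules of $V\otimes V\otimes W$ whose intersection patterns depend only on whether $\epsilon^2=1$, and whose degree-$\epsilon s^2 t$ summand $\K w_2$ of the image is $G$-stable precisely when the quantity $\alpha\beta^{-1}$ equals $\lambda$. Matching scalars then yields that $X_2^{V,W}$ is absolutely simple (necessarily one-dimensional $=\K w_2$) exactly in cases (1), (2), or (3), with the respective values of $\lambda$; the three cases correspond to the three algebraic mechanisms that kill the outer summands: (1) $\sigma(\epsilon)^2=1$ so the outer degrees coincide and coherently assemble into the middle one, (2) $\sigma(s)=-1$ forces the image of $v\ot v\ot w$ to vanish, and (3) the relation $\sigma(st)\tau(s)=1$ makes $\varphi_1$ vanish on the relevant factor. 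Finally, once $X_2^{V,W}=\K w_2$, computing the action of an element of $G^s$ and of $r$ on $w_2$ (using $gv=\sigma(g)v$, $g(rv)=\sigma(g)(grg^{-1})v$ and $r^2\in G^s$ from Lemma~\ref{lem:rs}(2)) yields the stated formulas for $\tau_2$.

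The main obstacle is the bookkeeping in the explicit computation of $\varphi_2$ and the matching of the three sets of conditions with the three mechanisms for the non-middle degree summands to vanish; especially delicate is the case $\epsilon^2=1$, where the degrees $s^2t$ and $\epsilon^2 s^2t$ coincide and the three summands must be handled jointly rather than separately.
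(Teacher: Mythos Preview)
Your overall strategy --- compute $\varphi_2$ explicitly on basis tensors and analyze when the image is absolutely simple --- is exactly the paper's approach. But your account of \emph{why} the three cases arise is wrong, and would mislead you in carrying out the details.

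The crucial structural fact you mishandle is this: by Lemma~\ref{lem:rs}(3) the conjugacy classes $(s^2t)^G=\{s^2t,\epsilon^2s^2t\}$ and $(\epsilon s^2t)^G=\{\epsilon s^2t\}$ are \emph{always} distinct, irrespective of whether $\epsilon^2=1$ in $G$ or whether $\sigma(\epsilon^2)=1$. Hence for $X_2^{V,W}$ to be absolutely simple, the part of the image supported on $(s^2t)^G$ must vanish. The paper computes
\[
\varphi_2(v\otimes v\otimes w)=(1+\sigma(s))(1-\sigma(st)\tau(s))\,v\otimes v\otimes w,
\]
so this gives the single necessary equation $(1+\sigma(s))(1-\sigma(st)\tau(s))=0$, which holds in \emph{all three} cases: it is the second condition in case~(1), and it is implied by the first condition in cases~(2) and~(3). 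Your description of case~(1) --- that the ``outer degrees coincide and coherently assemble into the middle one'' --- conflates the character condition $\sigma(\epsilon^2)=1$ with the group-element condition $\epsilon^2=1$, and even under the latter the degrees $s^2t$ and $\epsilon s^2t$ remain non-conjugate. Likewise in case~(3) the map $\varphi_1$ does \emph{not} vanish (indeed $\sigma(t)\tau(s)\ne1$ is the standing hypothesis); what vanishes is the factor $1-\sigma(st)\tau(s)$ above.

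Once the $(s^2t)^G$-part is gone, $X_2^{V,W}$ is generated by $w_2':=\varphi_2(v\otimes rv\otimes w)$, which lies in the two-dimensional degree-$\epsilon s^2t$ space. Since $\epsilon s^2t$ is central, absolute simplicity forces $\dim X_2^{V,W}=1$, i.e.\ $w_2'$ must span an $r$-stable line; the only such lines are $\K(v\otimes rv\pm rv\otimes v)\otimes w$, yielding
\[
(1-\lambda\sigma(\epsilon s))(1+\lambda\sigma(\epsilon st)\tau(s))=0,\qquad \lambda\in\{1,-1\}.
\]
The three cases of the lemma are precisely the ways to satisfy both displayed equations simultaneously, and each determines the stated value of $\lambda$. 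In particular the case $\epsilon^2=1$ needs no separate treatment, and by $G$-equivariance of $\varphi_2$ it suffices to evaluate on the two tensors $v\otimes v\otimes w$ and $v\otimes rv\otimes w$ rather than all four.
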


\begin{proof}
  Let $w_1=v\otimes w$. By the proof of Lemma~\ref{lem:X1},
  $w_1\in (X_1^{V,W})_{st}$ generates $X_1^{V,W}$ as a $\K G$-module.
  Since $s^G\times (st)^G=G\triangleright (s,st)\cup G\triangleright
  (s,\epsilon st)$, the vectors
  $\varphi _2(v\otimes w_1)$ and $\varphi _2(v\otimes rw_1)$ generate 
  the $\K G$-module $X_2^{V,W}$.

  Let $w_2'=\varphi _2(v\otimes rw_1)$.
  Since
  \begin{align*}
    &\varphi _2(v\otimes rw_1)=
    v\otimes rw_1-\epsilon stv\otimes srw_1
    +\tau (r)(\id \otimes \varphi _1)(srv\otimes v\otimes w)
    \\
    &\qquad =(1-\sigma (\epsilon ^2s^2t)\tau (s))v\otimes rw_1
    +\sigma (\epsilon s)\tau (r)(1-\sigma (t)\tau (s))rv\otimes w_1,
  \end{align*}
  we conclude that $w_2'\ne 0$ and hence $X_2^{V,W}\ne 0$.

  Assume that $X_2^{V,W}$ is absolutely simple.
  Since
  \begin{align*}
    \varphi _2(v\otimes w_1)=&\;
    v\otimes w_1-stv\otimes sw_1+(\id \otimes \varphi _1)(sv\otimes v\otimes w)
    \\
    =&\;(1+\sigma (s))(1-\sigma (st)\tau (s))v\otimes w_1,
  \end{align*}
  and $\varphi _2(v\otimes w_1)\in (X_2^{V,W})_{s^2t}$,
  $w_2'\in (X_2^{V,W})_{\epsilon s^2t}$, and
  $(s^2t)^G\ne (\epsilon s^2t)^G$ by Lemma~\ref{lem:rs}(3), we conclude that
  \begin{align} \label{eq:X2:simple1}
    (1+\sigma (s))(1-\sigma (st)\tau (s))=0.
  \end{align}
  Also, the tensors $v\ot rv\ot w$, $rv\ot v\ot w$ form a basis
  of $(V\ot V\ot W)_{\epsilon s^2t}$, and hence
  $$gu=\sigma (g\,r^{-1}gr)\tau (g)u \quad \text{for all $u\in (V\ot V\ot
  W)_{\epsilon s^2t}$, $g\in G^s$.}$$
  Since $G=G^s\cup rG^s$,
  $$\K (v\ot rv+rv\ot v)\ot w, \quad \K (v\ot rv-rv\ot v)\ot w
  $$
  are the only simple Yetter-Drinfeld submodules of $(V\ot V\ot W)_{\epsilon s^2t}$.
  Thus, $w_2'$ has to span one of these submodules, that is,
  $$ 1-\sigma (\epsilon ^2s^2t)\tau (s)=
     \lambda \sigma (\epsilon s)(1-\sigma (t)\tau (s))
  $$
  for some $\lambda \in \{1,-1\}$.
  Equivalently,
\begin{align}
  (1-\lambda \sigma (\epsilon s))(1+\lambda \sigma (\epsilon st)\tau (s))=0
\label{eq:w2'}
\end{align}
  for some $\lambda \in \{1,-1\}$.
  This and Equation~\eqref{eq:X2:simple1} imply that (1) or (2) or (3) hold,
  and $X_2^{V,W}=\K (v\ot rv\ot w+\lambda rv\ot v\ot w)$.
 
  Conversely, if one of (1), (2), (3) holds, then $X_2^{V,W}=\K w_2$
  by the above calculations, and hence $X_2^{V,W}$ is absolutely simple. The
  remaining claims also follow similarly. 
\end{proof}

\begin{lem} \label{lem:X3}
  Assume that $\sigma (t)\tau (s)\ne 1$ and that $X_2^{V,W}$ is absolutely
  simple.
  Let $\tau _3$ be the character of $G^s$ and $\tau _4$ be the character
  of $G$ with
  \begin{align*}
    \tau _3(g)=&\;\sigma (g^2 \,r^{-1}gr)\tau (g), &
    \tau _4(g)=&\;\sigma (g^2\,(r^{-1}gr)^2)\tau (g), &
    \tau _4(r)=&\;\sigma (r^4)\tau (r)
  \end{align*}
  for all $g\in G^s$. Then the following hold.
  \begin{enumerate}
    \item $X_3^{V,W}=0$ if and only if $\sigma (s)=-1$ or $\sigma (\epsilon
      ^2)\ne 1$.
    \item $X_3^{V,W}$ is absolutely simple if and only if
      $\sigma (s)\ne -1$ and $\sigma (\epsilon ^2)=1$. In this case,
      $X_3^{V,W}\simeq M(\epsilon s^3t,\tau _3)$ and $X_4^{V,W}\ne 0$.
    \item Assume that $\sigma (s)\ne -1$ and $\sigma (\epsilon ^2)=1$. Then
      $X_4^{V,W}$ is absolutely simple if and only if $(3)_{\sigma (s)}=0$.
      In this case,
      $X_4^{V,W}\simeq M(\epsilon ^2 s^4t,\tau _4)$ and $X_5^{V,W}=0$.
    \item Assume that $\sigma (\epsilon ^2)=1$ and $(3)_{\sigma (s)}=0$. Let
      $w_2$ be as in Lemma~\ref{lem:X2},
      $w_3=v\otimes w_2$,
      and $$w_4=v\otimes rw_3+\sigma (r^2)\tau (r)rv\otimes w_3.$$
      Then $w_3\in (X_3^{V,W})_{\epsilon s^3t}$,
      $w_4\in (X_4^{V,W})_{\epsilon ^2s^4t}$.
  \end{enumerate}
\end{lem}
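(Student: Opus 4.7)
}
The plan is to continue the direct style of computation used in Lemmas~\ref{lem:X1} and~\ref{lem:X2}, applying the recursion $X_{n+1}^{V,W}=\varphi_{n+1}(V\otimes X_n^{V,W})$ from Lemma~\ref{lem:X_n} for $n=2,3,4$. The key structural observation I will exploit at the outset is that $\epsilon s^2\in Z(G)$: indeed $\epsilon\in G^s$ and $rs^2r^{-1}=(\epsilon s)^2=\epsilon^2 s^2$ combined with $r\epsilon r^{-1}=\epsilon^{-1}$ show $r\epsilon s^2 r^{-1}=\epsilon s^2$, and $G^s\subseteq G^{\epsilon s^2}$ is immediate. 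Consequently, under the hypothesis that $X_2^{V,W}$ is absolutely simple, Lemma~\ref{lem:X2} gives $X_2^{V,W}=\K w_2$ as a one-dimensional module of central $G$-degree $\epsilon s^2 t$. Thus $V\otimes X_2^{V,W}$ has basis $\{v\otimes w_2,rv\otimes w_2\}$, with $G$-degrees $\epsilon s^3 t$ and $\epsilon^2 s^3 t$ respectively, forming the conjugacy class of $\epsilon s^3 t$.

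For claims (1) and (2) on $X_3^{V,W}$, I will compute $\varphi_3(v\otimes w_2)$ explicitly, using the defining formula
\[
\varphi_3=\id-c_{V^{\otimes 2}\otimes W,V}\,c_{V,V^{\otimes 2}\otimes W}+(\id\otimes\varphi_2)c_{1,2}
\]
and the identity $\varphi_2(v\otimes w_1)=(1+\sigma(s))(1-\sigma(st)\tau(s))v\otimes w_1$ established in the proof of Lemma~\ref{lem:X2}. After expansion and collection of terms by $G$-degree, the result reduces to a scalar multiple of $w_3:=v\otimes w_2$ inside the one-dimensional space $(V^{\otimes 3}\otimes W)_{\epsilon s^3 t}\cap\K(v\otimes w_2)$; by $\K G$-equivariance of $\varphi_3$ and $r(v\otimes w_2)=\tau_2(r)\,rv\otimes w_2$, the image on $rv\otimes w_2$ is determined, so $X_3^{V,W}$ is the $\K G$-submodule generated by this scalar multiple. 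Tracking the scalar, using $\sigma(s)^2=1$ in cases~(1) and~(3) of Lemma~\ref{lem:X2} and $\sigma(\epsilon^2 t^2)\tau(s^2)=1$ in case~(2), shows it vanishes exactly when $\sigma(s)=-1$ or $\sigma(\epsilon^2)\ne 1$, giving (1). In the surviving case $\sigma(s)\ne-1$, $\sigma(\epsilon^2)=1$, the module $X_3^{V,W}=\K G\cdot w_3$ has dimension two (the conjugacy class $\{\epsilon s^3 t,\epsilon^2 s^3 t\}$ has two elements), and the character $\tau_3$ on $G^s=G^{\epsilon s^3 t}$ is read off from the $G^s$-action on $w_3$, giving (2).

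For claims (3) and (4), I will perform the analogous computation one dimension up. Since $X_3^{V,W}$ is two-dimensional, $V\otimes X_3^{V,W}$ is four-dimensional with $G$-degrees among $\{\epsilon s^4 t,\epsilon^2 s^4 t,\epsilon^3 s^4 t\}$, and $(\epsilon^2 s^4 t)^G=\{\epsilon^2 s^4 t\}$ by Lemma~\ref{lem:rs}(3), while $(\epsilon s^4 t)^G=\{\epsilon s^4 t,\epsilon^3 s^4 t\}$. I will compute $\varphi_4$ on the two-element basis of the $\epsilon^2 s^4 t$-graded piece (which contains the proposed $w_4=v\otimes rw_3+\sigma(r^2)\tau(r)\,rv\otimes w_3$), and show that, after using $\sigma(\epsilon^2)=1$, the image reduces to a scalar multiple of $w_4$; the scalar is nonzero precisely when $(3)_{\sigma(s)}=0$. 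By $\K G$-equivariance, the images in the other $G$-orbit $\{\epsilon s^4 t,\epsilon^3 s^4 t\}$ are controlled by the $\epsilon s^4 t$-component, which vanishes exactly under the same condition (this is what forces the outcome to land in the central singleton orbit and thus produces $X_4^{V,W}\simeq M(\epsilon^2 s^4 t,\tau_4)$ of dimension one). Finally, $X_5^{V,W}=0$ will be obtained by computing $\varphi_5(v\otimes w_4)$ and $\varphi_5(rv\otimes w_4)$: since $X_4^{V,W}$ is one-dimensional of central degree, $V\otimes X_4^{V,W}$ is two-dimensional with the same $G$-orbit structure as $V$, and a direct expansion using $(3)_{\sigma(s)}=0$ to cancel the surviving terms yields zero.

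The main obstacle will be the verbose bookkeeping in the direct expansion of $\varphi_3$ and $\varphi_4$, where repeated use of the relations $\epsilon\in G^s$, $r\epsilon r^{-1}=\epsilon^{-1}$, $r^{-1}sr=\epsilon s$, and $s(rv)=\sigma(\epsilon)\sigma(s)\,rv$ is needed to move group elements past the tensor factors. The conceptual simplification is that in each relevant $G$-degree at most one basis vector appears, so every computation collapses to a single scalar whose vanishing dictates the case distinction.
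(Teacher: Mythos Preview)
Your overall strategy---iterating $X_{n+1}=\varphi_{n+1}(V\otimes X_n)$ and evaluating $\varphi_n$ on explicit generators---is exactly the paper's. However, two of your stated intermediate claims are wrong and would derail the computation if followed literally.

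First, in the case analysis for (1) you invoke ``$\sigma(s)^2=1$ in cases~(1) and~(3) of Lemma~\ref{lem:X2}''. Neither case asserts this. What both cases do give (once $\sigma(s)\ne-1$) is $\sigma(st)\tau(s)=1$, and that is the relation actually needed: the scalar in $\varphi_3(v\otimes w_2)=(1+\sigma(s))(1-\sigma(\epsilon^2 s^3 t)\tau(s))\,v\otimes w_2$ becomes $(1+\sigma(s))(1-\sigma(\epsilon^2 s^2))$, which vanishes in case~(3) (where $\sigma(\epsilon^2 s^2)=1$, hence $\sigma(\epsilon^2)\ne 1$) and is nonzero in case~(1) (where $\sigma(\epsilon^2)=1$ and $\sigma(s)\ne\pm 1$).

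Second, your description of the $X_4$ step has the logic reversed. You claim that on the two-dimensional $\epsilon^2 s^4 t$-graded piece ``the image reduces to a scalar multiple of $w_4$; the scalar is nonzero precisely when $(3)_{\sigma(s)}=0$''. In fact $\varphi_4(v\otimes rw_3)$ is \emph{never} zero under the standing hypotheses $\sigma(s)\ne\pm 1$; what $(3)_{\sigma(s)}=0$ controls is whether this nonzero vector lands in a one-dimensional absolutely simple Yetter--Drinfeld submodule. The paper computes
\[
\varphi_4(v\otimes rw_3)=(1-\sigma(s^5))\,v\otimes rw_3-\sigma(sr^2)\tau(r)(1+\sigma(s))(1-\sigma(s^2))\,rv\otimes w_3,
\]
and this equals $(1-\sigma(s)^{-1})w_4$ precisely when $(3)_{\sigma(s)}=0$. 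Together with $\varphi_4(v\otimes w_3)=(3)_{\sigma(s)}(1-\sigma(s^3))\,v\otimes w_3$, which vanishes iff $(3)_{\sigma(s)}=0$, one concludes: if $(3)_{\sigma(s)}\ne 0$ then $X_4^{V,W}$ meets both $G$-orbits of degrees and cannot be simple, while if $(3)_{\sigma(s)}=0$ then $X_4^{V,W}=\K w_4$. Your closing remark that ``in each relevant $G$-degree at most one basis vector appears'' is likewise false for this component, as you yourself acknowledged a few lines earlier.
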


\begin{proof}
  First we calculate that
  \begin{align*}
    \varphi _3(v\otimes w_2)=(1+\sigma (s))(1-\sigma (\epsilon ^2s^3t)\tau (s))
    v\otimes w_2.
  \end{align*}
  Hence $\varphi _3(v\otimes w_2)=0$ if and only if $\sigma (s)=-1$
  or $\sigma (\epsilon ^2s^3t)\tau (s)=1$.
  Assume that $\sigma (s)\ne -1$. Since $X_2^{V,W}$ is absolutely simple,
	Lemma~\ref{lem:X2} implies that $\sigma (st)\tau (s)=1$.
	Thus $X_3^{V,W}=0$ if and only if $\sigma (\epsilon ^2s^2)=1$.
  Since $\sigma(s)^{-1}=\sigma (t)\tau (s)\ne 1$ and $\sigma(s)\ne -1$ by
	assumption,
	Lemma~\ref{lem:X2} implies that $\sigma (\epsilon^2s^2)=1$ holds if and only
if $\sigma (\epsilon ^2)\ne 1$.

  Assume now that $\sigma (\epsilon ^2)=1$ and $\sigma (s)\ne -1$.
  Then $\sigma (st)\tau (s)=1$ by Lemma~\ref{lem:X2}.
  Let $w_3=v\ot w_2$. Then $w_3\in (V^{\ot 3}\ot W)_{\epsilon s^3t}$
  and
  $$X_3^{V,W}=\K w_3+\K rw_3 \simeq M(\epsilon s^3t,\tau _3),$$
  since $gw_2=\sigma (g\,r^{-1}gr)\tau (g)w_2$ for all $g\in G^s$ by
  Lemma~\ref{lem:X2}.
  Moreover,
  \begin{align*}
    \varphi _4(v\otimes w_3)=&\;
    (3)_{\sigma (s)}(1-\sigma (s^3))v\otimes w_3,\\
    \varphi _4(v\otimes rw_3)=&\;
    (1-\sigma (s^5))v\otimes rw_3\\
    &\;-\sigma (sr^2)\tau (r)
    (1+\sigma (s))(1-\sigma (s^2))rv\ot w_3.
  \end{align*}
  Since $V\otimes V\otimes X_2^{V,W}=X_4'\oplus X_4''$ in $\ydG $,
  where
  \begin{align*}
    X_4'=&\;v\otimes v\otimes X_2^{V,W}+rv\otimes rv\otimes X_2^{V,W},\\
    X_4''=&\;v\otimes rv\otimes X_2^{V,W}+rv\otimes v\otimes X_2^{V,W},
  \end{align*}
  similarly to an argument in the proof of Lemma~\ref{lem:X2} we conclude
  that
  $X_4^{V,W}$ is absolutely simple if and only if
  $\varphi _4(v\otimes w_3)=0$ and
  $$\varphi _4(v\ot rw_3)\in \K(v\otimes rv+\lambda rv\otimes v)\otimes w_2$$
  for some $\lambda \in \K $ with $\lambda ^2=1$.
  This is equivalent to $(3)_{\sigma (s)}=0$, since then
  $\varphi _4(v\ot rw_3)=(1-\sigma (s)^{-1})w_4$
  and $rw_4=\sigma (r^4)\tau (r)w_4$.
  The rest follows easily.
\end{proof}

Now we introduce classes of pairs of absolutely simple Yetter-Drinfeld
modules over any group $H$. They will appear naturally in
	Corollary~\ref{cor:finiteW} in the classification of specific pairs admitting
all reflections.

\begin{defn} \label{def:wp22}
  Let $H$ be a group.
	For $i\in \{0,1\}$ let $\wp ^H_{22,i}$ be the class of pairs $(V,W)$
  of Yetter-Drinfeld modules over $H$ such that
  the following hold.
  \begin{enumerate}
    \item $|\supp V|=2$, $|\supp W|=2$.
		\item There exist $s\in \supp V$, $t\in \supp W$, $\sigma \in \chg{H^s}$,
			and $\tau \in \chg{H^t}$, such that
      $V\simeq M(s,\sigma )$, $W\simeq M(t,\tau )$, and the following hold:
			\begin{enumerate}
				\item If $i=0$, then $(\id -c_{W,V}c_{V,W})(V\otimes W)=0$.
        \item If $i=1$, then $\sigma (\epsilon t^2)\tau (\epsilon s^2)=1$, and
          $\sigma (s)=\tau (t)=-1$,
					where $\epsilon \in H$ with $st=\epsilon ts$ and $\epsilon \ne 1$.
			\end{enumerate}
  \end{enumerate}
  Let $\wp ^H_i$ for $0\le i\le 8$ be the class of pairs $(V,W)$
  of Yetter-Drinfeld modules over $H$ such that
  the following hold.
  \begin{enumerate}
    \item $|\supp V|=2$, $|\supp W|=1$.
		\item There exist $s\in \supp V$, $t\in \supp W$, $\sigma \in \chg{H^s}$,
			and $\tau \in \chg H$, such that
      $V\simeq M(s,\sigma )$, $W\simeq M(t,\tau )$,
      and $\sigma $ and $\tau $ satisfy the conditions in Table~\ref{tab:wpH}.
  \end{enumerate}
	For all $n\in \N$ with $n\ge 2$ let $\wp^H_1(n)$ be the subclass of $\wp^H_1$
	of those pairs $(V,W)$, where additionally
	$\tau (t)$ is a primitive $n$-th root of $1$.
\end{defn}

\begin{table}
\caption{The classes $\wp^H_i$, $0\le i\le 8$.}
\begin{center}
  \begin{tabular}{c|c}
    $i$ & conditions on $\sigma $ and $\tau $\\
    \hline
    $0$ & $\sigma (t)\tau (s)=1$ \\
    $1$ & $\sigma (\epsilon ^2)=1$, $\sigma (s)=-1$, $\sigma (t)\tau (st)=1$,
    $\tau (t)\ne 1$ \\
    $2$ & $\sigma (\epsilon ^2)=1$, $\sigma (s)=-1$, $\tau (t)=-1$,
    $(3)_{\sigma (t)\tau (s)}=0$, $\sigma (t)\tau (s)\ne 1$ \\
    $3$ & $\sigma (\epsilon ^2)=1$, $\sigma (s)=-1$, $(3)_{\sigma (t)\tau (s)}=0$,
    $\tau (t)=-\sigma (t)\tau (s)$, $\sigma (t)\tau (s)\ne 1$ \\
    $4$ & $\sigma (\epsilon ^2)=1$, $(3)_{\sigma (s)}=0$, $\sigma (st)\tau (s)=1$,
    $\tau (t)=-1$, $\sigma (s)\ne 1$ \\
    $5$ & $\sigma (\epsilon ^2)\ne 1$, $\sigma (s)=-1$,
    $\sigma (\epsilon ^2t^2)\tau (s^2)=1$, $\sigma (t)\tau (st)=1$\\
    $6$ & $\sigma (\epsilon ^2)\ne 1$, $\sigma (s)=-1$,
    $\sigma (\epsilon ^2t^2)\tau (s^2)=1$, $\tau (t)=-1$\\
    $7$ & $\sigma (\epsilon ^2)\ne 1$, $\sigma (\epsilon ^2s^2)=1$,
    $\sigma (st)\tau (s)=1$, $\sigma (t)\tau (st)=1$\\
    $8$ & $\sigma (\epsilon ^2)\ne 1$, $\sigma (\epsilon ^2s^2)=1$,
    $\sigma (st)\tau (s)=1$, $\tau (t)=-1$
  \end{tabular}
\end{center}
\label{tab:wpH}
\end{table}

%
%
%
We point out that Lemma~\ref{lem:a=0for22} gives a characterization of pairs in
$\wp^H_{22,0}$. A characterization of the class $\wp^H_{22,1}$ was given in
Corollary~\ref{co:2-2:A2B2CartanMatrix}.

The pairs $(V,W)$ in the classes $\wp ^H_{22,j}$ for $j\in \{0,1\}$
and $\wp^H_i$ for $0\le i\le 8$ 
satisfy stronger properties. To prove them we need a lemma.

For any group $H$ and any representation $\rho $ of $H$ we write
$\const _\rho (H)$ for the normal subgroup of $H$ consisting of those $g\in H$
such that $\rho (g)$ is constant. In particular, $\const _\rho (H)=H$
if $\deg \rho =1$. The following Lemma is probably well-known. It follows
	directly from the structure theory of Yetter-Drinfeld modules over groups.

\begin{lem} \label{lem:constH}
  Let $H$ be a group and let $V\in \ydH $.
  Then the following hold.
  \begin{enumerate}
    \item For all $r\in \supp V$ there exists a representation
      $\rho _r$ of $H^r$ such that $\oplus _{s\in r^H}V_s\simeq M(r,\rho _r)$.
      These representations are unique up to isomorphism, and $\deg \rho _r=\deg
      \rho _s$ for all $r,s\in \supp V$ with $s\in r^H$.
    \item Let $r\in \supp V$, $h\in \const _{\rho _r}(H^r)$, and $g\in H$.
      Let $r'=grg^{-1}$ and $h'=ghg^{-1}$. Then
      $h'\in \const _{\rho _{r'}}(H^{h'})$ and $\rho _r(h)=\rho _{r'}(h')$.
  \end{enumerate}
\end{lem}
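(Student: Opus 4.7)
The plan is to read off both claims directly from the standard description of Yetter-Drinfeld modules over a group as induced modules, plus a simple equivariance observation.

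First I would prove (1). Recall that for any $V\in\ydH$ and any conjugacy class $C\subseteq\supp V$, the subspace $V_C=\oplus_{s\in C}V_s$ is both $H$-stable (since $hV_s\subseteq V_{hsh^{-1}}$) and a subcomodule, hence a Yetter-Drinfeld submodule. Fixing $r\in\supp V$ and $C=r^H$, the homogeneous component $V_r$ is preserved by $H^r$, so the restricted action defines a representation $\rho_r\colon H^r\to\End(V_r)$. Choosing a set of coset representatives $\{g_s\colon s\in r^H\}$ with $g_sr g_s^{-1}=s$ (and $g_r=1$), the action of $H$ on $V_{r^H}$ sends $V_r$ to $V_s$ via multiplication by $g_s$; thus $V_{r^H}=\oplus_{s\in r^H}g_sV_r$ and the usual computation shows the map $\K H\otimes_{\K H^r}V_r\to V_{r^H}$ given by $g\otimes v\mapsto gv$ is an isomorphism in $\ydH$. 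This gives $\oplus_{s\in r^H}V_s\simeq M(r,\rho_r)$. Uniqueness of $\rho_r$ up to isomorphism is immediate since $\rho_r$ is the action of $H^r$ on the isotypic component in degree $r$ of $M(r,\rho_r)$. Since each $g_s$ is a $\K$-linear bijection $V_r\to V_s$, we get $\deg\rho_r=\dim V_r=\dim V_s=\deg\rho_s$ for all $s\in r^H$.

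For (2), I would use the same equivariance. Let $r\in\supp V$, $h\in\const_{\rho_r}(H^r)$, and $g\in H$, and set $r'=grg^{-1}$, $h'=ghg^{-1}$. Then $h'\in gH^rg^{-1}=H^{r'}$, so $\rho_{r'}(h')$ is defined. The map $\mu_g\colon V_r\to V_{r'}$, $v\mapsto gv$, is a $\K$-linear isomorphism, and for every $v\in V_r$ one has
\[
\rho_{r'}(h')\mu_g(v)=h'(gv)=(ghg^{-1})gv=g(hv)=\mu_g(\rho_r(h)v).
\]
Hence $\rho_{r'}(h')=\mu_g\,\rho_r(h)\,\mu_g^{-1}$; in particular, if $\rho_r(h)=\lambda\,\id_{V_r}$ then $\rho_{r'}(h')=\lambda\,\id_{V_{r'}}$, proving both that $h'\in\const_{\rho_{r'}}(H^{r'})$ and that $\rho_{r'}(h')=\rho_r(h)$.

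I do not expect a genuine obstacle: everything reduces to the elementary fact that the $H$-action carries $V_r$ isomorphically onto $V_{grg^{-1}}$ and intertwines conjugate stabilizer actions, so both assertions follow by unwinding the definitions.
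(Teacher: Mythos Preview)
Your proof is correct and is precisely the standard structure-theory argument the paper has in mind; the paper itself does not give a detailed proof but simply remarks before the lemma that it ``follows directly from the structure theory of Yetter-Drinfeld modules over groups.'' Your write-up fills in exactly those details (restriction to a conjugacy class, identification of $V_{r^H}$ with the induced module, and the conjugation intertwining $\rho_{r'}(ghg^{-1})=\mu_g\rho_r(h)\mu_g^{-1}$), so there is nothing to compare.
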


In the following two propositions we show that the presentation of the pairs in
	the classes $\wp _{22}^H$ and $\wp ^H_i$, $0\le i\le 8$,
	in terms of elements of the group $H$ and representations of their
	centralizers is essentially independent of choices. This simplifies much the
discussion of skeletons of tuples.

\begin{pro} \label{pro:wp22univ}
	Let $H$ be a group, $(V,W)\in \wp ^H_{22,1}$,
	and $s\in \supp V$, $t\in \supp W$.
	Let $\epsilon \in H$ be such that $st=\epsilon ts$.
\begin{enumerate}
  \item There exist unique characters $\sigma $ of $H^s$ and $\tau $ of $H^t$
    such that $V\simeq M(s,\sigma )$ and $W\simeq M(t,\tau )$.
  \item 
		$s^H=\{s,\epsilon s\}$, $t^H=\{t,\epsilon t\}$, $\epsilon ^2=1$,
    $\epsilon \in Z(H)$, $\epsilon \ne 1$.
  \item
		$\sigma (\epsilon t^2)\tau (\epsilon s^2)=1$, $\sigma (s)=\tau (t)=-1$.
\end{enumerate}
\end{pro}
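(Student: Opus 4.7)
The plan is to obtain part~(2) directly from Lemma~\ref{lem:A3:222}, to derive part~(1) from the structure theory of simple Yetter-Drinfeld modules, and to deduce part~(3) by showing that each relation is an invariant under the choice of $s\in\supp V$ and $t\in\supp W$.

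For part~(2), I would apply Lemma~\ref{lem:A3:222}(1) with the lemma's $r,s$ replaced by our $s,t$: the relation $st=\epsilon ts$ with $\epsilon\ne 1$, together with $|s^H|=|t^H|=2$, fits the hypotheses verbatim, and the conclusion delivers $s^H=\{s,\epsilon s\}$, $t^H=\{t,\epsilon t\}$, $\epsilon^2=1$, and $\epsilon\in Z(H)$. Part~(1) then follows from the fact that the definition of $\wp^H_{22,1}$ provides some isomorphism $V\simeq M(s',\sigma')$, so $\dim V=|s'^H|=2$; hence every weight space $V_s$ for $s\in\supp V$ is one-dimensional, $H^s$ acts on it by a unique character $\sigma$, and this gives the desired $V\simeq M(s,\sigma)$. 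The same argument yields $\tau$.

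The heart of the proof is part~(3). By definition the three equations hold for some admissible $(s',t',\sigma',\tau')$, and I will show each quantity is independent of the representative so that the relations propagate to our $(s,t,\sigma,\tau)$. The scalar $\sigma(s)$ is the value of the character on its own distinguished element, which by Lemma~\ref{lem:constH}(2) depends only on $V$ and not on the choice of $s\in\supp V$; likewise for $\tau(t)$. For the product $\sigma(\epsilon t^2)\tau(\epsilon s^2)$, note first that $\epsilon^2=1$ forces $\epsilon(\epsilon t)^2=\epsilon t^2$ and $\epsilon(\epsilon s)^2=\epsilon s^2$, so the arguments themselves are already invariant under $t\mapsto\epsilon t$ and $s\mapsto\epsilon s$. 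To see invariance of $\sigma(\epsilon t^2)$ under $s\mapsto\epsilon s$, I will verify $\epsilon t^2\in H^s$ (from $st^2=\epsilon ts t=\epsilon^2 t^2 s=t^2 s$) and apply Lemma~\ref{lem:constH}(2) with the conjugating element $g=t$, which satisfies $tst^{-1}=\epsilon s$; since $\epsilon\in Z(H)$ gives $t(\epsilon t^2)t^{-1}=\epsilon t^2$, one obtains $\sigma_{\epsilon s}(\epsilon t^2)=\sigma(\epsilon t^2)$. A symmetric argument handles $\tau(\epsilon s^2)$.

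The main obstacle will be the bookkeeping around $\epsilon$, which a priori depends on the choice $(s,t)$ via $\epsilon=sts^{-1}t^{-1}$; a short computation using $\epsilon\in Z(H)$ and $\epsilon^2=1$ shows that $\epsilon$ is in fact unchanged when $s$ or $t$ is replaced by its $\epsilon$-translate, so $\epsilon$ is a genuine invariant of $(V,W)$ and the three relations in part~(3) can be read consistently across all admissible choices, completing the proof.
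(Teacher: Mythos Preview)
Your approach is essentially the paper's: both use Lemma~\ref{lem:A3:222} for~(2), Lemma~\ref{lem:constH}(1) for~(1), and the conjugation-invariance from Lemma~\ref{lem:constH}(2) for~(3). The paper packages~(3) a bit more economically by producing a single $x\in\langle s,t\rangle$ with $x\triangleright s'=s$, $x\triangleright t'=t$ (and hence $x\triangleright\epsilon'=\epsilon$, forcing $\epsilon=\epsilon'$ since $\epsilon'\in Z(H)$), then transporting all relations at once via Lemma~\ref{lem:constH}(2); your piecewise invariance checks under $s\mapsto\epsilon s$ and $t\mapsto\epsilon t$ accomplish the same thing.

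One logical ordering issue to fix: in your treatment of~(2) you apply Lemma~\ref{lem:A3:222} directly to the given $s,t$ while asserting $\epsilon\ne1$, but the proposition only defines $\epsilon=sts^{-1}t^{-1}$ --- that $\epsilon\ne1$ is part of the conclusion, not a hypothesis. Your last paragraph's remedy (``using $\epsilon\in Z(H)$ and $\epsilon^2=1$'') is circular as stated, since those are outputs of the very lemma you want to invoke. The clean order, which the paper follows, is: start with the witnesses $s',t',\epsilon'$ guaranteed by the definition of $\wp^H_{22,1}$ (where $\epsilon'\ne1$ is given), apply Lemma~\ref{lem:A3:222} to \emph{them} to get $\epsilon'\in Z(H)$, $\epsilon'^2=1$, $\supp V=\{s',\epsilon's'\}$, $\supp W=\{t',\epsilon't'\}$; then your commutator computation immediately yields $\epsilon=\epsilon'$, and everything else proceeds as you wrote.
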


\begin{proof}
  %
  By assumption, there exist $s'\in \supp V$, $t'\in \supp W$,
  $\epsilon '\in H$,
  such that $s't'=\epsilon 't's'$ and $\epsilon '\ne 1$.
	Since $|\supp V|=|\supp W|=2$ and since $\supp V$, $\supp W$
		are conjugacy classes of $H$, (2) follows from Lemma~\ref{lem:A3:222}(1).
  In particular, there exists
  $x\in \langle s,t\rangle $ such that $x\triangleright s'=s$,
  $x\triangleright t'=t$.
  Then $x\triangleright \epsilon '=\epsilon$.

  Again by assumption, there exist characters $\sigma '$ of $H^{s'}$
  and $\tau '$ of $H^{t'}$ such that $V\simeq M(s',\sigma ')$,
  $W\simeq M(t',\tau ')$,
  and
  $$ \sigma '(\epsilon 't'^2)\tau '(\epsilon 's'^2)=1, \quad
     \sigma '(s')=\tau '(t')=-1.
  $$
  Then (1) holds by Lemma~\ref{lem:constH}(1), and (3) follows
  from Lemma~\ref{lem:constH}(2) with $r=s'$, $g=x$ and $r=t'$, $g=x$,
  respectively.
\end{proof}

\begin{pro} \label{pro:wpiuniv}
  Let $H$ be a group, $i\in \Z $ with $0\le i\le 8$, $(V,W)\in \wp ^H_i$,
  and $s\in \supp V$, $t\in \supp W$. Let $\epsilon  \in H$ be such that
  $s^H=\{s,\epsilon s\}$.
\begin{enumerate}
  \item There exist unique characters $\sigma $ of $H^s$ and $\tau $ of $H$
    such that $V\simeq M(s,\sigma )$ and $W\simeq M(t,\tau )$.
  \item $\sigma $ and $\tau $ satisfy the conditions in
    Table~\ref{tab:wpH}.
  \item If $n\in \N $ and $(V,W)\in \wp^H_1(n)$, then $\tau (t)$ is a
		primitive $n$-th root of $1$.
\end{enumerate}
\end{pro}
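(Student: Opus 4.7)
The plan is to transport the data via conjugation using Lemmas~\ref{lem:constH} and \ref{lem:rs}, in the spirit of Proposition~\ref{pro:wp22univ}. By the definition of $\wp^H_i$ there exist $s'\in \supp V$, $t'\in \supp W$, $\sigma'\in \chg{H^{s'}}$, and $\tau'\in \chg{H}$ such that $V\simeq M(s',\sigma')$, $W\simeq M(t',\tau')$, and $(\sigma',\tau')$ satisfies the row-$i$ conditions of Table~\ref{tab:wpH} with respect to an $\epsilon'\in H$ determined by $s'^H=\{s',\epsilon' s'\}$. Since $|\supp W|=1$, the element $t$ lies in $Z(H)$, so $H^t=H$, $t'=t$, and $\tau'=\tau$. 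Lemma~\ref{lem:constH}(1) provides the existence and uniqueness of $\sigma\in \chg{H^s}$ with $V\simeq M(s,\sigma)$, proving~(1). Part~(3) then follows immediately from $\tau(t)=\tau'(t')$.

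For~(2), since $|\supp V|=2$ either $s'=s$ (in which case $\epsilon'=\epsilon$, $\sigma'=\sigma$, and nothing is to check), or $s'=\epsilon s$. In the latter case pick $y\in H$ with $ysy^{-1}=\epsilon s$. Then $y\epsilon y^{-1}=\epsilon^{-1}$ by Lemma~\ref{lem:rs}(1), and $H^{s'}=H^s$ since both have index two in $H$ and $H^s\subseteq H^{\epsilon s}$ by Lemma~\ref{lem:rs}(1). A direct computation gives $\epsilon'=\epsilon^{-1}$. Moreover, $\epsilon$ is the commutator $[y,s]=ysy^{-1}s^{-1}$, so $\tau(\epsilon)=1$ because $\K^{\times}$ is abelian.

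Lemma~\ref{lem:constH}(2) applied with $g=y^{-1}$ yields $\sigma'(h)=\sigma(y^{-1}hy)$ for all $h\in H^{s'}=H^s$. Combining this with the relations $y^{-1}sy=\epsilon s$, $y^{-1}ty=t$, $y^{-1}\epsilon y=\epsilon^{-1}$, $\tau(\epsilon)=1$, and $s\epsilon=\epsilon s$, one verifies
\begin{gather*}
\sigma'(s')=\sigma(s),\quad \sigma'(t)=\sigma(t),\quad \sigma'(\epsilon'^2)=\sigma(\epsilon^2),\quad \sigma'(s'^2)=\sigma(s^2),\\
\sigma'(\epsilon'^2s'^2)=\sigma(\epsilon^2s^2),\quad \sigma'(\epsilon'^2t^2)\tau(s'^2)=\sigma(\epsilon^2t^2)\tau(s^2),\\
\sigma'(s't)\tau(s')=\sigma(st)\tau(s),\quad \sigma'(t)\tau(s't)=\sigma(t)\tau(st).
\end{gather*}
These identities cover every expression appearing in Table~\ref{tab:wpH}, so inspecting the nine rows shows that the row-$i$ conditions for $(\sigma',\tau',s',\epsilon')$ translate into the row-$i$ conditions for $(\sigma,\tau,s,\epsilon)$, which gives~(2).

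The only delicate point is tracking the replacement $\epsilon\leftrightarrow\epsilon^{-1}$ in the $\epsilon^2$-terms, but the identity $\sigma'(\epsilon'^2)=\sigma(\epsilon^2)$ is exactly set up to absorb it, and in $\tau$-factors the vanishing $\tau(\epsilon)=1$ removes any ambiguity. The remainder is pure bookkeeping across the nine cases, and no ingredient beyond Lemmas~\ref{lem:constH} and \ref{lem:rs} is required.
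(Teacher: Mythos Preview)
Your proof is correct and follows essentially the same approach as the paper, which simply says ``Similar to the proof of Proposition~\ref{pro:wp22univ}''; you have supplied the details that the paper leaves to the reader. Your conjugating element $y^{-1}$ plays the role of the element $x$ in the proof of Proposition~\ref{pro:wp22univ}, and the observation $\tau(\epsilon)=1$ (since $\epsilon$ is a commutator and $\tau$ is a one-dimensional character of $H$) is the clean way to handle the $\tau$-factors in this $|\supp W|=1$ setting.
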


\begin{proof}
  Similar to the proof of Proposition~\ref{pro:wp22univ}.
\end{proof}

As before, let $G$ be a group, $V,W\in \ydG $ with $|\supp V|=2$ and $|\supp
W|=1$, $s\in \supp V$, $t\in \supp W$,
$\epsilon \in G$ with $s^G=\{s,\epsilon s\}$,
$\sigma $ a character of $G^s$, and $\tau $ a character of
$G$. Assume that $V\simeq M(s,\sigma )$ and
$W\simeq M(t,\tau )$. Then $\epsilon \ne 1$.

\begin{pro} \label{pro:simpleorzero}
  Assume that $\sigma (t)\tau (s)\ne 1$.
  Then $(\ad V)^m(W)$ and $(\ad W)^m(V)$ are absolutely simple or zero for all
$m\in \N $ if and only if the following hold.
  \begin{enumerate}
    \item $\sigma (\epsilon ^2)=1$, $\sigma (s)=-1$, or\\
      $\sigma (\epsilon ^2t^2)\tau (s^2)=1$, $\sigma (s)=-1$, $\sigma (\epsilon ^2)\ne 1$, or\\
      $\sigma (\epsilon ^2s^2)=\sigma (st)\tau (s)=1$, $\sigma (\epsilon ^2)\ne 1$, or\\
      $\sigma (\epsilon ^2)=\sigma (st)\tau (s)=1$, $(3)_{\sigma (s)}=0$.
    \item $(n+1)_{\tau (t)}(1-\sigma (t)\tau (st^n))=0$
      for some $n\ge 1$.
  \end{enumerate}
  Moreover, the four possibilities in (1) are mutually exclusive.
\end{pro}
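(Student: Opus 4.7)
The plan is to split the two conditions along the two ad-chains: part (1) handles $(\ad V)^m(W) \simeq X_m^{V,W}$ via Lemmas~\ref{lem:X1}, \ref{lem:X2}, \ref{lem:X3}, while part (2) handles $(\ad W)^m(V)$ via the Rosso-type Lemma~\ref{lem:genRosso}. The two analyses are essentially independent, since the first depends on the interaction of $V$ with itself and with $W$, while the second exploits only that $t\in Z(G)$ and $\tau \in \chg G$.

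For the first chain, the hypothesis $\sigma(t)\tau(s)\ne 1$ immediately gives that $X_1^{V,W}$ is absolutely simple (Lemma~\ref{lem:X1}) and $X_2^{V,W}\ne 0$ (Lemma~\ref{lem:X2}). Absolute simplicity or vanishing for all $m$ therefore forces $X_2^{V,W}$ to be absolutely simple, which forces one of the three alternatives (i)--(iii) of Lemma~\ref{lem:X2}. I would then split on whether $\sigma(\epsilon^2)=1$ and whether $\sigma(s)=-1$, using Lemma~\ref{lem:X3}(1)--(3) to control $m=3,4,5$: when $\sigma(s)=-1$ or $\sigma(\epsilon^2)\ne 1$ the chain is already killed at $X_3$; when $\sigma(\epsilon^2)=1$ and $\sigma(s)\ne -1$, absolute simplicity forces first $\sigma(st)\tau(s)=1$ (from alternative (i) of Lemma~\ref{lem:X2}) and then $(3)_{\sigma(s)}=0$ (from Lemma~\ref{lem:X3}(3)), after which $X_5$ vanishes. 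Pairing these possibilities produces exactly the four cases (a)--(d), and the converse is a routine matching: each of (a)--(d) sits directly in some alternative of Lemma~\ref{lem:X2} and triggers the appropriate vanishing clause of Lemma~\ref{lem:X3}. Mutual exclusivity drops out of the $\sigma(\epsilon^2)\in\{1,\ne 1\}$ dichotomy together with the observation that $(3)_{\sigma(s)}=0$ precludes $\sigma(s)=-1$.

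For the second chain, since $|\supp W|=1$ we have $t\in Z(G)$, so Lemma~\ref{lem:genRosso} applies with $U=W$ and $r=t$ (using that $\sigma(t)$ is a scalar because $\sigma$ is a character). The lemma exhibits every non-zero $(\ad W)^m(V)$ as $M(t^m s,\sigma_m)$ with $\sigma_m\in \chg{G^s}$, which is automatically absolutely simple; and it equates $(\ad W)^m(V)=0$ with the vanishing of $(m)^!_{\tau(t)}\prod_{i=0}^{m-1}(1-\sigma(t)\tau(st^i))$ for some $m\ge 1$. After discarding the $i=0$ factor (non-zero by hypothesis), this reads precisely as $(n+1)_{\tau(t)}(1-\sigma(t)\tau(st^n))=0$ for some $n\ge 1$, which is condition~(2).

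The main obstacle is not the individual calculations but the bookkeeping in part (1): the three alternatives of Lemma~\ref{lem:X2} overlap, and one must carefully verify that the union of their non-redundant pieces reassembles as the four disjoint cases (a)--(d). The subtlest sub-case is alternative (iii) of Lemma~\ref{lem:X2} under $\sigma(\epsilon^2)=1$: there $\sigma(\epsilon^2 s^2)=1$ forces $\sigma(s)\in\{1,-1\}$, and one must rule out $\sigma(s)=1$ (it collapses $\sigma(st)\tau(s)=1$ into the excluded $\sigma(t)\tau(s)=1$) and recognize $\sigma(s)=-1$ as already subsumed by case (a), so that alternative (iii) contributes cleanly only to case (c).
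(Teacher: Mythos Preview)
Your proposal is correct and follows exactly the approach the paper indicates: the paper's own proof is the single line ``This follows from Lemmas~\ref{lem:X1}, \ref{lem:X2}, \ref{lem:X3}, \ref{lem:genRosso},'' and your plan is precisely a careful unpacking of how those four lemmas combine. Your bookkeeping in part~(1)---in particular the disposal of alternative~(iii) of Lemma~\ref{lem:X2} under $\sigma(\epsilon^2)=1$, and the use of Lemma~\ref{lem:X3}(2) to force $(3)_{\sigma(s)}=0$ when $X_3^{V,W}\ne 0$---is accurate, as is your reduction of part~(2) to Lemma~\ref{lem:genRosso}.
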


\begin{proof}
  This follows from Lemmas~\ref{lem:X1}, \ref{lem:X2}, \ref{lem:X3},
  \ref{lem:genRosso}.
\end{proof}

Proposition~\ref{pro:simpleorzero} leads to a characterization of those pairs
$(V,W)$ which have a finite Weyl groupoid. Before obtaining this
characterization, we need to conclude some technicalities. For the definitions
of $\tau _2$, $\tau _4$, and $\sigma_n$ we refer to Lemmas~\ref{lem:X2}, \ref{lem:X3},
and \ref{lem:genRosso}, respectively.

\begin{lem} 
	\label{lem:21reflections}\
	\begin{enumerate}
		\item
			Assume that $\sigma (t)\tau (s)\ne 1$, $\sigma (\epsilon ^2)=1$,
			and that $\sigma (s)=-1$. Then
			$R_1(V,W)\simeq (M(s^{-1},\sigma ^*),M(\epsilon s^2t,\tau _2))$ and
			\begin{align*}
				\sigma ^*(s^{-1})=&\;-1,&
				\sigma ^*(\epsilon ^{-2})=&\;1,\\
				\sigma ^*(\epsilon s^2t)\tau _2(s^{-1})=&\;
				\sigma (t^{-1})\tau (s^{-1}),&
				\tau _2(\epsilon s^2t)=&\;\sigma (t^2)\tau (s^2t).
			\end{align*}
		\item
			Assume that
			$\sigma (\epsilon ^2t^2)\tau (s^2)=1$, $\sigma (s)=-1$, and
			that $\sigma (\epsilon ^2)\ne 1$. Then
			$R_1(V,W)\simeq (M(s^{-1},\sigma ^*),M(\epsilon s^2t,\tau _2))$ and
			\begin{align*}
				\sigma ^*(s^{-1})=&\;-1,&
				\sigma ^*(\epsilon ^{-1})=&\;\sigma (\epsilon ),\\
				\sigma ^*(\epsilon s^2t)\tau _2(s^{-1})=&\; \sigma (t)\tau (s),&
				\tau _2(\epsilon s^2t)=&\;\tau (t).
			\end{align*}
		\item
			Assume that $\sigma (\epsilon ^2s^2)=1$,
			$\sigma (st)\tau (s)=1$, and that $\sigma (\epsilon ^2)\ne 1$. Then
			$R_1(V,W)\simeq (M(s^{-1},\sigma ^*),M(\epsilon s^2t,\tau _2))$ and
			\begin{align*}
				\sigma ^*(s^{-1})=&\;\sigma (s),&
				\sigma ^*(\epsilon ^{-1})=&\;\sigma (\epsilon ),\\
				\sigma ^*(\epsilon s^2t)\tau _2(s^{-1})=&\;\sigma (t)\tau (s),&
				\tau _2(\epsilon s^2t)=&\;\tau (t).
			\end{align*}
		\item
			Assume that $\sigma (t)\tau (s)\ne 1$, $\sigma (\epsilon ^2)=1$,
			$\sigma (st)\tau (s)=1$, and $(3)_{\sigma (s)}=0$. Then
			$R_1(V,W)\simeq (M(s^{-1},\sigma ^*),M(\epsilon ^2s^4t,\tau _4))$ and
			\begin{align*}
				\sigma ^*(s^{-1})=&\;\sigma (s),&
				\sigma ^*(\epsilon ^{-2})=&\;1,\\
				\sigma ^*(\epsilon ^2s^4t)\tau _4(s^{-1})=&\;
				\sigma (t)\tau (s),&
				\tau _4(\epsilon ^2s^4t)=&\;\tau (t).
			\end{align*}
		\item Let $n\in \N $. Assume that $\sigma (t)\tau (st^n)=1$
			and that $\tau (t^k)\ne 1$ for all $1\le k\le n$. Then
			$R_2(V,W)\simeq (M(st^n,\sigma _n),M(t^{-1},\tau ^*))$, and
			\begin{align*}
				\sigma _n(st^n)=&\;\sigma (s),&
				\sigma _n(\epsilon )=&\;\sigma (\epsilon ),\\
				\sigma _n(t^{-1})\tau ^*(st^n)=&\;\sigma (t)\tau (s),&
				\tau ^*(t^{-1})=&\;\tau (t).
			\end{align*}
		\item Assume that $(\sigma (t)\tau (s))^2\ne 1$ and that
			$\tau (t)=-1$. Then
			$R_2(V,W)\simeq (M(st,\sigma _1),M(t^{-1},\tau ^*))$, and
			\begin{align*}
				\sigma _1(st)=&\;-\sigma (st)\tau (s),&
				\sigma _1(\epsilon ^2)=&\;\sigma (\epsilon ^2),\\
				\sigma _1(t^{-1})\tau ^*(st)=&\;\sigma (t^{-1})\tau (s^{-1}),&
				\tau ^*(t^{-1})=&\;-1.
			\end{align*}
	\end{enumerate}
\end{lem}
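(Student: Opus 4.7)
The proof splits naturally into parts (1)--(4), which compute the reflection $R_1(V,W)$, and parts (5)--(6), which compute $R_2(V,W)$.

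For parts (1)--(4), the strategy is to use Lemmas~\ref{lem:X1}, \ref{lem:X2}, and \ref{lem:X3} to identify $R_1(V,W)_2 = (\ad V)^{-a^{(V,W)}_{12}}(W) \simeq X_m^{V,W}$. In cases (1), (2), and (3) the hypotheses match the three alternatives of Lemma~\ref{lem:X2} under which $X_2^{V,W}$ is absolutely simple; combined with $\sigma(s)=-1$ (in (1), (2)) or $\sigma(\epsilon^2)\neq 1$ (in (3)), Lemma~\ref{lem:X3}(1) forces $X_3^{V,W}=0$, so that $m=2$ and $R_1(V,W)_2\simeq M(\epsilon s^2t,\tau_2)$. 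In case (4) the conditions $\sigma(\epsilon^2)=1$, $(3)_{\sigma(s)}=0$, and $\sigma(st)\tau(s)=1$ are precisely those of Lemma~\ref{lem:X3}(3): $X_3^{V,W}$ and $X_4^{V,W}$ are absolutely simple while $X_5^{V,W}=0$, giving $m=4$ and $R_1(V,W)_2\simeq M(\epsilon^2 s^4 t,\tau_4)$. The first entry $R_1(V,W)_1\simeq V^*\simeq M(s^{-1},\sigma^*)$ is standard.

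The claimed character identities in (1)--(4) then follow by direct substitution, using the explicit formulas for $\tau_2$ and $\tau_4$ from Lemmas~\ref{lem:X2} and \ref{lem:X3}, the rule $\sigma^*(g)=\sigma(g^{-1})$, the commutation relations $r^{-1}sr=\epsilon s$ and $r\epsilon=\epsilon^{-1}r$ from Lemma~\ref{lem:rs}(1), and the centrality of $t$. Each defining hypothesis is used precisely to simplify the ``cross evaluation'' $\sigma^*(\epsilon s^2 t)\tau_2(s^{-1})$ (or its analogue in case (4)) to the stated form; for instance in (1) the relations $\sigma(\epsilon^2)=1$ and $\sigma(s)=-1$ cause the $\epsilon^2$- and $s^4$-contributions to cancel, leaving $\sigma(t^{-1})\tau(s^{-1})$.

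For parts (5) and (6), Lemma~\ref{lem:genRosso} applies directly with $W$ playing the role of the module whose degree is central. In (5), the identity $\sigma(t)\tau(st^n)=1$ makes the $(n+1)$-th factor in Rosso's product vanish, while $\tau(t^k)\neq 1$ for $1\le k\le n$ keeps the relevant $q$-factorial nonzero, so that $R_2(V,W)_1 \simeq M(st^n,\sigma_n)$ with $\sigma_n(h)=\tau(h)^n\sigma(h)$. In (6), $\tau(t)=-1$ makes $(2)_{\tau(t)}=0$, forcing $(\ad W)^2(V)=0$, while $(\sigma(t)\tau(s))^2\neq 1$ in particular gives $\sigma(t)\tau(s)\neq 1$ so that $(\ad W)(V)\neq 0$; hence $R_2(V,W)_1\simeq M(st,\sigma_1)$. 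The claimed evaluations are then immediate from $\sigma_n(h)=\tau(h)^n\sigma(h)$, $\tau^*(g)=\tau(g^{-1})$, and the identity $\sigma(t)\tau(st^n)=1$ used to substitute for $\tau(t)^n$.

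The main obstacle is purely computational: no new idea is needed beyond the reflection machinery of \cite{MR2732989}, but the argument requires careful bookkeeping, particularly in distinguishing cases (1)--(3), whose final character formulas look superficially similar yet depend on three genuinely different algebraic simplifications (via $\sigma(\epsilon^2)=1$, $\sigma(\epsilon^2 t^2)\tau(s^2)=1$, and $\sigma(\epsilon^2 s^2)=1$, respectively).
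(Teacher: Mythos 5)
Your proposal is correct and follows essentially the same route as the paper: identify $R_1(V,W)_2$ as $X_2^{V,W}$ (cases (1)--(3)) or $X_4^{V,W}$ (case (4)) via Lemmas~\ref{lem:X1}, \ref{lem:X2}, \ref{lem:X3}, use Lemma~\ref{lem:genRosso} for $R_2$, and then evaluate $\sigma^*,\tau_2,\tau_4,\sigma_n$ on the relevant elements, simplifying with the case hypotheses, $\tau(\epsilon)=1$, and the relations of Lemma~\ref{lem:rs}. Only one tiny point deserves a word in (5): besides keeping $(n)^!_{\tau(t)}\ne 0$, the hypotheses are also what guarantee the factors $1-\sigma(t)\tau(st^i)\ne 0$ for $0\le i<n$ in Rosso's criterion (if $\sigma(t)\tau(st^i)=1$ then $\sigma(t)\tau(st^n)=1$ would force $\tau(t^{n-i})=1$), which is needed for $(\ad W)^n(V)\ne 0$.
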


\begin{proof}
	The claims follow from Lemmas~\ref{lem:X2}, \ref{lem:X3}, and
	\ref{lem:genRosso}.
	For example, in the first three cases one obtains that
	$X_2^{V,W}\ne 0$, $X_3^{V,W}=0$, and
	\begin{align*}
		\sigma ^*(s^{-1})=&\;\sigma (s),\quad
		\sigma ^*(\epsilon ^{-2})=\sigma (\epsilon ^2),\\
		\sigma ^*(\epsilon s^2t)\tau _2(s^{-1})=&\;
		\sigma (\epsilon ^{-2}s^{-4}t^{-1})\tau (s^{-1}),\\
		\tau _2(\epsilon s^2t)=&\;\sigma (\epsilon ^2s^4t^2)\tau (s^2t).
	\end{align*}
	The additional assumptions then imply the formulas.
\end{proof}

\begin{rem} 
  \label{rem:wpreflections}
	{}From Lemmas~\ref{lem:X1} and~\ref{lem:21reflections} we obtain the
  Cartan matrix entries and reflections of the pairs in the classes $\wp ^G_n$
	for $0\le n\le 8$. We collect these data in Table~\ref{tab:wprefl}.
  \begin{table}[h]
	  \caption{Reflections of pairs $(V,W)\in \wp_n$.}
		\label{tab:wprefl}
	  \begin{tabular}{|c|c|c|c|c|}
		  \hline 
		  $(V,W)$ & $a_{12}^{(V,W)}$ & $a_{21}^{(V,W)}$ & $R_{1}(V,W)$ &
		  $R_{2}(V,W)$\rule{0pt}{3ex}\tabularnewline
		  \hline 
		  $\wp_{0}^{G}$ & $0$ & $0$ & $\wp_{0}^{G}$ & $\wp_{0}^{G}$\rule{0pt}{3ex} \tabularnewline
		  \hline 
		  $\wp_{1}^{G}$ & $-2$ & $-1$ & $\wp_{1}^{G}$ & $\wp_{1}^{G}$\rule{0pt}{3ex} \tabularnewline
		  \hline 
		  $\wp_{2}^{G}$ & $-2$ & $-1$ & $\wp_{3}^{G}$ & $\wp_{4}^{G}$\rule{0pt}{3ex} \tabularnewline
		  \hline 
		  $\wp_{3}^{G}$ & $-2$ & $-2$ & $\wp_{2}^{G}$ & $\wp_{3}^{G}$\rule{0pt}{3ex} \tabularnewline
		  \hline 
		  $\wp_{4}^{G}$ & $-4$ & $-1$ & $\wp_{4}^{G}$ & $\wp_{2}^{G}$\rule{0pt}{3ex} \tabularnewline
		  \hline 
		  $\wp_{5}^{G}$ & $-2$ & $-1$ & $\wp_{5}^{G}$ & $\wp_{5}^{G}$\rule{0pt}{3ex} \tabularnewline
		  \hline 
		  $\wp_{6}^{G}$ & $-2$ & $-1$ & $\wp_{6}^{G}$ & $\wp_{8}^{G}$\rule{0pt}{3ex} \tabularnewline
		  \hline 
		  $\wp_{7}^{G}$ & $-2$ & $-1$ & $\wp_{7}^{G}$ & $\wp_{7}^{G}$\rule{0pt}{3ex} \tabularnewline
		  \hline 
		  $\wp_{8}^{G}$ & $-2$ & $-1$ & $\wp_{8}^{G}$ & $\wp_{6}^{G}$\rule{0pt}{3ex} \tabularnewline
		  \hline 
	  \end{tabular}
  \end{table}
\end{rem}


\begin{cor} \label{cor:finiteW}
  The following are equivalent.
  \begin{enumerate}
    \item The pair $(V,W)$ admits all reflections and 
      $\cW(V,W)$ is finite.
    \item $(V,W)\in \wp^G _i$ for some $0\le i\le 8$.
  \end{enumerate}
  If $(V,W)\in \wp^G_0$, then $(V,W)$ is standard of type $A_1\times A_1$.
	If $(V,W)\in \wp^G_i$ with $i\in \{1,5,6,7,8\}$, then $(V,W)$ is standard of type $C_2$.
  If $(V,W)\in \wp^G_i$ with $2\le i\le 4$,
  then $\rroots{(V,W)}_+$
  can be obtained from \cite[Lemma\,8.5]{MR3276225}.
\end{cor}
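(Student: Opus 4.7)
The plan is to prove the equivalence in two directions, leveraging the tabulated reflection data and the structural results on $(\ad V)^m(W)$ and $(\ad W)^m(V)$ developed earlier in the appendix.

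For the direction (2)$\Rightarrow$(1), my plan is to invoke Remark~\ref{rem:wpreflections} and Table~\ref{tab:wprefl} directly. That table shows the collection $\bigcup_{i=0}^{8}\wp_i^G$ is stable under both reflections $R_1$ and $R_2$, and records the Cartan matrix entries at each class. In every case $a_{12}^{(V,W)}a_{21}^{(V,W)}\in\{0,2,4\}$, so each Cartan matrix met along any sequence of reflections is a rank-two Cartan matrix corresponding either to a finite-type root system ($A_1\times A_1$ or $C_2$) or to the non-standard finite Cartan graph built from $\wp_2^G,\wp_3^G,\wp_4^G$. Hence $(V,W)$ admits all reflections and $\cW(V,W)$ has at most nine objects, each with a rank-two Cartan matrix, so it is finite.

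For (1)$\Rightarrow$(2), I would begin by applying Theorem~\ref{thm:admabsimple} to deduce that $(\ad V)^m(W)$ and $(\ad W)^m(V)$ are absolutely simple or zero for every $m\ge 0$. If $\sigma(t)\tau(s)=1$, then Lemma~\ref{lem:X1} gives $X_1^{V,W}=0$, i.e.\ $(\id-c_{W,V}c_{V,W})(V\otimes W)=0$, putting $(V,W)$ in $\wp_0^G$. Otherwise, Proposition~\ref{pro:simpleorzero} is applicable: its condition~(1) produces the four mutually exclusive possibilities that determine $a_{12}^{(V,W)}$ (equal to $-2$ in the first three possibilities and to $-4$ in the last), while its condition~(2) gives $(n{+}1)_{\tau(t)}(1-\sigma(t)\tau(st^n))=0$ for some $n\ge 1$. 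Finiteness of $\cW(V,W)$ forces the rank-two Cartan matrix to be of finite type, so $a_{21}\in\{-1,-2\}$ when $a_{12}=-2$ and $a_{21}=-1$ when $a_{12}=-4$. Substituting these back into the vanishing formula of Lemma~\ref{lem:genRosso} and splitting according to whether the vanishing comes from the $(n{+}1)_{\tau(t)}$ factor or from $1-\sigma(t)\tau(st^n)$ yields exactly the eight defining conditions of the classes $\wp_1^G,\dots,\wp_8^G$ listed in Table~\ref{tab:wpH}.

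I expect the main obstacle to be the bookkeeping in the subcase $\sigma(\epsilon^2)=1$, $\sigma(s)=-1$, which must be partitioned into the three classes $\wp_1^G,\wp_2^G,\wp_3^G$ according to the order of $\tau(t)$ and the smallest $n$ with $\sigma(t)\tau(st^n)=1$. In particular, separating $\wp_2^G$ (where $\tau(t)=-1$ and $(3)_{\sigma(t)\tau(s)}=0$) from $\wp_3^G$ (where $\tau(t)=-\sigma(t)\tau(s)$ and the same triality condition on $\sigma(t)\tau(s)$) requires one to iterate: apply the reflection $R_2$, re-examine the compatible vanishing coming from Lemma~\ref{lem:21reflections}, and match the reflected pair against the table. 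Once stability under $R_1,R_2$ is verified for each candidate class, the identification is forced.

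Finally, for the concluding structural statements, I would read them off Table~\ref{tab:wprefl}. All reflections within $\wp_0^G$ give Cartan matrix $(0,0)$, which is standard of type $A_1\times A_1$; and within each of $\wp_1^G,\wp_5^G,\wp_6^G,\wp_7^G,\wp_8^G$, every object carries Cartan matrix $(-2,-1)$, so the groupoid is standard of type $C_2$. For $i\in\{2,3,4\}$ the orbit cycles through three distinct Cartan matrices $(-2,-1),(-2,-2),(-4,-1)$, and the explicit set $\rroots{(V,W)}_+$ at each point coincides with the list in \cite[Lemma~8.5]{MR3276225}.
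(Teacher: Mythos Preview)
Your direction (2)$\Rightarrow$(1) matches the paper's argument. In (1)$\Rightarrow$(2), however, there is a genuine gap: the assertion ``finiteness of $\cW(V,W)$ forces the rank-two Cartan matrix to be of finite type'' is false. A rank-two finite Cartan graph can (and here does) have objects whose Cartan matrix is not of finite type: the class $\wp_3^G$ itself has $a_{12}=a_{21}=-2$, which is affine, not finite. So you cannot assume at the outset that the Cartan matrix at $(V,W)$ is of finite type, and the subsequent case split ``$a_{21}\in\{-1,-2\}$ when $a_{12}=-2$, $a_{21}=-1$ when $a_{12}=-4$'' is left unjustified.

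The paper avoids this by inserting one extra step before the case analysis. First it observes (from Lemmas~\ref{lem:X2}, \ref{lem:X3}, \ref{lem:genRosso}) that every reflection of $(V,W)$ is again a pair of the same shape ($|\supp V'|=2$, $|\supp W'|=1$, with characters). Then it invokes Theorem~\ref{thm:finitetype} to pass to \emph{some} object $(V',W')$ in the Cartan graph whose Cartan matrix is of finite type. At that object, since $a_{12}^{(V',W')}\in\{-2,-4\}$ by Lemmas~\ref{lem:X2} and \ref{lem:X3}, finite type forces $a_{12}^{(V',W')}=-2$ and $a_{21}^{(V',W')}=-1$. The remaining case analysis (which you correctly identified as the delicate part) is then done only in this situation, and one reflection $R_2$ via Lemma~\ref{lem:21reflections}(6) together with Proposition~\ref{pro:simpleorzero} suffices to pin down $\wp_2^G$ when $\sigma(\epsilon^2)=1$, $\sigma(s)=-1$, $\tau(t)=-1$, $(\sigma(t)\tau(s))^2\ne 1$. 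Finally, stability of $\bigcup_i\wp_i^G$ under reflections (Remark~\ref{rem:wpreflections}) transports the conclusion back to the original $(V,W)$. Your outline becomes correct once you insert this reduction to a finite-type point.
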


\begin{proof}
  (2)$\Rightarrow $(1) Since $(V,W)\in \wp^G _i$ for some $0\le i\le 8$, the
  pair $(V,W)$ admits all reflections by Remark~\ref{rem:wpreflections}.
  Moreover, the Weyl groupoid $\cW(V,W)$ is finite since the set of roots of
  $(V,W)$ is finite.

  (1)$\Rightarrow $(2)
  Assume that $(V,W)$ admits all reflections and that $\cW(V,W)$ is
  finite. Then $(\ad V)^m(W)$ and $(\ad W)^m(V)$ are absolutely simple or zero
	for all $m\ge 1$ by Theorem~\ref{thm:admabsimple}.
  Lemmas~\ref{lem:X2}, \ref{lem:X3}, \ref{lem:genRosso}
  imply that all reflections of $(V,W)$
  are pairs $(V',W')$ of absolutely simple Yetter-Drinfeld modules,
  such that there exist $s',\epsilon '\in G$, $t'\in Z(G)$, and characters
  $\sigma '$ of $G^{s'}$
	and $\tau '$ of $G$ with $\epsilon '\ne 1$, $s'^G=\{s',\epsilon 's'\}$,
  $V\simeq M(s',\sigma ')$, $W\simeq M(t',\tau ')$.
  By Theorem~\ref{thm:finitetype}, there exists an object
  $(V',W')$ of $\cW(V,W)$ with a Cartan matrix of finite type.
	By Remark~\ref{rem:wpreflections}, the reflections $R_1$ and $R_2$
  induce permutations of the classes $\wp^G _i$ with $0\le i\le 8$. Hence
  it suffices to show that $(V,W)\in \wp^G _i$ for some $0\le i\le 8$
	if the Cartan matrix $A^{(V,W)}$ is of finite type.

  Assume that $A^{(V,W)}$ is of finite type different from $A_1\times A_1$.
  Then $\sigma (t)\tau (s)\ne 1$, and
  we obtain that $a^{(V,W)}_{12}\le -2$ by Lemma~\ref{lem:X2}.
  Further, $a^{(V,W)}_{12}\in \{-2,-4\}$ by Lemma~\ref{lem:X3}.
  Hence $a^{(V,W)}_{12}=-2$ and $a^{(V,W)}_{21}=-1$.
  Then
  $$\sigma (\epsilon ^2)=1,\quad \sigma (s)=-1$$
  or
  $$\sigma (\epsilon ^2t^2)\tau (s^2)=1,\quad
    \sigma (s)=-1,\quad
	  \sigma (\epsilon ^2)\ne 1$$
  or
  $$\sigma (\epsilon ^2s^2)=1,\quad \sigma (st)\tau (s)=1,\quad
    \sigma (\epsilon ^2)\ne 1
  $$
  by Lemma~\ref{lem:X3}, and
  $$(\tau (t)+1)(1-\sigma (t)\tau (st))=0$$
  by Lemma~\ref{lem:genRosso}.
  By the same lemmas, $R_1(V,W)\simeq (M(s^{-1},\sigma ^*),M(\epsilon s^2t,\tau _2))$ and
  $R_2(V,W)\simeq (M(st,\sigma _1),M(t^{-1},\tau ^*))$.

  If $\sigma (\epsilon ^2)\ne 1$, then $(V,W)\in \wp _i$ for some $5\le i\le 8$.
  So assume that $\sigma (\epsilon ^2)=1$ and $\sigma (s)=-1$.

  If $\sigma (t)\tau (st)=1$, then $(V,W)\in \wp^G _1$.  Assume now that $\tau
  (t)=-1$ and $(\sigma (t)\tau (s))^2\ne 1$.  Then
  Lemma~\ref{lem:21reflections}(6) for $(V,W)$ and
  Proposition~\ref{pro:simpleorzero} for $R_2(V,W)$ implies that $(3)_{\sigma
  _1(st)}=(3)_{\sigma (t)\tau (s)}=0$, since $\sigma _1(st)=\sigma (t)\tau
  (s)\ne -1$.  Then $(V,W)\in \wp^G_2$. This completes the proof.
\end{proof}

\section{Rank two classification} 

\label{appendix:rank2}

In this appendix we collect the main results of \cite{MR3276225, MR3272075, rank2}.
The results are presented in the terminology of this paper. Many of the examples
will be described using Definition~\ref{def:skeleton}. However, to include all
the Nichols algebras found in \cite{MR3276225, MR3272075, rank2}, one needs to add
some additional diagrams. 

\subsection{}
\label{G2}
We first describe the examples related to the group $\Gamma_2$ of
\cite[\S1.1]{rank2}. For the Nichols algebras of dimension $64$ one has the
following skeleton:
\[
\begin{picture}(100,20)
	\DDvertextwo{10}{8}
	\multiput(13,10)(5,0){5}{\line(1,0){3}}
	\DDvertextwo{39}{8}
\end{picture}
\]
In characteristic three, the pair of Yetter-Drinfeld modules which yields
Nichols algebras of dimension $1296$ has the following skeleton:
\[
  \begin{picture}(100,20)
	  \DDvertextwo{10}{8}
    \multiput(13,9)(5,0){5}{\line(1,0){3}}
    \multiput(13,11)(5,0){5}{\line(1,0){3}}
		\put(22,7){$>$}
	  \DDvertextwo{39}{8}
  \end{picture}
  \makebox[0pt]{\raisebox{8pt}{$\charK =3$}}\\
\]

\subsection{}
\label{G3}
Let us review the examples related to the group $\Gamma_3$,
see~\cite[\S1.4]{rank2}.  For the Nichols algebras of dimension $2304$ related
to the group $\Gamma_3$,~\cite[Example 1.11, \S1.4]{rank2}, one has the
following diagrams related by reflections:
\begin{equation*}
	\begin{picture}(100,20)
		\DDvertexone{10}{10}
		\put(10,10){\circle{4}}
		\put(13,9){\line(1,0){23}}
		\put(13,11){\line(1,0){23}}
		\put(22,7){$>$}
		\DDvertexthree{40}{10}
	\end{picture}
	\begin{picture}(100,20)
		\DDvertextwo{10}{8}
		\multiput(13,9)(5,0){5}{\line(1,0){3}}
		\multiput(13,11)(5,0){5}{\line(1,0){3}}
		\put(22,7){$>$}
		\DDvertexthree{39}{10}
	\end{picture}
\end{equation*}
We remark that the diagram on the left is not a skeleton in the sense of
Definition~\ref{def:skeleton} because the simple Yetter-Drinfeld module
$M(s_1,\sigma_1)$ is constructed with a two-dimensional representation
$\sigma_1$. This situation is described with a double circle at the left vertex
of the diagram.

The examples of dimensions $10368$, $5184$ or $1152$ can be described with the
following skeleton:
\[
\begin{picture}(100,22)
	\DDvertexone{10}{10}
	\put(9,15){$p$}
	\put(20,15){$p^{-1}$}
	\put(13,9){\line(1,0){23}}
	\put(13,11){\line(1,0){23}}
	\put(22,7){$>$}
	\DDvertexthree{40}{10}
\end{picture}
\makebox[0pt]{\raisebox{8pt}{$(3)_{-p}=0$}}\\
\]
We remark that in this case, an extra assumption on the value of
$p=\sigma_1(s_1$) is needed.

The examples of dimension $2239488$ related to the group $\Gamma_3$ 
of~\cite[Example 1.9, \S1.4]{rank2} can be described
with the following diagrams related by reflections:
\begin{equation*}
	\begin{picture}(100,20)
		\DDvertextwo{10}{8}
		\multiput(13,7)(5,0){5}{\line(1,0){3}}
		\multiput(13,9)(5,0){5}{\line(1,0){3}}
		\multiput(13,11)(5,0){5}{\line(1,0){3}}
		\multiput(13,13)(5,0){5}{\line(1,0){3}}
		\put(18,7){$\big<$}
		\put(27,7){$\big>$}
		\DDvertexthree{39}{10}
	\end{picture}
	\begin{picture}(100,22)
		\DDvertexone{10}{10}
		\put(8,15){$1$}
		\put(13,9){\line(1,0){23}}
		\put(13,11){\line(1,0){23}}
		\put(22,7){$>$}
		\DDvertexthree{40}{10}
	\end{picture}
	\begin{picture}(100,22)
		\DDvertexone{10}{10}
		\put(8,15){$1$}
		\put(13,7){\line(1,0){23}}
		\put(13,9){\line(1,0){23}}
		\put(13,11){\line(1,0){23}}
		\put(13,13){\line(1,0){23}}
		\put(22,7){$\big>$}
		\DDvertexthree{40}{10}
	\end{picture}
\end{equation*}
The diagram on the left is not a skeleton in the sense of
Definition~\ref{def:skeleton} since it has a double arrow. This double arrow
means that the Cartan matrix of the pair satisfies
$a_{12}^{(M_1,M_2)}=a_{21}^{(M_1,M_2)}=-2$. 

\subsection{}
\label{T}
Nichols algebras related to the group $T$ have dimension $1259712$ over fields
of characteristic two and $80621568$ otherwise, see~\cite[\S1.3]{rank2}. In this
case one has the following skeleton:
\[
\begin{picture}(100,22)
	\DDvertexone{10}{10}
	\put(9,15){$p$}
	\put(20,15){$p^{-1}$}
	\put(13,8){\line(1,0){23}}
	\put(13,10){\line(1,0){23}}
	\put(13,12){\line(1,0){23}}
	\put(22,7){$>$}
	\DDvertexT{40}{10}
\end{picture}
\makebox[0pt]{\raisebox{8pt}{$(3)_{-p}=0$}}\\
\]
The dots on the right vertex describe the structure of the support of
$M(s_2,\sigma_2)$ which is isomorphic (as a quandle) to the tetrahedron quandle.
Further, the assumption $(3)_{-p}=0$, where $p=\sigma_1(s_1)$, is needed.
	
\subsection{}
\label{G4}
Nichols algebras related to the group $\Gamma_4$ have dimension $65536$ over
fields of characteristic two and $262144$ otherwise, see \cite[\S1.2]{rank2}.
In this case one has the following skeleton:
\[
\begin{picture}(100,20)
	\DDvertextwo{10}{8}
	\multiput(13,9)(5,0){5}{\line(1,0){3}}
	\multiput(13,11)(5,0){5}{\line(1,0){3}}
	\put(22,7){$>$}
	\DDvertexsq{39}{8}
\end{picture}
\]
The four dots in the right vertex mean that the support of $M_2(s_2,\sigma_2)$
is isomorphic (as a quandle) to the dihedral quandle $\D_4$.

\subsection{}
With these diagrams, the classification of finite-dimensional Nichols algebras
admiting a finite root system of rank two, \cite[Theorem 2.1]{rank2}, can be
reformulated as follows.

\begin{thm*}
	Let $G$ be a non-abelian group and $M$ in $\Ggen_2^G$. Assume that $M$ is
	braid-indecomposable.  The following are equivalent:
	\begin{enumerate}
		\item $M$ has a skeleton appearing in
			~\eqref{G2}--\eqref{G4}.
		\item $\NA(M)$ is finite-dimensional.
		\item $M$ admits all reflections and $\cW(M)$ is finite.
	\end{enumerate}
\end{thm*}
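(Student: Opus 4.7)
The plan is to prove the three implications (1)$\Rightarrow$(2), (2)$\Rightarrow$(3), and (3)$\Rightarrow$(1) by combining the case-by-case structure theorems of Sections~\ref{section:ADE}--\ref{section:F4} with the general Cartan graph result Theorem~\ref{thm:finitetype}.

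For (1)$\Rightarrow$(2) I would proceed by case analysis on the type of the skeleton, invoking for each type the appropriate structure theorem. If $M$ has a skeleton of type $\alpha_\theta$, $\delta_\theta$, or $\varepsilon_\theta$, then $\dim\NA(M)<\infty$ by Theorem~\ref{thm:ADE}. If the type is $\gamma_\theta$ or $\varphi_4$, apply Theorems~\ref{thm:C} and \ref{thm:F4}, respectively. For type $\beta_\theta$, note that Lemma~\ref{lem:B:conditions} forces $\dim M_1>1$, so Theorem~\ref{thm:B2} applies; for type $\beta'_3$, Lemma~\ref{lem:B':conditions} forces $\dim M_1=1$, so Theorem~\ref{thm:B1} applies. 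The only slightly subtle case is type $\beta''_3$: here I would observe that $R_3(M)$ has a skeleton of type $\beta'_3$ by Proposition~\ref{pro:skeleton:B''}, and then use the reflection invariance of Nichols algebra dimensions \cite[Thm.\,1]{MR2766176} together with $R_3(R_3(M))\simeq M$ \cite[Thm.\,3.12]{MR2766176} to transfer finite-dimensionality back.

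The implication (2)$\Rightarrow$(3) is immediate from the general theory: by \cite[Cor.\,3.18]{MR2766176} any tuple with finite-dimensional Nichols algebra admits all reflections, and by \cite[Prop.\,3.23]{MR2766176} its Weyl groupoid is finite.

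The main content is in (3)$\Rightarrow$(1), and this is where I expect the real work to be. Suppose $M$ admits all reflections with $\cW(M)$ finite. Since $M$ is braid-indecomposable and lies in $\Ggen_\theta^G$, Theorem~\ref{thm:C(M)finiteCartan} ensures $\cC(M)$ is a connected indecomposable finite Cartan graph. Now invoke Theorem~\ref{thm:finitetype} to obtain a word $R_{i_1}\cdots R_{i_k}$ such that $N=R_{i_1}\cdots R_{i_k}(M)$ has an indecomposable Cartan matrix $A^N$ of finite type, hence of one of the classical types $A$, $D$, $E$, $B$, $C$, or $F_4$. Each of Theorems~\ref{thm:ADE}, \ref{thm:B1}, \ref{thm:B2}, \ref{thm:C}, and \ref{thm:F4} shows that $N$ then has a skeleton of finite type (noting that for type $B$ one uses Theorem~\ref{thm:B1} or \ref{thm:B2} according to whether $\dim N_1=1$ or $\dim N_1>1$; the dichotomy is exhaustive since $G$ is non-abelian).

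Finally, to transport the skeleton back to $M$, apply the reflection-preservation Propositions~\ref{pro:skeleton:ADE}, \ref{pro:skeleton:C}, \ref{pro:skeleton:F4}, \ref{pro:skeleton:B}, \ref{pro:skeleton:B'}, and \ref{pro:skeleton:B''} iteratively along the reversed word $R_{i_k}\cdots R_{i_1}$, which reconstructs $M$ up to isomorphism. The main obstacle I expect is a minor bookkeeping issue: the reflection of a skeleton of type $\beta'_3$ is not always of type $\beta'_3$ but can become of type $\beta''_3$ (and vice versa) under $R_3$, so the invariant one tracks is ``has a skeleton of finite type'' rather than ``has a skeleton of a specific type''. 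Propositions~\ref{pro:skeleton:B'} and \ref{pro:skeleton:B''} handle this precisely, so this obstruction is in fact already resolved by the preparatory material.
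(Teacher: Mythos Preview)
Your proposal proves the wrong theorem. The statement under review is the rank-two classification in Appendix~\ref{appendix:rank2}: here $M\in\Ggen_2^G$ is a \emph{pair}, and the skeletons in~\eqref{G2}--\eqref{G4} are the rank-two diagrams associated with the groups $\Gamma_2$, $\Gamma_3$, $T$, and $\Gamma_4$. What you have written is, essentially verbatim, the proof of Theorem~\ref{thm:main} from Section~\ref{section:main}, which concerns $\theta\ge 3$ and the skeletons of Figure~\ref{fig:fdNA} (types $\alpha_\theta$, $\beta_\theta$, $\beta'_3$, $\beta''_3$, $\gamma_\theta$, $\delta_\theta$, $\varepsilon_\theta$, $\varphi_4$). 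None of the results you invoke---Theorems~\ref{thm:ADE}, \ref{thm:C}, \ref{thm:B1}, \ref{thm:B2}, \ref{thm:F4} and the reflection Propositions~\ref{pro:skeleton:ADE}--\ref{pro:skeleton:B''}---apply to pairs; they all assume $\theta\ge 3$ (or, in the $ADE$ case, that the relevant Cartan matrix has at least two nodes but is then analysed via rank-three subdiagrams).

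In the paper this rank-two theorem is not proved at all: it is explicitly stated as a reformulation of \cite[Thm.\,2.1]{rank2}, and the present paper \emph{uses} it as input for the higher-rank classification rather than deriving it. A correct response here is simply to cite \cite{rank2} (together with \cite{MR3276225,MR3272075}) and explain how the examples listed in~\eqref{G2}--\eqref{G4} correspond to the cases of that classification. If you wanted to sketch an independent argument, you would need the rank-two machinery of those references (the explicit computations of $(\ad V)^m(W)$ for the various support sizes $2$, $3$, and $4$, and the rank-two finite Cartan graphs), not the rank-$\ge 3$ structure theorems you cite.
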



\subsection*{Acknowledgement}

Leandro Vendramin was supported by Conicet and the Alexander von Humboldt
Foundation. Part of this work was done during his visit to ICTP (Trieste).
We also thank the referee for his numerous comments and suggestions.

\bibliographystyle{abbrv}
\bibliography{refs}

\def\cprime{$'$}
\begin{thebibliography}{10}

\bibitem{ICM}
N.~Andruskiewitsch.
\newblock On finite-dimensional {H}opf algebras.
\newblock {\em Accepted for publication in Proceedings of the {I}nternational
  {C}ongress of {M}athematicians. arXiv:1403.7838}.

\bibitem{MR1907185}
N.~Andruskiewitsch.
\newblock About finite dimensional {H}opf algebras.
\newblock In {\em Quantum symmetries in theoretical physics and mathematics
  ({B}ariloche, 2000)}, volume 294 of {\em Contemp. Math.}, pages 1--57. Amer.
  Math. Soc., Providence, RI, 2002.

\bibitem{MR2742620}
N.~Andruskiewitsch and I.~E. Angiono.
\newblock On {N}ichols algebras with generic braiding.
\newblock In {\em Modules and comodules}, Trends Math., pages 47--64.
  Birkh\"auser Verlag, Basel, 2008.

\bibitem{MR3032811}
N.~Andruskiewitsch and J.~Cuadra.
\newblock On the structure of (co-{F}robenius) {H}opf algebras.
\newblock {\em J. Noncommut. Geom.}, 7(1):83--104, 2013.

\bibitem{MR2786171}
N.~Andruskiewitsch, F.~Fantino, M.~Gra{\~n}a, and L.~Vendramin.
\newblock Finite-dimensional pointed {H}opf algebras with alternating groups
  are trivial.
\newblock {\em Ann. Mat. Pura Appl. (4)}, 190(2):225--245, 2011.

\bibitem{MR2745542}
N.~Andruskiewitsch, F.~Fantino, M.~Gra{\~n}a, and L.~Vendramin.
\newblock Pointed {H}opf algebras over the sporadic simple groups.
\newblock {\em J. Algebra}, 325:305--320, 2011.

\bibitem{MR2766176}
N.~Andruskiewitsch, I.~Heckenberger, and H.-J. Schneider.
\newblock The {N}ichols algebra of a semisimple {Y}etter-{D}rinfeld module.
\newblock {\em Amer. J. Math.}, 132(6):1493--1547, 2010.

\bibitem{MR1659895}
N.~Andruskiewitsch and H.-J. Schneider.
\newblock Lifting of quantum linear spaces and pointed {H}opf algebras of order
  {$p\sp 3$}.
\newblock {\em J. Algebra}, 209(2):658--691, 1998.

\bibitem{MR1780094}
N.~Andruskiewitsch and H.-J. Schneider.
\newblock Finite quantum groups and {C}artan matrices.
\newblock {\em Adv. Math.}, 154(1):1--45, 2000.

\bibitem{MR1913436}
N.~Andruskiewitsch and H.-J. Schneider.
\newblock Pointed {H}opf algebras.
\newblock In {\em New directions in {H}opf algebras}, volume~43 of {\em Math.
  Sci. Res. Inst. Publ.}, pages 1--68. Cambridge Univ. Press, Cambridge, 2002.

\bibitem{MR2108213}
N.~Andruskiewitsch and H.-J. Schneider.
\newblock A characterization of quantum groups.
\newblock {\em J. Reine Angew. Math.}, 577:81--104, 2004.

\bibitem{MR2630042}
N.~Andruskiewitsch and H.-J. Schneider.
\newblock On the classification of finite-dimensional pointed {H}opf algebras.
\newblock {\em Ann. of Math. (2)}, 171(1):375--417, 2010.

\bibitem{MR3181554}
I.~Angiono.
\newblock On {N}ichols algebras of diagonal type.
\newblock {\em J. Reine Angew. Math.}, 683:189--251, 2013.

\bibitem{MR3420518}
I.~E. Angiono.
\newblock A presentation by generators and relations of {N}ichols algebras of
  diagonal type and convex orders on root systems.
\newblock {\em J. Eur. Math. Soc. (JEMS)}, 17(10):2643--2671, 2015.

\bibitem{MR2209265}
Y.~Bazlov.
\newblock Nichols-{W}oronowicz algebra model for {S}chubert calculus on
  {C}oxeter groups.
\newblock {\em J. Algebra}, 297(2):372--399, 2006.

\bibitem{MR2498801}
M.~Cuntz and I.~Heckenberger.
\newblock Weyl groupoids with at most three objects.
\newblock {\em J. Pure Appl. Algebra}, 213(6):1112--1128, 2009.

\bibitem{MR2869179}
M.~Cuntz and I.~Heckenberger.
\newblock Finite {W}eyl groupoids of rank three.
\newblock {\em Trans. Amer. Math. Soc.}, 364(3):1369--1393, 2012.

\bibitem{MR3341467}
M.~Cuntz and I.~Heckenberger.
\newblock Finite {W}eyl groupoids.
\newblock {\em J. Reine Angew. Math.}, 702:77--108, 2015.

\bibitem{MR2294771}
I.~Heckenberger.
\newblock Weyl equivalence for rank 2 {N}ichols algebras of diagonal type.
\newblock {\em Ann. Univ. Ferrara Sez. VII (N.S.)}, 51:281--289, 2005.

\bibitem{MR2207786}
I.~Heckenberger.
\newblock The {W}eyl groupoid of a {N}ichols algebra of diagonal type.
\newblock {\em Invent. Math.}, 164(1):175--188, 2006.

\bibitem{MR2500361}
I.~Heckenberger.
\newblock Classification of arithmetic root systems of rank 3.
\newblock In {\em Proceedings of the {XVI}th {L}atin {A}merican {A}lgebra
  {C}olloquium ({S}panish)}, Bibl. Rev. Mat. Iberoamericana, pages 227--252.
  Rev. Mat. Iberoamericana, Madrid, 2007.

\bibitem{MR2295200}
I.~Heckenberger.
\newblock Examples of finite-dimensional rank 2 {N}ichols algebras of diagonal
  type.
\newblock {\em Compos. Math.}, 143(1):165--190, 2007.

\bibitem{MR2379892}
I.~Heckenberger.
\newblock Rank 2 {N}ichols algebras with finite arithmetic root system.
\newblock {\em Algebr. Represent. Theory}, 11(2):115--132, 2008.

\bibitem{MR2462836}
I.~Heckenberger.
\newblock Classification of arithmetic root systems.
\newblock {\em Adv. Math.}, 220(1):59--124, 2009.

\bibitem{MR2732989}
I.~Heckenberger and H.-J. Schneider.
\newblock Nichols algebras over groups with finite root system of rank two {I}.
\newblock {\em J. Algebra}, 324(11):3090--3114, 2010.

\bibitem{MR2734956}
I.~Heckenberger and H.-J. Schneider.
\newblock Root systems and {W}eyl groupoids for {N}ichols algebras.
\newblock {\em Proc. Lond. Math. Soc. (3)}, 101(3):623--654, 2010.

\bibitem{MR3096611}
I.~Heckenberger and H.-J. Schneider.
\newblock Right coideal subalgebras of {N}ichols algebras and the {D}uflo order
  on the {W}eyl groupoid.
\newblock {\em Israel J. Math.}, 197(1):139--187, 2013.

\bibitem{rank2}
I.~{Heckenberger} and L.~{Vendramin}.
\newblock The classification of {N}ichols algebras with finite root system of
  rank two.
\newblock {\em Accepted for publication in J. Europ. Math. Soc,
  arXiv:1311.2881}, 2013.

\bibitem{MR3276225}
I.~Heckenberger and L.~Vendramin.
\newblock Nichols algebras over groups with finite root system of rank two
  {II}.
\newblock {\em J. Group Theory}, 17(6):1009--1034, 2014.

\bibitem{MR3272075}
I.~Heckenberger and L.~Vendramin.
\newblock Nichols algebras over groups with finite root system of rank two
  {III}.
\newblock {\em J. Algebra}, 422:223--256, 2015.

\bibitem{MR3313687}
I.~Heckenberger and J.~Wang.
\newblock Rank 2 {N}ichols algebras of diagonal type over fields of positive
  characteristic.
\newblock {\em SIGMA Symmetry Integrability Geom. Methods Appl.}, 11:Paper 011,
  24, 2015.

\bibitem{MR823672}
V.~G. Kac.
\newblock {\em Infinite-dimensional {L}ie algebras}.
\newblock Cambridge University Press, Cambridge, second edition, 1985.

\bibitem{MR1763385}
V.~K. Kharchenko.
\newblock A quantum analogue of the {P}oincar\'e-{B}irkhoff-{W}itt theorem.
\newblock {\em Algebra Log.}, 38(4):476--507, 509, 1999.

\bibitem{MR3253277}
S.~Lentner.
\newblock New large-rank {N}ichols algebras over nonabelian groups with
  commutator subgroup {$\mathbb{Z}_2$}.
\newblock {\em J. Algebra}, 419:1--33, 2014.

\bibitem{MR2106930}
S.~Majid.
\newblock Noncommutative differentials and {Y}ang-{M}ills on permutation groups
  {$S_n$}.
\newblock In {\em Hopf algebras in noncommutative geometry and physics}, volume
  239 of {\em Lecture Notes in Pure and Appl. Math.}, pages 189--213. Dekker,
  New York, 2005.

\bibitem{MR0506406}
W.~D. Nichols.
\newblock Bialgebras of type one.
\newblock {\em Comm. Algebra}, 6(15):1521--1552, 1978.

\bibitem{MR1632802}
M.~Rosso.
\newblock Quantum groups and quantum shuffles.
\newblock {\em Invent. Math.}, 133(2):399--416, 1998.

\bibitem{MR3171534}
A.~M. Semikhatov.
\newblock Fusion in the entwined category of {Y}etter-{D}rinfeld modules of a
  rank-1 {N}ichols algebra.
\newblock {\em Theoret. and Math. Phys.}, 173(1):1329--1358, 2012.
\newblock Russian version appears in Teoret. Mat. Fiz. {{\bf{173}}} (2012), no.
  1, 3--37.

\bibitem{MR2965674}
A.~M. Semikhatov and I.~Y. Tipunin.
\newblock The {N}ichols algebra of screenings.
\newblock {\em Commun. Contemp. Math.}, 14(4):1250029, 66, 2012.

\bibitem{MR3146017}
A.~M. Semikhatov and I.~Y. Tipunin.
\newblock Logarithmic {$\widehat{s\ell}(2)$} {CFT} models from {N}ichols
  algebras: {I}.
\newblock {\em J. Phys. A}, 46(49):494011, 53, 2013.

\bibitem{MR901157}
S.~L. Woronowicz.
\newblock Compact matrix pseudogroups.
\newblock {\em Comm. Math. Phys.}, 111(4):613--665, 1987.

\bibitem{MR994499}
S.~L. Woronowicz.
\newblock Differential calculus on compact matrix pseudogroups (quantum
  groups).
\newblock {\em Comm. Math. Phys.}, 122(1):125--170, 1989.

\end{thebibliography}

\end{document}